\numberwithin{equation}{section}
\theoremstyle{plain}
\newtheorem{thm}{Theorem}[section]
\newtheorem{prop}[thm]{Proposition}
\newtheorem{cor}[thm]{Corollary}
\newtheorem{lemma}[thm]{Lemma}
\newtheorem{conj}[thm]{Conjecture}
\theoremstyle{definition}
\newtheorem{deff}[thm]{Definition}
\newtheorem{example}[thm]{Example}
\theoremstyle{remark}
\newtheorem{rmk}[thm]{\bf Remark}
\def\g{\gamma}
\def\d{\delta}
\def\G{\Gamma}
\def\xra{\xrightarrow[]{}}
\def\a{\alpha}
\def\b{\beta}
\def\N{\mathbb N}
\newcommand{\K}{\mathsf{k}}
\newcommand{\Ima}{\operatorname{Im}}
\newcommand{\ima}{\operatorname{i m}}
\newcommand{\Mod}{\operatorname{{\bf Mod}}}
\newcommand{\reg}{\operatorname{reg}}
\newcommand{\GR}{\operatorname{{\bf Gr}}}
\newcommand{\proj}{\operatorname{proj}}
\newcommand{\Mat}{\operatorname{\mathbb{M}}}
\newcommand{\rr}{\operatorname{{\bf r}}}
\newcommand{\s}{\operatorname{{\bf s}}}
\newcommand{\V}{\mathcal{V}}
\newcommand{\diag}{\operatorname{diag}}
\newcommand{\ring}{R\text{-rings}_{\K}}
\newcommand{\ringz}{R_0\text{-rings}_{\K_0}}
\newcommand{\grring}{R\text{-gr-rings}_{\K}}
\newcommand{\T}{{\mathbb{T}}}
\newcommand{\Sets}{\operatorname{Sets}}
\newcommand{\gr}{\operatorname{gr}}
\def \Z{\mathbb Z}
\def\-{\text{-}}
\newcommand{\End}{\operatorname{End}}
\newcommand{\Hom}{\operatorname{Hom}}
\newcommand{\HOM}
{\operatorname{HOM}}
\newcommand{\Ker}{\operatorname{Ker}}
\newcommand{\M}{\mathbb M}
\newcommand{\id}{\operatorname{id}}
\begin{document}

\title[Bergman algebras]{Bergman algebras\\ The graded universal algebra constructions}

\author{R. Hazrat}
\address{Roozbeh Hazrat: Centre for Research in Mathematics and Data Science\\Western Sydney University, Australia} \email{r.hazrat@westernsydney.edu.au}

\author{H. Li}
\address{Huanhuan Li:  Center for Pure Mathematics\\
School of Mathematical Sciences\\
Anhui University, Hefei, China} \email{lhh@ahu.edu.cn}

\author{R. Preusser}
\address{Raimund Preusser: School of Mathematics and Statistics\\
Nanjing University of Information Science and Technology, Nanjing, China}
\email{raimund.preusser@gmx.de}

%\subjclass[2010]{16W50,16S50,16D70}

\keywords{Bergman algebra, Leavitt path algebra, graded ring, $\Gamma$-monoid, graded Grothendieck group}

\date{\today}

\begin{abstract} A half a century ago, George Bergman introduced  stunning machinery which would realise any commutative conical monoid as the non-stable $K$-theory of a ring. The ring constructed is ``minimal" or ``universal". Given the success of graded $K$-theory in classification of algebras and its connections to dynamics and operator algebras, the realisation of $\Gamma$-monoids (monoids with an action of an abelian group $\Gamma$ on them) as non-stable graded $K$-theory of graded rings becomes vital. In this paper, we revisit Bergman's work and develop the graded version of this universal construction. For an abelian group $\Gamma$,  a $\Gamma$-graded ring $R$, and non-zero graded finitely generated projective (left) $R$-modules $P$ and $Q$, we construct a universal $\Gamma$-graded ring extension $S$ such that $S\otimes_R P\cong S\otimes_R Q$ as graded $S$-modules. This makes it possible to bring the graded techniques, such as smash products and Zhang twists into Bergman's machinery. 
Given a commutative conical   $\Gamma$-monoid $M$, we construct a $\Gamma$-graded ring $S$ such that $\mathcal V^{\gr}(S)$ is $\Gamma$-isomorphic to $M$. In fact we show that any finitely generated $\Gamma$-monoid can be realised as the non-stable graded $K$-theory of a hyper Leavitt path algebra. Here $\mathcal V^{\gr}(S)$  is the monoid of isomorphism classes of graded finitely generated projective $S$-modules and the  action of $\Gamma$ on $\mathcal V^{\gr}(S)$ is by shift of degrees. Thus the group completion of $M$ can be realised as 
the graded Grothendieck group $K^{\gr}_0(S)$. We use this machinery to provide a short proof to the fullness of the graded Grothendieck functor $K^{\gr}_0$ for the class of Leavitt path algebras (i.e., Graded Classification Conjecture II). 
\end{abstract}

\maketitle

\begin{center}
\emph{ In memory of Nikolai Vavilov who conveyed the joy of mathematics to us in Chinese, German and Persian}
\end{center}
\tableofcontents

\section{Introduction}

In 1974 in the seminal paper~\cite{bergman74} George Bergman introduced machinery whereby by adding sufficient generators and relations to an algebra, one can lift any pair of non-zero finitely generated projective modules to become isomorphic. To be precise, let $R$ be an algebra over a field $\K$, and let $(P_i, Q_i)_{i\in I}$ be pairs of non-zero finitely generated projective left $R$-modules. Bergman's machinery adjoins certain generators and relations to $R$, so that the resulted extension algebra $S$ obtained provides universal isomorphisms of modules, $S\otimes_R P_i \cong S\otimes_{R} Q_i$, where $i\in I$. Furthermore, the non-stable $K$-theory of the algebra $S$ can be obtained from that of $R$ by identifying the pairs $(P_i, Q_i)_{i\in I}$ in the structure.

Starting from a field $\K$ and choosing the pair of  $\K$-modules $(\K^n, \K^m)$,  $n,m\in \mathbb N^{+}$,  Bergman observed that his machinery  retrieves the celebrated
Leavitt algebra $R=L_\K(n,m)$, where 
$R^n \cong \K^n \otimes_\K R\cong \K^m \otimes_\K R \cong R^m$ as $R$-modules. Many combinatorial algebras constructed in the last 50 years, such as Leavitt path algebras and their generalisations, can be obtained from Bergman's machinery.

Two natural questions immediately arise: 

\begin{enumerate}
    \item {\bf The structural properties}: To describe the structure of the extension algebra $S$ obtained from $R$ relative to the pairs $(P_i,Q_i)_{i\in I}$.

    \item {\bf The classification}: Given pairs $(P_i, Q_i)_{i\in I}$ of $R$-modules and $(P'_i, Q'_i)_{i\in I}$ of $R'$-modules, for $\K$-algebras $R$ and $R'$, respectively, to describe how the extension $\K$-algebras $S$ and $S'$ obtained from the Bergman machinery, respectively, compare.
\end{enumerate}

We highlighted Leavitt path algebras as substantial activity is currently taking place
on both questions (1) and (2).
  In fact, there are several conjectures concering the classification problems and what could be the right invariant for Leavitt path algebras~\cite{lpabook}. Leavitt path algebras associated to graphs have rich $\mathbb Z$-graded structures (arising by assigning degree $1$ to edges) which play a vital role in their studies. Their  non-stable \emph{graded} $K$-theory is  related to invariants of symbolic dynamics and the equivariant $K$-theory of operator algebras and the current  open conjecture is that the graded $K$-theory could be a complete invariant for these algebras (see~\cite{willie,haz2013} and \S\ref{lpaaplicaton}). This is one of the main motivations of this project: to study the Bergman machinery in the graded setting.

Let $\K$ be a commutative ring and let $R$ be a unital $\K$-algebra. Further let $N$ be an $R$-module and $P$ a finitely generated projective $R$-module. Let $\ring$ be the category of $\K$-algebras $T$ equipped with a $\K$-homomorphism $R\rightarrow T$. Bergman's work started by showing that the following functor is representable:
\begin{equation}\label{bergpresnt}
\begin{split}
\mathcal F_{N,P}: \ring &\longrightarrow{} \Sets\\
T&\longmapsto \Hom(T\otimes_R N, T\otimes_R P).
\end{split}
\end{equation}
That is, there exists a universal algebra $S$ (or the initial object $R\rightarrow S$ in an appropriate category), denoted by $R\langle f: \overline N \rightarrow \overline P \rangle$, which provides the $S$-homomorphism $S\otimes_R N \rightarrow S\otimes_R P$. Bergman further showed that there exists a universal algebra $S$, denoted by $R\langle \overline f=0\rangle$,  such that the extension of a given $R$-module homomorphim $f: N\rightarrow P$ vanishes. 

Combining these two constructions, allows for forming very interesting universal rings. For example, given a homomorphism $f:P\rightarrow Q$, one can construct a universal ring $R\langle \overline f^{-1} \rangle$, where the extension of $f$ becomes invertible.  The two important universal constructions needed to realise any conical monoid are the following: Let $P$ and $Q$ be non-zero  finitely generated projective left $R$-modules. Constructing the universal homomorphisms $h:\overline P\rightarrow \overline Q$ and $h': \overline{\overline Q}\rightarrow \overline{\overline P}$ in subsequent extensions and setting $h'\overline{h}-1$ and $\overline{h}h'-1$ to zero, we obtain the universal ring  
\begin{equation}\label{const1}
S:=R\big\langle h,h^{-1}:\overline{P}\cong \overline{Q}\big\rangle,
\end{equation}
where $S\otimes_R P\cong S\otimes_R Q$ as $S$-modules.

On the other hand, for $P$ a non-zero   finitely generated projective left $R$-module, by constructing the universal homomorphism $e:P\rightarrow P$ and setting the homomorphism $e^2-e$ to zero we form a universal ring 
\begin{equation}\label{const2}
    T:=R\big\langle e:\overline{P}\to \overline{P};~e^2=e\big\rangle,
    \end{equation}
such that the extension of $e$ in $T$ is an idempotent endomorphism.

Bergman then proved \cite[Theorems 5.1, 5.2]{bergman74} that $\V(S)\cong \V(R)/\langle [P]=[Q]\rangle$ and 
$\V(T)\cong \langle \V(R), [P_1], [P_2] \rangle /\langle [P]=[P_1]+[P_2]\rangle$, for the universal rings $S$ and $T$ of (\ref{const1}) and (\ref{const2}), respectively. Here $\V(S)$  is the monoid of isomorphism classes of finitely generated projective left $S$-modules. 
These constructions allowed Bergman to build up a machinery to realise any (finitely generated) conical commutative monoid as the non-stable $K$-theory of a ring. 

Graded algebras frequently appear when there is a group acting on an algebraic structure. A graded ring can be a model of an algebraic structure that captures and reflects the time evolution and the dynamics.
A ring $R$ is \emph{graded} by a group $\Gamma$ when, roughly, $R$ can be partitioned by $\Gamma$ in a way that is compatible with the structure of $R$. (See \S\ref{grprelim} for more details.)  Consequently, three categories play a prominent role in this setting: the category of left $R$-modules $R$-$\Mod$, the category of graded left $R$-modules $R$-$\GR$, and the category of left $R_{0}$-modules $R_{0}$-$\Mod$, where $R_{0}$ is the subring of $R$ consisting of the elements in the partition corresponding to the identity element $0$ of $\Gamma$. A substantial portion of the theory of graded rings concerns the relationships between these categories. While the applications of this theory are numerous, one prominent example is the fundamental theorem of $K$-theory, proved by Quillen~\cite{quillen}, using the category of graded modules in a crucial way.

Our starting point is to consider a $\Gamma$-graded $\K$-algebra $R$ over a $\Gamma$-graded commutative ring $\K$,  a $\Gamma$-graded (left) $R$-module $N$ and a $\Gamma$-graded finitely generated projective (left) $R$-module $P$, where $\Gamma$ is an abelian group, and consider the corresponding functors in the graded setting: 
\begin{equation}\label{grbergmp}
\begin{split}
\mathcal F^{\gr}_{N,P}: \grring & \longrightarrow{} \Sets\\
T&\longmapsto \Hom_{T\text{-}\GR}(T\otimes_R N, T\otimes_R P)
\end{split}
\end{equation}

\begin{equation}\label{grbergmp0}
\begin{split}
\mathcal F_{N_0,P_0}: \ringz & \longrightarrow{} \Sets\\
T&\longmapsto \Hom(T\otimes_{R_0} N_0, T\otimes_{R_0} P_0)
\end{split}
\end{equation}

Here $\grring$ is the category of $\Gamma$-graded $\K$-algebras $T$ equipped with a graded $\K$-homomorphism $R\rightarrow T$. In this paper we show that $\mathcal F^{\gr}_{N,P}$ is representable, i.e., there is a $\Gamma$-graded algebra $S$ such that $ \Hom(S,-) \cong \mathcal F^{\gr}_{N,P}$.   Furthermore, if $R$ is strongly graded, then $S \cong R \ast_{R_0} S_0 $, where $S_0$ is the representation for  $\mathcal F_{N_0,P_0}$ and $\ast$ is the coproduct. Forgetting the grading on $S$ gives us the representation for 
$\mathcal F_{N,P}$ of (\ref{bergpresnt}). 

Having these graded universal constructions, we can realise any conical commutative $\Gamma$-monoid as the nonstable graded $K$-theory of a suitable graded ring. Namely, for such a $\Gamma$-monoid $M$, there is a $\Gamma$-graded $\K$-algebra $R$ such that $M\cong \mathcal V^{\gr}(R)$, as $\Gamma$-monoids. Here $\mathcal V^{\gr}(R)$  is the monoid of isomorphism classes of graded finitely generated projective $R$-modules and the  action of $\Gamma$ on $\mathcal V^{\gr}(R)$ is by shift of degrees (Theorem~\ref{mainalibm}). 
In order to carry out our $\Gamma$-monoid realisations, we had two options; either to develop the entire theory of monoids for co-products rings (\cite{bergman74mod}) in the graded setting, or to start with a graded ring, and use the fact that the category of graded modules is equivalent to the category of modules over its corresponding smash product (i.e., to pass from the graded setting to the non-graded case and use the available techniques).  
We opted for the second option; we pass from the graded structure via smash product to the category of modules and  
use the established results in the non-graded setting. The price we had to pay for such a shortcut, 
was to drop some generality, such as working with graded $\K$-algebras, where $\K$ is a field concentrated in degree zero, rather than with $\K$-algebras, where $\K$ is a graded field. 

Our realisations of $\Gamma$-monoids allow us to provide a short proof to one of the Graded Classification Conjectures (See Conjecture~\ref{conjalg}(1)): Let $A$ be a $\mathbb Z$-graded $\K$-algebra where $\K$ is concentrated in degree zero and $\mathcal V^{\gr}(A)$ is cancellative. Then any order preserving $\Z[x,x^{-1}]$-module
homomorphism $\phi: K_0^{\gr}(L_\K(E)) \rightarrow K_0^{\gr}(A)$ with 
$\phi([L_\K(E)])=[A]$ 
is induced by a unital $\mathbb Z$-graded $\K$-homomorphism $\psi: L_\K(E) \rightarrow A$, i.e., $K^{\gr}_0(\psi) = \phi$. Here $L_\K(E)$ is the Leavitt path algebra associated to the graph $E$. Replacing the algebra $A$ with a Leavitt path algebra $L_\K(F)$ provides a positive answer to the fullness conjecture.

In general given two finitely generated projective modules, it is not straightforward to describe Bergman's (localisation) algebras explicitly. Given finitely generated $R$-modules $P$ and $Q$, and a map $f:P\rightarrow Q$, there are few instances where the localisation rings  $R\langle f: \overline P \rightarrow \overline Q \rangle$, or $R\langle \overline f^{-1} \rangle$  are explicitly described. We refer to the paper of Sheiham~\cite{sheiham} where the localisation of triangular matrix rings has been described. In this paper, by representing the finitely generated projective modules by the corresponding idempotent matrices, we first describe the Bergman's constructions in terms of generators and relations.  Thus given idempotent matrices $e\in \M_m(R)$ and $f\in \M_n(R)$, we define the Bergman algebra $B_R(e,f)$ via generators and relations such that $e$ and $f$ become equivalent idempotents in $B_R(e,f)$ in a universal way. 
The construction can be extended to families of pairs of idempotents $(e,f) :=\{(e_i,f_i)\}_{i\in I}$. In fact this construction will be carried out for graded rings and homogeneous idempotents in Section \ref{bergmanalgebrasec}. The algebra $B_R(e,f)$ obtained this way, coincides with 
the universal localisation ring $R\langle f, f^{-1}: \overline P_i \cong \overline Q_i \mid i \in I\rangle$, for pairs of finitely generated projective modules $(P_i,Q_i), i \in I$ which represent $(e_i,f_i), {i\in I}$.

Returning to the graded setting, we show that when $R$ is a $\Gamma$-graded commutative $\K$-semisimple ring which is concentrated in degree zero, then the universal localisation ring $R\langle f, f^{-1}: \overline P_i \cong \overline Q_i \mid i \in I\rangle$, for any family of pairs of finitely generated projective modules $(P_i,Q_i), i \in I$, is graded isomorphic to a Leavitt path algebra of a hypergraph and conversely any such hyper Leavitt path algebra can be realised as a graded Bergman's construction over a $\Gamma$-graded commutative $\K$-semisimple ring  (Theorem~\ref{lemhyperberg}). Built on this, we show that any finitely generated conical $\Gamma$-monoid can be realised as the non-stable graded $K$-theory of a hyper Leavitt path algebra (Theorem~\ref{mainalibm_2}).

As mentioned, starting with a field $\K$, and projective modules $\K$ and $\K^n$, Bergman's machinery would give the Leavitt algebra $L_\K(1,n)$. This ring can be graded by assigning $1$ and $-1$ to the suitable generators. However, we can start with a field $\K$ concentrated in degree zero, and consider the $\mathbb Z$-graded projective modules  $\K$ and $\K(-1)^n$, where $\K(-1)$ is the projective module $\K$ shifted by $-1$. Applying our graded Bergman construction, we obtain the algebra $L_\K(1,n)$ which is naturally $\mathbb Z$-graded from the outset. This allows us to build the graded invariants such as non-stable graded $K$-theory from the machinery we develop.   

The paper is organised as follows: Section~\ref{sectiontwo} provides a background needed for the concepts of $\Gamma$-monoids and graded algebras, as well as the notion of smash products. In Section~\ref{lpalabel}, we recall the notion of Leavitt path algebras of hypergraphs, which generalises several extensions of Leavitt path algebras. This is the generalisation which covers all Bergman algebras constructed from commutative semisimple rings as a base ring. In Section~\ref{bergmanalgebrasec}, we introduce Bergman algebras, via generators and relations, both the non-graded version and the graded version. Section~\ref{bergconscore} which is the core part of this paper, carries out Bergman's localisation construction~\cite{bergman74} in the setting of graded rings. 
Section~\ref{smashbergloc} investigates how the graded Bergman localisation behaves under the smash product. This is needed in Section~\ref{sec7} to compute the non-stable graded $K$-theory of graded Bergman algebras.  Section~\ref{secmainres} develops many consequences of the graded Bergman machinery: we realise any conical $\Gamma$-monoid as the non-stable graded $K$-theory of an appropriate graded algebra. In fact we show that any finitely generated $\Gamma$-monoid can be obtained as the non-stable graded $K$-theory of a hyper Leavitt path algebra. Using our result, we provide a short positive answer to one of the Graded Classification Conjectures (Conjecture~\ref{conjalg}(1)) for Leavitt path algebras in Section~\ref{lpaaplicaton}. 

 In this paper, all rings are unital and all modules are considered as left modules unless otherwise stated.  $\mathbb N$ denotes the natural numbers with $0$ and $\mathbb N^+ = \mathbb N \backslash \{0\}$.

\section{$\Gamma$-monoids and graded algebras}\label{sectiontwo}

\subsection{$\Gamma$-monoids}\label{gammamoni} 

Recall that a {\it monoid} is a semigroup with an identity element. Throughout this paper monoids are commutative, written additively, with the identity element denoted by $0$.  A monoid homomorphism $\phi:M\rightarrow N$ is a map between monoids $M$ and $N$ which respects the structures and satisfies $\phi(0)=0$. Every monoid $M$ is equipped with a natural preordering: $n \leq m$ if $n+p=m$ for some $p\in M$.
We write 
\[n \propto m \text{ if } n \leq k m, \text{ for some } k \in \mathbb N^{+},\] and 
\[n \asymp m \text{ if } n \propto m \text{ and } m \propto n.\] Thus $n \asymp m$ if there is a $k \in \mathbb N^+$ such that $m\leq kn$ and $n \leq km$.
We say $i \in M$ is \emph{an order unit} if $m \propto i$  for any $m\in M$.
We say $M$ is \emph{conical} if $m+n=0$ implies that $m=n=0$, where $m,n \in M$. We say $M$ is \emph{cancellative} if  $m_1+n=m_2+n$ implies $m_1=m_2$, for all $m_1,m_2,n\in M$. 

Given a group $\Gamma$, a \emph{$\Gamma$-monoid} consists of a monoid $M$ equipped with an action of $\Gamma$ on $M$ (by monoid automorphisms). We denote the action of $\alpha\in\Gamma$ on $m\in M$ by ${}^\alpha m$. Throughout this paper $\Gamma$ will be an abelian group and $M$ a commutative monoid. A monoid homomorphism $\phi\colon M \rightarrow N$ between two $\Gamma$-monoids is called $\Gamma$-\emph{monoid homomorphism} if $\phi$ respects the actions of $\Gamma$, i.e., $
\phi({}^\alpha m)={}^\alpha \phi(m)$ for all $m\in M$.  The natural preordering is respected by the action of $\Gamma$. 

An {\it order unit} of a $\Gamma$-monoid $M$ is an element $i\in M$ such that for any $m\in M$, there are  $\gamma_1,\dots,\gamma_t\in \Gamma$ such that 
\begin{equation}\label{orderuni}
    m \leq \sum_{k=1}^{t}{}^{\gamma_k}i.
\end{equation}

We denote the $\Gamma$-monoid $M$ along with the order unit $i$ by  $(M,i)$ and call it a \emph{pointed $\Gamma$-monoid}. A morphism $f:(M,i) \rightarrow (N,j)$ in the category of pointed $\Gamma$-monoids, is a $\Gamma$-monoid morphism $f$ such that $f(i)=j$.

The order unit $i$ is called a \emph{strong order unit} if for any $m \in M$, we can choose all $\gamma_k=0$ in (\ref{orderuni}).   We say $i$ is a \emph{invariant order unit} if ${}^\gamma i =i$ for any $\gamma \in \Gamma$.

A {\it congruence} on a $\Gamma$-monoid $M$ is an equivalence relation $\sim$ on $M$ such that $m\sim m'$ and $n\sim n'$ implies $m+n\sim m'+n'$, and $m\sim m'$ implies $ {}^\gamma m\sim {}^\gamma m'$, for any $\gamma\in \Gamma$ and $m,m',n,n'\in M$. If $\sim$ is a congruence on $M$, then $M/\sim$ is a $\Gamma$-monoid in the obvious way. We call $M/\sim$ a {\it quotient} of $M$. 

 If $M$ is a $\Gamma$-monoid and $(m_i, n_i)$, where $i\in I$, a family of pairs of elements of $M$, then \emph{the congruence on the $\Gamma$-monoid $M$ generated by the relations $m_i = n_i$} will be the congruence generated on $M$ as a monoid by the relations ${}^{\gamma} m_i = {}^{\gamma} n_i$, where  $i\in I$ and $\gamma\in\Gamma$.

 The action of the abelian group $\Gamma$ respects the relations  $\propto$ and $\asymp$ and one can see that $\asymp$ is an equivalence relation. 

 \begin{lemma}
     Let $M$ be a $\Gamma$-monoid and $i\in M$ a $\Gamma$-order unit. Then  the following are equivalent. 
\begin{enumerate}[\upshape(1)]
\item The element $i$ is a strong order unit;

\item For any $\gamma \in \Gamma$ we have $i \asymp {}^\gamma i$; 

\item 
The equivalence class $[i]$ under $\asymp$, which is clearly a
sub-semigroup of $M$, is in fact a $\Gamma$-subsemigroup of $M$.

\item  If $M$ has a strong order-unit, then all
such elements form an equivalence class under $\asymp$.

\end{enumerate}
\end{lemma}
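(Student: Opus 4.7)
The plan is to establish the cycle $(1) \Leftrightarrow (2) \Leftrightarrow (3)$ and treat $(4)$ as a corollary of these characterisations. The entire argument is an exercise in moving $\Gamma$-actions through the preorder $\leq$, the relation $\propto$, and the equivalence $\asymp$, using only the facts that $\Gamma$ acts by monoid automorphisms and that $i$ is a $\Gamma$-order unit.

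\textbf{Step 1: $(1) \Rightarrow (2)$.} Assume $i$ is a strong order unit. Then for every $m\in M$, $m \propto i$, so in particular ${}^{\gamma}i \propto i$ for each $\gamma\in\Gamma$. Applying the automorphism ${}^{-\gamma}(-)$ to an inequality of the form ${}^{-\gamma}i \leq n\,i$ yields $i \leq n\,{}^{\gamma}i$, so $i \propto {}^{\gamma}i$ as well. Hence $i \asymp {}^{\gamma}i$ for every $\gamma$.

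\textbf{Step 2: $(2) \Rightarrow (1)$.} Assume $i \asymp {}^{\gamma}i$ for all $\gamma$. Given $m \in M$, use the hypothesis that $i$ is a $\Gamma$-order unit to write $m \leq \sum_{k=1}^{t}{}^{\gamma_{k}}i$. For each $k$ the relation ${}^{\gamma_{k}}i \propto i$ gives some $n_{k}\in\mathbb{N}^{+}$ with ${}^{\gamma_{k}}i \leq n_{k}\,i$. Summing yields $m \leq \bigl(\sum_{k} n_{k}\bigr)\,i$, so $m \propto i$, i.e.\ $i$ is a strong order unit.

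\textbf{Step 3: $(2) \Leftrightarrow (3)$.} That $[i]$ is a sub-semigroup is the hint in the statement: for $m,n\in [i]$, combining $m\leq k_{1}i$, $n\leq k_{2}i$ gives $m+n \leq (k_{1}+k_{2})i$, while $i\leq k\,m \leq k(m+n)$ gives $i \propto m+n$. The extra content is closure under $\Gamma$. Since the $\Gamma$-action preserves $\leq$, hence $\propto$ and $\asymp$, we have ${}^{\gamma}m \asymp {}^{\gamma}i$ whenever $m\asymp i$. Thus $[i]$ is $\Gamma$-stable precisely when ${}^{\gamma}i \asymp i$ for all $\gamma$, which is exactly $(2)$.

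\textbf{Step 4: (4).} Suppose $M$ admits some strong order unit $i$, and let $j \in M$. If $j \asymp i$, then for every $m \in M$ we have $m \propto i \propto j$, so $j$ is a strong order unit. Conversely, if $j$ is a strong order unit, then $j \propto i$ and $i \propto j$, so $i \asymp j$. Hence the set of strong order units coincides with the single $\asymp$-class $[i]$. The only potential obstacle is purely notational --- keeping careful track of which inequalities involve the $\Gamma$-action and which do not --- but no substantive technical issue arises, since everything reduces to the multiplicativity of $\leq$ under addition and the fact that $\Gamma$ acts by automorphisms.
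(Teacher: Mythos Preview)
Your proof is correct and essentially identical in spirit to the paper's approach; the paper simply writes ``The proofs are straightforward,'' and your argument is exactly the routine verification one would supply to fill that in.
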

\begin{proof}
    The proofs are straightforward. 
\end{proof}

 Clearly the image of a homomorphism $f:M\to N$, denoted by $\ima f$,  is a $\Gamma$-submonoid of $N$. The {\it kernel} $\ker f$ of $f$ is the congruence $\sim$ on $M$ defined by $m\sim m'$ if  $f(m)=f(m')$, so that $M/\ker f$ is a $\Gamma$-monoid and the canonical homomorphism $M/\ker f \rightarrow N$ is injective (see Lemma~\ref{lemgammahom}).  

Let $X$ be a set. The {\it free  $\Gamma$-monoid  on $X$}, denoted by $F\langle X \rangle$, is constructed as the free monoid on the set $X\times \Gamma$, with the action of $\delta \in \Gamma$ on $(x,\gamma)$ defined by $(x,\gamma+\delta)$. To be precise, 
\[F\langle X \rangle=\Big \{\sum_{x\in X,\gamma\in \Gamma}n_{x,\gamma}(x,\gamma)\mid n_{x,\gamma}\geq 0 \text{, for any }x\in X \text{ and }\gamma\in \Gamma; \text{ almost all }n_{x,\gamma}\text{ are zero}\Big\}\]
which becomes a $\Gamma$-monoid with addition
\[\sum_{x\in X,\gamma\in \Gamma}n_{x,\gamma}(x,\gamma)+\sum_{x\in X,\gamma\in \Gamma}n'_{x,\gamma}(x, \gamma)=\sum_{x\in X,\gamma\in \Gamma}(n_{x,\gamma}+n'_{x,\gamma})(x, \gamma)\]
and $\Gamma$-action
\[{}^{\delta}\Big (\sum_{x\in X,\gamma\in \Gamma}n_{x,\gamma}(x,\gamma)\Big )=\sum_{x\in X,\gamma\in \Gamma} n_{x,\gamma}x(\gamma+\delta).\]
One checks easily that $ F \langle X \rangle $ has the following universal property.

\begin{lemma}\label{lemgammafree}
Let $X$ be a set, $M$ a  $\Gamma$-monoid and $f:X\to M$ a map. Let $g:X\to F\langle X \rangle$ be the map defined by $x\to x(0)$. Then there is a unique homomorphism $h:F\langle X \rangle\to M$ of $\Gamma$-monoids such that $f=hg$.
\end{lemma}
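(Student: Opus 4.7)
The plan is to define $h$ on the generating set $\{(x,\gamma)\mid x\in X,\gamma\in\Gamma\}$ of the free monoid $F\langle X\rangle$ by the formula $h(x,\gamma) := {}^{\gamma}f(x)$, and then extend additively, setting
\[
h\Big(\sum_{x\in X,\gamma\in\Gamma} n_{x,\gamma}(x,\gamma)\Big) = \sum_{x\in X,\gamma\in\Gamma} n_{x,\gamma}\,{}^{\gamma}f(x).
\]
This formula makes sense because every element of $F\langle X\rangle$ has a unique expression of this form (as $F\langle X\rangle$ is the free monoid on the set $X\times\Gamma$), so $h$ is well-defined as a set map.

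Next I would verify the three required properties in turn. First, $h$ is a monoid homomorphism: this is immediate from the componentwise definition of addition in $F\langle X\rangle$ given in the excerpt, together with the commutativity of addition in $M$. Second, $h$ is $\Gamma$-equivariant: a direct computation using the definition of the $\Gamma$-action on $F\langle X\rangle$ gives
\[
h\Big({}^{\delta}\sum n_{x,\gamma}(x,\gamma)\Big) = h\Big(\sum n_{x,\gamma}(x,\gamma+\delta)\Big) = \sum n_{x,\gamma}\,{}^{\gamma+\delta}f(x) = {}^{\delta}\sum n_{x,\gamma}\,{}^{\gamma}f(x),
\]
where the last equality uses that $\Gamma$ acts on $M$ by monoid automorphisms. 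Third, $hg = f$: for $x\in X$, $hg(x) = h(x,0) = {}^{0}f(x) = f(x)$.

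For uniqueness, suppose $h':F\langle X\rangle\to M$ is any $\Gamma$-monoid homomorphism with $h'g = f$. Then $h'(x,0) = f(x)$ for every $x\in X$, and since $(x,\gamma) = {}^{\gamma}(x,0)$ in $F\langle X\rangle$, the $\Gamma$-equivariance of $h'$ forces $h'(x,\gamma) = {}^{\gamma}h'(x,0) = {}^{\gamma}f(x) = h(x,\gamma)$. Since $h$ and $h'$ agree on the generators $(x,\gamma)$ and both are monoid homomorphisms, they agree on all of $F\langle X\rangle$. There is no serious obstacle in this argument — everything is forced by the construction of $F\langle X\rangle$ as the free monoid on $X\times\Gamma$ with the diagonal-shift $\Gamma$-action; the only minor bookkeeping is keeping the two pieces of universal data (additivity and $\Gamma$-equivariance) aligned, which is transparent once one notes the identity $(x,\gamma)={}^{\gamma}(x,0)$.
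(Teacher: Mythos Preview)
Your proof is correct. The paper does not actually give a proof of this lemma (it is prefaced with ``One checks easily that $F\langle X\rangle$ has the following universal property''), so your argument is precisely the routine verification the authors leave to the reader.
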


\begin{lemma}[Homomorphism theorem]\label{lemgammahom}
Let $f:M\to N$ be a homomorphism of $\Gamma$-monoids. Then $M/\ker f\cong \ima f$.
\end{lemma}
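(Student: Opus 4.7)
The plan is to mimic the classical homomorphism theorem for monoids, taking care to verify the extra clauses involving the $\Gamma$-action at each step. First I would check that $\ker f$ is genuinely a congruence on the $\Gamma$-monoid $M$ in the sense defined in the excerpt. Reflexivity, symmetry and transitivity are immediate. If $m\sim m'$ and $n\sim n'$, then $f(m+n)=f(m)+f(n)=f(m')+f(n')=f(m'+n')$, so $m+n\sim m'+n'$. Similarly, if $m\sim m'$ and $\gamma\in\Gamma$, then since $f$ respects the $\Gamma$-action, $f({}^\gamma m)={}^\gamma f(m)={}^\gamma f(m')=f({}^\gamma m')$, giving ${}^\gamma m\sim {}^\gamma m'$. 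Hence $M/\ker f$ is a $\Gamma$-monoid in the sense described in the paragraph preceding the lemma.

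Next I would define the candidate isomorphism $\bar f\colon M/\ker f\to \ima f$ by $\bar f([m]):=f(m)$. Well-definedness is built into the definition of $\ker f$: $[m]=[m']$ means exactly $f(m)=f(m')$. That $\bar f$ is a monoid homomorphism sending $0$ to $0$ follows from $\bar f([m]+[n])=\bar f([m+n])=f(m+n)=f(m)+f(n)=\bar f([m])+\bar f([n])$ and $\bar f([0])=f(0)=0$. That it is a $\Gamma$-monoid homomorphism follows from $\bar f({}^\gamma [m])=\bar f([{}^\gamma m])=f({}^\gamma m)={}^\gamma f(m)={}^\gamma \bar f([m])$.

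Finally I would check that $\bar f$ is bijective onto $\ima f$. Surjectivity is immediate: every element of $\ima f$ is of the form $f(m)=\bar f([m])$. For injectivity, $\bar f([m])=\bar f([m'])$ unpacks to $f(m)=f(m')$, which is exactly the relation $m\sim m'$ defining $\ker f$, so $[m]=[m']$. Combining the three steps yields the desired $\Gamma$-monoid isomorphism $M/\ker f\cong \ima f$.

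There is no serious obstacle here: the lemma is a routine translation of the classical first isomorphism theorem into the $\Gamma$-equivariant setting. The only thing that requires genuine attention is the compatibility with the $\Gamma$-action, which enters in exactly two places, namely showing that $\ker f$ is closed under the action and that $\bar f$ intertwines it.
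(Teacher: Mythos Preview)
Your proof is correct and follows exactly the same approach as the paper: define $\bar f([m])=f(m)$ and verify it is a well-defined bijective $\Gamma$-monoid homomorphism onto $\ima f$. You simply supply more detail than the paper, which dispatches well-definedness, bijectivity, and the homomorphism property each in a single sentence.
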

\begin{proof}
Define a map $\bar f:M/\ker f\to\ima f$ by $\bar f([m])=f(m)$ for any $m\in M$. Clearly $\bar f$ is well-defined and bijective. One checks easily that $f$ is a homomorphism of $\Gamma$-monoids. 
\end{proof}

\begin{prop}\label{propgammaquot}
Any $\Gamma$-monoid is isomorphic to a quotient of a free $\Gamma$-monoid.
\end{prop}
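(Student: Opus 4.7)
The plan is to mimic the classical argument that any monoid is a quotient of a free monoid, using the two lemmas immediately preceding the proposition. Given a $\Gamma$-monoid $M$, I would take $X$ to be the underlying set of $M$ and consider the identity map $f\colon X \to M$ as a set-theoretic map.

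First, I would invoke Lemma~\ref{lemgammafree} applied to this $f$, which produces a unique $\Gamma$-monoid homomorphism $h\colon F\langle M\rangle \to M$ satisfying $h\circ g = f$, where $g\colon M \to F\langle M\rangle$ sends $m$ to $m(0)$. Since $f$ is the identity on the underlying set of $M$, the image $\ima h$ contains every element of $M$, so $h$ is surjective.

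Next, I would apply the homomorphism theorem (Lemma~\ref{lemgammahom}) to $h$, which yields
\[
F\langle M\rangle / \ker h \;\cong\; \ima h \;=\; M
\]
as $\Gamma$-monoids. This exhibits $M$ as a quotient of the free $\Gamma$-monoid $F\langle M\rangle$, as required.

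There is no serious obstacle here: the proposition is essentially a formal consequence of the universal property of $F\langle X\rangle$ together with the homomorphism theorem, both of which have already been established. The only mild thing to check (which I would leave implicit) is that the quotient by the congruence $\ker h$ is indeed a $\Gamma$-monoid, but this was already noted in the discussion of congruences above.
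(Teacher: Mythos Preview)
Your proof is correct and follows essentially the same approach as the paper's own proof: use Lemma~\ref{lemgammafree} to produce a surjective $\Gamma$-monoid homomorphism from a free $\Gamma$-monoid onto $M$, then apply Lemma~\ref{lemgammahom}. The only cosmetic difference is that you make the choice $X=M$ explicit, whereas the paper leaves the choice of $X$ implicit.
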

\begin{proof}
Let $M$ be a $\Gamma$-monoid. It follows from Lemma \ref{lemgammafree} that there is a surjective homomorphism $\gamma:F\langle X \rangle\to M$ of $\Gamma$-monoids. Thus $F\langle X \rangle/\ker \gamma\cong \ima \gamma=M$ by Lemma \ref{lemgammahom}.
\end{proof}

 Let $X$ be a set and $R$ a set of relations $m_i=n_i$, where $m_i,n_i\in F\langle X \rangle$, for any $i\in I$. We write $M=\langle X\mid R\rangle$ if $M\cong F\langle X \rangle/\sim$, where $\sim$ is the congruence on $F\langle X \rangle$ generated by $R$. In this case we say that {\it $M$ is a $\Gamma$-monoid  presented by the generating set $X$ and the relations $R$}.

\begin{comment}
\begin{prop}\label{propgammafinrel}
Let $M$ be a $\Gamma$-monoid. If $M=\langle X\mid R\rangle$ where $X$ is finite, then there is a finite set of relations $R'$ in $\mathbb F(X)$ such that $M=\langle X\mid R'\rangle$.
\end{prop}
\begin{proof}
 {\color{red} Provide proof }   
\end{proof}
\end{comment}

 \subsection{Graded algebras}\label{grprelim}

In this section we collect basic facts that we need about  graded rings and graded algebras. We refer the reader to \cite{haz, NvObook} for the theory of graded rings.

Let $\Gamma$ be an abelian group with identity denoted by $0$. A ring $R$ (possibly without unit)
is called a \emph{$\Gamma$-graded ring} if $ R=\bigoplus_{ \gamma \in \Gamma} R_{\gamma}$
such that each $R_{\gamma}$ is an additive subgroup of $R$ and $R_{\gamma}  R_{\delta}
\subseteq R_{\gamma + \delta}$ for all $\gamma, \delta \in \Gamma$. The group $R_\gamma$ is
called the $\gamma$-\emph{homogeneous component} of $R.$ When it is clear from context
that a ring $R$ is graded by the group $\Gamma,$ we simply say that $R$ is a  \emph{graded
ring}. We denote the set of all homogeneous elements of the graded ring $R$, by $R^h$. 

A unital $\Gamma$-graded ring $R=\bigoplus_{\gamma\in \Gamma}R_\gamma$ is \emph{strongly graded} if $R_{\alpha} R_{\beta}=R_{\alpha+\beta}$ for any $\alpha,\beta\in \Gamma$ and it is \emph{crossed-product} if each component $R_{\alpha}$, $\alpha \in \Gamma$ contains an invertible element. Furthermore, we say $R$ is a \emph{graded field} if it is commutative and all non-zero homogeneous elements are invertible. The notion of strongly graded, in particular, is an important case as it widely generalises the case of group rings, and it has been gradually established that many results on group rings can be extended to strongly graded rings.

Let $\K$ be a $\Gamma$-graded commutative ring and $R$ a unital $\Gamma$-graded ring.  We say $R$ is a \emph{graded $\K$-algebra} if it is equipped with a graded ring homomorphism $\phi:\K\rightarrow Z(R)$, where $Z(R)$ is the centre of $R$. Note that since $\Gamma$ is abelian, $Z(R)$ is a graded ring. Let $S$ be a graded $\K$-algebra. Then we say $S$ is a \emph{$\Gamma$-graded $R\text{-ring}_{\K}$} if it is equipped with a graded $\K$-algebra homomorphism $R\rightarrow S$. We denote the category of 
$\Gamma$-graded $R\text{-rings}_{\K}$ by $\grring$. Considering $\Gamma$ to be trivial we obtain the category of $\ring$. 

For a graded $A$-module $M$, we define the $\a$-\emph{shifted} graded 
$A$-module $M(\a)$ as
\begin{equation*}M(\a)=\bigoplus_{\g\in \G}M(\a)_{\g},
\end{equation*}
where $M(\a)_{\g}=M_{\a+\g}$. That is, as an ungraded module, $M(\alpha)$ is a copy of
$M$, but the grading is shifted by $\alpha$. 

For a $\Gamma$-graded ring $R$, the category of graded $R$-modules is denoted by $R$-$\GR$ and the category of $R$-modules by $R$-$\Mod$.
For $\a\in\G$, the \emph{shift functor}
\begin{equation*}
\mathcal{T}_{\a}: R\-\GR \longrightarrow R\-\GR,\quad M\longmapsto M(\a)
\end{equation*}
is an isomorphism with the property $\mathcal{T}_{\a}\mathcal{T}_{\b}=\mathcal{T}_{\a+\b}$
for $\a,\b\in\G$.

Let $R$ be a strongly $\Gamma$-graded $\K$-algebra. For any $R_0$-module $N$ and any $\gamma\in\Gamma$, we identify the $R_0$-module $R_{\g}\otimes_{R_0} N$ with its image in $R\otimes_{R_0} N$. Since $R=\bigoplus_{\g\in\G} R_{\g}$ and $R_{\g}$ are $R_0$-bimodules, $R\otimes_{R_0} N$ is a $\G$-graded $R$-module, with
$$R\otimes_{R_0}N=\bigoplus_{\g\in\G} R_{\g}\otimes_{R_0}N.$$

Recall that we have the \emph{restriction functor} 
\begin{equation}\label{jetlagwarsaw}
(-)_0: R\-\GR \longrightarrow R_0\text{-}\Mod
\end{equation}
sending $M$
to $M_0$ and $\phi$ to $\phi_{|_{M_0}}$ with $M$ a graded $R$-module and $\phi$ a graded module homomorphism from $M$, and the \emph{induction functor}
\begin{equation}\label{jetlagwarsaw2}
\begin{split}
R\otimes_{R_0}-: R_0\text{-}\Mod &\longrightarrow R\-\GR\\
 N &\longmapsto R\otimes_{R_0}N\\
 \psi &\longmapsto R\otimes \psi.
 \end{split}
\end{equation}

When $R$ is strongly graded,  Dade’s Theorem \cite[Theorem 1.5.1]{haz} (see also
\cite[Theorem 3.1.1]{NvObook}), guarantees that the functors $(-)_0$ and $R\otimes_{R_0}-$ form mutually inverse equivalences of categories. 

 Note that in general $(-)_0: R\text{-}\GR\xrightarrow{} R_0\text{-}\Mod $ does not induce a functor $(-)_0:R\text{-} \GR_{\proj}\xrightarrow[]{} R_0\text{-}\Mod_{\proj}$. But  $R$ is strongly graded if and only if $R\text{-} \GR_{\proj}$ is equivalent to $R\text{-}\Mod_{\proj}$, if and only if $R\text{-}{\bf gr}$ is equivalent $R_0\text{-}{\bf mod}$, both under the functors~(\ref{jetlagwarsaw}) and (\ref{jetlagwarsaw2}). Here $R\text{-}{\bf gr}$ denotes the category of graded finitely generated $R$-modules and $R_0\text{-}{\bf mod}$ is the category of finitely generated $R_0$-modules.

Let $R$, $S$ and $A$ be $\Gamma$-graded $\K$-algebras and  let $i: A\rightarrow R$ and $j: A\rightarrow S$ be $\K$-graded algebra monomorphisms (so we identify $A$ as a graded subring of $R$ and $S$). The coproduct $R*_A S$ can be given a natural $\Gamma$-grading, so that this coproduct is indeed the pushout for the graded maps $i$ and $j$.

\subsection{Matrix form of graded homomorphisms}\label{matrixrep}

Let $\G$ be an abelian group, $\K$ a $\Gamma$-graded commutative ring with identity and $R$ a $\Gamma$-graded $\K$-algebra.

A graded $R$-module is called a {\em graded free} module, if it is a free module with homogeneous basis. Clearly a graded free module is a free module. A graded free $R$-module can be presented as $\bigoplus_{i\in I}R(\gamma_i)$, where $\gamma_i \in \Gamma$. 

A graded $R$-module $P$ is called a {\em graded projective} module if $P$ is a projective $R$-module. One can check that $P$ is graded projective if and only if the functor $\Hom_{R\text{-}\GR}(P,-)$ is an exact functor in the category $R$-$\GR$ if and only if $P$ is graded isomorphic to a direct summand of a graded free $R$-module (\cite[Proposition 1.2.15]{haz}). So a module is  graded projective module if and only if it is graded and projective. This is not the case for graded free module. A module could be graded and free, but not graded free (see~\cite[\S~1.2.4]{haz}). 

%Here, $R$-$\GR$ denotes the category of $\Gamma$-graded $R$-modules whose morphisms preserve the grading. 

In particular, if $P$ is a graded finitely generated projective $R$-module, then there is a graded finitely generated projective $R$-module $Q$ such that 
\begin{equation}\label{projidem1}
 P\bigoplus Q\cong_{\gr} \bigoplus_{i=1}^n R(\alpha_i),   
\end{equation}
with $\alpha_i\in \Gamma$. We denote $\bigoplus_{i=1}^n R(\alpha_i)$  by $R^n (\overline{\alpha})$ with $\overline{\alpha}=(\alpha_1, \cdots, \alpha_n)$.

 One observes that ${\Hom}_{R}(R(\d_j), R(\d_i))\cong_{\gr}R(\d_i-\d_j)$ as graded $R$-modules. Hence we have the following isomorphisms:
 \begin{equation}
 \label{Hom}
 \begin{split}
 \Hom_{R}(\bigoplus_{i\in I}R(\a_i),\bigoplus_{j\in J}R(\b_j))
 &\cong \prod_{i\in I}\Hom_{R}(R(\a_i), \bigoplus_{j\in J}R(\b_j))\\
 &\cong \prod_{i\in I}\bigoplus_{j\in J}\Hom_{R}(R(\a_i), R(\b_j)) \\ 
 &\cong \prod_{i\in I}\bigoplus_{j\in J}R(\b_j-\a_i).	\end{split}
 \end{equation}

 For a $\G$-graded $\K$-algebra $R$, $\overline{\a}=(\a_i)_{i\in I}$, $\overline{\b}=(\b_j)_{j\in J}$ with $\a_i, \b_j\in \G$, set
 \begin{equation}
 \label{zero}
 \mathbb{M}_{I\times J}(R)[\overline{\a}][\overline{\b}]:={\begin{pmatrix}
 	R_{\b_j-\a_i}
 \end{pmatrix}}_{i\in I, j\in J},
 \end{equation}
where for each $i\in I$, only a finite number of entries in $(R_{\b_j-\a_i})$ are non-zero. Thus $\mathbb{M}_{I\times J}(R)[\overline{\a}][\overline{\b}] $ consists of matrices with the $ij$-entry in $R_{\b_j-\a_i}$, with a finite number of non-zero elements in each row.  One observes that $\mathbb{M}_{I\times J}(R)[\overline{\a}][\overline{\b}] $  represents the set $\Hom_{R\text{-}\GR}(\bigoplus_{i\in I}R(\a_i),\bigoplus_{j\in J}R(\b_j))$ as follows:  Suppose that $u:\bigoplus_{i\in I}R(\a_i)\xrightarrow{} 
 \bigoplus_{j\in J} R(\b_j)$ is a graded $R$-module homomorphism. Then for each $i\in I$, we can write $u(1_i)=(u_{ij})_{j\in J}$, where $1_i$ is the element of $\bigoplus_{i\in I}R(\a_i)$ whose $i$-th component is $1$ and whose other components are zero (note that $\deg(1_i)=-\a_i)$), and $u_{ij}\in R(\b_j)_{-\a_i}=R_{\b_j-\a_i}$ are all zero except for a finite number of $j$ in $J$. Then the graded homomorphism $u:\bigoplus_{i\in I}R(\a_i)\xrightarrow{} 
 \bigoplus_{j\in J} R(\b_j)$ is given by multiplying the  matrix $(u_{ij})_{i\in I, j\in J}\in \mathbb M_{I\times J}(R)[\overline{\a}][\overline{\b}]$ from the right. Suppose that $v:\bigoplus_{j\in J}R(\b_j)\xrightarrow{} 
 \bigoplus_{k\in K} R(\g_k)$ is given by multiplying the  matrix $(v_{jk})_{j\in J, k\in K}\in \mathbb M_{J\times K}(R)[\overline{\b}][\overline{\g}]$ from the right. Then the composition $v\circ u$ is given by multiplying the matrix $(u_{ij})_{i\in I, j\in J}  \cdot (v_{jk})_{j\in J, k\in K}$ from the right. Here, 
 \begin{equation}\label{jan16dis}
 (u_{ij})_{i\in I, j\in J}  \cdot (v_{jk})_{j\in J, k\in K}
 \end{equation}is the multiplication of matrices. Notice that the matrix representation of the composition of functions $v \circ u$ is obtained by swapping the orders (\ref{jan16dis}). 

 The above notion of \emph{mixed-shift} was developed for graded right $R$-modules in \cite[\S1.3.4]{haz}.

In fact, any element in $\prod_{i\in I}\bigoplus_{j\in J}R(\b_j-\a_i)$ in the last equation of \eqref{Hom} can be written as $(x_i)_{i\in I}$, where $x_i\in \bigoplus_{j\in J}R(\b_j-\a_i)$ is a row vector  indexed by $J$ ($x_i$ having finitely many non-zero entries for each $i\in I$).

For a $\G$-graded $\K$-algebra $R$, $\overline{\a}=(\a_1, \cdots, \a_m)\in\Gamma^m$, $\overline{\b}=(\b_1,\cdots, \b_n)\in \Gamma^n$, (\ref{zero}) takes the form
 \begin{equation}\label{aidansan}
 \mathbb{M}_{m\times n}(R)[\overline{\a}][\overline{\b}]={\begin{pmatrix}
 	R_{\b_1-\a_1}& R_{\b_2-\a_1}&\cdots& R_{\b_n-\a_1}\\
  R_{\b_1-\a_2}& R_{\b_2-\a_2}&\cdots& R_{\b_n-\a_2}\\
  \vdots& \vdots& \ddots& \vdots\\
  R_{\b_1-\a_m}& R_{\b_2-\a_m}&\cdots& R_{\b_n-\a_m}\\
 \end{pmatrix}},
 \end{equation} and $\mathbb{M}_{m\times n}(R)[\overline{\a}][\overline{\b}]$ represents the set $\Hom_{R\text{-}\GR}(R^m(\overline{\a}),R^n(\overline{\b}))$. When $m=n$, we simply write $\mathbb{M}_{m\times n}(R)[\overline{\a}][\overline{\b}]$ as $\mathbb{M}_{n}(R)[\overline{\a}][\overline{\b}]$.

 %When $m=n$ and $\overline{\a}=\overline{\b}$, we denote $ \mathbb{M}_{m\times n}(R)[\overline{\a}][\overline{\b}]$ by By \eqref{Hom}, which is isomorphic to $\End_{R\text{-}\GR}(R^n(\overline{\a}))$; see \eqref{Hom}.

 By \eqref{Hom} we have the graded matrix ring
 $\End_{R}(R^n(\overline{\a}))\cong \bigoplus_{i=1}^n \bigoplus_{j=1}^n R(\a_j-\a_i)$, denoted by $ \mathbb{M}_{n}(R)(\overline{\a})$. More precisely 
 \begin{equation}
     \mathbb{M}_{n}(R)(\overline{\a})=
     {\begin{pmatrix}
 	R(\a_1-\a_1)& R(\a_2-\a_1)&\cdots& R(\a_n-\a_1)\\
  R(\a_1-\a_2)& R(\a_2-\a_2)&\cdots& R(\a_n-\a_2)\\
  \vdots& \vdots& \ddots& \vdots\\
  R(\a_1-\a_n)& R(\a_2-\a_n)&\cdots& R(\a_n-\a_n)
 \end{pmatrix}}
 \end{equation} 
 For each $\g\in \Gamma$, $\mathbb{M}_{n}(R)(\overline{\a})_{\gamma}$, the $\gamma$-homogeneous elements, are the
$n\times n$-matrices over $R$ with the degree shifted (suspended) as follows 
 \begin{equation}\label{homogenhdgdt}
     \mathbb{M}_{n}(R)(\overline{\a})_{\gamma}=
     {\begin{pmatrix}
 	R_{\gamma+\a_1-\a_1}& R_{\gamma+\a_2-\a_1}&\cdots& R_{\gamma+\a_n-\a_1}\\
  R_{\gamma+\a_1-\a_2}& R_{\gamma+\a_2-\a_2}&\cdots& R_{\gamma+\a_n-\a_2}\\
  \vdots& \vdots& \ddots& \vdots\\
  R_{\gamma+\a_1-\a_n}& R_{\gamma+\a_2-\a_n}&\cdots& R_{\gamma+\a_n-\a_n}
 \end{pmatrix}}.
 \end{equation} 

We identify graded finitely generated projective modules with homogeneous idempotent matrices of degree zero. This is taken from~\cite[Lemma~3.2.3]{haz}.

\begin{lemma} We have the following statements.
    \begin{enumerate}[\upshape(1)]
          \item Any graded finitely generated projective module gives rise to a homogeneous idempotent matrix of degree zero.
        \item Any homogeneous idempotent matrix of degree zero gives rise to a graded finitely generated projective module.
        %\item {\color{red} correct this}There is a one-to-one correspondence between the set of graded finitely generated projective $R$-modules and the set of homogeneous idempotent matrices of degree zero.
    \end{enumerate}
\end{lemma}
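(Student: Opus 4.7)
The plan is to use the matrix representation of graded homomorphisms developed in Subsection~\ref{matrixrep} together with the characterisation of graded finitely generated projective modules as graded direct summands of graded free modules.

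For part (1), suppose $P$ is a graded finitely generated projective $R$-module. By (\ref{projidem1}) there is a graded finitely generated projective module $Q$ and a graded isomorphism $P \oplus Q \cong_{\gr} R^n(\overline{\alpha})$ for some $\overline{\alpha}=(\alpha_1,\dots,\alpha_n)\in\Gamma^n$. I would let $\pi\colon R^n(\overline{\alpha})\to R^n(\overline{\alpha})$ denote the composition of the projection onto $P$ with the inclusion. Then $\pi$ is a graded $R$-module endomorphism with $\pi^2=\pi$ and $\Ima\pi\cong_{\gr} P$. By (\ref{aidansan}) the graded endomorphism ring of $R^n(\overline{\alpha})$ is $\mathbb{M}_n(R)(\overline{\alpha})$, and its degree-zero component, via (\ref{homogenhdgdt}), consists precisely of $n\times n$ matrices whose $ij$-entry lies in $R_{\alpha_j-\alpha_i}$. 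Hence $\pi$ corresponds to an idempotent matrix in $\mathbb{M}_n(R)(\overline{\alpha})_0$, which is a homogeneous idempotent matrix of degree zero.

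For part (2), suppose $e\in \mathbb{M}_n(R)(\overline{\alpha})_0$ is a homogeneous idempotent matrix of degree zero for some $\overline{\alpha}\in\Gamma^n$. Under the identification from (\ref{aidansan}), $e$ corresponds to a graded $R$-module endomorphism $\widetilde e\colon R^n(\overline{\alpha})\to R^n(\overline{\alpha})$ satisfying $\widetilde e^2=\widetilde e$. Set $P:=\Ima\widetilde e=\widetilde e\bigl(R^n(\overline{\alpha})\bigr)$. Since $\widetilde e$ is a graded homomorphism, $P$ is a graded submodule of $R^n(\overline{\alpha})$, and the standard idempotent decomposition gives the graded direct sum
\[
R^n(\overline{\alpha}) = \widetilde e\bigl(R^n(\overline{\alpha})\bigr) \oplus (1-\widetilde e)\bigl(R^n(\overline{\alpha})\bigr),
\]
so $P$ is a graded direct summand of the graded free module $R^n(\overline{\alpha})$. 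By the characterisation of graded projective modules recalled above (\cite[Proposition 1.2.15]{haz}), $P$ is a graded finitely generated projective $R$-module.

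There is no real obstacle here; the proof is a routine bookkeeping exercise, and the only care required is to verify that the assumption that $e$ be homogeneous of degree zero is exactly what is needed for the corresponding endomorphism $\widetilde e$ of $R^n(\overline{\alpha})$ to be a morphism in the graded category (i.e. to preserve degree), so that $\Ima\widetilde e$ is genuinely a graded submodule rather than just an ungraded submodule that happens to be projective.
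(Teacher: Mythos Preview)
Your proof is correct and follows essentially the same approach as the paper: both parts use the graded direct-sum decomposition $P\oplus Q\cong_{\gr} R^n(\overline{\alpha})$ together with the identification $\End_R(R^n(\overline{\alpha}))_0\cong \mathbb{M}_n(R)(\overline{\alpha})_0$ to pass between graded projectives and degree-zero idempotent matrices. Your added remark explaining why the degree-zero condition is precisely what makes $\widetilde e$ a graded endomorphism is a nice clarification not spelled out in the paper.
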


\begin{proof}
For (1),  let $P$ be a graded finitely generated  projective $R$-module. Then
there is a graded module $Q$ such that $P\oplus Q\cong R^n(\overline{\a})$ for some $n\in \mathbb N$ and $\overline{\a}=(\a_1,\cdots, \a_n)\in\Gamma^n$. Define the homomorphism $e \in  \End_{R}(R^n(\overline{\a}))$ 
which sends $Q$ to zero and acts as identity on $P$. Clearly, $e$ is an idempotent and
graded homomorphism of degree zero. Thus $e\in \End_{R}(R^n(\overline{\a}))_0\cong \mathbb{M}_{n}(R)(\overline{\a})_{0}$. 

For (2), let $e\in \mathbb{M}_{n}(R)(\overline{\a})_{0}$ be a homogeneous idempotent matrix of degree zero, where $\overline{\a}=(\a_1,\cdots, \a_n)$ with $\a_i\in\Gamma$. Then $1-e\in \mathbb{M}_{n}(R)(\overline{\a})_{0} $ and $$R^n(\overline{\a})=R^ne(\overline{\a})\oplus R^n(1-e) (\overline{\a}).$$ 
 This shows that $R^ne(\overline{\a})$ is a graded finitely generated projective $R$-module. 
 %For (3), one checks directly that the proofs for (1) and (2) give two inverse maps between the set of graded finitely generated projective $R$-modules and the set of homogeneous idempotent matrices of degree zero.
\end{proof}

%There is a one-to-one correspondence between the set of $R$-module homomorphisms from $P$ to $Q$ and the set of $R$-module homomorphisms from $h: F_P\xrightarrow{} F_Q$ satisfying $ehf=h$.

In the first part of the paper we assume $\K$ to be a commutative graded ring. In the second part, for the realisation of conical $\Gamma$-monoids, we assume $\K$ to be a (graded) field concentrated in degree zero. 

\subsection{Non-stable Graded $K$-theory}

For a $\Gamma$-graded ring $R$ with identity and a graded finitely generated  projective (left) $R$-module $P$, let $[P]$ denote the class of graded $R$-modules graded isomorphic to $P$. Then the set
\begin{equation}\label{zhongshan1}
\mathcal V^{\gr}(R)=\big \{ \, [P] \mid  P  \text{ is a graded finitely generated projective R-module} \, \big \}
\end{equation}
with direct sum as addition constitutes a $\Gamma$-monoid structure defined as follows: $[P]+[Q]= [P\oplus Q]$ and for $\gamma \in \Gamma$ and $[P]\in \mathcal V^{\gr}(R)$, ${}^\gamma [P]=[P(\gamma)]$. The group completion of $\mathcal V^{\gr}(R)$ is called the \emph{graded Grothendieck group} and is denoted by $K^{\gr}_0(R)$, which 
as the above discussion shows is a $\Z[\Gamma]$-module. 

Following the notion of order unit for a $\Gamma$-monoid in~\S\ref{gammamoni}, 
an element $[P]\in \mathcal V^{\gr}(R)$ is an order unit for $\mathcal V^{\gr}(R)$ if for any $[Q] \in \mathcal V^{\gr}(R)$ we have $[Q]  \leq \sum_{1\leq i \leq n} [P(\gamma_i)]$ for some $n\in \mathbb N$ and $\gamma_i \in \Gamma$. 
Since for any graded finitely generated projective module $Q$, there is a graded module $P$ such that $Q\oplus P \cong R(\gamma_1)\oplus R(\gamma_2)\oplus \cdots \oplus R(\gamma_t)$, it follows that that $[R]$ is an order unit (see \cite[\S3]{haz}). 

The following easy proposition shows that the order unit $[R]$ in $\mathcal V^{\gr}(R)$ can determine when $R$ is a strongly graded or a crossed product ring (see also~\cite[Theorems 2.10.1 and 2.10.2]{NvObook}). This will be used in Theorem~\ref{mainalibm}. 

\begin{prop}\label{mnhygh}
Let $R$ be a $\Gamma$-graded ring with identity and $\mathcal V^{\gr}(R)$ the monoid of graded finitely generated projective modules. Then 
\begin{enumerate}[\upshape(1)]

%\item For a finitely generated graded projective module $P$, the ring $A=\End_R(P)$ is strongly graded if and only if  for any $\alpha \in \Gamma$, $[P] \asymp {}^\alpha [P]$  in $\mathcal V^{\gr}(R)$. In particular, the graded ring $R$ is strongly graded if and only if $[R] \asymp {}^\alpha [R]$ for any $\alpha \in \Gamma$. 

\item The ring $R$ is strongly graded if and only if $[R]$ is an strong order unit.

\item  The  ring $R$ is crossed-product if and only if $[R]$ is an invariant order unit. 

\end{enumerate}
\end{prop}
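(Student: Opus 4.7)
The plan is to translate each equivalence into a standard criterion on the homogeneous components of $R$ and then pass through graded module isomorphism classes. Recall the well-known facts that $R$ is strongly graded if and only if $1 \in R_{-\gamma}R_\gamma$ for every $\gamma \in \Gamma$, and that $R$ is crossed product if and only if each $R_\gamma$ contains a homogeneous unit. With these in hand, both parts reduce to very short arguments.

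For (1), the forward direction is a direct application of Dade's theorem (cited in \S\ref{grprelim}): if $R$ is strongly graded, then any graded finitely generated projective $P$ has the form $R \otimes_{R_0} P_0$ with $P_0$ a finitely generated projective $R_0$-module, hence a direct summand of some $R_0^n$. Applying the induction functor $R \otimes_{R_0} -$ realises $P$ as a graded direct summand of $R^n$, so $[P] \leq n[R]$, and $[R]$ is a strong order unit. For the converse, I would apply the strong order unit hypothesis to the class $[R(\gamma)]$ for each fixed $\gamma$: there exists $n$ and a graded module $Q$ with $R(\gamma) \oplus Q \cong_{\gr} R^n$. Pick the corresponding graded splitting maps $\iota: R(\gamma) \hookrightarrow R^n$ and $\pi: R^n \twoheadrightarrow R(\gamma)$ with $\pi\iota = \id$. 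Writing $\iota(1) = (a_1,\ldots,a_n)$ and $b_i := \pi(e_i)$, the degree bookkeeping forces $a_i \in R_{-\gamma}$ (since $1 \in R(\gamma)_{-\gamma} = R_0$) and $b_i \in R(\gamma)_0 = R_\gamma$, and the relation $\sum a_i b_i = \pi\iota(1) = 1$ produces $1 \in R_{-\gamma}R_\gamma$. Since $\gamma$ is arbitrary, $R$ is strongly graded.

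For (2), the key observation is that $[R(\gamma)] = [R]$ in $\mathcal V^{\gr}(R)$ if and only if $R_\gamma$ contains a homogeneous unit. Indeed, any graded left $R$-linear map $\phi: R \to R(\gamma)$ is of the form $\phi(r) = ru$ with $u = \phi(1) \in R(\gamma)_0 = R_\gamma$, and $\phi$ is an isomorphism precisely when $u$ is invertible in $R$ (the inverse then being $r \mapsto ru^{-1}$, with $u^{-1}$ automatically in $R_{-\gamma}$ since the inverse is graded). As $[R]$ being an invariant order unit means exactly ${}^\gamma[R] = [R(\gamma)] = [R]$ for every $\gamma \in \Gamma$, this matches the crossed product condition.

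The only mildly subtle point is the degree bookkeeping in the splitting argument for (1), where one must track that $1 \in R(\gamma)_{-\gamma}$ rather than in $R(\gamma)_0$; once the components $a_i \in R_{-\gamma}$ and $b_i \in R_\gamma$ are correctly identified, the rest is routine translation between graded module language and the component-wise characterisations.
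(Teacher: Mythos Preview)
Your proof is correct. For part (2) you and the paper argue identically: both reduce to the observation that $R \cong_{\gr} R(\gamma)$ as graded left modules if and only if $R_\gamma$ contains a unit.

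For part (1) the paper takes a more uniform, elementary route. It records the chain of equivalences
\[
R \text{ strongly graded} \;\Longleftrightarrow\; 1 \in R_\alpha R_{-\alpha} \text{ for all } \alpha \;\Longleftrightarrow\; \exists \text{ graded epimorphism } R^{n_\alpha} \twoheadrightarrow R(\alpha) \;\Longleftrightarrow\; [R(\alpha)] \leq n_\alpha [R] \text{ for all } \alpha,
\]
the last being equivalent to $[R]$ a strong order unit because every $[P]$ is already dominated by some $\sum [R(\gamma_i)]$. Your backward direction is exactly the third equivalence above, spelled out with explicit splitting maps $\iota,\pi$; the content is the same. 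Your forward direction, however, invokes Dade's theorem to write $P \cong R \otimes_{R_0} P_0$ and then embed $P_0$ into $R_0^n$. This is a heavier tool than the paper uses (and Dade is in some sense a refinement of what is being proved here), but it is not circular and gives a conceptually clean reason why strong grading forces all graded projectives to sit inside unshifted free modules. The paper's approach buys symmetry (both directions at once) and avoids any appeal to the module-category equivalence; your approach buys a more structural explanation of the forward implication at the cost of importing Dade.
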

\begin{proof}

(1) The ring $R$ is strongly graded if and only if, for any $\alpha \in \Gamma$,  $1=\sum_{i=1}^{n_\alpha} r_i s_i$, where $r_i\in R_\alpha$ and $s_i\in R_{-\alpha}$. This is equivalent to having matrices ${\mathbf r}=(r_1,\dots, r_{n_\alpha})\in \M_{1\times n_\alpha}(R)[\overline \alpha][0]$ and 
${\mathbf s}=(s_1,\dots, s_{n_\alpha})^t\in \M_{n_\alpha\times 1}(R)[0][\overline \alpha]$ with ${\mathbf r}{\mathbf s}=1$.
In turn this is equivalent to having an epimorphism $\bigoplus_{n_\alpha}R\rightarrow R(\alpha)\rightarrow 0$, which is equivalent to $[R(\alpha)]\leq n_\alpha[R]$ in $\mathcal V^{\gr}(R)$. This is now equivalent to $[R]$ being a strong order unit. 

(2) This follows from the observation that there is a graded $R$-module isomorphism $R\cong R(\alpha)$ if and only if there is an invertible element in $R_\alpha$. 
\end{proof}

\subsection{Graded hereditary rings}\label{gradedhersec}
Recall that a unital ring $R$ is called a left hereditary ring if any left ideal of $R$ is a projective $R$-module~\cite[Chapter~4]{weibel}. Analogously, we say a $\Gamma$-graded ring $R$ is called a \emph{left graded hereditary ring} if any graded left ideal of $R$ is a graded projective module. 

The following proposition is the graded version of well-known equivalent properties of hereditary rings whose proof follows mutatis mutandis, with an attention given to the grading.

\begin{prop}\label{herher}
Let $R$ be a $\Gamma$-graded ring. The following statements are equivalent.
    \begin{enumerate}[\upshape(1)]
        \item $R$ is left graded hereditary;
        \item Any graded submodule of a graded projective left $R$-module is graded projective;
         \item Any graded submodule of a graded projective left $R$-module is projective;
         \item The graded left global dimension is less than two;
        \item ${\rm Ext}_{R\-\GR}^2(-,-)$ vanishes. 
        \end{enumerate}
\end{prop}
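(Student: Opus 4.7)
My plan is to mimic the classical non-graded proof of the corresponding equivalences, paying attention only to the grading where it matters. The cycle I would follow is $(1)\Rightarrow(2)\Rightarrow(3)\Rightarrow(2)\Rightarrow(4)\Rightarrow(5)\Rightarrow(4)\Rightarrow(2)\Rightarrow(1)$, but broken into three blocks: first $(1)\Leftrightarrow(2)$, then $(2)\Leftrightarrow(3)$, and finally $(2)\Leftrightarrow(4)\Leftrightarrow(5)$.

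The heart of the argument is $(1)\Rightarrow(2)$. Since a graded projective module is a graded direct summand of a graded free module $F=\bigoplus_{i\in I}R(\gamma_i)$, and a graded direct summand of a graded submodule is again a graded submodule, it suffices to prove that a graded submodule $N$ of a graded free $R$-module $F$ is graded projective. Here I would adapt the classical Kaplansky-type argument: well-order the index set $I$, put $F_{<j}=\bigoplus_{i<j}R(\gamma_i)$ and $F_{\leq j}=F_{<j}\oplus R(\gamma_j)$, and for each $j\in I$ let
\[
N_j \;=\; \bigl\{\, r\in R \;\big|\; \text{$r$ is the $j$-coordinate of some element of $N\cap F_{\leq j}$}\,\bigr\}.
\]
Because $N$ is a graded submodule and each $F_{\leq j}$ is a graded submodule, $N_j$ is a graded left ideal of $R$, hence graded projective by (1). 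The surjection $N\cap F_{\leq j}\to N_j(-\gamma_j)$ (with kernel $N\cap F_{<j}$) therefore splits by a graded splitting, which identifies $N$ with the graded direct sum $\bigoplus_{j\in I}N_j(-\gamma_j)$. The converse $(2)\Rightarrow(1)$ is immediate, since a graded left ideal is a graded submodule of $R$.

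The equivalence $(2)\Leftrightarrow(3)$ uses the fact recalled in \S\ref{grprelim}: a module is graded projective if and only if it is both graded and projective. Thus a graded submodule of a graded projective module is projective precisely when it is graded projective. For $(2)\Leftrightarrow(4)\Leftrightarrow(5)$, I would just transcribe the standard Cartan--Eilenberg proof into the category $R\-\GR$: given a graded module $M$, take a graded free cover $F\twoheadrightarrow M$; its kernel is graded, and by (2) it is graded projective, so $M$ admits a graded projective resolution of length at most one, giving (4). The converse follows by applying $(4)$ to the graded quotient $P/N$ for $N$ a graded submodule of a graded projective $P$, whose second syzygy must vanish. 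Finally, $(4)\Leftrightarrow(5)$ is formal, as graded projective dimension at most one is equivalent to $\operatorname{Ext}_{R\-\GR}^{n}(-,-)=0$ for all $n\geq 2$, which in turn is equivalent to the vanishing at $n=2$ by dimension shifting in the graded module category.

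The only step that is genuinely non-automatic is $(1)\Rightarrow(2)$: one must verify that the Kaplansky well-ordering argument can be run homogeneously, i.e.\ that $N_j$ is really a graded ideal and that the splittings chosen at each stage can be taken to be graded homomorphisms (which follows from $N_j$ being graded projective and the short exact sequence being a sequence of graded maps). Everything else in the proposition is a straightforward transportation of the non-graded equivalences to $R\-\GR$.
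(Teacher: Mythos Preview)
Your proof is correct and follows exactly the route the paper intends: the paper does not give a detailed argument but simply states that the proof ``follows mutatis mutandis'' from the classical non-graded equivalences, with attention to the grading, and your Kaplansky-type argument for $(1)\Rightarrow(2)$ together with the standard homological reductions for $(2)\Leftrightarrow(4)\Leftrightarrow(5)$ is precisely that transcription. The only quibble is a harmless sign in the shift (the image of the $j$-th coordinate projection is $N_j(\gamma_j)$ rather than $N_j(-\gamma_j)$ under the paper's convention), which does not affect the argument since any shift of a graded projective module is graded projective.
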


Proposition~\ref{herher}(3) implies that a $\Gamma$-graded ring which is hereditary is indeed graded hereditary. However, the following example shows that a graded hereditary ring is not necessarily hereditary. This is in contrast with the fact that graded regular rings and regular rings which are graded (i.e., their (graded) global dimension is finite) coincide.

Consider a strongly $\Gamma$-graded ring $R$. By Dade's theorem (see Section~\ref{grprelim}), the categories $R$-$\GR$ and  $R_0$-$\Mod$ are equivalent. An application of Proposition~\ref{herher} shows that if $R_0$ is hereditary, then $R$ is a graded hereditary ring. The  
 $\mathbb Z$-graded ring $R=\mathbb Z[x,x^{-1}]$ with $\deg(x)=1$ and $\deg(x^{-1})=-1$ is a strongly graded ring with the zero-component the ring $\mathbb Z$ which is hereditary (PID rings are hereditary). Thus $R$ is a graded hereditary ring. However, $R$ is not hereditary: for example, the (non-graded) ideal $\langle 2,x-1 \rangle$ is not a projective module. 
 
 Note that the graded ring $\K[x,x^{-1}]$, where $\K$ is a field, is indeed a PID ring and thus a hereditary ring.  We will see the graded rings coming out of the graded Bergman constructions are indeed hereditary and thus they are graded hereditary as well. 

\subsection{Smash products of graded algebras}\label{smashprem}

\subsubsection{Smash products}\label{smashsmashhit}
 Let $R$ be a $\Gamma$-graded ring. The {\it smash product} ring $R \# \Gamma$ is defined as the set of all formal sums $\sum\limits_{\gamma\in\Gamma}r^{(\gamma)}p_{\gamma}$, where $r^{(\gamma)}\in R$ for any $\gamma\in\Gamma$, the $p_{\gamma}$'s are symbols, and all but finitely many coefficients $r^{(\gamma)}$ are zero. Addition is defined component-wise and multiplication is defined by linear extension of the rule $(rp_{\alpha})(sp_{\beta})=rs_{\alpha-\beta}p_{\beta}$, where $r,s\in R$ and $\alpha,\beta\in \Gamma$. Instead of $1p_\gamma$ we may write $p_\gamma$. If $R$ is a $\Gamma$-graded $\K$-algebra where $\K$ is a commutative graded ring concentrated in degree $0$, then $R\# \Gamma$ is a (not-necessarily unital) $\K$-algebra with scalar multiplication $\lambda\sum\limits_{\gamma\in\Gamma}r^{(\gamma)}p_{\gamma}=\sum\limits_{\gamma\in\Gamma}\lambda r^{(\gamma)}p_{\gamma}$, for any $\lambda\in \K$ and $\sum\limits_{\gamma\in\Gamma}r^{(\gamma)}p_{\gamma}\in R\#\Gamma$.

%\subsubsection{Presentations of smash products}

\begin{lemma}\label{lempressmash}
 Let $R$ be a $\Gamma$-graded $\K$-algebra where $\K$ is a commutative graded ring concentrated in degree $0$. Then the $\K$-algebra $R \# \Gamma$ has the presentation
 \begin{align*}
 R\#\Gamma=\Big\langle rp_{\gamma}~(r\in R^{h}, \gamma\in \Gamma)\mid{}& rp_\gamma+sp_\gamma=(r+s)p_\gamma\quad(r,s\in R_{\delta};~\gamma,\delta\in \Gamma),\\
 &~rp_{\gamma}sp_{\delta}=rs_{\gamma-\delta} p_{\delta}\quad(r,s\in R^{h};~\gamma,\delta\in\Gamma )\Big\rangle.
 \end{align*}
 \end{lemma}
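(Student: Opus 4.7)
The plan is to establish the claimed presentation by verifying the universal property. Let me write $\widetilde{R\#\Gamma}$ for the $\K$-algebra defined by the presentation on the right-hand side, and write $\langle rp_\gamma\rangle$ for the image in $\widetilde{R\#\Gamma}$ of the formal generator indexed by $(r,\gamma)$ with $r\in R^{h}$, $\gamma\in\Gamma$. The goal is then to produce mutually inverse $\K$-algebra homomorphisms between $\widetilde{R\#\Gamma}$ and the smash product $R\#\Gamma$ defined in the text.

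First I would verify that the ring $R\#\Gamma$ satisfies both families of relations; in one direction this is essentially built in, since additivity in each slot is componentwise, and for the multiplicative relation one checks $(rp_\gamma)(sp_\delta)=rs_{\gamma-\delta}p_\delta$ with $s\in R^{h}$ of degree $\epsilon$: the element $s_{\gamma-\delta}$ equals $s$ when $\epsilon=\gamma-\delta$ and $0$ otherwise, matching the right-hand side on the nose. By the universal property of $\widetilde{R\#\Gamma}$, this yields a unique $\K$-algebra homomorphism
\[
\phi\colon \widetilde{R\#\Gamma}\longrightarrow R\#\Gamma,\qquad \langle rp_\gamma\rangle\longmapsto rp_\gamma.
\]

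For the inverse, I would define a $\K$-linear map $\psi\colon R\#\Gamma\to \widetilde{R\#\Gamma}$ by decomposing each coefficient into homogeneous components:
\[
\psi\Big(\sum_{\gamma}r^{(\gamma)}p_\gamma\Big)=\sum_{\gamma,\delta}\bigl\langle r^{(\gamma)}_\delta p_\gamma\bigr\rangle,
\]
which is well-defined because the homogeneous decomposition $r^{(\gamma)}=\sum_\delta r^{(\gamma)}_\delta$ is unique, and compatible with the $\K$-action since $\K$ sits in degree $0$. (Note that the additivity relation applied to $r=s=0$ forces $\langle 0\cdot p_\gamma\rangle=0$, so degree-zero summands pose no issue.) The remaining nontrivial check is multiplicativity of $\psi$: by $\K$-bilinearity it suffices to verify it on pure tensors $rp_\gamma$, $sp_\delta$, where after decomposing $r=\sum_\alpha r_\alpha$, $s=\sum_\beta s_\beta$ one gets
\[
\psi(rp_\gamma)\,\psi(sp_\delta)=\sum_{\alpha,\beta}\bigl\langle r_\alpha p_\gamma\bigr\rangle\bigl\langle s_\beta p_\delta\bigr\rangle
=\sum_{\alpha,\beta}\bigl\langle r_\alpha (s_\beta)_{\gamma-\delta}p_\delta\bigr\rangle
=\sum_{\alpha}\bigl\langle r_\alpha s_{\gamma-\delta}p_\delta\bigr\rangle,
\]
using the second defining relation and the fact that $(s_\beta)_{\gamma-\delta}$ vanishes unless $\beta=\gamma-\delta$. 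On the other hand $(rp_\gamma)(sp_\delta)=rs_{\gamma-\delta}p_\delta$, and applying $\psi$ using that $s_{\gamma-\delta}$ is already homogeneous gives precisely the same sum.

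Finally, $\phi\circ\psi=\id$ is immediate on the canonical spanning set of $R\#\Gamma$, and $\psi\circ\phi=\id$ needs only to be verified on the generators $\langle rp_\gamma\rangle$ (with $r\in R^{h}$), where it is obvious. The main obstacle is really just the multiplicativity verification for $\psi$, which is simply a matter of keeping careful track of how homogeneous decomposition interacts with the projection map $s\mapsto s_{\gamma-\delta}$ appearing in the smash-product multiplication; once this bookkeeping is in place, the isomorphism $\widetilde{R\#\Gamma}\cong R\#\Gamma$ follows.
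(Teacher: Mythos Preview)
Your proof is correct. The approach differs slightly from the paper's: rather than constructing an explicit inverse $\psi$, the paper defines the map $\bar\theta\colon F/I\to R\#\Gamma$ (your $\phi$), observes that every element of $F/I$ can be reduced to the form $r_1p_{\gamma_1}+\cdots+r_np_{\gamma_n}$ with $r_i\in R^h$ and distinct degrees within each fixed $\gamma$, and then checks directly that $\bar\theta$ of such an expression vanishes only if all $r_i=0$. Your argument essentially packages the same normal-form observation into the definition of $\psi$, which makes the multiplicativity check the only nontrivial step; the paper instead leaves the reduction to normal form implicit and does a short injectivity computation. Both routes are equally elementary and amount to the same underlying verification.
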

\begin{proof}
Denote by $F$ the free $\K$-algebra generated by the set $\{rp_{\gamma}~(r\in R^h, \gamma\in \Gamma)\}$, and by $I$ the two-sided ideal of $F$ generated by the set \[\big \{rp_\gamma+sp_\gamma-(r+s)p_\gamma\mid r,s\in R_{\delta};~\gamma,\delta\in \Gamma\big \}\bigcup \big \{rp_{\gamma}sp_{\delta}-rs_{\gamma-\delta} p_{\delta}\mid r,s\in R^h;~\gamma,\delta\in\Gamma\big \}.\] 

Clearly the $\K$-algebra homomorphism $\theta:F\to R\#\Gamma$ defined by $\theta(rp_{\gamma})=rp_\gamma~(r\in R^h,\gamma\in \Gamma)$ induces a $\K$-algebra homomorphism $\bar\theta:F/I\to R\#\Gamma$ (since $I\subseteq \ker(\theta)$). Since the elements $rp_\gamma~(r\in R^h,\gamma\in \Gamma)$ generate the $\K$-algebra $R\#\Gamma$, the homomorphism $\bar\theta$ is surjective. It remains to show that $\bar\theta$ is injective. Suppose that $\bar\theta(x)=0$, where $x\in F/I$ is non-zero. Clearly we can write $x=r_1p_{\gamma_1}+\dots+r_np_{\gamma_n}$,  where $r_1,\dots,r_n\in R^h\setminus \{0\}$ and $\gamma_1,\dots,\gamma_n\in\Gamma$. For $\gamma\in\Gamma$ set $J(\gamma):=\{1\leq i\leq n \mid \gamma_i=\gamma\}$. We may assume that 
\begin{equation}\label{2.1}
\deg(r_i)\neq \deg(r_j), \text{ for any }\gamma\in \Gamma \text{ and }i\neq j\in J(\gamma). 
\end{equation}
It follows from 
\[0=\bar\theta(x)=r_1p_{\gamma_1}+\dots+r_np_{\gamma_n}=\sum_{\gamma\in\Gamma}
\Big (\sum_{i\in J(\gamma)}r_i\Big )p_\gamma\]
that $\sum_{i\in J(\gamma)}r_i=0$, for any $\gamma\in\Gamma$. In view of (\ref{2.1}) this implies $r_i=0$, for any $1\leq i\leq n$, a contradiction. Thus we have shown that $\bar\theta$ is injective.
\end{proof}

\subsubsection{Smash products of quotients and extensions}
 We continue with the assumption that $R$ is a $\Gamma$-graded $\K$-algebra where $\K$ is a commutative graded ring concentrated in degree $0$. If $I$ is a graded ideal of $R$, then we denote by $I\#\Gamma$ the ideal of $R\#\Gamma$ consisting of all elements $\sum\limits_{\gamma\in\Gamma}r^{(\gamma)}p_{\gamma}$ such that $r^{(\gamma)}\in I$, for any $\gamma\in\Gamma$. Note that the quotient ring $R/I$ is $\Gamma$-graded ring. 

 \begin{prop}\label{propsmashquot}
 Let $R$ be a $\Gamma$-graded $\K$-algebra where $\K$ is a commutative graded ring concentrated in degree $0$, and  $I$  a graded ideal of $R$. Then 
 \[(R/I)\# \Gamma\cong R\#\Gamma/I\#\Gamma.\]
 \end{prop}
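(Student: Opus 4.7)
The plan is to exhibit a natural surjective $\K$-algebra homomorphism $\pi\colon R\#\Gamma \to (R/I)\#\Gamma$ whose kernel is exactly $I\#\Gamma$, and then invoke the first isomorphism theorem. Since $I$ is a graded ideal, $I=\bigoplus_{\gamma\in\Gamma} I_\gamma$ with $I_\gamma = I\cap R_\gamma$, so $R/I$ inherits a natural $\Gamma$-grading with $(R/I)_\gamma = R_\gamma/I_\gamma$; in particular the smash product $(R/I)\#\Gamma$ makes sense.

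First I would define $\pi$ on the spanning elements by $\pi(rp_\gamma) = (r+I)p_\gamma$ for $r\in R^h$ and $\gamma\in\Gamma$, and extend $\K$-linearly. To show this extends to a well-defined $\K$-algebra homomorphism, I would invoke the presentation of $R\#\Gamma$ from Lemma \ref{lempressmash} and verify that $\pi$ respects the two families of defining relations. The additive relation $rp_\gamma+sp_\gamma=(r+s)p_\gamma$ with $r,s\in R_\delta$ is immediate, since $r+I, s+I \in (R/I)_\delta$ and addition of cosets is compatible. The multiplicative relation $rp_\alpha \cdot sp_\beta = r s_{\alpha-\beta}\, p_\beta$ hinges on the identity $(s+I)_{\alpha-\beta} = s_{\alpha-\beta} + I$ in $R/I$, which holds precisely because $I$ is graded. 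Granting this, a direct check yields $\pi(rp_\alpha)\pi(sp_\beta) = (r+I)(s+I)_{\alpha-\beta}\,p_\beta = (rs_{\alpha-\beta}+I)p_\beta = \pi(rs_{\alpha-\beta}p_\beta)$, so $\pi$ is multiplicative.

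Surjectivity is immediate since each generator $\bar r\, p_\gamma$ of $(R/I)\#\Gamma$ lifts via any representative $r\in R^h$ of $\bar r$. For the kernel, unwinding the definition, $\sum_\gamma r^{(\gamma)}p_\gamma \in \ker\pi$ iff $\sum_\gamma (r^{(\gamma)}+I)p_\gamma = 0$ in $(R/I)\#\Gamma$. Because the symbols $\{p_\gamma\}_{\gamma\in\Gamma}$ index a direct sum decomposition of the underlying additive group, this forces $r^{(\gamma)}+I = 0$, i.e.\ $r^{(\gamma)}\in I$ for every $\gamma$, which is exactly the definition of $I\#\Gamma$. Thus $\ker\pi = I\#\Gamma$, and the first isomorphism theorem gives $R\#\Gamma / I\#\Gamma \cong (R/I)\#\Gamma$.

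The main (minor) obstacle is the multiplicative compatibility check, which turns entirely on the graded-ideal hypothesis: without the decomposition $I = \bigoplus I_\gamma$, the equality $(s+I)_{\alpha-\beta} = s_{\alpha-\beta}+I$ would fail and $\pi$ would not respect the twisted multiplication $rp_\alpha \cdot sp_\beta = rs_{\alpha-\beta}p_\beta$. All other verifications are essentially bookkeeping with the presentation.
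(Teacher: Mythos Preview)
Your proof is correct and takes essentially the same approach as the paper: both exhibit the natural map between $(R/I)\#\Gamma$ and $R\#\Gamma/I\#\Gamma$ and verify it is an isomorphism by direct inspection of the elements $\sum_\gamma r^{(\gamma)}p_\gamma$. The only cosmetic difference is direction—you build the quotient map $R\#\Gamma\to (R/I)\#\Gamma$ and invoke the first isomorphism theorem, whereas the paper writes down the inverse isomorphism $(R/I)\#\Gamma\to R\#\Gamma/I\#\Gamma$ directly; your use of Lemma~\ref{lempressmash} to check multiplicativity is arguably a bit more explicit than the paper's ``clearly''.
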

 \begin{proof}
 We define the map 
 \begin{align*}
     f:(R/I)\# \Gamma&\longrightarrow R\#\Gamma/I\#\Gamma\\
     \sum\limits_{\gamma\in\Gamma}(r^{(\gamma)}+I)p_{\gamma}&\longmapsto\sum\limits_{\gamma\in\Gamma}r^{(\gamma)}p_{\gamma}+I\#\Gamma.
 \end{align*}
 Suppose that $\sum\limits_{\gamma\in\Gamma}(r^{(\gamma)}+I)p_{\gamma}=\sum\limits_{\gamma\in\Gamma}(s^{(\gamma)}+I)p_{\gamma}$ in $(R/I)\# \Gamma$. Then $\sum\limits_{\gamma\in\Gamma}(r^{(\gamma)}-s^{(\gamma)}+I)p_{\gamma}=0$ and hence $r^{(\gamma)}-s^{(\gamma)}\in I$, for any $\gamma\in\Gamma$. It follows that $\sum\limits_{\gamma\in\Gamma}(r^{(\gamma)}-s^{(\gamma)})p_{\gamma}\in I\#\Gamma$ and hence $\sum\limits_{\gamma\in\Gamma}r^{(\gamma)}p_{\gamma}+I\#\Gamma=\sum\limits_{\gamma\in\Gamma}s^{(\gamma)}p_{\gamma}+I\#\Gamma$. Thus $f$ is well-defined. Clearly $f$ is a surjective $\K$-algebra homomorphism. It remains to show $f$ is injective. Suppose that $f(\sum\limits_{\gamma\in\Gamma}(r^{(\gamma)}+I)p_{\gamma})=0$, i.e.,  $\sum\limits_{\gamma\in\Gamma}r^{(\gamma)}p_{\gamma}+I\#\Gamma=0$. Then $\sum\limits_{\gamma\in\Gamma}r^{(\gamma)}p_{\gamma}\in I\#\Gamma$ and hence $r^{(\gamma)}\in I$ for any $\gamma\in\Gamma$. It follows $\sum\limits_{\gamma\in\Gamma}(r^{(\gamma)}+I)p_{\gamma}=0$ and thus $f$ is injective.
 \end{proof}

If $X$ is a set, we denote by $R \langle X \rangle $ the $\K$-algebra obtained from $R$ by adjoining the set $X$ (i.e., $R \langle X \rangle$ is the free product of $R$ and the free $\K$-algebra $k\langle X\rangle$ generated by $X$). If in addition we are given a map $\deg:X\to\Gamma$, then $R \langle X \rangle$ becomes a $\Gamma$-graded $\K$-algebra with the induced grading. We denote by $X\#\Gamma$ the set $\{xp_\gamma\mid x\in X,\gamma\in\Gamma\}$.

 \begin{prop}\label{propsmashext}
Let $R$ be a $\Gamma$-graded $\K$-algebra where $\K$ is a commutative graded ring concentrated in degree $0$, and $X$  a set  and $\deg:X\to\Gamma$ a map. Consider  $R \langle X \rangle$ as a $\Gamma$-graded $\K$-algebra with the induced grading. Then \[R \langle X \rangle\# \Gamma\cong R\#\Gamma \langle X\#\Gamma \rangle /J,\] where $J$ is the ideal of $R\#\Gamma \langle X\#\Gamma \rangle $ generated by the set \[ \big\{(xp_\gamma)(1p_\gamma)-xp_\gamma,~(1p_{\gamma+\deg(x)})(xp_\gamma)-xp_\gamma\mid x\in X,\gamma\in \Gamma\big\}.\]
 \end{prop}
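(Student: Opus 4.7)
The plan is to construct a $\K$-algebra homomorphism $\bar\phi : R\#\Gamma\langle X\#\Gamma\rangle/J \to R\langle X\rangle\#\Gamma$ using universal properties, then show $\bar\phi$ is an isomorphism by producing a two-sided inverse $\psi$ through the presentation from Lemma \ref{lempressmash}.

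First I would build $\phi$. The graded inclusion $R \hookrightarrow R\langle X\rangle$ induces a $\K$-algebra map $R\#\Gamma \to R\langle X\rangle\#\Gamma$, $rp_\gamma \mapsto rp_\gamma$. By the universal property of the free extension $R\#\Gamma\langle X\#\Gamma\rangle$, the further assignment $xp_\gamma \mapsto xp_\gamma$ (for $x \in X$, $\gamma \in \Gamma$) extends this uniquely to a $\K$-algebra homomorphism $\phi : R\#\Gamma\langle X\#\Gamma\rangle \to R\langle X\rangle\#\Gamma$. To see $J \subseteq \ker\phi$, I apply the smash multiplication rule $(ap_\alpha)(bp_\beta) = ab_{\alpha - \beta}p_\beta$ in $R\langle X\rangle\#\Gamma$ to each generator of $J$: since $x$ is homogeneous of degree $\deg(x)$ in $R\langle X\rangle$, $(xp_\gamma)(1p_\gamma) = x \cdot 1_0 p_\gamma = xp_\gamma$ and $(1p_{\gamma+\deg(x)})(xp_\gamma) = 1 \cdot x_{\deg(x)} p_\gamma = xp_\gamma$. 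Hence $\phi$ descends to $\bar\phi$.

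Next I would construct $\psi$ using Lemma \ref{lempressmash} applied to $R\langle X\rangle$: a $\K$-algebra homomorphism out of $R\langle X\rangle\#\Gamma$ is determined by assigning to each $sp_\gamma$ (with $s \in (R\langle X\rangle)^h$) an element of the target and verifying the two families of presentation relations. For each homogeneous monomial $w = a_1 a_2 \cdots a_m$ of $R\langle X\rangle$ with $a_j \in R^h \cup X$ and $\delta_j := \deg(a_j)$, set
\[
\psi(wp_\gamma) := (a_1 p_{\gamma_1}) (a_2 p_{\gamma_2}) \cdots (a_m p_{\gamma_m}) \bmod J,
\]
where $\gamma_m = \gamma$ and $\gamma_{j-1} = \gamma_j + \delta_j$; then extend $\K$-linearly within each homogeneous component. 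The same index recursion, together with the smash rule, shows that in the codomain $R\langle X\rangle\#\Gamma$ this same product equals $wp_\gamma$; this will give $\bar\phi \circ \psi = \mathrm{id}$ on the generators of $R\langle X\rangle\#\Gamma$, while $\psi \circ \bar\phi = \mathrm{id}$ on the generators of $R\#\Gamma\langle X\#\Gamma\rangle/J$ follows directly from the construction of $\phi$.

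The main obstacle is verifying that $\psi$ is well-defined, i.e., that the assignment respects both the additive relation $rp_\gamma + r'p_\gamma = (r+r')p_\gamma$ and the multiplicative relation $(sp_\gamma)(s'p_{\gamma'}) = ss'_{\gamma-\gamma'}p_{\gamma'}$ of Lemma \ref{lempressmash}, and does not depend on how a homogeneous $s$ is parsed into monomials (since adjacent $R$-factors can be merged or split, and $R$-coefficients may shift between slots). The resolution is twofold: (i) the smash multiplication rule inside $R\#\Gamma \subseteq R\#\Gamma\langle X\#\Gamma\rangle/J$ already collapses any two consecutive $R$-valued factors whose indices match the prescribed recursion, so parsings differing only by regroupings of $R$-coefficients produce identical elements; and (ii) the two relations generating $J$ supply exactly the identities needed to absorb trivial $R$-factors across adjacent $X$-factors. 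Once well-definedness is established, the multiplicative relation reduces by linearity to concatenating two monomial products, which is immediate from the definition and fact (i).
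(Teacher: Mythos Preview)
Your proposal is correct and follows essentially the same route as the paper. The paper constructs the same forward map $f$ (your $\bar\phi$) and then, instead of building an explicit inverse, proves injectivity directly by writing an arbitrary element of the quotient as a $\K$-linear combination of ``paths'' $y_1p_{\gamma_1}\cdots y_np_{\gamma_n}$ satisfying exactly your index recursion $\gamma_{j-1}=\gamma_j+\deg(y_j)$; the observation that $f$ sends such a path to $(y_1\cdots y_n)p_{\gamma_n}$ then reduces injectivity to the same parsing/regrouping check you identify as the main obstacle for well-definedness of $\psi$. In other words, your inverse $\psi$ and the paper's injectivity argument are two packagings of the same computation.
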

 \begin{proof}
 It follows from Lemma \ref{lempressmash} that $R\#\Gamma \langle X\#\Gamma \rangle /J$ has the presentation
 \begin{equation}\label{2.2}
 \begin{split}
     R\#\Gamma \langle X\#\Gamma\rangle /J=\Big\langle rp_{\gamma}, xp_\gamma~(r\in R^h,x\in X,\gamma\in \Gamma)\mid{}& rp_\gamma+sp_\gamma=(r+s)p_\gamma\quad(r,s\in R_{\delta};~\gamma,\delta\in \Gamma),\\
 &~rp_{\gamma}sp_{\delta}=rs_{\gamma-\delta} p_{\delta}\quad(r,s\in R^h;~\gamma,\delta\in\Gamma ),\\
 &~(xp_\gamma)(1p_\gamma)=xp_\gamma\quad(x\in X,~\gamma\in \Gamma),\\
 &~(1p_{\gamma+\deg(x)})(xp_\gamma)=xp_\gamma\quad (x\in X,~\gamma\in \Gamma)\Big\rangle.
 \end{split}
 \end{equation}
 Since the relations in presentation (\ref{2.2}) above are satisfied in $R \langle X \rangle \# \Gamma$, there is a $k$-algebra homomorphism \[f:R\#\Gamma \langle X\#\Gamma\rangle /J\rightarrow R\langle X \rangle \# \Gamma,\] such that $f( rp_{\gamma})=rp_{\gamma}$ and $f( xp_{\gamma})=xp_{\gamma}$, for any $r\in R^h,x\in X,\gamma\in \Gamma$. Clearly the elements $rp_{\gamma}, xp_\gamma$, where $r\in R^h,x\in X,\gamma\in \Gamma$ generate the $k$-algebra $R(X)\# \Gamma$ and hence $f$ is surjective. It remains to show that $f$ is injective.
 
 Let $w$ be a non-empty and finite word over the alphabet $\{rp_{\gamma}, xp_\gamma~(r\in R^h,x\in X,\gamma\in \Gamma)\}$. Then $w=y_1p_{\gamma_1}\dots y_np_{\gamma_n}$ for some $y_1,\dots, y_n\in R\cup X$ and $\gamma_1,\dots,\gamma_n\in \Gamma$. We call $w$ a {\it path (ending in $\gamma_n$)} if $\gamma_{i-1}-\gamma_i=\deg(y_i)~(i=2,\dots,n)$. %, and {\it $R$-reduced} if there does not exist an $i\in \{2,\dots,n\}$ such that $y_{i-1},y_i\in R$. 
 We denote the set of of all paths by $W$, and for any $\gamma\in\Gamma$ the set of all paths ending in $\gamma$ by $W_\gamma$. Define a map $\zeta:W\to R \langle X \rangle $ by $\zeta(y_1p_{\gamma_1}\dots y_np_{\gamma_n})=y_1\dots y_n$. %For any $\gamma\in\Gamma$ define a map $\xi_\gamma:W_\gamma\to R(X)\# \Gamma$ by $\xi_\gamma(w)=\zeta(w)p_{\gamma}$. One checks easily that the maps $\xi_\gamma~(\gamma\in\Gamma)$ are injective.
 Now suppose that $f(a)=0$ for some $a\in R\#\Gamma \langle X\#\Gamma \rangle /J$. It follows from the presentation (\ref{2.2}) that $a=\sum_{w\in W}\lambda_{w}w$ where $\lambda_w\in k~(w\in W)$. Hence
 \begin{equation*}
 0=f(a)=f\Big(\sum_{w\in W}\lambda_{w}w\Big)=f\Big(\sum_{\gamma\in\Gamma}\sum_{w\in W_\gamma}\lambda_{w}w\Big)=\sum_{\gamma\in\Gamma}\sum_{w\in W_\gamma}\lambda_w\zeta(w)p_\gamma.
 \end{equation*}
 It follows that $\sum_{w\in W_\gamma}\lambda_w\zeta(w)=0$ in $R(X)$ for any $\gamma\in\Gamma$. One checks easily that this implies that $\sum_{w\in W_\gamma}\lambda_ww=0$ in $R\#\Gamma\langle X\#\Gamma \rangle /J$,  for any $\gamma\in\Gamma$. Hence $a=0$ and thus $f$ is injective.
 \end{proof}

 \begin{cor}\label{corsmashkey}
 Let $R$ be a $\Gamma$-graded $\K$-algebra where $\K$ is a commutative graded ring concentrated in degree $0$, $X$  a set  and $\deg:X\to\Gamma$ a map. Consider  $R \langle X \rangle$ as a $\Gamma$-graded $\K$-algebra with the induced grading. Let $I$ be a graded ideal of $R\langle X \rangle $. Then \[(R\langle X \rangle /I)\#\Gamma\cong R\#\Gamma \langle X\#\Gamma\rangle /J,\] where $J$ is the ideal of $R\#\Gamma\langle X\#\Gamma\rangle $ generated by the image of $I\#\Gamma$ and the set  
 \begin{equation}\label{hgbdgfuro}
  \big \{(xp_\gamma)(1p_\gamma)-xp_\gamma,~(1p_{\gamma+\deg(x)})(xp_\gamma)-xp_\gamma\mid x\in X,\gamma\in \Gamma\big \}.   
 \end{equation}
  \end{cor}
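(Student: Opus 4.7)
The plan is to deduce Corollary \ref{corsmashkey} by composing Proposition \ref{propsmashquot} and Proposition \ref{propsmashext}, with one careful step to interpret what ``the image of $I\#\Gamma$'' means in $R\#\Gamma\langle X\#\Gamma\rangle$.

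First I would apply Proposition \ref{propsmashquot} to the $\Gamma$-graded $\K$-algebra $R\langle X\rangle$ (with its induced grading) and its graded ideal $I$ to obtain a $\K$-algebra isomorphism
\[
\phi_1:(R\langle X\rangle/I)\#\Gamma\xrightarrow{\;\sim\;}(R\langle X\rangle\#\Gamma)/(I\#\Gamma).
\]
Next I would invoke Proposition \ref{propsmashext} for the same data (but without the ideal) to produce a surjective $\K$-algebra homomorphism
\[
\phi_2:R\#\Gamma\langle X\#\Gamma\rangle\longrightarrow R\langle X\rangle\#\Gamma,\qquad rp_\gamma\mapsto rp_\gamma,\quad xp_\gamma\mapsto xp_\gamma,
\]
with $\ker\phi_2=J_0$, where $J_0$ is the ideal generated by the set (\ref{hgbdgfuro}).

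The key step is to combine these: the composition
\[
R\#\Gamma\langle X\#\Gamma\rangle\xrightarrow{\phi_2} R\langle X\rangle\#\Gamma\xrightarrow{\;\text{can}\;}(R\langle X\rangle\#\Gamma)/(I\#\Gamma)\xrightarrow{\phi_1^{-1}}(R\langle X\rangle/I)\#\Gamma
\]
is surjective, and by the third isomorphism theorem (i.e.\ the correspondence between ideals containing $J_0$ in $R\#\Gamma\langle X\#\Gamma\rangle$ and ideals in $R\langle X\rangle\#\Gamma$), its kernel is $\phi_2^{-1}(I\#\Gamma)$. This preimage is precisely the ideal $J$ of $R\#\Gamma\langle X\#\Gamma\rangle$ generated by $J_0$ together with any set of lifts (under $\phi_2$) of generators of $I\#\Gamma$; that is the content of ``the ideal generated by the image of $I\#\Gamma$ and the set (\ref{hgbdgfuro}).'' Since any two lifts differ by an element of $J_0\subseteq J$, the resulting ideal $J$ does not depend on the chosen lifts, so the expression is well-defined. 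Concretely, a typical generator $ap_\gamma\in I\#\Gamma$ with $a=r_0x_{i_1}r_1\cdots x_{i_n}r_n\in I\subseteq R\langle X\rangle$ has the canonical lift
\[
(r_0p_{\gamma+\deg(x_{i_1}r_1\cdots x_{i_n}r_n)})(x_{i_1}p_{\gamma+\deg(r_1x_{i_2}\cdots x_{i_n}r_n)})\cdots(x_{i_n}p_{\gamma+\deg(r_n)})(r_np_{\gamma}),
\]
which modulo $J_0$ equals $ap_\gamma$.

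The main obstacle, such as it is, is merely bookkeeping: ensuring that the lifts one writes down in $R\#\Gamma\langle X\#\Gamma\rangle$ actually map under $\phi_2$ onto the intended elements of $I\#\Gamma$, and checking that the third isomorphism theorem applies in this graded/non-unital smash-product setting. Both are straightforward once the two preceding propositions are in hand, so the proof reduces to a two-line composition of isomorphisms.
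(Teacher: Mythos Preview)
Your proposal is correct and follows essentially the same approach as the paper: apply Proposition~\ref{propsmashquot} to pass from $(R\langle X\rangle/I)\#\Gamma$ to $(R\langle X\rangle\#\Gamma)/(I\#\Gamma)$, then use Proposition~\ref{propsmashext} to identify $R\langle X\rangle\#\Gamma$ with $R\#\Gamma\langle X\#\Gamma\rangle/J_0$, and combine via the third isomorphism theorem. Your discussion of why the phrase ``image of $I\#\Gamma$'' is well-defined (any two lifts differ by an element of $J_0$) is more explicit than the paper's version, but the argument is the same.
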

 \begin{proof}
 It follows from Proposition \ref{propsmashquot} that $(R\langle X \rangle /I)\#\Gamma\cong R\langle X\rangle \#\Gamma/I\#\Gamma$. By Proposition \ref{propsmashext}, there is an isomorphism $g:R\langle X\rangle \#\Gamma\to   R\#\Gamma\langle X\#\Gamma\rangle /J'$, where $J'$ is the ideal of $R\#\Gamma \langle X\#\Gamma\rangle $ generated by (\ref{hgbdgfuro}). Hence 
 \[(R\langle X \rangle /I)\#\Gamma\cong R\langle X\rangle \#\Gamma/I\#\Gamma\cong (R\#\Gamma    \langle X\#\Gamma\rangle/J')/g(I\#\Gamma).\]
 The assertion of the corollary follows.
 \end{proof}

\subsubsection{An isomorphism of categories}\label{subsubseccatiso}
 Recall that $R$-$\GR$ denotes the category of $\Gamma$-graded $R$-modules and $R$-$\GR_{\proj}$ the full subcategory of $R$-$\GR$ whose objects are the projective objects of $R$-$\GR$ that are finitely generated as a $R$-module. Moreover, $R\#\Gamma$-$\Mod$ denotes the category of unital $R\#\Gamma$-modules and $R\#\Gamma$-$\Mod_{\proj}$ the full subcategory of $R\#\Gamma$-$\Mod$ whose objects are the projective objects of $R\#\Gamma$-$\Mod$ that are finitely generated as a $R\#\Gamma$-module. In \cite{ara-hazrat-li-sims} it was shown that there is an isomorphism $\psi:R$-$\GR\to R\#\Gamma$-$\Mod$, and in \cite{Raimund2} it was shown that $\psi$ restricts to an isomorphism $R$-$\GR_{\proj}\to R\#\Gamma$-$\Mod_{\proj}$. The isomorphism $\psi$ maps an object $M$ of $R$-$\GR$ to the object $M\#\Gamma$ of $R\#\Gamma$-$\Mod$ which is defined as follows. As abelian group $M\#\Gamma=M$. The $R\#\Gamma$-action on $M\#\Gamma$ is defined by $(rp_\alpha)m=rm_\alpha$ for any $r\in R$, $\alpha\in \Gamma$ and $m\in M=M\#\Gamma$. On morphisms, $\psi$ is just the identity map.

\section{Leavitt path algebras}\label{lpalabel}

Let $\K$ be a field. Consider the symbols 
$X=(x_{ij})$ and $X^*=(x^*_{ji})$, where $1 \leq i \leq n$ and $1 \leq j \leq n+k$: 
\begin{equation} \label{breaktr}
X=\left( 
\begin{matrix} 
x_{11\phantom{n+{},}} & x_{12\phantom{n+{},}} & \dots & x_{1,n+k}\\ 
x_{21\phantom{n+{},}} & x_{22\phantom{n+{},}} & \dots & x_{2,n+k}\\ 
\vdots & \vdots & \ddots & \vdots\\ 
x_{n,1\phantom{n+{},}} & x_{n,2\phantom{n+{},}} & \dots & x_{n,n+k} 
\end{matrix} 
\right),  
\qquad
X^*=\left( 
\begin{matrix} 
x^*_{11\phantom{n+{},}} & x^*_{12\phantom{n+{},}} & \dots & x^*_{1,n\phantom{n+{},}}\\ 
x^*_{21\phantom{n+{},}} & x^*_{22\phantom{n+{},}} & \dots & x^*_{2,n\phantom{n+{},}}\\ 
\vdots & \vdots & \ddots & \vdots\\ 
x^*_{n+k,1} & x^*_{n+k,2} & \dots & x^*_{n+k,n} 
\end{matrix} 
\right).
\end{equation} 
The \emph{Leavitt algebra} $L_\K(n,n+k)$ is defined as 
\begin{equation}\label{leviialg}
L_\K(n,n+k)=\frac{\K\langle X, X^*\rangle}{\langle XX^*=I_{n}, \, X^*X=I_{n+k} \rangle }.
\end{equation}

Here $\K\langle X, X^*\rangle$ is the free $\K$-algebra generated by the symbols $x_{ij}$ and $x_{ji}^*$'s. Furthermore, the relations $XX^*=I_{n}$ and $X^*X=I_{n+k}$, where $I_n \in \M_n(\K)$ and $I_{n+k} \in \M_{n+k}(\K)$ are the identity matrices,  stand for the collection of relations after multiplying the matrices and comparing the two sides of the equations. 

These rings were first studied by Leavitt \cite{leavitt57} and \cite[p.130, footnote 6]{leavitt62}. Leavitt showed the type of the ring $L_\K(n,n+k)$ is $(n,n+k)$. Furthermore, $L_\K(n,n+k)$, for $n>1$ and $k\geq 1$, is a domain, whereas  $L_\K(1,k+1)$, is a simple ring.

If in (\ref{leviialg}) we only consider the relation $X^*X=I_{n+k}$, then we call the resulting algebra \emph{Cohn algebra}
\begin{equation}\label{cohnialg}
C_\K(n,n+k)=\frac{\K\langle X, X^*\rangle}{\langle  X^*X=I_{n+k} \rangle },
\end{equation}
which was considered by Cohn~\cite{cohn}, who showed that
 $C_\K(n,n+k)$, for $n>1$ and $k\geq 1$, is a domain, whereas  $C_\K(1,k+1)$ has only one non-trivial proper ideal generated by relation $XX^*=I_n$. This result also implies the simplicity of $L_\K(1,k+1)$.

The simple rings $L_\K(1,k+1)$ received significant attentions as they are the discrete versions of Cuntz algebras.   Modeled on this, Leavitt path algebras are introduced \cite{lpabook}, which attach to a directed graph a certain algebra. In the case that the graph has one vertex and $k+1$ loops, it recovers Leavitt algebra $L_\K(1,k+1)$. 

In order to recover the Leavitt algebras $L_\K(n,n+k)$, for $n>1$,  a plethora of generalisations of Leavitt path algebras were introduced, such as Leavitt path algebra of separated graphs, weighted graphs and ultra graphs. 
Here, following~\cite{Raimund2},  we introduce the Leavitt path algebras of hypergraphs. We show that Bergman algebras (see Section~\ref{bergmanalgebrasec}) with coefficients in a commutative semisimple ring are precisely Leavitt path algebras of hypergraphs (Theorem~\ref{lemhyperberg}). 

\subsection{Leavitt path algebras of hypergraphs} \label{hyperraim}

For sets $I$ and $X$, a function $x:I\rightarrow X$, $i\mapsto x_i=x(i)$ is called a {\it family of elements in $X$ indexed by $I$}. A family $x$ of elements in $X$ indexed by $I$ is usually denoted as $(x_i)_{i\in I}$. We call the family $(x_i)_{i\in I}$ {\it non-empty} if $I\neq\emptyset$.

A {\it (directed) hypergraph} is a quadruple $H=(H^0,H^1,s,r)$, where $H^0$ and $H^1$ are sets and $s$ and $r$ are maps associating to each $h\in H^1$ a non-empty family $s(h)=(s(h)_i)_{i\in I_{h}}$, respectively,  $r(h)=(r(h)_j)_{j\in J_{h}}$ of elements in $H^0$. %Note that the maps $s(h):I_h\rightarrow H^0$ and $r(h):J_h\rightarrow H^0$ are not assumed to be injective.
The elements of $H^0$ are called {\it vertices} and the elements of $H^1$ {\it hyperedges}. %$H$ is called {\it empty} if $H^0=H^1=\emptyset$ and {\it finite} if $H^0$ and $H^1$ are finite sets. %In this article all hypergraphs are assumed to be nonempty.
In this paper all hypergraphs are assumed to be ${\it regular}$, i.e. for any hyperedge $h$ the sets $I_h$ and $J_h$ are finite. 

%Let $H$ be a hypergraph. A hyperedge $h\in H^1$ is called {\it source regular} if $I_h$ is a finite set and {\it range regular} if $J_h$ is a finite set. The subset of $H^1$ consisting of all source regular hyperedges is denoted by $H^1_{\sreg}$ and the subset of $H^1$ consisting of all range regular hyperedges by $H^1_{\rreg}$. The hypergraph $H$ is called {\it regular} if $H^1=H^1_{\sreg}=H^1_{\rreg}$.
%, and {\it regular} if it is source and range regular and the subset of $H^1$ consisting of all regular hyperedges by $H^1_{\reg}$. The hypergraph $H$ is called {\it source regular} (resp. {\it range regular}, {\it regular}) if $H^1=H^1_{\sreg}$ (resp. $H^1=H^1_{\rreg}$, $H^1=H^1_{\reg}$).

\begin{deff}\label{defhyper}
Let $H$ be a hypergraph. The not necessarily unital $\K$-algebra $L_\K(H)$ presented by the generating set 
\[\{v,h_{ij},h_{ij}^*\mid v\in H^0, h\in H^1, i\in I_h,  j\in J_h\}\]
and the relations
\begin{enumerate}[(i)]
\item $uv=\delta_{uv}u\quad(u,v\in H^0)$,

\item $s(h)_ih_{ij}=h_{ij}=h_{ij}r(h)_j,~r(h)_jh_{ij}^*=h_{ij}^*=h_{ij}^*s(h)_i\quad(h\in H^1, i\in I_h,  j\in J_h)$,

\item $\sum\limits_{j\in J_h}h_{ij}h_{i'j}^*= \delta_{ii'}s(h)_i\quad(h\in H^1, ~i,i'\in I_h)$ and

\item $\sum\limits_{i\in I_h}h_{ij}^*h_{ij'}= \delta_{jj'}r(h)_j\quad(h\in H^1,~j,j'\in J_h)$
\end{enumerate}
is called the {\it Leavitt path algebra of the hypergraph $H$}. We will sometimes call a Leavitt path algebra of a hypergraph a {\it hyper Leavitt path algebra}.
\end{deff}

It is not difficult to check that if $H$ is a hypergraph, then $L_\K(H)$ has a set of local units, namely the set of all finite sums of distinct elements
of $H^0$. Furthermore, $L_K (H)$ is a unital ring if and only if  $H^0$ is finite (\cite[Proposition~8]{Raimund2}).

\begin{rmk}\label{rmkhyperrel}
Let $H$ be a hypergraph. We can describe the relations (ii)-(iv) in Definition \ref{defhyper} using matrices as follows. For any hyperedge $h\in H^1$, let $e_h$ be the $I_h\times I_h$-matrix whose entry at position $(i,i')$ is $\delta_{ii'}s(h)_i$, and $f_h$ the $J_h\times J_h$-matrix whose entry at position $(j,j')$ is $\delta_{jj'}r(h)_j$. Moreover, we denote for any hyperedge $h\in H^1$ the $I_h\times J_h$-matrix whose entry at position $(i,j)$ is $h_{ij}$ also by $h$, and the $J_h\times I_h$-matrix whose entry at position $(j,i)$ is $h^*_{ij}$ by $h^*$. Then $L_\K(H)$ is the $\K$-algebra presented by the generating set $\{v,h_{ij},h_{ij}^*\mid v\in H^0, h\in H^1, i\in I_h,  j\in J_h\}$ and relations
\begin{enumerate}[(I)]
\item the elements of $H^0$ are pairwise orthogonal idempotents,

\item $ h=e_h h f_h,\quad  h^*=f_hh^*e_h\quad(h\in H^1)$,

\item $hh^*= e_h\quad(h\in H^1)$ and

\item $h^*h= f_h\quad(h\in H^1)$.
\end{enumerate}
\end{rmk}

\begin{rmk}\label{rmkhyperweight}
Let $H$ be a hypergraph and $\Gamma$ an abelian group. For any $h\in H^1$ choose an $i_h\in I_h$ and a $j_h\in J_h$. A map $w$ which associates to any generator $h_{ij}~(h\in H^1,i\in I_h,j\in J_h)$ an element from $\Gamma$, such that 
\[w(h_{ij})=w(h_{ij_h})-w(h_{i_hj_h})+w(h_{i_hj})\]
for any $h\in H^1$, $i\in I_h$ and $j\in J_h$ is called a {\it $\Gamma$-weight map} for $H$. By \cite[Lemma 70]{Raimund2}, a $\Gamma$-weight map $w$ for $H$ induces a $\Gamma$-grading on $L_\K(H)$ such that $\deg(v)=0$, $\deg(h_{ij})=w(h_{ij})$ and $\deg(h_{ij}^*)=-w(h_{ij})$ for any $v\in H^0$, $h\in H^1$, $i\in I_h$ and $j\in J_h$.  
\end{rmk}

\begin{example}[Leavitt path algebras]
\label{exmlpa}
Suppose that $E$ is a {\it directed graph}, i.e. a quadruple $E=(E^{0}, E^{1}, r, s)$, where $E^{0}$ and $E^{1}$ are
sets and $r,s$ are maps from $E^1$ to $E^0$. The elements of $E^0$ are called \textit{vertices} and the elements of $E^1$ \textit{edges}. We may think of an edge $e\in E^1$ as an arrow from $s(e)$ to $r(e)$. %A graph $E$ is \emph{finite} if $E^0$ and $E^1$ are both finite.
Assume that $E$ is \emph{row-finite}, i.e. for each vertex $v\in E^0$ there are at most finitely many edges in $s^{-1}(v)$. %A vertex $u$ for which $s^{-1}(u)$ is empty is called a \emph{sink}. If $u\in E^{0}$ is not a %sink, then it is called a \emph{regular vertex}. 
Denote by $E^{\reg}$ the set of {\it regular} vertices of the graph $E$, i.e. the set of all vertices $v$ for which $s^{-1}(v)$ is not the empty set. The $\K$-algebra $L_{\K}(E)$ presented by the generating set $\{v,e,e^*\mid v\in E^0,e\in E^1\}$ and the relations
\begin{enumerate}[(i)]
\item $uv=\delta_{uv}u\quad(u,v\in E^0)$,
\medskip
\item $s(e)e=e=er(e),~r(e)e^*=e^*=e^*s(e)\quad(e\in E^1)$,
\medskip
\item $\sum_{e\in s^{-1}(v)}ee^*= v\quad(v\in E^{\reg})$ and
\medskip
\item $e^*f= \delta_{ef}r(e)\quad(e,f\in E^1)$
\end{enumerate}
is called the {\it Leavitt path algebra} of $E$ (cf. for example \cite{lpabook}). For each $v\in E^{\reg}$ write $s^{-1}(v)=\{e^{v,j}\mid j\in J_v\}$. Define a hypergraph $H=(H^0,H^1,s',r')$ by $H^0=E^0$, $H^1=\{h^v\mid v\in E^{\reg}\}$, $s'(h^v)=(v)_{i\in\{1\}}$ and $r'(h^v)=(r(e^{v,j}))_{j\in J_v}$. There is a $\K$-algebra isomorphism $\phi:L_{\K}(E)\rightarrow L_{\K}(H)$ such that $\phi(u)=u$, $\phi(e^{v,j})=h^v_{1j}$ and $\phi((e^{v,j})^*)=(h^v_{1j})^*$ for any $u\in E^0$, $v\in E^{\reg}$ and $j\in J_v$.    
\end{example}

\begin{example}[Vertex-weighted Leavitt path algebras]
Suppose that $(E,w)$ is a \textit{vertex-weighted} graph, i.e. $E$ is a directed graph and $w$ is a map associating to each regular vertex a positive integer. We again assume that $E$ is row-finite. The $\K$-algebra $L_{\K}(E,w)$ presented by the generating set $\{v,e_i,e_i^*\mid v\in E^0, e\in E^1, 1\leq i\leq w(s(e))\}$ and the relations
\begin{enumerate}[(i)]
\item $uv=\delta_{uv}u\quad(u,v\in E^0)$,
\medskip
\item $s(e)e_i=e_i=e_ir(e),~r(e)e_i^*=e_i^*=e_i^*s(e)\quad(e\in E^1, 1\leq i\leq w(s(e)))$,
\medskip
\item 
$\sum_{e\in s^{-1}(v)}e_ie_j^*= \delta_{ij}v\quad(v\in E^{\reg};~1\leq i, j\leq w(v))$ and
\medskip 
\item $\sum_{1\leq i\leq w(v)}e_i^*f_i= \delta_{ef}r(e)\quad(v\in E^{\reg};~ e,f\in s^{-1}(v))$
\end{enumerate}
is called the {\it Leavitt path algebra} of $(E,w)$. For each $v\in E^{\reg}$ write $s^{-1}(v)=\{e^{v,j}\mid j\in J_v\}$. Define a hypergraph $H=(H^0,H^1,s',r')$ by $H^0=E^0$, $H^1=\{h^v\mid v\in E^0_{\reg}\}$, $s'(h^v)=(v)_{i\in \{1,\dots,w(v)\}}$ and $r'(h^v)=(r(e^{v,j}))_{j\in J_v}$. There is a $\K$-algebra isomorphism $\psi:L_{\K}(E,w)\rightarrow L_{\K}(H)$ such that $\psi(u)=u$, $\psi(e^{v,j}_i)=h^v_{ij}$ and $\psi((e^{v,j}_i)^*)=(h^v_{ij})^*$ for any $u\in E^0$, $v\in E^{\reg}$, $i\in \{1,\dots,w(v)\}$ and $j\in J_v$.  
\end{example}

\section{Bergman algebras}\label{bergmanalgebrasec}

 Let $R$ be a ring and $e\in \M_m(R)$ and $f\in \M_n(R)$ be idempotent matrices. Recall that $e$ and $f$ are called \emph{equivalent}, denoted $e\sim f$, if there are matrices $h\in \M_{m\times n}(R)$ and $h^* \in M_{n\times m}(R)$ such that $e=hh^*$ and $f=h^*h$.  
 %We say the pair $(e,f)$ \emph{dominates} $h$ if $ehf=h$.

\begin{deff}\label{bergmanalgebra}
Let $R$ be a $\K$-algebra, where $\K$ is a commutative ring. Let $e\in \M_m(R)$ and   $f \in \M_n(R)$ be idempotent matrices. Let $h=(h_{ij})$ and $h^*=(h^*_{ji})$, where $1\leq i \leq m$ and $1\leq j \leq n$, be a collection of symbols. Then we define a chain of   \emph{Bergman algebras} 
\[B^1_{R}(e,f) \longrightarrow B^2_{R}(e,f) \longrightarrow B^3_{R}(e,f)\longrightarrow B_R(e,f),\]
as follows:
\begin{align*}
B^1_{R}(e,f)&:=  R\big\langle h\big\rangle \big /  \big\langle 
ehf =h \big\rangle\\
B^2_{R}(e,f) & := R \big\langle h, h^*\big\rangle \big /  \big\langle 
ehf =h, fh^*e =h^* \big\rangle\\
B^3_{R}(e,f) & := R\big\langle h, h^*\big\rangle\big /  \big\langle  ehf =h, fh^*e =h^*, hh^*=e \big\rangle
\end{align*}
and 
\begin{align}\label{berglagebra}
B_R(e,f) = B^4_{R}(e,f)& := R\big\langle h, h^*\big\rangle\big / \big \langle 
ehf =h, fh^*e =h^*, h h^* = e, h^*h=f \big\rangle.
\end{align}
\end{deff}
We further define, 
$$B_R(e)=R\big\langle h\big\rangle \big/ \big \langle 
ehe =h, h^2=h \big\rangle.$$

Here $R\langle h, h^*\rangle$ is the free ring generated by symbols $h_{ij}$ and $h^*_{ji}$'s with coefficients from the $\K$-algebra $R$. We assume the symbols commute with $\K$.  In fact $R\langle h, h^*\rangle$ is 
the coproduct  $R *_{\K} \K \langle h_{ij}, h^*_{ji}\mid 1\leq i \leq m, 1\leq j \leq n\rangle$. Furthermore, the relations such as  $ehf=h$  stand for the collection of relations after multiplying the matrices and comparing the two sides of the equation.  Clearly there is a canonical $\K$-algebra homomorphim $R\rightarrow B_R^i(e,f)$, where the images of $e$ and $f$ in $B_R(e,f)$ become equivalent. 

In this note, we particularly concentrate on the Bergman algebra $B_R(e,f)$ defined in (\ref{bergmanalgebra}), as many of combinatorial algebras can be realised as such Bergman algebras.

\begin{example}
We can recover classical Cohn and Leavitt algebras (\ref{cohnialg}) and  (\ref{leviialg}), respectively, as follows. For a field $\K$, and idempotent matrices $1\in \K$ and $I_n\in \M_n(\K)$, we have 
\begin{align*}
  B^1_\K(1, I_n)  &\cong \K \langle x_1,x_2,\dots,x_n \rangle \\
  B^2_\K(1, I_n)  &\cong \K \langle x_1,x_2,\dots,x_n, x_1^*,x_2^*,\dots,x_n^*\rangle \\
  B^3_\K(I_n, 1)  &\cong C_\K(1,n)\\
  B_\K(1, I_n) = B^4_\K(1, I_n)  &\cong L_\K(1,n),
\end{align*}
furthermore
\begin{align*}
B(I_n, I_{n+k}) = B^4(I_n, I_{n+k})&\cong L_\K(n,n+k)\\
B^3(I_{n+k}, I_{n})&\cong C_\K(n,n+k)
\end{align*}

\end{example}

\begin{rmk}
    In this note we use categories of left modules over rings. One can also use categories of right modules. In that case, one would have $B^3_\K(1, I_n) \cong C_\K(1,n)$ returning to right $R$-modules.
\end{rmk}

\begin{example}
This example demonstrates how rich the structure of seemingly easy to construct Bergman algebras of (\ref{berglagebra}) could be. Consider the Bergman algebra $B_{\M_2(\K)}(1,1)$. Here $1$ is the identity matrix of the $\K$-algebra $\M_2(\K)$.   One can immediately see that 
\[B_{\M_2(\K)}(1,1) \cong \M_2(\K) *_\K \K[x,x^{-1}].
\]
Furthermore, the discussion in Section~4 of \cite{bergman74} shows that the Leavitt algebra $L_\K(2,2)$ coincides with a corner of this Bergman algebra, specifically, 
\[L_\K(2,2) \cong e_{11} B_{\M_2(\K)}(1,1) e_{11}.
\]
\end{example}

\begin{lemma}\label{bergequiiso}
Let $R$ be a $\K$-algebra, where $\K$ is a commutative ring. Let $e\in \M_m(R)$ and   $f \in \M_n(R)$ be idempotent matrices. If idempotent matrices $e'\in \M_k(R)$ and   $f' \in \M_l(R)$ are equivalent to $e$ and $f$, respectively, then $B_R^i(e,f)$ is $\K$-algebra isomorphic to $B_R^i(e',f')$, where $1\leq i \leq 4$.
\end{lemma}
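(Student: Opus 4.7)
The plan is to exhibit mutually inverse $\K$-algebra homomorphisms between $B_R^i(e,f)$ and $B_R^i(e',f')$ using the universal property of the Bergman construction described in Definition~\ref{bergmanalgebra}. Since $e \sim e'$ and $f \sim f'$, I would fix matrices $a \in \M_{m \times k}(R)$, $a^* \in \M_{k \times m}(R)$ with $aa^* = e$ and $a^*a = e'$, and similarly $b \in \M_{n \times l}(R)$, $b^* \in \M_{l \times n}(R)$ with $bb^* = f$ and $b^*b = f'$. These conjugating data will be used to transport universal generators in one Bergman algebra to elements satisfying the defining relations of the other.

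Inside $B_R^i(e', f')$, let $h', (h')^*$ denote the universal generators. I would define
\[h := a h' b^* \in \M_{m \times n}(B_R^i(e',f')), \qquad h^* := b (h')^* a^* \in \M_{n \times m}(B_R^i(e',f')),\]
and check, using the defining relations for $(e',f')$ together with $aa^*=e$, $a^*a=e'$, $bb^*=f$, $b^*b=f'$ and idempotency, that these matrices satisfy the defining relations of $B_R^i(e,f)$. For instance, one computes $hh^* = a h'(b^*b)(h')^* a^* = a h' f' (h')^* a^* = a e' a^* = (aa^*)^2 = e$, and analogously $ehf = h$, $h^*h = f$, etc. The universal property of $B_R^i(e,f)$ then produces a unique $R$-algebra homomorphism $\phi \colon B_R^i(e,f) \to B_R^i(e',f')$. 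Reversing the roles (viewing $a^*a = e'$ and $b^*b = f'$ as presenting $e \sim e'$ and $f \sim f'$ in the other direction), the symmetric assignments $\tilde h := a^* h b$ and $\tilde h^* := b^* h^* a$ inside $B_R^i(e,f)$ will likewise produce $\psi \colon B_R^i(e',f') \to B_R^i(e,f)$.

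To check $\psi \circ \phi = \id$ and $\phi \circ \psi = \id$ on universal generators, I would first observe that the single defining relation $ehf = h$ automatically forces $eh = h$ and $hf = h$ (multiply on the left by $e$, use $e^2 = e$; similarly on the right by $f$), so in particular $aa^* h bb^* = ehf = h$. Then $\psi\phi(h) = a(a^* h b) b^* = aa^* h bb^* = h$ and symmetrically $\psi\phi(h^*) = h^*$, with analogous computations for $\phi\psi$.

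The same argument covers all four variants $i = 1,2,3,4$ uniformly, since the list of relations to verify only shortens for smaller $i$ and the conjugation by $a, a^*, b, b^*$ preserves each relation independently. No genuine obstacle arises; the only bookkeeping concern is keeping track of the four distinct matrix sizes ($m, n, k, l$) and the order of multiplication, but the universal property reduces the problem to a straightforward relation check.
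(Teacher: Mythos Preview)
Your proof is correct and follows essentially the same approach as the paper: both define explicit $\K$-algebra homomorphisms by conjugating the universal generators with the equivalence matrices and verify the defining relations directly. The only organizational difference is that the paper reduces first to the case where only $e$ is replaced by $e'$ (keeping $f$ fixed) and then invokes symmetry, whereas you handle both equivalences $e\sim e'$ and $f\sim f'$ simultaneously; your version is slightly more detailed in that the paper leaves the relation checks to the reader.
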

\begin{proof}
   We show that if $e\sim e'$ then $B_R^i(e,f) \cong B_R^i(e',f)$. The general result follows by symmetry. 
   
   Since $e\sim e'$, there are matrices $x\in \M_{m\times k}(R)$, $y\in \M_{k\times m}(R)$ such that
$e=xy$, $e'=yx$. 
    Define the map $\phi:B_R^i(e,f)\rightarrow B_R^i(e',f)$ by sending $h \mapsto x h'$ and $h^*$$ 
    \mapsto  {h^*}'y$ (if $h^*$ is present in the definition of $B_R^i(e,f)$). The converse map $\psi:B_R^i(e',f)\rightarrow B_R^i(e,f)$ is defined by sending $h' \mapsto  yh$ and ${h^*}'  \mapsto h^* x$. We leave it to the reader to check these assignments respect the defining relations and give a $\K$-algebra isomorphism between the two algebras. 
\end{proof}

\begin{example}

We note that if the idempotents $e\in \M_m(R)$ and $f \in \M_n(R)$ are equivalent, i.e., $e=hh^*$ and $f=h^*h$, then by considering $h_1= ehf$ and $h_1^*=fhe$ we have $e=h_1h_1^*$ and $f=h_1^*h_1$, with, $eh_1f=h_1$ and $fh_1^*e=h_1^*$. However the algebras 
\[
B'(e,f):= R\big\langle h, h^*\big \rangle \big /  \big \langle 
h h^* = e, h^*h=f \big \rangle.
\]
and 
\[B_R(e,f) = R\big \langle h, h^*\big \rangle \big /  \big \langle 
ehf =h, fh^*e =h^*, h h^* = e, h^*h=f \big \rangle.
\] are not necessarily isomorphic, as the following example shows. 

Suppose that $\K$ is a field (or more generally an integral domain). Let $e=f\in \M_2(\K)$ be the idempotent matrix which has a one at position $(1,1)$ and zeros elsewhere (note that the projective $\K$-module $P$ defined by $e$ is just $\K$, the free $\K$-module of rank 1).
 
Let $R$ be the free $\K$-algebra $\K \langle h,h^* \rangle$ subject to the relations $\{ h h^*=e, h^* h=e, ehe=h, eh^*e=h^* \}$. It follows from the relations $ehe=h$, $eh^*e=h^*$ that all entries of $h$ and $h^*$ that are not at position $(1,1)$ are zero. So $R$ is just the Laurent polynomial ring $\K[x,x^{-1}]$.
 
Let $S$ be the free $\K$-algebra $\K \langle h,h^* \rangle$ subject to the relations  $\{ h h^*=e, h^* h=e \}$. Since $$e=hh^*=hh^*hh^*=heh^*=heeh^*$$ implies $e=eheeh^*e,$ it follows that $h_{11}h^*_{11}=1$. But we also have
 $$h_{11}h^*_{11}+h_{12}h^*_{21}=(h h^*)_{11}=e_{11}=1$$
 and hence $h_{12}h^*_{21}=0$. If both $h_{12}$ and $h^*_{21}$ are non-zero in $S$, then the ring $S$ has zero divisors, while $R=\K[x,x^{-1}]$ has not. Suppose now that one of the entries $h_{12}$ or $h^*_{21}$ is zero in $S$. Clearly
 $$h^*_{21}h_{12}+h^*_{22}h_{22}=(h^*h)_{22}=e_{22}=0$$
 and hence $h^*_{22}h_{22}=0$ since $h_{12}=0$ or $h^*_{21}=0$. We show that  $h^*_{22}$ and $h_{22}$ are non-zero in $S$, which implies that $S$ has zero divisors.
 
Let $R'$ be the ring obtained from $R=\K[x,x^{-1}]$ by adjoining two elements $y$ and $z$ subject to $yz=0$ and $zy=0$. Then $y$ and $z$ are non-zero in $R'$. Since $$\diag(x,y)\diag(x^{-1},z)=\diag(1,0)=\diag(x^{-1},z)\diag(x,y),$$
there is a ring homomorphism $S\rightarrow R'$ mapping any $r\in R$ to itself, and
 $$h_{11} \mapsto x, h^*_{11} \mapsto x^{-1}, h_{22} \mapsto y,  h^*_{22} \mapsto z,$$
 and the other entries of $h$ and $h^*$ to zero. Since the image of $h_{22}$ under this homomorphism is non-zero, $h_{22}$ must be non-zero as well. Similarly it follows that $h^*_{22}$ is non-zero. 
 Thus $S$ has zero divisors while $R$ has not.
 \end{example}

The rings introduced in Definition~\ref{bergmanalgebra}, come directly from Bergman's machinery of universal constructions, by working with idempotent matrices instead of projective modules. Let $P$ and $Q$ be non-zero finitely generated projective $R$-modules. In~\cite{bergman74}, Bergman constructed a ring $$S_1=R\big\langle h:\overline{P}\rightarrow \overline{Q}\big\rangle$$  
such that there is a ``universal'' $S_1$-module homomorphism 
$h: S_1 \otimes_R P \rightarrow S_1 \otimes_R Q$. He further constructed universal algebras 
$$S_2=R\big\langle h, h^*:\overline{P}\rightarrow \overline{Q}, \overline{Q}\rightarrow \overline{P}\big\rangle$$ and 
$$S_3=R\big\langle h, h^*:\overline{P}\rightarrow \overline{Q}, \overline{Q}\rightarrow \overline{P}, h^*h=1\big\rangle$$ and finally 
$$S = S_4=R\big\langle h, h^*:\overline{P}\cong \overline{Q}\big\rangle.$$ Thus, over $S$ we have $S\otimes_R P \cong S\otimes_R Q$. 

 Representing $P$ and $Q$ as idempotent matrices $e$ and $f$, respectively, in \S\ref{ghostagain} we will establish that the  rings $S_i$,  above coincide with the Bergman algebras $B^i_{R}(e,f)$, $1\leq i \leq 4$, in Definition~\ref{bergmanalgebra}, which explains our terminology. In fact in \S\ref{ghostagain} we will carry out the graded version of these constructions. By setting the grade group trivial we obtain the above statements. We note that the paper \cite{bergman74} works with right modules whereas we work here with left modules.

We can extend the definition of Bergman algebras to families of pairs of idempotents. Let 
$(e,f) :=\{(e_i,f_i)\}_{i\in I}$, be a collection of pairs of idempotents. Then we define 
\begin{equation}\label{bergcolec}
B_R(e,f) := R\big\langle h_i, h_i^*\mid i\in I\big\rangle \big/  \big\langle 
e_ih_if_i =h_i, 
f_ih_i^*e_i =h_i^*, 
h_i h_i^* = e_i, h_i^*h_i=f_i\mid i\in I\big\rangle.
\end{equation}

Similarly we define $B^i_R(e,f)$, for $1\leq i \leq 3$ as well as $B_R(e)$. 

\begin{example}
Let $(e,f)$ be the collection $\{(1,1),(1,1)\}$. Then 
$B_\K(e,f)=\K\langle x,x^{-1},y,y^{-1} \rangle ,$ the (non-commutative) Laurent polynomial ring with two variables. 
\end{example}

Next we define the graded version of the Bergman algebras of Definition~\ref{berglagebra}.

\begin{deff}\label{berggralgebra2}
Let $R$ be a $\Gamma$-graded $\K$-algebra, where $\K$ is a $\Gamma$-graded commutative ring. Let $e\in \M_m(R)(\overline{\beta})_0$ and   $f \in \M_n(R)(\overline \gamma)_0$ be idempotent matrices, where $\overline \beta=(\beta_1, \dots, \beta_m)$ and  $\overline \gamma=(\gamma_1, \dots, \gamma_n)$. Furthermore, let 
$h=(h_{ij})$ and 
$h^*=(h^*_{ji})$, where $1\leq i \leq m$ and $1\leq j \leq n$,  be a collection of symbols. To the symbols 
 $h_{ij}$ assign the degree $\gamma_j-\beta_i$ and $h^*_{ji}$ the degree $\beta_i-\gamma_j$ for $1\leq i \leq m$ and $1\leq j \leq n$. Then the Bergman algebra $B_R^i(e,f)$ defined in Definition~\ref{bergmanalgebra} form $\Gamma$-graded $\K$-algebras. For
$(e,f) :=\{(e_i,f_i)\}_{i\in I}$, a collection of pairs of homogeneous idempotents, the graded Bergman algebra $B^i_R(e,f)$ is defined similarly as in (\ref{bergcolec})
\end{deff}

It has already been established in the literature that Leavitt path algebras can be realised as Bergman algebras (using the module theoretical language). This identification was then used to describe the non-stable $K$-theory of Leavitt path algebras (Theorem~3.2.5 in~\cite{lpabook}).

\begin{lemma}\label{lemprehyperberg}
Suppose that $R= \Pi_{t\in T}\K$, where $\K$ is field and $T$ some finite set. For any $t\in T$, let $\epsilon_t$ be the element of $R$ whose $t$-component is $1$ and whose other components are $0$. Then any idempotent matrix with entries in $R$ is equivalent to a matrix of the form $\diag(\epsilon_{t_{1}},\dots,\epsilon_{t_{n}})$, where $t_1,\dots,t_n\in T$.
\end{lemma}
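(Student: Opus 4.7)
The plan is to reduce the statement to a componentwise problem using the product decomposition of $R$, and then invoke the classical fact that over a field, Murray--von Neumann equivalence of idempotent matrices is detected by rank. The decomposition $R=\prod_{t\in T}\K$ induces, for each pair of dimensions, a compatible ring/module isomorphism $\M_{m\times n}(R)\cong \prod_{t\in T}\M_{m\times n}(\K)$, under which a matrix $a\in\M_{m\times n}(R)$ corresponds to a tuple $(a_t)_{t\in T}$. Matrix multiplication is componentwise, so an idempotent $e\in\M_m(R)$ corresponds to a tuple $(e_t)_{t\in T}$ with each $e_t\in\M_m(\K)$ idempotent, and $e\sim f$ in $R$ if and only if $e_t\sim f_t$ in $\K$ for every $t$.

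Next, for each $t\in T$, let $r_t:=\mathrm{rank}_{\K}(e_t)$. A standard linear algebra argument (factoring $e_t$ through its image) produces $a_t\in\M_{m\times r_t}(\K)$ and $b_t\in\M_{r_t\times m}(\K)$ with $e_t=a_tb_t$ and $b_ta_t=I_{r_t}$. Consequently $e_t\sim I_{r_t}$, and more generally any two idempotents over $\K$ of the same rank are Murray--von Neumann equivalent.

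Now set $n:=\sum_{t\in T}r_t$ (finite since $T$ is finite) and choose an assignment $t_1,\dots,t_n\in T$ in which each $t\in T$ occurs exactly $r_t$ times. Then the matrix $f:=\diag(\epsilon_{t_1},\dots,\epsilon_{t_n})\in\M_n(R)$ has $t$-component $f_t\in\M_n(\K)$ equal to the diagonal $0$-$1$ matrix with a $1$ at position $i$ precisely when $t_i=t$; in particular $\mathrm{rank}_{\K}(f_t)=r_t$. By the previous paragraph, for every $t$ there exist $h_t\in\M_{m\times n}(\K)$ and $h_t^{*}\in\M_{n\times m}(\K)$ with $h_th_t^{*}=e_t$ and $h_t^{*}h_t=f_t$. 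Assembling these into $h:=(h_t)_{t\in T}\in\M_{m\times n}(R)$ and $h^{*}:=(h_t^{*})_{t\in T}\in\M_{n\times m}(R)$ yields $hh^{*}=e$ and $h^{*}h=f$, so $e\sim f$.

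There is no substantial obstacle: once the product decomposition is noted, the proof is routine. The only point requiring care is the bookkeeping of sizes, namely choosing a single $n$ that works simultaneously for all $t\in T$ and producing the rectangular witnesses $h_t,h_t^{*}$ of uniform size $m\times n$ and $n\times m$ respectively, so that they can be packaged into matrices over $R$.
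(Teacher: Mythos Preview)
Your proof is correct. You proceed by decomposing $R=\prod_{t\in T}\K$ componentwise, reducing to the classical fact that over a field Murray--von Neumann equivalence of idempotents is governed by rank, and then reassembling. The only subtlety---producing rectangular witnesses $h_t,h_t^{*}$ of uniform shape $m\times n$, $n\times m$ for all $t$ simultaneously---is handled correctly by your choice $n=\sum_t r_t$.

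The paper takes a different, more conceptual route: it invokes the bijection between equivalence classes of idempotent matrices over $R$ and isomorphism classes of finitely generated projective $R$-modules, and then the structure theorem that every such module over $\prod_{t\in T}\K$ decomposes as $P_{t_1}\oplus\cdots\oplus P_{t_n}$ with $P_t=R\epsilon_t$. Your argument is essentially an unpacking of that structure theorem at the level of matrices: the componentwise decomposition and the rank count are exactly how one proves the module statement. The paper's proof is shorter and situates the lemma in the language used elsewhere in the paper (projective modules), while yours is more self-contained and avoids appealing to the module--idempotent correspondence.
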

\begin{proof}
Recall that there is a $1-1$ correspondence between equivalence classes of idempotent matrices over $R$ and isomorphism classes of finitely generated projective $R$-modules. For any $t\in T$, let $P_t=R\epsilon_t$. It is well-known that any finitely generated projective $R$-module $P$ is isomorphic to a direct sum $P=P_{t_1}\oplus\dots\oplus P_{t_n}$, where $t_1,\dots,t_n\in T$. It follows that the corresponding idempotent matrix equals (up to equivalence) $(\epsilon_{t_{1}})\oplus \dots\oplus (\epsilon_{t_{n}})=\diag(\epsilon_{t_{1}},\dots,\epsilon_{t_{n}})$.
\end{proof}
    
The following theorem shows that if the $\K$-algebra $R$ is  commutative $\K$-semisimple, then the Bergman algebras $B_R(e,f)$ of (\ref{bergcolec}) coincide with the class of hyper Leavitt path algebras of Section~\ref{hyperraim}.  

\begin{thm}\label{lemhyperberg}
Let $\K$ be a field, $\Gamma$ an abelian group and $R$ a commutative semisimple $\K$-algebra considered as $\Gamma$-graded algebra concentrated in degree zero. Let $(e,f)$ be a family of pairs of homogeneous idempotents. Then the Bergman algebra $B_R(e,f)$ as defined in {\upshape (\ref{bergcolec})} is $\Gamma$-graded isomorphic to a unital hyper Leavitt path algebra $L_\K(H)$ whose grading is induced by a weight map. Conversely, any unital hyper Leavitt path algebra $L_\K(H)$ whose grading is induced by a weight map is graded isomorphic to a Bergman algebra $B_R(e,f)$, where $R$ is a commutative semisimple $\K$-algebra concentrated in degree zero. 
\end{thm}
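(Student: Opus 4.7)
The plan is to match the defining generators and relations of a Bergman algebra over a commutative semisimple $\K$-algebra with those of a hyper Leavitt path algebra in the matrix formulation of Remark \ref{rmkhyperrel}.

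For the forward direction, write $R = \prod_{t\in T}\K$ with $T$ finite, and let $(e,f) = \{(e_i,f_i)\}_{i\in I}$ with $e_i \in \M_{m_i}(R)(\overline{\beta}^i)_0$ and $f_i \in \M_{n_i}(R)(\overline{\gamma}^i)_0$. Since $R$ is concentrated in degree zero, the $(j,k)$-entry of $e_i$ is zero unless $\beta^i_j = \beta^i_k$, so $e_i$ is block-diagonal along the partition of indices by equal $\beta^i$-values and each block has entries in $R$. Applying Lemma \ref{lemprehyperberg} block-by-block diagonalises $e_i$ into $\diag(\epsilon_{t^i_1},\dots,\epsilon_{t^i_{m_i}})$ and, analogously, $f_i$ into $\diag(\epsilon_{s^i_1},\dots,\epsilon_{s^i_{n_i}})$; the conjugating matrices have entries in $R$, hence sit in degree zero, so a graded refinement of Lemma \ref{bergequiiso} preserves $B_R(e,f)$ up to graded isomorphism under this replacement. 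Now set $H^0 := T$, $H^1 := \{h^i \mid i \in I\}$, $s(h^i) := (\epsilon_{t^i_j})_j$, $r(h^i) := (\epsilon_{s^i_k})_k$, and $w(h^i_{jk}) := \gamma^i_k - \beta^i_j$; the cocycle identity required in Remark \ref{rmkhyperweight} is immediate by telescoping, so $w$ is a valid $\Gamma$-weight map on $H$. The assignment $\epsilon_t \mapsto t$, $(h_i)_{jk} \mapsto h^i_{jk}$, $(h_i^*)_{kj} \mapsto (h^i_{jk})^*$ carries the Bergman relations onto the matrix relations (I)--(IV) of Remark \ref{rmkhyperrel} verbatim and preserves degrees, producing the desired graded $\K$-algebra isomorphism $B_R(e,f) \cong L_\K(H)$.

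For the reverse direction, let $L_\K(H)$ be a unital hyper Leavitt path algebra graded by a weight map $w$. Then $H^0$ is finite by \cite[Proposition 8]{Raimund2}, so $R := \prod_{v \in H^0}\K$ concentrated in degree zero is a commutative semisimple $\K$-algebra. For each $h \in H^1$, fix the reference indices $(i_h, j_h)$ used in defining $w$ and set $\beta^h_i := w(h_{i_h,j_h}) - w(h_{i,j_h})$ and $\gamma^h_j := w(h_{i_h,j})$; the cocycle identity on $w$ yields $\gamma^h_j - \beta^h_i = w(h_{ij})$ for all $i \in I_h$, $j \in J_h$. Then $e_h := \diag(s(h)_i)_i \in \M_{|I_h|}(R)(\overline{\beta}^h)_0$ and $f_h := \diag(r(h)_j)_j \in \M_{|J_h|}(R)(\overline{\gamma}^h)_0$ are homogeneous idempotents, and the forward correspondence read in reverse yields a graded $\K$-algebra isomorphism $L_\K(H) \cong B_R(\{(e_h,f_h)\}_{h\in H^1})$.

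The main technical issue is making precise the graded refinements of Lemma \ref{lemprehyperberg} and Lemma \ref{bergequiiso}, that is, ensuring that the diagonalising matrices can be chosen simultaneously respecting the block structure imposed by the shift vector and the degree-zero condition. This forces the target shift vector of the diagonal idempotent to be the original $\overline{\beta}^i$-values repeated with multiplicity according to the decomposition of each block's finitely generated projective module into primitive summands $\K \cdot \epsilon_t$. Once this bookkeeping is in place, the remainder is a line-by-line comparison of presentations.
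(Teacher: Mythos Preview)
The proposal is correct and follows essentially the same route as the paper: diagonalise the idempotents over $R=\prod_T\K$ via Lemmas \ref{lemprehyperberg} and \ref{bergequiiso}, read off a hypergraph with weight map $w(h^i_{jk})=\gamma^i_k-\beta^i_j$, and match presentations through Remark \ref{rmkhyperrel}, with the converse obtained by reversing this dictionary. Your care about the block-diagonal structure forced by $R$ being concentrated in degree zero, and the need for a graded version of Lemma \ref{bergequiiso}, is in fact more explicit than the paper's own treatment, which invokes those lemmas directly without spelling out the graded bookkeeping.
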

\begin{proof}
 First we show that if $R$ is a $\Gamma$-graded commutative semisimple $\K$-algebra concentrated in degree zero, then $B_R(e,f)$ is graded isomorphic to a unital hyper Leavitt path algebra $L_\K(H)$ whose grading is induced by a weight map. Write $R= \Pi_{t\in T}\K$, where $T$ is some finite set. For any $t\in T$ we let $\epsilon_t$ be the element of $R$ whose $t$-component is $1$ and whose other components are $0$. Moreover, write $(e,f)=\{(e_u,f_u)\}_{u\in U}$, where for any $u\in U$, $e_u\in \Mat_{m_u}(R)(\overline \alpha_u)_0$ and $f_u\in \Mat_{n_u}(R)(\overline \beta_u)_0$, $\overline \alpha_u \in \Gamma^{m_u}$ and $\overline \beta_u \in \Gamma^{n_u}$, are homogeneous idempotent matrices. Since $R= \Pi_{t\in T}\K$, by Lemma~\ref{bergequiiso} and \ref{lemprehyperberg}, we may assume that for any $u\in U$, $e_u=\diag(\epsilon_{t_{u,1}},\dots,\epsilon_{t_{u,m_u}})$ and $f_u=\diag(\epsilon_{t'_{u,1}},\dots,\epsilon_{t'_{u,n_u}})$, for some $t_{u,1},\dots, t_{u,m_u}, t'_{u,1},\dots, t'_{u,n_u}\in T$. Define a hypergraph $H=(H^0, H^1, s,r)$ by $H^0=\{v_t\mid t\in T\}$, $H^1=\{h_u\mid u\in U\}$, $I_{h_u}=\{1,\dots,m_u\}~(h_u\in H^1)$, $J_{h_u}=\{1,\dots,n_u\}~(h_u\in H^1)$, $s(h_u)_i=v_{t_{u,i}}~(h_u\in H^1, i\in I_{h_u})$ and $r(u)_j=v_{t'_{u,j}}~(h_u\in H^1, j\in J_{h_u})$. It follows from Remark \ref{rmkhyperrel} that $B_R(e,f)\cong L_\K(H)$ as $\K$-algebras. Since $H^0=T$ is finite, $L_\K(H)$ is unital (see \cite[Proposition 8]{Raimund2}). Now define a $\Gamma$-weight map $w$ for $H$ by $w((h_u)_{ij})=(\beta_u)_j-(\alpha_u)_i$ for any $h_u\in H^1$, $i\in I_{h_u}$ and $j\in J_{h_u}$. One checks easily that $w$ is indeed a $\Gamma$-weight map for $H$, and that the isomorphism $B_R(e,f)\cong L_\K(H)$ is graded with respect to the $\Gamma$-grading on $L_\K(H)$ induced by $w$.

 Now let $L_\K(H)$ be a unital hyper Leavitt path algebra whose grading is induced by a $\Gamma$-weight map $w$. Since $L_\K(H)$ is unital, $H^0$ is finite. Let $R:= \Pi_{v\in H^0}\K$. For any $v\in H^0$ we let $\epsilon_v$ be the element of $R$ whose $v$-component is $1$ and whose other components are $0$. We may assume that for any $h\in H^1$, $I_h=\{1,\dots, m_h\}$ and $J_h=\{1,\dots,n_h\}$, and moreover that $i_h=1=j_h$. Since $w$ is a $\Gamma$-weight map for $H$, we have $w(h_{ij})=w(h_{i1})-w(h_{11})+w(h_{1j})$ for any $h\in H^1$. For any $h\in H^1$, $1\leq i\leq m_h$ and $1\leq j\leq n_h$ set $\alpha_{h,i}:=-w(h_{i1})+w(h_{11})$ and $\beta_{h,j}:=w(h_{1j})$. Moreover, set $\overline \alpha_h:=(\alpha_{h,1},\dots, \alpha_{h,m_h})$ and $\overline \beta_h:=(\beta_{h,1},\dots, \beta_{h,n_h})$. For any $h\in H^1$ let $e_h\in \Mat_{m_h}(R)(\overline \alpha_h)_0$ be the idempotent matrix whose entry at position $(i,i')$ is $\delta_{ii'}\epsilon_{s(h)_i}$, and $f_h\in \Mat_{n_h}(R)(\overline \beta_h)_0$ the idempotent matrix whose entry at position $(j,j')$ is $\delta_{jj'}\epsilon_{r(h)_j}$. It follows from Remark \ref{rmkhyperrel} that $L_\K(H)\cong B_R(e,f)$ as $\K$-algebras, where $(e,f)=\{(e_h,f_h)\}_{h\in H^1}$. One checks easily that this isomorphism is graded as well.
\end{proof}

\section{Graded Bergman universal construction}\label{bergconscore}

In this section we start by extending Bergman's work to the graded setting. We will see that if one wants to realise a conical $\Gamma$-monoid as a non-stable $K$-theory of a ring, that ring needs to be $\Gamma$-graded and the type of $K$-theory to be considered should be the graded $K$-theory. 

Throughout this section, $\Gamma$ is an abelian group, $\K$ a commutative $\Gamma$-graded ring and $R$ a $\Gamma$-graded $\K$-algebra, i.e., $R$ is equipped with a graded homomorphism $\K\rightarrow Z(A)$.

\subsection{Graded universal morphisms}\label{bergconst}

We start with a simple lemma which will be used throughout the paper.  

\begin{lemma}
\label{lemma-resolution}
Let $F, M$ and $X, Y$ be $\G$-graded $R$-modules with the exact sequence of $\G$-graded $R$-modules \[\xymatrix{Y\ar[r]^{u} &X \ar[r]^{\varepsilon} &M \ar[r] &0,} \] and a graded idempotent endomorphism  $e: F\xrightarrow{} F$ (i.e., $e\circ e=e$). Set  $P := {\rm Im}\, e$. Then for any graded module homomorphism $g: M\xrightarrow{} P$, there is a unique graded module homomorphism $h:X\xrightarrow{} F$ in the following diagram   \[\xymatrix{Y\ar[r]^{u}& X \ar[r]^{\varepsilon} \ar[d]^{h}&M\ar[r]\ar[d]^g &0\\ & F \ar[r]^{e} \ar[d]^{1-e}& P \ar[r]&0\\ & F&& }\] satisfying $h\circ u=0$, $(1-e)\circ h=0$ and $e\circ h=g\circ \varepsilon$. On the other hand, for any graded module homomorphism $h:X\xrightarrow{} F$ with $h\circ u=0=(1-e)\circ h$, there is a unique graded module homomorphism $g:M\xrightarrow{} P$ such that $e\circ h=g\circ \varepsilon$. Furthermore, we have that $g: M\xrightarrow[]{} P$ is the zero map if and only if the corresponding $h:X\xrightarrow[]{} F$ is the zero map.

%In particular, if $\G$ is the trivial group, then the grading is trivial and thus we have the non-graded statement.

\begin{comment}
suppose that we have the exact sequence of $R$-modules \[\xymatrix{X_1\ar[r]^{u} &X_0 \ar[r]^{\varepsilon} &M \ar[r] &0}, \] and a module homomorphism $e: F\xrightarrow{} F$ with $e^2=e, \Ker e=P$. Then for any module homomorphism $g: M\xrightarrow{} P$ there is a unique module homomorphism $h:X_0\xrightarrow{} F$ in the following diagram   \[\xymatrix{X_1\ar[r]^{u}& X_0 \ar[r]^{\varepsilon} \ar[d]^{h}&M\ar[r]\ar[d]^g &0\\ & F \ar[r]^{1-e} \ar[d]^e& P \ar[r]&0\\ & F&& }\] satisfying $(1-e)h=g\varepsilon$, annihilating $u$ and right annihilating $e$. On the other hand, for any module homomorphism $h:X_0\xrightarrow{} F$ with $hu=0=eh$, there is a unique module homomorphism $g:M\xrightarrow{} P$ such that $(1-e)h=g\varepsilon$.
\end{comment}
\end{lemma}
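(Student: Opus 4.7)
The proof has two easy ingredients. First, since $e$ is an idempotent endomorphism, there is the direct sum decomposition $F = \Ima e \oplus \Ker e = P\oplus\Ker e$; in particular, $e$ restricts to the identity on $P$ and $1-e$ vanishes on $P$. Second, exactness of $Y\xrightarrow{u} X\xrightarrow{\varepsilon} M\to 0$ is precisely the statement that $\varepsilon$ is the graded cokernel of $u$, so graded maps out of $M$ correspond bijectively, via precomposition with $\varepsilon$, to graded maps out of $X$ that annihilate $u$. Both facts are elementary and both live entirely inside the abelian category $R\text{-}\GR$, so no non-graded arguments are needed.

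For the first direction, given a graded homomorphism $g\colon M\to P$, the plan is to define $h := \iota\circ g\circ\varepsilon$, where $\iota\colon P\hookrightarrow F$ denotes the inclusion. Routine verifications using $\varepsilon\circ u = 0$, $(1-e)\circ\iota = 0$, and $e\circ\iota = \iota$ yield the three required identities $h\circ u=0$, $(1-e)\circ h=0$ and $e\circ h=g\circ\varepsilon$. Uniqueness is immediate: any $h'\colon X\to F$ satisfying these three identities obeys $h' = e\circ h' + (1-e)\circ h' = e\circ h' = g\circ\varepsilon$, so $h'$ is pinned down.

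For the converse, suppose $h\colon X\to F$ satisfies $h\circ u = 0$ and $(1-e)\circ h = 0$. The second equation gives $h = e\circ h$, hence $\Ima h\subseteq P$, so $h$ factors as $\iota\circ\bar h$ for a unique graded $\bar h\colon X\to P$. The first equation, together with the universal property of $\varepsilon$ as the cokernel of $u$, then produces a unique graded $g\colon M\to P$ with $g\circ\varepsilon = \bar h$, which rephrases as $e\circ h = h = g\circ\varepsilon$ in $F$. The final equivalence $g = 0 \iff h = 0$ follows at once from $h = \iota\circ g\circ\varepsilon$ combined with surjectivity of $\varepsilon$.

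There is no genuine obstacle here; the only point requiring care is the consistent bookkeeping between the inclusion $\iota\colon P\hookrightarrow F$ and the projection $e\colon F\twoheadrightarrow P$, so that the identity $e\circ h = g\circ\varepsilon$ (interpreted as maps into $F$) is correctly recognised as the identity $\bar h = g\circ\varepsilon$ (interpreted as maps into $P$) after the factorisation of $h$ through $P$.
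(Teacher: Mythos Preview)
Your proof is correct and essentially the same as the paper's: both define $h$ as $g\circ\varepsilon$ (viewed in $F$ via the inclusion of $P$), verify the three identities directly, obtain uniqueness from $h' = e\circ h' + (1-e)\circ h' = e\circ h'$, and handle the converse via the cokernel universal property of $\varepsilon$. The only cosmetic difference is that you keep explicit track of the inclusion $\iota\colon P\hookrightarrow F$, whereas the paper silently identifies $P$ with its image in $F$; in the converse you first factor $h$ through $P$ and then invoke the cokernel, while the paper first passes to $\tau\colon M\to F$ and then projects by $e$, but these are the same argument in a different order.
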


\begin{proof} For the idempotent map $e:F\xrightarrow{} F$, it is a fact that 
\begin{align*}
    \begin{cases}\Ker e={\rm Im} (1-e), &\\
        {\rm Im}\,e=\Ker (1-e),&\\
        F=\Ker e\oplus {\rm Im} \,e.&
    \end{cases}
\end{align*} Define $h(x)=(g\circ \varepsilon)(x)$ for any $x\in X$. As $e(p)=p$ for any $p\in P$, we have $e\circ h=h=g\circ\varepsilon$. Hence $(1-e)\circ h=(1-e)\circ e\circ h=0$ and $h\circ u=g\circ\varepsilon\circ u= 0$. For the uniqueness of $h$, suppose that there exists $h':X\xrightarrow{} F$ such that $e\circ h'=g\circ \varepsilon$, $h'\circ u=0$ and $(1-e)\circ h'=0$. Then $h'(x)=h'(x)-((1-e)\circ h')(x)=e\circ h')(x)=(g\circ \varepsilon) (x)=h(x)$ for any $x\in X$. Therefore $h'=h$.  One observes that if $g: M\xrightarrow[]{} P$ is the zero map, one takes $h$ to be the zero map which is the desired map. Obviously if $h:X\xrightarrow[]{} F$ is the zero map, then $g=0$. 

On the other hand, when $h\circ u=0$ by the universal property of cokernel there exists $\tau: M\xrightarrow{} F$ such that $h=\tau \circ \varepsilon$. Set $g=e\circ\tau$. Then $e\circ h=g\circ\varepsilon$. For the uniqueness of $g$, suppose there exists another $g':M\xra{}P$ satisfying $e\circ h=g'\circ \varepsilon$. Then we have $g\circ\varepsilon=e\circ h=g'\circ\varepsilon$. Since $\varepsilon$ is surjective, we have $g=g'$. 
\end{proof}

We are in position to extend two crucial theorems of Bergman on universal ring constructions~\cite[Theorems~3.1 and~3.2]{bergman74} to the setting of graded rings.

\begin{thm} \label{thm-1} Let $R$ be a $\Gamma$-graded $\K$-algebra. Suppose that $M$ is a $\Gamma$-graded $R$-module and $P$ a graded finitely generated projective $R$-module. Then there exists a $\Gamma$-graded $R\text{-ring}_{\K}$, $S$, with a universal graded module homomorphism $f: S\otimes_R M \rightarrow{} S\otimes_R P$; that is, given any $\G$-graded $R\text{-ring}_{\K}$, $T$, and any graded $T$-module homomorphism $g:T\otimes_R M\rightarrow{} T\otimes_R P$, there exists a unique graded homomorphism $S\rightarrow{} T$ of $R\text{-rings}_{\K}$ such that $g=T\otimes_S f$. 
\end{thm}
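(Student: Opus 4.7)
The plan is to mimic Bergman's original argument \cite[Theorem 3.1]{bergman74}, replacing modules, free resolutions, and matrices by their graded counterparts developed in \S\ref{matrixrep}, and then invoking Lemma~\ref{lemma-resolution} to identify graded maps $\overline{M}\to\overline{P}$ with graded maps out of a graded free cover of $M$ satisfying two vanishing conditions.

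First I would set up the data. Since $P$ is a graded finitely generated projective $R$-module, there exist $\overline{\alpha}=(\alpha_{1},\dots,\alpha_{n})\in\Gamma^{n}$ and a homogeneous idempotent $e=(e_{i\ell})\in\M_{n}(R)(\overline{\alpha})_{0}$ with $P\cong eF$, where $F=\bigoplus_{i=1}^{n}R(\alpha_{i})$. Next, I would choose a graded free presentation
\[
\bigoplus_{k\in K}R(\gamma_{k})\;\xrightarrow{\;u\;}\;\bigoplus_{j\in J}R(\beta_{j})\;\xrightarrow{\;\varepsilon\;}\;M\longrightarrow 0,
\]
writing $Y$ and $X$ for the two free modules. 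By the discussion around \eqref{Hom}--\eqref{aidansan}, $u$ is encoded by a matrix $U=(u_{kj})$ with $u_{kj}\in R_{\beta_{j}-\gamma_{k}}$ (finite support in each row).

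Now construct $S$ by generators and relations. Let $\tilde{R}$ denote the graded $R\text{-ring}_{\K}$ obtained from $R$ by freely adjoining symbols $h_{ji}$ ($j\in J$, $1\le i\le n$) with $\deg(h_{ji})=\alpha_{i}-\beta_{j}$; set $H=(h_{ji})$, a formal matrix in $\M_{J\times n}(\tilde R)[\overline{\beta}][\overline{\alpha}]$. Define
\[
S\;:=\;\tilde R\,\big/\,I,
\]
where $I$ is the two-sided ideal generated by the homogeneous elements
\[
\Bigl\{\;\sum_{j\in J}u_{kj}h_{ji}\;\bigm|\;k\in K,\;1\le i\le n\Bigr\}\;\cup\;\Bigl\{\;h_{ji}-\sum_{\ell=1}^{n}h_{j\ell}e_{\ell i}\;\bigm|\;j\in J,\;1\le i\le n\Bigr\},
\]
which in matrix form encode $U\!\cdot\!H=0$ and $H\!\cdot\!(I_{n}-E)=0$ (i.e.\ $H=HE$), where $E=(e_{i\ell})$. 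Since the relations are homogeneous, $S$ inherits the structure of a $\Gamma$-graded $R\text{-ring}_{\K}$. The matrix $H$ defines a graded $S$-module homomorphism $h_{S}\colon S\otimes_{R}X\to S\otimes_{R}F$; the relation $U\!\cdot\!H=0$ gives $h_{S}\circ(S\otimes u)=0$ and $H=HE$ gives $(1-S\otimes e)\circ h_{S}=0$. By Lemma~\ref{lemma-resolution} applied over $S$, $h_{S}$ factors through $\varepsilon$ and lands in $e(S\otimes_{R}F)=S\otimes_{R}P$, yielding the desired graded map $f\colon S\otimes_{R}M\to S\otimes_{R}P$.

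Finally I would verify the universal property. Given a graded $R\text{-ring}_{\K}$ $T$ and a graded $T$-module homomorphism $g\colon T\otimes_{R}M\to T\otimes_{R}P$, Lemma~\ref{lemma-resolution} (applied to the exact sequence $T\otimes Y\to T\otimes X\to T\otimes M\to 0$ and the idempotent $1\otimes e$) produces a unique graded homomorphism $\tilde h\colon T\otimes_{R}X\to T\otimes_{R}F$ with $\tilde h\circ(T\otimes u)=0$, $(1-T\otimes e)\circ\tilde h=0$, and $(T\otimes e)\circ\tilde h=g\circ\varepsilon$. The matrix $(\tilde h_{ji})$ of $\tilde h$ has entries $\tilde h_{ji}\in T_{\alpha_{i}-\beta_{j}}$, so the assignment $h_{ji}\mapsto\tilde h_{ji}$ extends uniquely to a graded $\K$-algebra homomorphism $\tilde R\to T$ under $R$. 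The two families of generators of $I$ are sent to $0$ in $T$ because $\tilde h$ satisfies the corresponding matrix identities, so the map descends to a unique graded $R\text{-ring}_{\K}$ homomorphism $\pi\colon S\to T$ with $g=T\otimes_{S}f$. Uniqueness of $\pi$ is forced by uniqueness of $\tilde h$ in Lemma~\ref{lemma-resolution}. I expect the main bookkeeping obstacle to be keeping the matrix conventions (and degree shifts) in \eqref{Hom}--\eqref{jan16dis} consistent throughout, as everything else reduces to a direct transcription of Bergman's argument into the graded language.
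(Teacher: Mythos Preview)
Your proposal is correct and follows essentially the same approach as the paper's own proof: both fix a graded free presentation of $M$, realise $P$ as the image of a homogeneous idempotent on a finitely generated graded free module, invoke Lemma~\ref{lemma-resolution} to identify graded maps $\overline M\to\overline P$ with matrices satisfying the two vanishing conditions $UH=0$ and $H(1-E)=0$, and then define $S$ by freely adjoining the matrix entries $h_{ji}$ with the prescribed degrees and quotienting by these homogeneous relations. The only differences are cosmetic (you swap the roles of the letters $\alpha$ and $\gamma$, and you spell out the construction of the universal $f$ over $S$ a bit more explicitly than the paper does).
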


\begin{proof}  We first write $M$ as a cokernel of the graded $R$-module homomorphism $u:\bigoplus_{i\in I}R(\a_i)\xrightarrow{} 
 \bigoplus_{j\in J} R(\b_j)$  of graded free $R$-modules;  that is $$\bigoplus_{i\in I}R(\a_i)\stackrel{u}{\longrightarrow}\bigoplus_{j\in J} R(\b_j)
\stackrel{\varepsilon}{\longrightarrow} M \longrightarrow 0,$$ is an exact sequence of graded $R$-modules. Using the matrix representation of  \S\ref{matrixrep}, we have $$u=(u_{ij})_{i\in I, j\in J}\in \mathbb M_{I\times J}(R)[\overline{\a}][\overline{\b}].$$  Here $\overline{\a}=(\a_i)_{i\in I}$ and $\overline{\b}=(\b_j)_{j\in J}$. 
%Then for each $1_{\b_j}\in R(\b_j)$ with $|1_{\b_j}|=-\b_j$ we can write $u(1_{\b_j})=\sum_{i\in I}u_{\a_i\b_j}$, where $u_{\a_i\b_j}\in R(\a_i)$ ($u_{\a_i\b_j}\in R_{\a_i-\b_j}$ zero for almost all $i$ in $I$). 
We also write $P$ as a direct summand of a graded free $R$-module $\bigoplus_{m\in K} R(\g_m)$ (see~(\ref{projidem1})). Hence, $P$ can be written as the image of a graded idempotent endomorphism $e: \bigoplus_{m\in K} R(\g_m) \xrightarrow{} \bigoplus_{m\in K} R(\g_m)$.  Similarly, by~\S\ref{matrixrep} we have $1-e=(v_{mm'})_{m,m'\in K}\in \mathbb M_{K\times K}(R)[\overline{\g}][\overline{\g}] $ with $\overline{\g}=(\g_m)_{m\in K}$.
%defined by $e(1_{\g_m})=\sum_{m'\in K}e_{\g_{m'}\g_m}$  for $e_{\g_{m'}\g_m}$ in $R(\g_{m'})$ ($e_{\g_{m'}\g_m} \in R_{\g_{m'}-\g_m}$).

For any $\G$-graded $R\text{-ring}_{\K}$ $T$ and any graded module homomorphism $g:T\otimes_R M \xrightarrow{} T\otimes_R P$, we have the following diagram with the first two rows exact
\begin{equation}
	\label{diagram}
\xymatrix{T\otimes_R (\bigoplus_{i\in I}R(\a_i))\ar[r]^{\overline{u}}& T\otimes_R (\bigoplus_{j\in J} R(\b_j))\ar[r]^>>>>>{\overline{\varepsilon}} \ar[d]^{h}&T\otimes_R M\ar[r]\ar[d]^{g} &0\\ & T\otimes_R (\bigoplus_{m\in K}R(\g_m)) \ar[r]^>>>>>{\overline{e}} \ar[d]^{\overline{1-e}}& T\otimes_R P\ar[r]&0\\ & T\otimes_R (\bigoplus_{m\in K}R(\g_m))&& },
\end{equation} where by Lemma \ref{lemma-resolution}, $g$ is uniquely determined by a graded module homomorphism $h$ with $h\circ\overline{u}=0=\overline{1-e}\circ h$.
% and $\overline{1-e}\circ h=g\circ \overline{\varepsilon}$. 
But in matrix form, $$h=(h_{jm})_{j\in J, m\in K}\in \mathbb M_{J\times K}(T)[\overline{\b}][\overline{\g}],$$ with $h_{jm}\in T_{\g_m-\b_j}$. 
The condition $h\circ \overline{u}=0$  is equivalent to
\begin{equation}\label{matrixrelation-one}(u_{ij})_{i\in I, j\in J}\cdot (h_{jk})_{j\in J, m\in K}=0
\end{equation}  in $\mathbb M_{I\times K}(T)[\overline{\a}][\overline{\g}]$, that is,  \begin{equation}\label{relation-one}
\sum_{j\in J}u_{ij}h_{jm}=0 
\end{equation} for $i\in I, m\in K$.  Similarly, the condition  $\overline{1-e}\circ h=0$ is equivalent to 
\begin{equation}
\label{matrixrelation-two}(h_{jm})_{j\in J, m\in K}\cdot (v_{mm'})_{m,m'\in K} =0\end{equation} in $\mathbb M_{J\times K}(T)[\overline{\b}][\overline{\g}]$, that is 
 \begin{equation}
 \label{relation-two}
 \sum_{m\in K}h_{jm}v_{mm'}=0	
 \end{equation} for $j\in J, m'\in K$.

%We write $h(1_{\b_j})=\sum_{m\in K}h_{\g_m\b_j}$ with $h_{\g_m\b_j}\in T$ of degree $\g_m-\b_j.$ Since $K$ is finite, we have that $h\overline{u}=0$ implies \begin{equation}
%\label{relation-one}
%\sum_{j\in J}h_{\g_m\b_j}u_{\b_j\a_i}=0
%\end{equation}
% for $m\in K, i\in I$ and that $\overline{e}h=0$ implies 
 %\begin{equation}
 %\label{relation-two}
 %\sum_{m\in K}e_{\g_{m'}\g_m}h_{\g_m\b_j}=0	
% \end{equation} for $m'\in K, j\in J$.

Let us define $S_0$ as the $R\text{-ring}_{\K}$ obtained by adjoining generators $h_{jm}$, for $j\in J, m\in K$ to the ring $R$.  Set the degrees of $h_{jm}$ to $\g_m-\b_j$, where $j\in J$ and $m\in K$. Then $S_0$ is a $\G$-graded $R\text{-ring}_{\K}$. Define $S$ to be the quotient of $S_0$ subject to the two relations \eqref{relation-one} and \eqref{relation-two} which are homogeneous of degree $\gamma_m - \alpha_i$ and $\gamma_{m'}-\beta_j$, respectively, in the graded ring $S_0$. Then $S$ is the desired $\G$-graded $R\text{-ring}_{\K}$.
\end{proof}

\begin{rmk}
\label{rmkthm}
\begin{enumerate}[\upshape(1)]
\item  The $\G$-graded $R\text{-ring}_{\K}$ $S$ is independent of the choices of resolutions for $M$ and $P$. One observes that $S$ has the universal property for fixed resolutions of $M$ and $P$. 
\item We prove that applying the forgetful functor $\mathcal{F}$ to the $\G$-graded ring $S$ with a universal graded module homomorphism in Theorem \ref{thm-1} we have the ring in Bergman's Theorem 3.1 \cite{bergman74}. Actually when we apply the forgetful functor $\mathcal{F}$ to the graded projective resolution $\bigoplus_{i\in I}R(\a_i)\xrightarrow{u} 
 \bigoplus_{j\in J} R(\b_j)
\xrightarrow{\varepsilon} M \xrightarrow{} 0$, we have the non-graded projective resolution $\bigoplus_{i\in I}R\xrightarrow{u} 
 \bigoplus_{j\in J} R
\xrightarrow{\varepsilon} M \xrightarrow{} 0$. Similarly if we apply the forgetful functor $\mathcal{F}$ to the idempotent endomorphism $e: \bigoplus_{m\in K} R(\g_m) \xrightarrow{} \bigoplus_{m\in K} R(\g_m) $, then we have the idempotent endomorphism $e: \bigoplus_{K} R\xrightarrow{} \bigoplus_{K}R $. 

Given us any $R\text{-}\text{ring}_{\K}$ $T'$ and any module homomorphism $g':T'\otimes_R M\xrightarrow{} T'\otimes_R P$,  
\begin{equation}
	\label{dd}
\xymatrix{T'\otimes_R(\bigoplus_{i\in I}R)\ar[r]^{\overline{u}}& T'\otimes_R (\bigoplus_{j\in J} R) \ar[r]^>>>>>{\overline{\varepsilon}} \ar[d]^{h'}&T'\otimes_R M\ar[r]\ar[d]^{g'} &0\\ & T'\otimes_R (\bigoplus_{K}R)\ar[r]^>>>>>{\overline{e}} \ar[d]^{\overline{1-e}}& T'\otimes_R P \ar[r]&0\\ & T'\otimes_R (\bigoplus_{K}R)&& },
\end{equation} where $h'$ is uniquely determined by $g'$; see Lemma \ref{lemma-resolution}. This $h'$ gives us a ring homomorphism $S\xrightarrow{} T'$ . Hence $S$ satisfies the universal property as the ring in Bergman's Theorem 3.1. Therefore $S$ is the ring in Bergman's Theorem 3.1.

\item[(3)] The desired $\G$-graded $\K$-algebra $S$ in Theorem \ref{thm-1} has the graded $\K$-homomorphism $R\xra S$ which sends $r$ to $\overline{r}$ for $r\in R$.
\end{enumerate}
\end{rmk}

\begin{thm}
\label{thm-2}
Let $R$ be a $\G$-graded $\K$-algebra, $M$ a graded $R$-module, $P$ a graded projective $R$-module, and $f:M\xrightarrow{} P$ a graded module homomorphism.  Then there exists a $\G$-graded $R\text{-ring}_{\K}$ $S$ such that $S\otimes_R f=0$, and $S$ is universal for this property: Given any $\G$-graded $R\text{-ring}_{\K}$ $T$ with $T\otimes_R f=0$, there exists a unique graded homomorphism of $R\text{-rings}_{\K}$, $S\xrightarrow{} T$.
\end{thm}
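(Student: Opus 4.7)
The approach adapts the argument of Theorem \ref{thm-1}, with the key simplification that, because $P$ is projective, no new generators need to be adjoined to $R$: it suffices to take $S$ to be the quotient of $R$ by an appropriate graded two-sided ideal.

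Following the setup of Theorem \ref{thm-1}, choose a graded free presentation
\[
\bigoplus_{i\in I} R(\alpha_i) \xrightarrow{u} \bigoplus_{j\in J} R(\beta_j) \xrightarrow{\varepsilon} M \longrightarrow 0,
\]
and realise $P$ as the image of a graded idempotent endomorphism $e \colon F \to F$ of the graded free module $F = \bigoplus_{m \in K} R(\gamma_m)$. Since $F = P \oplus \ker e$, the canonical inclusion $\iota \colon P \hookrightarrow F$ is a split monomorphism. By \S\ref{matrixrep}, the composition
\[
\bigoplus_{j \in J} R(\beta_j) \xrightarrow{\varepsilon} M \xrightarrow{f} P \xrightarrow{\iota} F
\]
is represented by a matrix $(h_{jm}) \in \mathbb{M}_{J \times K}(R)[\overline{\beta}][\overline{\gamma}]$ with homogeneous entries $h_{jm} \in R_{\gamma_m - \beta_j}$.

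Set $S := R/L$, where $L$ is the two-sided ideal of $R$ generated by $\{h_{jm} \mid j \in J, m \in K\}$. Since all generators are homogeneous, $L$ is a graded ideal and $S$ inherits a $\Gamma$-grading for which the quotient map $R \to S$ is a graded $\K$-algebra homomorphism; hence $S$ is a $\Gamma$-graded $R\text{-ring}_\K$. Tensoring the displayed composition above with $S$ turns the matrix into zero, so $(S \otimes_R \iota) \circ (S \otimes_R f) \circ (S \otimes_R \varepsilon) = 0$; since $S \otimes_R \varepsilon$ is an epimorphism and $S \otimes_R \iota$ is still a split monomorphism, two cancellations yield $S \otimes_R f = 0$, as required.

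For the universal property, let $T$ be any $\Gamma$-graded $R\text{-ring}_\K$ with structure map $\varphi \colon R \to T$ satisfying $T \otimes_R f = 0$. The same reasoning shows that the matrix $(\varphi(h_{jm}))$ represents the zero map $T \otimes_R \bigoplus R(\beta_j) \to T \otimes_R F$; evaluating on the standard basis vectors of $T \otimes_R \bigoplus R(\beta_j)$ then forces $\varphi(h_{jm}) = 0$ for every $j$ and $m$. Since $\ker \varphi$ is a two-sided graded ideal containing the generators of $L$, we have $L \subseteq \ker \varphi$, so $\varphi$ factors uniquely through $S$ as a graded $\K$-homomorphism of $R\text{-rings}_\K$. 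The main point requiring care is the bookkeeping of degrees, so that the generating relations are homogeneous and $L$ is indeed graded; as with Theorem \ref{thm-1}, the ring $S$ appears to depend on the chosen presentations of $M$ and $P$, but the dependence is superficial in the sense of Remark \ref{rmkthm}(1).
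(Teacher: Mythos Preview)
Your argument is correct and matches the paper's own proof: both embed $P$ as a direct summand of a graded free module so that the inclusion stays (split) injective after tensoring, then take $S$ to be the quotient of $R$ by the graded ideal generated by the homogeneous coordinates of $\iota\circ f$. The only cosmetic difference is that the paper indexes those coordinates over all homogeneous $x\in M$ while you index over a chosen generating set via the surjection $\varepsilon$ (the kernel map $u$ in your presentation is never used and can be omitted); the resulting ideals coincide.
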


\begin{proof}
We write $P$ as a direct summand of a graded free module $\bigoplus_{m\in K} R(\g_m)$. Then for any graded $R\text{-ring}_{\K}$ $T$, the map $\overline{i}: T\otimes_R P\xrightarrow{}\bigoplus_{m\in K}T(\g_m)$ induced by the inclusion $i: P\xrightarrow{} \bigoplus_{m\in K}R(\g_m)$ is again an inclusion. Hence  for each homogeneous elements $x\in M$, we write $$f(x)=\sum_{m\in K} f_{m x}$$ where $f_{m x}$ is a homogeneous element in $R(\g_m)$. Observe that for any $\G$-graded $R\text{-ring}_{\K}$ $T$, $T\otimes_R f=0$ if and only if for any homogeneous elements $x\in \bigcup_{\a\in \G} M_{\a}$ all the $f_{m x}$ is zero in $T$. So the desired universal ring $S$ is  the quotient of $R$ by the two-sided ideal generated by these homogeneous elements $f_{m x}$.   \end{proof}

The ring constructed in Theorem~\ref{thm-1} will be denoted by $R\langle f:\overline{M}\xrightarrow{} \overline{P}\rangle$ (or $R\langle \overline{M}\xrightarrow{} \overline{P}\rangle$), and that of Theorem~\ref{thm-2} by $R\langle \overline{f}=0\rangle$. Note that the graded $\K$-homomorphism from $R$ to $R\langle \overline{f}=0\rangle$ in Theorem \ref{thm-2} is a canonical projection.

\subsubsection{}\label{bgdhyu}

Combining Theorems~\ref{thm-1} and~\ref{thm-2} we can construct graded rings with rich structures. 
Let $f:P \rightarrow Q$ be a module homomorphism  between graded finitely generated projective 
$R$-modules $P$ and $Q$. By Theorem~\ref{thm-1} we construct a graded ring with a homomorphism $g: \overline Q\rightarrow \overline P$ and then by two applications of Theorem~\ref{thm-2} we set $1-g \overline f$ and $1-\overline fg$ to zero. We denote the graded ring obtained by $R\langle {\overline f}^{-1}\rangle$ where the homomorphism $\overline f$ is now invertible.

In a similar fashion, for graded finitely generated  projective $R$-modules $P$ and $Q$, we can adjoin a universal graded isomorphism between $\overline{P}$ and $\overline{Q}$ by first freely adjoining a graded map $\overline{P}\xrightarrow[]{}\overline{Q}$, then adjoining an inverse. We denote the resulting $\Gamma$-graded $R\text{-ring}_{\K}$ by $$R\langle i, i^{-1} :\overline{P}\cong_{\gr} \overline{Q}\rangle.$$ 

Given a single graded finitely generated projective module $P$, one may obtain a $\Gamma$-graded $R\text{-ring}_{\K}$ by adjoining to $R$ a universal idempotent graded endomorphism of $\overline{P}$. This ring will be denoted by $$R\langle i :\overline{P}\xra \overline{P}; i^2=i\rangle.$$

\subsection{Universal morphisms for strongly graded rings}
Let $\K$ be a commutative $\Gamma$-graded ring and $R$ a $\G$-graded $\K$-algebra. As mentioned in the introduction, in certain sections, such as \S\ref{smashbergloc} and \S\ref{gdhdhdh} we need to assume that the graded ring $\K$ is concentrated in degree zero. Here we also need to make this assumption, namely, $R$ is a $\G$-graded $\K$-algebra, where $\K$ is concentrated in degree zero. Note that, by our assumption $\K \subseteq R_0$.
Let $M$ be a $\Gamma$-graded $R$-module and $P$ a $\Gamma$-graded finitely generated projective $R$-module. It is natural to compare the Bergman algebras $R\langle f:\overline{M}\xra \overline{P} \rangle$ and $R_0\langle f_0: \overline{M_0}\xra \overline{P_0}\rangle$, in case $P_0$ is a finitely generated projective $R_0$-module.  We will show that if $R$ is strongly graded, then there is a $\Gamma$-graded isomorphism of algebras, 
 \[R\langle f:\overline{M}\longrightarrow \overline{P} \rangle \cong  R \ast_{R_0} R_0\langle f_0: \overline{M_0}\longrightarrow \overline{P_0}\rangle.\] This ties the two representable functors~(\ref{grbergmp}) and (\ref{grbergmp0}) together.

We first consider the grading for coproducts of two $\Gamma$-graded $\K$-algebras. Let $R$ be a $\G$-graded $\K$-algebra and $S$ an $R_0\text{-ring}_{\K}$ with $\Gamma$-grading. Then the coproduct $R\ast_{R_0} S$ has a $\Gamma$-grading. The coproduct is, as a graded vector space, the space of all words in $R$ and $S$, and the homogeneous elements of pure degree are words where the letters are homogeneous, and the degree is given by the sum of the degrees. One observes that when $S$ is an $R_0\text{-ring}_{\K}$   with trivial grading, the zeroth component of the coproduct $R\ast_{R_0} S$ contains $R_0\ast_{R_0} S\cong S$. And the zeroth component of $R\ast_{R_0} S$   can contain elements such as $rsr'$, where $r,r'\in R$ are homogeneous and the sum of the degrees are zero.

\begin{prop}
\label{propcoproduct}
    Let $\K$ be a commutative $\Gamma$-graded ring concentrated in degree zero, and $R$ a strongly $\G$-graded $\K$-algebra.  Suppose that $M$ is a $\Gamma$-graded $R$-module and $P$ a graded finitely generated projective $R$-module. Then we have the following isomorphism of  $\Gamma$-graded $\K$-algebras 
    $$R\ast_{R_0} R_0\langle f_0: \overline{M_0}\longrightarrow \overline{P_0}\rangle \cong R\langle f:\overline{M}\longrightarrow \overline{P} \rangle.$$ 
    
    %{\color{blue} Furthermore, the zeroth component of the $\G$-graded algebra $R\langle f:\overline{M}\xra \overline{P} \rangle$ is isomorphic to $R_0\langle f_0: \overline{M_0}\xra \overline{P_0}\rangle$.}
\end{prop}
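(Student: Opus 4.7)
The plan is to show that both $\Gamma$-graded $R\text{-ring}_{\K}$s represent the same functor on the category $\grring$, namely the functor
\[\mathcal F^{\gr}_{M,P}(T)=\Hom_{T\text{-}\GR}\big(T\otimes_R M,\, T\otimes_R P\big),\]
so the result will follow from Theorem~\ref{thm-1} and uniqueness of representing objects. The strategy divides into two computations: one identifying $\mathcal F^{\gr}_{M,P}(T)$ with an $R_0$-theoretic Hom set, and one matching that Hom set with graded $R\text{-ring}_{\K}$ morphisms out of the coproduct.

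For the first computation, we use Dade's theorem (see \S\ref{grprelim}): since $R$ is strongly graded, the induction functor $R\otimes_{R_0}-\colon R_0\text{-}\Mod\to R\text{-}\GR$ is an equivalence, and restricts to an equivalence $R_0\text{-}\Mod_{\proj}\to R\text{-}\GR_{\proj}$. In particular there are canonical graded $R$-module isomorphisms $M\cong R\otimes_{R_0}M_0$ and $P\cong R\otimes_{R_0}P_0$, with $P_0$ a finitely generated projective $R_0$-module. For any $T$ in $\grring$ this yields $T\otimes_R M\cong T\otimes_{R_0} M_0$ and $T\otimes_R P\cong T\otimes_{R_0} P_0$ as graded $T$-modules, where $M_0$ and $P_0$ are viewed as concentrated in degree $0$, so that $(T\otimes_{R_0} P_0)_\gamma=T_\gamma\otimes_{R_0}P_0$. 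A graded $T$-module morphism $T\otimes_{R_0}M_0\to T\otimes_{R_0}P_0$ is determined by the $R_0$-linear map $M_0\to(T\otimes_{R_0}P_0)_0=T_0\otimes_{R_0}P_0$ sending $m\mapsto 1\otimes m$, and conversely every such $R_0$-map extends uniquely. By tensor-hom adjunction this gives a natural bijection
\[
\mathcal F^{\gr}_{M,P}(T)\ \cong\ \Hom_{T_0}\bigl(T_0\otimes_{R_0}M_0,\,T_0\otimes_{R_0}P_0\bigr).
\]

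For the second computation, write $S_0:=R_0\langle f_0\colon \overline{M_0}\to\overline{P_0}\rangle$ and view $S_0$ as a $\Gamma$-graded $R_0\text{-ring}_\K$ concentrated in degree $0$. By the universal property of the coproduct (the pushout over $R_0$ in $\grring$), graded $R\text{-ring}_\K$ morphisms $R\ast_{R_0}S_0\to T$ are in natural bijection with graded $R_0\text{-ring}_\K$ morphisms $S_0\to T$; since $S_0$ is concentrated in degree $0$, such a morphism factors through the subring $T_0\subseteq T$, giving a bijection with $R_0\text{-ring}_\K$ morphisms $S_0\to T_0$. Applying Theorem~\ref{thm-1} (in the trivially graded setting) to $S_0$ with target $T_0$, these are in bijection with $T_0$-module maps $T_0\otimes_{R_0}M_0\to T_0\otimes_{R_0}P_0$. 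Combining with the previous paragraph, both $R\langle f\colon\overline{M}\to\overline{P}\rangle$ and $R\ast_{R_0}S_0$ represent $\mathcal F^{\gr}_{M,P}$, hence are canonically isomorphic as $\Gamma$-graded $R\text{-ring}_\K$s.

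The step I expect to require the most care is the identification $T\otimes_R M\cong T\otimes_{R_0} M_0$ as \emph{graded} $T$-modules and the verification that a graded degree-zero $T$-linear map out of $T\otimes_{R_0} M_0$ is captured by its restriction $M_0\to T_0\otimes_{R_0} P_0$: this uses in an essential way both that $M_0$ sits purely in degree $0$ inside $T\otimes_{R_0}M_0$ and that $T$ is only assumed graded (not strongly graded), so one cannot invoke Dade on $T$ directly. The hypothesis that $\K$ is concentrated in degree $0$ is what makes the coproduct $R\ast_{R_0}S_0$ well-defined in $\grring$, since $S_0$ is an $R_0\text{-ring}_\K$.
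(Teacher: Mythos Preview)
Your proof is correct and takes a genuinely different route from the paper's. The paper proceeds concretely: it fixes a free resolution of $M_0$ and an idempotent presentation of $P_0$, induces these up to $R$ via Dade's theorem, writes down the explicit generators-and-relations presentations of both $R_0\langle f_0:\overline{M_0}\to\overline{P_0}\rangle$ and $R\langle f:\overline M\to\overline P\rangle$ from the proof of Theorem~\ref{thm-1}, and then verifies by hand that $R\langle f:\overline M\to\overline P\rangle$ satisfies the pushout universal property of $R\ast_{R_0}R_0\langle f_0:\overline{M_0}\to\overline{P_0}\rangle$ by constructing the required map $\theta$ directly on generators.

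Your approach is purely functorial: you show that both algebras represent $\mathcal F^{\gr}_{M,P}$ on $\grring$, by reducing $\mathcal F^{\gr}_{M,P}(T)$ to $\Hom_{T_0}(T_0\otimes_{R_0}M_0,\,T_0\otimes_{R_0}P_0)$ via Dade and the observation that a graded $T$-map out of $T\otimes_{R_0}M_0$ is determined on $M_0$, and then matching this with $\Hom_{\grring}(R\ast_{R_0}S_0,T)$ via the pushout property and the universal property of $S_0$. This avoids choosing any resolution and makes naturality transparent. The paper's approach, by contrast, produces an explicit description of the generators and relations of $R\langle f:\overline M\to\overline P\rangle$ (and in particular shows that the adjoined elements $h$ live in degree $0$), which is useful for the concrete computations in later sections; your Yoneda argument gives the isomorphism but not that explicit form.
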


\begin{proof}
    Choose a projective resolution  $$\bigoplus_{i\in I}R_0\stackrel{u_0}{\longrightarrow}\bigoplus_{j\in J} R_0
\stackrel{\varepsilon_0}{\longrightarrow} M_0 \longrightarrow 0$$ for $M_0$ as $R_0$-module. Since $P$ is a graded finitely generated projective $R$-module, and $R$ is strongly graded, its zeroth component $P_0$ is a finitely generated projective $R_0$-module. We write $P_0$ as the image of the idempotent morphism $e_0: \bigoplus_{m\in K} R_0\xra \bigoplus_{m\in K} R_0$ of $R_0$-modules.

For any $R_0\text{-ring}_{\K}$ $T_0$ and any module homomorphism $g_0:T_0\otimes_{R_0} M_0 \xrightarrow{} T_0\otimes_{R_0} P_0$, we have the following diagram with the first two rows exact
\begin{equation*}
\xymatrix{T_0\otimes_{R_0}  (\bigoplus_{i\in I}R_0)\ar[r]^{\overline{u_0}}& T_0\otimes_{R_0} (\bigoplus_{j\in J} R_0)\ar[r]^>>>>>{\overline{\varepsilon_0}} \ar[d]^{h_0}&T_0\otimes_{R_0} M_0\ar[r]\ar[d]^{g_0} &0\\ & T_0\otimes_{R_0}  (\bigoplus_{m\in K}R_0) \ar[r]^>>>>>{\overline{e_0}} \ar[d]^{\overline{1-e_0}}& T_0\otimes_{R_0}  P_0\ar[r]&0\\ &T_0\otimes_{R_0}  (\bigoplus_{m\in K}R_0)&& },
\end{equation*} where by Lemma \ref{lemma-resolution}, $g_0$ is uniquely determined by a module homomorphism $h_0$ with $h_0\circ\overline{u_0}=0=\overline{1-e_0}\circ h_0$. Then we have $$R_0\langle f_0: \overline{M_0}\longrightarrow\overline{P_0}\rangle\cong R_0\langle h_0 \rangle/ \langle u_0h_0=0=h_0(1-e_0)\rangle.$$ Here $ u_0h_0=0$ and $h_0(1-e_0)=0$ follows from the matrix relations \eqref{matrixrelation-one} and \eqref{matrixrelation-two}; compare Remark \ref{rmkthm} (2).

Since the graded ring $R$ is strongly graded, $M\cong R\otimes_{R_0} M_0$ and we have a projective resolution of $M$ as follows: $$\bigoplus_{i\in I} R\stackrel{1\otimes_{R_0} u_0}{\longrightarrow}\bigoplus_{j\in J} R
\stackrel{1\otimes_{R_0}\varepsilon_0}{\longrightarrow} R\otimes_{R_0} M_0 \longrightarrow 0.$$

For any $\G$-graded $R\text{-ring}_{\K}$ $T$ and any graded module homomorphism $g:T\otimes_{R} M \xrightarrow{} T\otimes_{R} P$, we have the following diagram with the first two rows exact
\begin{equation*}
\xymatrix{T \otimes_R R \otimes_{R_0}  (\bigoplus_{i\in I}R_0)\ar[r]^{\overline{u_0}}& T \otimes_R R \otimes_{R_0} (\bigoplus_{j\in J} R_0)\ar[r]^>>>>>{\overline{\varepsilon_0}} \ar[d]^{h}&T \otimes_R R \otimes_{R_0} M_0\ar[r]\ar[d]^{g} &0\\ & T \otimes_R R \otimes_{R_0}  (\bigoplus_{m\in K}R_0) \ar[r]^>>>>>{\overline{e_0}} \ar[d]^{\overline{1-e_0}}& T \otimes_R R \otimes_{R_0}  P_0\ar[r]&0\\ &T \otimes_R R \otimes_{R_0}  (\bigoplus_{m\in K}R_0)&& },
\end{equation*} where by Lemma \ref{lemma-resolution}, $g$ is uniquely determined by a graded module homomorphism $h$ with $h\circ\overline{u_0}=0=\overline{1-e_0}\circ h$. Then we have \begin{equation}
\label{isosg}
\begin{split}
R\langle f: \overline{M}\longrightarrow\overline{P}\rangle
&\cong R\langle h\rangle/ \langle u_0h=0=h(1-e_0)\rangle 
%&\cong \big(R\otimes_{R_0} R_0\langle h\rangle\big) / \langle u_0h=0=h(1-e_0)\rangle\\
%&\cong R\otimes_{R_0} \big(R_0\langle f_0: \overline{M_0}\xra \overline{P_0}\rangle \big). 
\end{split}
\end{equation} By Theorem \ref{thm-1}, $R\langle f: \overline{M}\xra \overline{P}\rangle$  is  a $\Gamma$-graded $\K$-algebra. One observes that by the proof of Theorem \ref{thm-1} the elements of the matrix $h$ as additional generators of $R\langle f: \overline{M}\xra \overline{P}\rangle$ are of zero degree. By Lemma~\ref{lemma-resolution}, it follows that the universal homomorphism of $R_0\langle f_0: \overline{M_0}\xra \overline{P_0} \rangle $ is the restriction of the universal homomorphism of $R\langle f:  \overline{M}\xra \overline{P}\rangle$ to the zeroth component. 

 Now we prove that 
    $R\ast_{R_0} R_0\langle f_0: \overline{M_0}\xra \overline{P_0}\rangle \cong R\langle f:\overline{M}\xra \overline{P} \rangle$ as $\K$-algebras. One notes that we have the natural inclusion $\iota:R_0\xra R$, and the natural homomorphisms $\pi:R_0\xra R_0\langle f_0: \overline{M_0}\xra \overline{P_0} \rangle$ sending $r_0$ in $R_0$ to $\overline{r_0}$, as well as  $\pi':R\xra R\langle f: \overline{M}\xra \overline{P} \rangle$ sending $r$ in $R$ to $\overline{r}$.  The homomorphism $\varphi: R_0\langle f_0: \overline{M_0}\xra \overline{P_0} \rangle \xra R\langle f: \overline{M}\xra \overline{P} \rangle$ is induced by the homomorphism $R_0\langle h_0\rangle\xra R\langle h\rangle$ since the matrix $h$ is induced by $h_0$, equivalently, we have $\varphi(r_0)=r_0$ for $r_0\in R_0$ and $\varphi(h_{ij})=h_{ij}$ for $h_0=(h_{ij})$. It follows directly that in the diagram below,  $\varphi\circ \pi=\pi'\circ \iota$. In order to prove that $R\langle f: \overline{M}\xra \overline{P}\rangle$ is the coproduct $R*_{R_0} R_0\langle f_0: \overline{M_0}\xra \overline{P_0} \rangle$, one only needs to check that $R\langle f: \overline{M}\xra \overline{P}\rangle$ has the universal property. That is, taking any $R_0\text{-ring}_{\K}$ $S$ and any homomorphisms $\alpha: R_0\langle f_0: \overline{M_0}\xra \overline{P_0} \rangle\xra S$ and $\beta: R\xra S$ with $\alpha\circ \pi=\beta\circ \iota$, we need to show that there exists a unique homomorphism $\theta: R\langle f: \overline{M}\xra \overline{P} \rangle \xra S$ in the following diagram 
\begin{equation*}
        \xymatrix{
        R_0 \ar[r]^{\iota} \ar[d]_{\pi} & R \ar[d]_{\pi'} \ar@/^/[rdd]^{\beta}&\\
        R_0\langle f_0: \overline{M_0}\xra \overline{P_0} \rangle \ar@/_/[rrd]_{\alpha} \ar[r]^>>>>>>{\varphi} &  R\langle f: \overline{M}\xra \overline{P} \rangle \ar@.[rd]^{\theta}&\\
        &&S
        }
    \end{equation*} such that $\alpha=\theta\circ \varphi$ and $\beta=\theta\circ \pi'$. First we define $\theta: R\langle f: \overline{M}\xra \overline{P}\rangle \xra S$, where $\theta(r)=\beta(r)$, for $r\in R$ and $\theta(h_{ij})=\alpha(h_{ij})$ for $h=(h_{ij})$. We observe that $\theta$ is well defined as $\alpha$ sends the generators of the two-sided ideal $\langle u_0h_0=0=h_0(1-e_0)\rangle$ to zero. And we can check directly that $\alpha=\theta\circ \varphi$ and $\beta=\theta\circ \pi'$. The uniqueness of $\theta$  follows directly from the commutativity of the two triangles in the above diagram. One observes that $\theta$ preserves the grading in the graded algebras setting. Therefore, we obtain that $R\ast_{R_0} R_0\langle f_0: \overline{M_0}\xra \overline{P_0}\rangle \cong R\langle f:\overline{M}\xra \overline{P} \rangle$ as $\K$-algebras.
    \end{proof}

\begin{prop}
\label{propcoproducttwo}
    Let $R$ be a strongly $\G$-graded $\K$-algebra. Suppose that $M$ is a $\Gamma$-graded $R$-module, $P$ a graded finitely generated projective $R$-module and $f:M\xrightarrow{} P$ is a graded module homomorphism. Then we have the following isomorphism of  $\Gamma$-graded $\K$-algebras 
    $$R\ast_{R_0} R_0\langle \overline{f_0}=0\rangle \cong R\langle \overline{f}=0 \rangle.$$ Here, we have $f_0=(-)_0(f)$. Furthermore, the zeroth component of the $\G$-graded $\K$-algebra $R\langle \overline{f}=0 \rangle$ is isomorphic to $R_0\langle \overline{f_0}=0\rangle$.
\end{prop}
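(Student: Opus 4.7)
The plan is to parallel the proof of Proposition 5.3 just completed: reduce everything to the zero component via Dade's equivalence, identify both sides as the same quotient of $R$ by a graded ideal generated by $R_0$-level relations, and then read off the zero-component statement from that quotient description.

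First I would unpack $R\langle \overline{f} = 0\rangle$ using Theorem 5.2. Since $R$ is strongly graded, Dade's theorem gives $M \cong R \otimes_{R_0} M_0$ and $P \cong R \otimes_{R_0} P_0$, and identifies the graded morphism $f$ with $\mathrm{id}_R \otimes f_0$. Moreover, a direct summand decomposition $P_0 \oplus Q_0 \cong R_0^{n}$ extends to $P \oplus Q \cong R^{n}$, giving an embedding $P \hookrightarrow \bigoplus_{m\in K} R(\gamma_m)$ (with trivial shifts in this case) that restricts on the zero-component to the original embedding $P_0 \hookrightarrow R_0^n$. Theorem 5.2 then presents $R\langle \overline{f} = 0\rangle = R/J$, where $J$ is the graded two-sided ideal generated by the coordinate entries $f_{mx}\in R$ for all homogeneous $x\in M$ and $m\in K$. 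The crucial observation is that any homogeneous $x \in M_\gamma$ is a finite sum $\sum_k r_k \otimes y_k$ with $r_k \in R_\gamma$ and $y_k \in M_0$, so $f_{mx} = \sum_k r_k\, (f_0)_{m,y_k}$. Consequently $J = R\, I_0\, R$, where $I_0\lhd R_0$ is the two-sided ideal generated by the coordinates $(f_0)_{m,y}$ of $f_0$, and $R_0\langle \overline{f_0} = 0\rangle = R_0/I_0$.

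Next I would invoke the universal property of the coproduct $R \ast_{R_0} (R_0/I_0)$ as a pushout of $\K$-algebras. Because $R_0 \twoheadrightarrow R_0/I_0$ is surjective with kernel $I_0$, the pushout collapses precisely the image of $I_0$ in $R$, which gives a canonical isomorphism $R\ast_{R_0} (R_0/I_0) \cong R/\langle I_0\rangle_R = R/RI_0R$. Composing with the identification $R\langle \overline{f} = 0\rangle = R/RI_0R$ from the previous paragraph yields the desired graded $\K$-algebra isomorphism. Checking that the map respects the $\Gamma$-grading is routine because $I_0 \subseteq R_0$ sits in degree zero and the relations added respect the grading.

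For the second claim, the inclusion $R_0 \hookrightarrow R$ composed with the quotient $R \twoheadrightarrow R/RI_0R$ induces a canonical graded $\K$-algebra map $R_0/I_0 = R_0\langle \overline{f_0} = 0\rangle \to (R/RI_0R)_0$, which is obviously surjective. The content is the injectivity, equivalent to the ideal identity $RI_0R \cap R_0 = I_0$. I would try to prove this by combining the strong grading $R_\alpha R_{-\alpha} = R_0$ with the fact that $I_0$ is a two-sided $R_0$-ideal, writing any element of the zero component $\sum_\alpha R_\alpha I_0 R_{-\alpha}$ in a form that manifestly lies in $I_0$ via Dade's equivalence applied to the graded bimodule $RI_0R$.

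The main obstacle will be precisely this last ideal identity $(RI_0R)_0 = I_0$; the first isomorphism is essentially formal and follows the template of Proposition 5.3, but the equality $(RI_0R)_0 = I_0$ is the non-trivial bimodule-theoretic step where the strongly graded hypothesis must be used carefully, since conjugation of $I_0$ by homogeneous elements of $R$ of nonzero degree could a priori enlarge it inside $R_0$.
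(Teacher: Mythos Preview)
Your argument for the first isomorphism matches the paper's almost exactly: both compute $R_0\langle\overline{f_0}=0\rangle = R_0/I_0$ and $R\langle\overline{f}=0\rangle = R/RI_0R$ from the explicit description in Theorem~\ref{thm-2}, and then verify the pushout property. Your invocation of the general fact that pushing out along the surjection $R_0 \twoheadrightarrow R_0/I_0$ yields $R/RI_0R$ is slightly slicker than the paper's direct check of the universal property, but the substance is identical.

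For the ``Furthermore'' claim your caution is well placed, and the paper does no better: its proof simply stops after establishing the first isomorphism and offers no argument for the zero-component statement. More seriously, the identity $(RI_0R)_0 = I_0$ you isolate is \emph{not} true for arbitrary strongly graded rings, so the ``Furthermore'' assertion appears to fail as stated. Take $R_0 = \K\langle x,y\rangle$ and $R = R_0[t,t^{-1};\sigma]$ with $\sigma$ the $\K$-automorphism interchanging $x$ and $y$; let $M_0 = P_0 = R_0$ and $f_0$ be right multiplication by $x$. Then $I_0 = \langle x\rangle_{R_0}$ and $R_0\langle\overline{f_0}=0\rangle \cong \K\langle y\rangle$, whereas $(RxR)_0$ contains $t x t^{-1} = \sigma(x) = y$, so $(RxR)_0 = \langle x,y\rangle_{R_0}$ and $(R\langle\overline{f}=0\rangle)_0 \cong \K \not\cong \K\langle y\rangle$. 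Your worry that conjugation of $I_0$ by homogeneous elements of nonzero degree could enlarge it inside $R_0$ is precisely what happens, and no amount of Dade-type manipulation will repair it in this generality.
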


\begin{proof}
As $R$ is a strongly $\G$-graded $\K$-algebra and $P$ is a graded finitely generated projective $R$-module, $P_0$ is a finitely generated projective $R_0$-module. We write $P_0$ as the summand of the free $R_0$-module $\bigoplus_{c\in C} R_0$. The fact that it is a direct summand insures that for any
$R_0\text{-}{\rm ring}_{\K_0}$  $T$, the map $\overline{i}:T\otimes_{R_0} P_0 \xra T\otimes_{R_0}(\bigoplus_{c\in C} R_0)$ induced by the inclusion $i: P_0\xra \bigoplus_{c\in C} R_0$ is again an inclusion. Hence if we write for each $m_0\in M_0$ $$f_0(m_0)=\sum_{c\in C} f_{m_0, c}$$ with $f_{m_0,c}$ in the component $R_0$ corresponding to $c$. We can see that for any $R_0\text{-}{\rm ring}_{\K}$  $T$, $T\otimes_{R_0} f_0$ will be zero if and only
if all the $f_{m_0,c}$ go to zero in $T$. Hence $$R_0\langle \overline{f_0}=0\rangle\cong R_0/\langle f_{m_0,c}\rangle_{m_0\in M_0, c\in C}. $$ Here $\langle f_{m_0,c}\rangle_{m_0\in M_0, c\in C}$ is the two-sided ideal of $R_0$ generated by the elements $f_{m_0,c}$ for $m_0\in M_0, c\in C$.

One observes that $R\otimes_{R_0} P_0$ is a direct summand of the free $R$-module $R\otimes_{R_0}(\bigoplus_{c\in C} R_0)\cong \bigoplus_{c\in C} R$. Actually we have the maps
$$R\otimes_{R_0} M_0\longrightarrow{} R\otimes_{R_0} P_0\longrightarrow{} R\otimes_{R_0}(\bigoplus_{c\in C} R_0)\cong \bigoplus_{c\in C}R.$$
We write for each $m_0\in M_0$ $$R\otimes_{R_0} f_0(r\otimes m_0)=\sum_{c\in C} rf_{m_0, c}$$ with $r\in R$ a homogeneous element, and $f_{m_0,c}$ in the component $R_0$ corresponding to $c$. For any $\G$-graded $R\text{-}{\rm ring}_{\K}$  $T'$, $T'\otimes_{R} f$ will be zero if and only
if $T'\otimes_{R} (R\otimes_{R_0} f_0)$ will be zero, if and only if all the $rf_{m_0,c}$ go to zero in $T'$ for any homogeneous elements $r\in R$, $c\in C$ and $m_0\in M_0$. Hence $R\langle \overline{f}=0\rangle\cong R/\langle rf_{m_0,c}\rangle_{r\in R_h, m_0\in M_0, c\in C}$. Here $R_h$ is the set consisting of all homogeneous elements of $R$ and $\langle rf_{m_0,c}\rangle_{m_0\in M_0, c\in C}$ is the two-sided ideal of $R$ generated by the homogeneous elements $rf_{m_0,c}$ for $m_0\in M_0, c\in C$. Hence we have $$R\langle \overline{f}=0\rangle\cong R/\langle rf_{m_0,c}\rangle_{r\in R_h, m_0\in M_0, c\in C}.$$  
One observes that we obtain the following commutative diagram with $\varphi_1(r)=r$, for $r\in R$ and $\psi_1(\overline{r_0})=\overline{r_0}$, for $r_0\in R_0$. 
\begin{equation*}
        \xymatrix{
        R_0 \ar[r]^>>>>>{\pi} \ar[d]_{\iota} &R_0\langle \overline{f_0}=0\rangle \ar[d]_{\psi_1}&\\
        R \ar[r]^>>>>>>{\varphi_1} & R\langle \overline{f}=0\rangle & &&
        }
    \end{equation*} Here $\iota:R_0\xra R$ is the inclusion and $\pi: R_0\xra R_0\langle \overline{f_0}=0\rangle$ is the natural projection. One can check directly that $R\langle \overline{f}=0\rangle$ satisfies the universal property of $R\ast_{R_0} R_0\langle \overline{f_0}=0\rangle$ as $\Gamma$-graded $\K$-algebras. The proof is completed.
%Actually we apply   $R\otimes_{R_0}-$ to the exact sequence $$\xymatrix{\langle f_{m_0,c}\rangle_{m_0\in M_0, c\in C}\rangle\ar[r]^>>>>>{}& R_0 \ar[r]^{} &R_0\langle \overline{f_0}=0\rangle \ar[r] & 0}.$$ The image of $\langle f_{m_0,c}\rangle_{m_0\in M_0, c\in C}\rangle$ under $R\otimes_{R_0}-$ is exactly $\langle rf_{m_0,c}\rangle_{m_0\in M_0, c\in C}$. By the right exact property of $R\otimes_{R_0}-$ we have $$R\langle \overline{f}=0 \rangle\cong R/\langle rf_{m_0,c}\rangle_{r\in R_h, m_0\in M_0, c\in C}\cong R\otimes_{R_0} \big(R_0\langle \overline{f_0}=0\rangle \big).$$
\end{proof}

Let $R$ be a strongly $\G$-graded $\K$-algebra. Suppose that $M$ and $P$ be $\Gamma$-graded finitely generated projective $R$-modules. Then we have the following two universal algebras: 
  $$R_0\langle i_0, i^{-1}_0: \overline M_0 \cong  \overline P_0\rangle \text{ and }  R\langle i, i^{-1}: \overline M \cong \overline P\rangle.$$ 
    
It would be interesting to establish a relation between these two algebras and $ R\langle i, i^{-1}: \overline M \cong \overline P\rangle_0$ similar to Proposition~\ref{propcoproduct}.

\subsection{Matrix forms for three types of Bergman algebras}\label{ghostagain}
Let $\Gamma$ be an abelian group, $\K$ a $\Gamma$-graded commutative ring and  $R$ a $\Gamma$-graded $\K$-algebra. 

Let $P$ and $Q$ be non-zero graded finitely generated projective $R$-modules. Applying Theorem~\ref{thm-1} we obtain a $\Gamma$-graded ring  $R\langle i:\overline{P}\xrightarrow{} \overline{Q}\rangle$. Recall from \S\ref{bgdhyu} that we can also construct the graded rings $R\langle i, i^{-1} :\overline{P}\cong_{\gr} \overline{Q}\rangle$ and $R\big\langle i:\overline{P}\to\overline{P};~i^2=i\big\rangle$. 
Further if $g:P\rightarrow Q$ is a graded $R$-module homomorhism, then $R\langle \overline{g}^{-1} \rangle$ is the universal ring which $g$ becomes invertible.  In this section, by representing the graded finitely generated projective modules by idempotent matrices, 
we describe these $\Gamma$-graded rings via generators and relations, 
in  Lemmas \ref{lempresS}, \ref{lempresS*} and \ref{lemma-univ}.  Once the grade group $\Gamma$ is set to be trivial, we obtain the Bergman algebras of Definition~\ref{berglagebra}.

 Since $P$ is graded finitely generated $R$-module, using~(\ref{projidem1}), we can write $P$ as the image of a  graded idempotent endomorphism $e$ of a  graded free $R$-module of finite rank $\bigoplus_{j=1}^n R(\beta_j)$ with $\beta_j\in \Gamma$. Similarly, we write  $Q$ as the image of a  graded idempotent endomorphism $f$ of a  graded free $R$-module of finite rank $\bigoplus_{k=1}^m R(\gamma_k)$ with $\gamma_k\in\Gamma$.

By Lemma \ref{lemma-resolution} there is a unique graded homomorphism $h_{g}: \bigoplus_{j=1}^n R(\b_j)\xra \bigoplus_{k=1}^m R(\g_k) $ corresponding to the given graded module homomorphism $g: P\xra Q$ such that we have the following diagram 
\begin{equation}\label{howinfhf}
	\xymatrix{\bigoplus_{j=1}^n R(\b_j)\ar[r]^{1-e}& \bigoplus_{j=1}^n R(\b_j)\ar[r]^>>>>>{e} \ar[d]^{h_{g}}&P\ar[r]\ar[d]^{g} &0\\ & \bigoplus_{k=1}^m R(\g_m) \ar[r]^>>>>>{f} \ar[d]^{1-f}&  Q\ar[r]&0\\ & \bigoplus_{k=1}^mR(\g_m)&& },
\end{equation} with the first two rows exact, the right square commutative and $h_{g}\circ (1-e)=0$ and $(1-f)\circ h_{g}=0$. One can represent $h_g$ as a matrix  $h_g \in \mathbb{M}_{m\times n}(R)[\overline{\beta}][\overline{\gamma}]$, where $\overline \beta=(\beta_1, \dots, \beta_n)$ and  $\overline \gamma=(\gamma_1, \dots, \gamma_m)$ (see(~\ref{aidansan})).
So  Lemma \ref{lemma-resolution}, one observes that there is a one-to-one correspondence between the set of graded $R$-module homomorphisms $g: P\xra Q$ and the set of  $R$-module homomorphisms  $h_{g}: \bigoplus_{j=1}^n R(\b_j)\xrightarrow{} \bigoplus_{k=1}^m R(\g_m)$ satisfying $eh_{g}f=h_{g}$.

\begin{lemma}
\label{lemma-matrix} 
Let $R$ be a $\Gamma$-graded $\K$-algebra and let $P$ and $Q$ be non-zero graded finitely generated  projective $R$-modules. 
\begin{enumerate}[\upshape(1)]
   \item We have that $R\langle i: \overline{P}\xrightarrow{} \overline{Q}\rangle\cong R\langle h\rangle/{\langle ehf=h\rangle} $ as $\Gamma$-graded algebras for some $n\times m$-matrix $h=(h_{jk})$ with $h_{jk}$ symbols of degree $\gamma_k-\beta_j\in \Gamma$, for $j=1,\cdots, n$ and $k=1, \cdots, m$ and homogeneous idempotents $e\in \M_n(R)(\overline \beta)_0$ and $f\in \M_m(R)(\overline \gamma)_0$, where $\overline \beta=(\beta_1, \dots, \beta_n)$ and $\overline \gamma=(\gamma_1, \dots, \gamma_m)$. Moreover, there is a graded $\K$-algebra homomorphism $R\xra R\langle \overline{P}\xrightarrow{} \overline{Q}\rangle$ sending $r$ to $\overline{r}$ for any $r\in R$.

   \item Let $g: P\xrightarrow{} Q$ be a graded $R$-module homomorphism. Then 
   $R\langle \overline{g}=0\rangle \cong R/\langle h_g\rangle $, where $h_g$ is the corresponding matrix from $\bigoplus_{j=1}^n R(\b_j)$ to $\bigoplus_{k=1}^m R(\g_k)$ of {\upshape (\ref{howinfhf})} whose $jk$-th entry is of degree $\gamma_k-\beta_j\in \Gamma$ and $\langle h_g\rangle$ is the two-sided ideal generated by the entries of $h_g$. 
   \end{enumerate}
\end{lemma}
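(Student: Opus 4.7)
The plan for part (1) is to apply Theorem~\ref{thm-1} directly, using the given idempotents $e$ and $f$ to supply explicit graded free presentations of the modules involved. Because $P = \Ima\, e$ with $e$ a graded idempotent endomorphism of $\bigoplus_{j=1}^n R(\beta_j)$, the sequence
\[
\bigoplus_{j=1}^n R(\beta_j) \xrightarrow{1-e} \bigoplus_{j=1}^n R(\beta_j) \xrightarrow{e} P \longrightarrow 0
\]
is a graded free presentation of $P$, so in the notation of Theorem~\ref{thm-1} I take $u = 1-e$. Similarly, $Q = \Ima\, f$ realises $Q$ as the image of the idempotent $f \in \M_m(R)(\overline\gamma)_0$, which is exactly the data Theorem~\ref{thm-1} requires, with $1-f$ playing the role of the matrix $v$ appearing in its relations.

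Substituting these choices into the two relations (\ref{matrixrelation-one}) and (\ref{matrixrelation-two}) from the proof of Theorem~\ref{thm-1} produces the matrix equations $(1-e)\,h = 0$ and $h\,(1-f) = 0$, i.e.\ $eh = h$ and $hf = h$. A short check shows these are jointly equivalent to the single relation $ehf = h$: if $ehf = h$, then multiplying on the left by $e$ and using $e^2 = e$ gives $eh = ehf = h$, and multiplying on the right by $f$ gives $hf = ehf = h$; the converse is immediate. The degree bookkeeping of \S\ref{matrixrep} then forces $\deg(h_{jk}) = \gamma_k - \beta_j$, and the canonical graded $\K$-homomorphism $R \to R\langle \overline P \to \overline Q\rangle$ comes from Remark~\ref{rmkthm}(3).

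For part (2), I will apply Theorem~\ref{thm-2} to the graded module homomorphism $g: P \to Q$, embedding $Q$ into $\bigoplus_{k=1}^m R(\gamma_k)$ via $f$. Theorem~\ref{thm-2} then presents $R\langle \overline g = 0\rangle$ as the quotient of $R$ by the two-sided ideal generated by the coordinates of $g(x)$ as $x$ ranges over homogeneous elements of $P$. The remaining task is to identify this ideal with $\langle h_g\rangle$. The commutativity of the right square of diagram (\ref{howinfhf}) gives $g(e(y)) = h_g(y)$ in $\bigoplus_k R(\gamma_k)$ for every $y \in \bigoplus_j R(\beta_j)$. Since every $x \in P$ is of the form $e(y)$, each coordinate of $g(x)$ is an $R$-linear combination of entries of $h_g$; conversely, the entry $(h_g)_{jk}$ is the $k$-th coordinate of $g(e(1_j))$, where $1_j$ is the $j$-th standard basis vector of $\bigoplus_j R(\beta_j)$. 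The two generating sets therefore produce the same two-sided ideal.

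I do not anticipate a serious obstacle: both parts are essentially translations of Theorems~\ref{thm-1} and~\ref{thm-2} once the idempotents $e$ and $f$ are used to furnish the relevant resolutions. The only point requiring care is the matrix-degree bookkeeping of \S\ref{matrixrep}, which is what ensures that the relations $(1-e)h = 0$ and $h(1-f) = 0$ from Theorem~\ref{thm-1} are homogeneous of the correct degrees and that the symbols $h_{jk}$ receive the degree $\gamma_k - \beta_j$ prescribed in the statement.
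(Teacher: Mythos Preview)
Your proposal is correct and follows essentially the same approach as the paper: both parts proceed by feeding the resolution $\bigoplus_j R(\beta_j)\xrightarrow{1-e}\bigoplus_j R(\beta_j)\xrightarrow{e}P\to 0$ and the idempotent $f$ into Theorems~\ref{thm-1} and~\ref{thm-2} respectively, then translating the resulting relations into matrix form. The only minor difference is in part (2): the paper invokes the last clause of Lemma~\ref{lemma-resolution} (that $\overline g=0$ iff $h_g=0$, hence $T\otimes_R g=0$ iff the entries of $h_g$ vanish in $T$) to identify the ideal directly, whereas you spell out the same identification by hand via the commuting square of~(\ref{howinfhf}); these are equivalent and equally valid.
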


\begin{proof}
Writing, $P\bigoplus P'\cong_{\gr} \bigoplus_{i=1}^n R(\beta_i)$, we choose $$\bigoplus_{j=1}^n R(\b_j)\xrightarrow{1-e}\bigoplus_{j=1}^n R(\b_j)
\xrightarrow{e}P \xrightarrow{} 0$$ as the projective resolution of $P$ as graded $R$-module. Writing $Q\bigoplus Q'\cong_{\gr} \bigoplus_{i=1}^m R(\gamma_i)$,
by the proof of Theorem \ref{thm-1}, for any $\G$-graded $R\text{-ring}_{\K}$ $T$ and any graded module homomorphsm $g':T\otimes_R P \xrightarrow{} T\otimes_R Q$, we have the following diagram with the first two rows exact 
\begin{equation}\label{ggffddsss}
	\xymatrix{T\otimes_R (\bigoplus_{j=1}^n R(\b_j))\ar[r]^{\overline{1-e}}& T\otimes_R (\bigoplus_{j=1}^n R(\b_j))\ar[r]^>>>>>{\overline{e}} \ar[d]^{h}&T\otimes_R P\ar[r]\ar[d]^{g'} &0\\ & T\otimes_R (\bigoplus_{k=1}^m R(\g_m)) \ar[r]^>>>>>{\overline{f}} \ar[d]^{\overline{1-f}}& T\otimes_R Q\ar[r]&0\\ & T\otimes_R (\bigoplus_{k=1}^mR(\g_m))&& },
\end{equation}

For (1), by matrix relations \eqref{matrixrelation-one} and \eqref{matrixrelation-two}, we have the multiplication of matrices  
$(1-e)h=0$ and $h(1-f)=0$, which is equivalent to that $ehf=h$. Thus $R\langle i: \overline{P}\xrightarrow{} \overline{Q}\rangle$ is isomorphic to $R\langle h\rangle/{\langle ehf=h\rangle}$. 

For (2), by Lemma \ref{lemma-resolution}, for $g:P\rightarrow Q$,  there is a unique $h_g$  from $\bigoplus_{j=1}^n R(\b_j)$ to $\bigoplus_{k=1}^m R(\g_m)$, and we have $\overline{g}=0$ if and only if $h_g=0$. By the proof of Theorem \ref{thm-2} the desired universal ring $R\langle \overline{g}=0\rangle $ is  the quotient of $R$ by the two-sided ideal generated by entries of the matrix $h_g$.
\end{proof}

We have the following two consequences which will be used in the following subsections.

\begin{lemma}\label{lempresS} Let $R$ be a $\Gamma$-graded $\K$-algebra and let $P$ and $Q$ be non-zero graded finitely generated projective $R$-modules. 
 We have the $\Gamma$-graded algebra isomorphism $$R\big\langle i,i^{-1}:\overline{P}\cong \overline{Q}\big\rangle \cong R\big\langle h, h^*\big\rangle /\big\langle ehf=h, fh^*e=h^*, hh^*=e, h^*h=f\big\rangle,$$ where $h=(h_{jk})$ is a $n\times m$-matrix with $h_{jk}$ symbols of degree $\gamma_k-\beta_j$ and $h^*=(h^*_{kj})$ is a $m\times n$-matrix with $h^*_{kj}$ symbols of degree $\beta_j-\gamma_k$, for $j=1,\cdots, n$ and $k=1,\cdots, m$, and homogeneous idempotents $e\in \M_n(R)(\overline \beta)_0$ and $f\in \M_m(R)(\overline \gamma)_0$, where $\overline \beta=(\beta_1, \dots, \beta_n)$ and $\overline \gamma=(\gamma_1, \dots, \gamma_m)$.
\end{lemma}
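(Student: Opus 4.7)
The plan is to build $R\langle i, i^{-1}: \overline P \cong \overline Q\rangle$ in four successive steps, following the universal description of \S\ref{bgdhyu}, and to translate each step into matrix language via Lemma \ref{lemma-matrix}.

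First, by Lemma \ref{lemma-matrix}(1), the universal ring $R_1 := R\langle \alpha: \overline P \to \overline Q\rangle$ is presented as $R\langle h\rangle/\langle ehf = h\rangle$, where $h = (h_{jk})$ is an $n\times m$-matrix of symbols of degree $\gamma_k - \beta_j$ and $\alpha$ is the tautological graded homomorphism $\overline P \to \overline Q$. Applying Lemma \ref{lemma-matrix}(1) a second time over $R_1$ to freely adjoin a graded homomorphism $\beta: \overline{\overline Q}\to \overline{\overline P}$ adjoins fresh generators $h^* = (h^*_{kj})$ of degree $\beta_j - \gamma_k$ subject to $fh^*e = h^*$, giving
\[ R_2 \cong R\langle h, h^*\rangle / \langle ehf = h,\; fh^*e = h^*\rangle. \]

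Next, I would observe that under the dictionary of Lemma \ref{lemma-resolution}, the identity map $\id_P: P \to P$ lifts to the matrix $e$ itself (one checks directly that $e$ satisfies the three characterising conditions), and similarly $\id_Q$ lifts to $f$. By the composition convention spelled out in (\ref{jan16dis}), the lifted matrix of the composition $\beta\circ\alpha: P \to P$ is the matrix product $h h^*$, while the lifted matrix of $\alpha\circ\beta: Q \to Q$ is $h^* h$. Successively applying Lemma \ref{lemma-matrix}(2) to the morphisms $\id_P - \beta\circ\alpha$ and $\id_Q - \alpha\circ\beta$, whose matrices are $e - hh^*$ and $f - h^*h$ respectively, therefore adjoins exactly the relations $hh^* = e$ and $h^*h = f$. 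The resulting $\Gamma$-graded algebra
\[ R_4 \cong R\langle h, h^*\rangle / \langle ehf = h,\; fh^*e = h^*,\; hh^* = e,\; h^*h = f\rangle \]
is, by its iterated universal properties, the universal $\Gamma$-graded $R\text{-ring}_{\K}$ in which $\alpha$ becomes invertible with inverse $\beta$, and is therefore $\Gamma$-graded isomorphic to $R\langle i, i^{-1}: \overline P \cong \overline Q\rangle$.

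The only real bookkeeping is the dictionary of Lemma \ref{lemma-resolution}: that identity morphisms lift to the defining idempotents $e$ and $f$, and that the lift of a composition of graded maps is the matrix product of the lifts taken in the order prescribed by (\ref{jan16dis}). Once this is in place, each of the four steps above is a direct invocation of Lemma \ref{lemma-matrix}, and the stated presentation drops out.
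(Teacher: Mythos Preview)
Your proposal is correct and follows essentially the same approach as the paper: apply Lemma~\ref{lemma-matrix}(1) to obtain the presentation $R\langle h,h^*\rangle/\langle ehf=h,\;fh^*e=h^*\rangle$, then use the dictionary of Lemma~\ref{lemma-resolution} together with the composition convention~(\ref{jan16dis}) to identify the matrices of $\id_P-\beta\alpha$ and $\id_Q-\alpha\beta$ as $e-hh^*$ and $f-h^*h$, and finish with Lemma~\ref{lemma-matrix}(2). The only cosmetic difference is that the paper adjoins both universal maps at once and draws the two intermediate diagrams explicitly, whereas you do the four steps sequentially and summarise the diagram chase verbally.
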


\begin{proof}
By Lemma~\ref{lemma-matrix}(1) we have that $R\langle i_0:\overline{P}\xra \overline{Q}, i_0': \overline{Q}\xra \overline{P}\rangle \cong R\langle h, h^*\rangle /\langle ehf=h, fh^*e=h^*\rangle$. Now for the $\grring$ $S=R\langle i_0:\overline{P}\xra \overline{Q}, i_0':\overline{Q}\xra \overline{P}\rangle $, we have the following two diagrams (see~(\ref{ggffddsss}))
\begin{equation*}
	\xymatrix{S\otimes_R (\bigoplus_{j=1}^n R(\b_j))\ar[r]^{\overline{1-e}}& S\otimes_R (\bigoplus_{j=1}^n R(\b_j))\ar[r]^>>>>>{\overline{e}} \ar[d]^{h}&S\otimes_R P\ar[r]\ar[d]^{i_0} &0\\
 & S\otimes_R (\bigoplus_{k=1}^m R(\g_k)) \ar[r]^>>>>>{\overline{f}} \ar[d]^{\overline{1-f}}& S\otimes_R Q\ar[r]&0\\ 
 & S\otimes_R (\bigoplus_{k=1}^m R(\g_k)&& }
\end{equation*} with the first two rows exact, the right square commutative and $(1-e)h=0$ and $h(1-f)=0$, and 
\begin{equation*}
	\xymatrix{S\otimes_R (\bigoplus_{k=1}^m R(\g_k))\ar[r]^{\overline{1-f}}& S\otimes_R (\bigoplus_{k=1}^m R(\g_k))\ar[r]^>>>>>{\overline{f}} \ar[d]^{h^*}&S\otimes_R Q\ar[r]\ar[d]^{i_0'} &0\\
 & S\otimes_R (\bigoplus_{j=1}^n R(\b_j)) \ar[r]^>>>>>{\overline{e}} \ar[d]^{\overline{1-e}}& S\otimes_R P\ar[r]&0\\ 
 & S\otimes_R (\bigoplus_{j=1}^n R(\b_j)&& }
\end{equation*} with the first two rows exact, the right square commutative and $(1-f)h^*=0$ and $h^*(1-e)=0$. Combining the above two diagrams, we have the following diagram 
\begin{equation*}
	\xymatrix{S\otimes_R (\bigoplus_{j=1}^n R(\b_j))\ar[r]^{\overline{1-e}}& S\otimes_R (\bigoplus_{j=1}^n R(\b_j))\ar[r]^>>>>>{\overline{e}} \ar[d]^{e-h^*\circ h}&S\otimes_R P\ar[r]\ar[d]^{\id_{S\otimes P}-i_0'\circ i_0} &0\\ & S\otimes_R (\bigoplus_{j=1}^n R(\b_j)) \ar[r]^>>>>>{\overline{e}} \ar[d]^{\overline{1-e}}& S\otimes_R P\ar[r]&0\\ & S\otimes_R (\bigoplus_{j=1}^n R(\b_j))&& }
\end{equation*} with the first two rows exact, the right square commutative and the matrix form $(1-e)(e-hh^*)=0$ and $(e-hh^*)(1-e)=0$. By Lemma \ref{lemma-resolution} $e-h^* \circ h$ is the corresponding map for $\id_{S\otimes P}-i_0'\circ i_0$. The corresponding matrix representation of $e-h^* \circ h$ is $e-hh^*$ (see~\ref{jan16dis}). Similarly one can show that $f-h^*h$ is the corresponding matrix of $\id_{S\otimes Q}-i_0\circ i_0'$. By Lemma \ref{lemma-matrix}(2), we then have $$R\big\langle i,i^{-1}:\overline{P}\cong \overline{Q}\big\rangle \cong R\big\langle h, h^*\big\rangle /\big\langle ehf=h, fh^*e=h^*, hh^*=e, h^*h=f\big\rangle.$$
This completes the proof. 
\end{proof}

\begin{lemma}\label{lempresS*}
 Let $R$ be a $\Gamma$-graded $\K$-algebra and $P$ a non-zero graded finitely generated projective $R$-module. We have the $\Gamma$-graded algebra isomorphism $$R\big\langle i:\overline{P}\to\overline{P};~i^2=i\big\rangle \cong R\big\langle h\big\rangle /\big\langle ehe=h, h^2=h\big\rangle,$$ where $h=(h_{kl})$ is a $n\times n$-matrix with $h_{kl}$ symbols of degree $\beta_l-\beta_k$, for $k,l=1,\cdots, n$ and $e\in \M_n(R)(\overline \beta)_0$, where $\overline \beta= (\beta_1, \dots, \beta_n)$  is a homogeneoues idempotent. 
\end{lemma}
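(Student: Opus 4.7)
The plan is to mirror the argument of Lemma \ref{lempresS}, adapted to a single module $P$ with a universal idempotent endomorphism. First I would apply Lemma \ref{lemma-matrix}(1) to the pair $(P,P)$ to obtain the intermediate graded algebra
\[S := R\big\langle i_0 : \overline{P} \to \overline{P}\big\rangle \cong R\langle h\rangle / \langle ehe = h\rangle,\]
where $h=(h_{kl})$ is the $n\times n$ matrix of generators with $\deg(h_{kl}) = \beta_l - \beta_k$, and the homogeneous idempotent $e \in \M_n(R)(\overline{\beta})_0$ represents $P$ as in (\ref{howinfhf}). The relation $ehe = h$ repackages the two conditions $(1-e)h = 0$ and $h(1-e) = 0$ arising from the matrix equations (\ref{matrixrelation-one}) and (\ref{matrixrelation-two}).

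Next I would impose $i_0^2 = i_0$ as a relation on $S$. Applying Lemma \ref{lemma-resolution} to the graded homomorphism $i_0^2 - i_0 : S\otimes_R P \to S\otimes_R P$, we obtain the diagram
\begin{equation*}
\xymatrix{S\otimes_R (\bigoplus_{k=1}^n R(\beta_k))\ar[r]^{\overline{1-e}}& S\otimes_R (\bigoplus_{k=1}^n R(\beta_k))\ar[r]^>>>>>{\overline{e}} \ar[d]^{h\circ h - h}&S\otimes_R P\ar[r]\ar[d]^{i_0^2 - i_0} &0\\
& S\otimes_R (\bigoplus_{k=1}^n R(\beta_k)) \ar[r]^>>>>>{\overline{e}} \ar[d]^{\overline{1-e}}& S\otimes_R P\ar[r]&0\\
& S\otimes_R (\bigoplus_{k=1}^n R(\beta_k))&& }
\end{equation*}
By the composition convention of \S\ref{matrixrep}, in particular equation (\ref{jan16dis}), the matrix representation of the composition $i_0 \circ i_0$ is $h\cdot h = h^2$. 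Hence the unique graded map in the left column lifting $i_0^2 - i_0$ is represented by the matrix $h^2 - h$, whose entries are homogeneous of the correct degrees and automatically satisfy $e(h^2-h)e = h^2-h$ thanks to $ehe = h$.

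Finally, by Lemma \ref{lemma-matrix}(2), the universal $\Gamma$-graded algebra in which $\overline{i_0^2 - i_0}$ vanishes is obtained from $S$ by quotienting by the two-sided ideal generated by the entries of $h^2 - h$. Combining the two presentations yields
\[R\big\langle i:\overline{P}\to\overline{P};\; i^2=i\big\rangle \cong R\langle h\rangle / \langle ehe = h,\; h^2 = h\rangle,\]
as required. There is no substantive obstacle; the argument is a direct specialisation of the Lemma \ref{lempresS} pattern, and the only point requiring care is verifying that the composition $i_0 \circ i_0$ has matrix $h^2$ under the convention of (\ref{jan16dis}), which is immediate since both factors coincide.
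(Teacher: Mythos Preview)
Your proof is correct and follows essentially the same approach as the paper: apply Lemma~\ref{lemma-matrix}(1) to obtain $R\langle i_0:\overline{P}\to\overline{P}\rangle\cong R\langle h\rangle/\langle ehe=h\rangle$, identify via Lemma~\ref{lemma-resolution} and the convention~(\ref{jan16dis}) that the map $i_0^2-i_0$ lifts to the matrix $h^2-h$, and then invoke Lemma~\ref{lemma-matrix}(2) to kill those entries. The paper writes the map as $i'-i'\circ i'$ with matrix $h-h^2$, but this is immaterial since the generated ideal is the same.
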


\begin{proof}
    By Lemma \ref{lemma-matrix}(1) we have that $R\langle i': \overline{P}\xra \overline{P}\rangle \cong R\langle h\rangle /\langle ehe=h\rangle$. Now for the graded $R\text{-ring}_{\K}$ $S'=R\langle i':\overline{P}\xra \overline{P}\rangle $, we have the following diagram
\begin{equation*}
	\xymatrix{S'\otimes_R (\bigoplus_{j=1}^n R(\b_j))\ar[r]^{\overline{1-e}}& S'\otimes_R (\bigoplus_{j=1}^n R(\b_j))\ar[r]^>>>>>{\overline{e}} \ar[d]^{h}&S'\otimes_R P\ar[r]\ar[d]^{i'} &0\\
 & S'\otimes_R (\bigoplus_{j=1}^n R(\b_j)) \ar[r]^>>>>>{\overline{e}} \ar[d]^{\overline{1-e}}& S'\otimes_R P\ar[r]&0\\ 
 & S'\otimes_R (\bigoplus_{j=1}^n R(\b_j)&& }
\end{equation*} with the first two rows exact, the right square commutative and $(1-e)h=0$ and $h(1-e)=0$. Observe that $h-h^2$ corresponds to the matrix of $i'-i'\circ i'$ (Lemma \ref{lemma-resolution}). Now by Lemma \ref{lemma-matrix}(2), the proof is completed.
\end{proof}

Given a graded homomorphism $g:P\xrightarrow{} Q$ between graded finitely generated projective $R$-modules $P$ and $Q$, we may construct a $\Gamma$-graded $R\text{-ring}_{\K}$ $R\langle \overline{g}^{-1} \rangle$ in which $\overline{g}$ become an isomorphism, by adjoining a map $g: \overline{Q}\xrightarrow{} \overline{P}$, then setting $1_{\overline{P}}-g\circ \overline{g}$ and $1_{\overline{Q}}-\overline{g}\circ g$ equal to zero.

Recall that by Lemma \ref{lemma-resolution} there is a unique graded homomorphism $h_{g}: \bigoplus_{j=1}^n R(\b_j)\xra \bigoplus_{k=1}^m R(\g_k) $ corresponding to $\g: P\xra Q$ satisfying $eh_{g}f=h_{g}$.
%Here $\widetilde{e}$ is the composition of $e:\bigoplus_{j=1}^n R(\b_j)\xra P$ with the inclusion $P\xra \bigoplus_{j=1}^n R(\b_j)$. 
We can describe the universal localisation algebra of the $R$-module homomorphism $g:P \rightarrow Q$ as follows (see~\cite[page 293]{bergman78}).

\begin{lemma} \label{lemma-univ} 
Let $R$ be a $\Gamma$-graded $\K$-algebra and $g: P\xrightarrow{} Q$ a graded $R$-module homomorphism. 
We have the $\Gamma$-graded algebra isomorphism $$R\big\langle \overline{g}^{-1} \big\rangle \cong R\big\langle h^*\big\rangle /\big\langle fh^*e=h, h_{g}h^*=e,h^*h_{g}=f\big\rangle,$$ where $h^*=(h^*_{kj})$ is a $m\times n$-matrix with $h^*_{kj}$ symbols of degree $\beta_j-\g_k$, for $j=1,\cdots, n$ and $k=1,\cdots, m$.
\end{lemma}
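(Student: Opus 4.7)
The plan is to follow the same two-step strategy used for Lemmas \ref{lempresS} and \ref{lempresS*}: first adjoin the candidate inverse $g'\colon\overline{Q}\to\overline{P}$ using Theorem \ref{thm-1} (packaged as Lemma \ref{lemma-matrix}(1)), then kill the two ``error'' maps $1_{\overline{P}}-g'\circ\overline{g}$ and $1_{\overline{Q}}-\overline{g}\circ g'$ using Theorem \ref{thm-2} (packaged as Lemma \ref{lemma-matrix}(2)). By construction, this composite universal construction is $R\langle\overline{g}^{-1}\rangle$.

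More precisely, I would first form the $\Gamma$-graded algebra $R_1:=R\langle g'\colon\overline{Q}\to\overline{P}\rangle$. By Lemma \ref{lemma-matrix}(1), applied with the roles of $P$ and $Q$ swapped and using the same resolutions as in the construction of $h_g$, we obtain
\[
R_1\cong R\langle h^*\rangle\big/\langle fh^*e=h^*\rangle,
\]
where $h^*=(h^*_{kj})$ is an $m\times n$ matrix whose entries have degree $\beta_j-\gamma_k$. Thus $h^*$ is precisely the matrix representing $g'$ via the diagram of Lemma \ref{lemma-resolution} applied to the resolutions of $Q$ and $P$.

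Next I would identify the matrices corresponding to $1_{\overline{P}}-g'\circ\overline{g}\colon\overline{P}\to\overline{P}$ and $1_{\overline{Q}}-\overline{g}\circ g'\colon\overline{Q}\to\overline{Q}$, using the matrix calculus of \S\ref{matrixrep} and the composition formula (\ref{jan16dis}). Since composition of graded maps between our graded free modules corresponds to multiplication of matrices in the opposite order, the composite $g'\circ\overline{g}$ is represented by $h_g\cdot h^*\in\M_n(R_1)(\overline\beta)_0$ and $\overline{g}\circ g'$ by $h^*\cdot h_g\in\M_m(R_1)(\overline\gamma)_0$. Forming the commutative diagram as in the proof of Lemma \ref{lempresS}, Lemma \ref{lemma-resolution} then yields that $1_{\overline{P}}-g'\circ\overline{g}$ and $1_{\overline{Q}}-\overline{g}\circ g'$ correspond to the matrices $e-h_gh^*$ and $f-h^*h_g$, respectively.

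Finally, by two applications of Lemma \ref{lemma-matrix}(2), the algebra $R\langle\overline{g}^{-1}\rangle$ is obtained from $R_1$ by quotienting out the two-sided homogeneous ideal generated by the entries of $e-h_gh^*$ and of $f-h^*h_g$. Combining this with the presentation of $R_1$ gives the claimed isomorphism. The main technical point to verify carefully is the identification of the two composition matrices in the third paragraph, including the degree check (the entries of $e-h_gh^*$ and $f-h^*h_g$ are homogeneous, so the resulting quotient is $\Gamma$-graded) and the fact that the universal property of the iterated construction coincides with that of $R\langle\overline{g}^{-1}\rangle$ as defined in \S\ref{bgdhyu}; this last compatibility is immediate once one unravels that a graded $R$-ring $T$ in which $\overline{g}$ becomes invertible is the same as a $T$ supporting an inverse $g'$ together with the vanishing of the two error maps above.
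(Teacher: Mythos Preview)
Your proposal is correct and follows essentially the same approach as the paper's proof: adjoin $g'\colon\overline{Q}\to\overline{P}$ via Lemma \ref{lemma-matrix}(1), identify the matrices of the two error maps using Lemma \ref{lemma-resolution} and the composition convention (\ref{jan16dis}) exactly as in Lemma \ref{lempresS}, then quotient via Lemma \ref{lemma-matrix}(2). The paper's proof is terser (it says ``we omit the details here''), but your argument fills in precisely those details.
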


\begin{proof}
By  Lemma \ref{lemma-matrix}(1) we have $S_1 = R\langle i: \overline{Q}\xra \overline{P}\rangle \cong R\langle h^*\rangle/ fh^*e=h^*\rangle$, for some $m\times n$-matrix $h^*$. Similarly as in  Lemma \ref{lempresS}, one has that $h^*\circ h_{g}-e$ is the unique graded homomorphism corresponding to $i\circ (S_1\otimes_R g) -1_{S_1\otimes_R P}$ and that $h_{g}\circ h^*-f$ is the unique graded homomorphism corresponding to $(S_1\otimes_R g)\circ i-1_{S_1\otimes_R Q}$. We omit the details here.
\end{proof}

\subsection{Universal morphisms and representable functors}

We recall some facts on representable functors and universal morphisms.

Let $\mathcal{C}$ be a small category and $F:\mathcal{C}\xrightarrow{} \Sets$ a functor, where $\Sets$ is the category of sets with set functions as morphisms.  We say that $F$ is \emph{representable} if there exists a natural isomorphism $\Hom_{\mathcal{C}}(A, -)\xrightarrow{} F$ for some object $A$ in $\mathcal{C}$. For an element $A$ in $\mathcal{C}$, we say that an element $u\in F(A)$ is \emph{universal} if for any object $B$ in $\mathcal{C}$ and $v\in F(B)$, there is a unique morphism $f: A\xrightarrow{} B$ with $v=F(f)(u)$. In this case, this property is called the universal property of $(A, u)$. 

\begin{lemma}
\label{lemma-bijection}
Let $A$ be an object in $\mathcal{C}$. The representations $\alpha: \Hom_{\mathcal{C}}(A, -)\xrightarrow{} F$ are in bijection to universal elements of the form $(A, u)$ via $\a\xrightarrow{} \a_A(1_A)$.
\end{lemma}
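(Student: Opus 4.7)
The plan is to prove this by showing that the claimed map has an explicit two-sided inverse, which is essentially a restatement of the Yoneda lemma in the language used here.

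First I would clarify the two directions. Given a natural isomorphism $\alpha:\Hom_{\mathcal{C}}(A,-)\to F$, set $u:=\alpha_A(1_A)\in F(A)$. Conversely, given a universal element $u\in F(A)$, define for each object $B$ in $\mathcal{C}$ a map $\beta^u_B:\Hom_{\mathcal{C}}(A,B)\to F(B)$ by $\beta^u_B(f):=F(f)(u)$. I would then check that $\beta^u$ is natural: for any morphism $g:B\to C$ and any $f:A\to B$,
\[
\beta^u_C(g\circ f)=F(g\circ f)(u)=F(g)\bigl(F(f)(u)\bigr)=F(g)\bigl(\beta^u_B(f)\bigr),
\]
so the required naturality square commutes. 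The universal property of $(A,u)$ says precisely that for every $v\in F(B)$ there is a unique $f:A\to B$ with $F(f)(u)=v$, i.e.\ $\beta^u_B$ is a bijection for every $B$. Hence $\beta^u$ is a natural isomorphism.

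Next I would show that if $\alpha$ is a natural isomorphism and $u=\alpha_A(1_A)$, then $u$ is universal. For any object $B$ and any $v\in F(B)$, since $\alpha_B$ is a bijection there is a unique $f:A\to B$ with $\alpha_B(f)=v$. By naturality of $\alpha$ applied to $f:A\to B$, the square
\[
\xymatrix@C=40pt{
\Hom_{\mathcal C}(A,A)\ar[r]^{\alpha_A}\ar[d]_{f\circ -} & F(A)\ar[d]^{F(f)}\\
\Hom_{\mathcal C}(A,B)\ar[r]^{\alpha_B} & F(B)
}
\]
commutes, so $\alpha_B(f)=\alpha_B(f\circ 1_A)=F(f)(\alpha_A(1_A))=F(f)(u)$. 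Thus $v=F(f)(u)$, and uniqueness of $f$ follows from the bijectivity of $\alpha_B$. This shows $(A,u)$ is universal.

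Finally I would verify that the two assignments $\alpha\mapsto \alpha_A(1_A)$ and $u\mapsto \beta^u$ are mutually inverse. On the one hand, $\beta^u_A(1_A)=F(1_A)(u)=u$, so the composite sending $u$ to $\beta^u$ and then to $\beta^u_A(1_A)$ is the identity on universal elements. On the other hand, starting from a representation $\alpha$ with $u=\alpha_A(1_A)$, the same naturality square above gives $\alpha_B(f)=F(f)(u)=\beta^u_B(f)$ for every $B$ and every $f:A\to B$, so $\alpha=\beta^u$. This establishes the claimed bijection. No step looks like a genuine obstacle; the only thing to be careful about is to apply naturality at $1_A$ to force the identification $\alpha_B(f)=F(f)(\alpha_A(1_A))$, which is the heart of the Yoneda argument.
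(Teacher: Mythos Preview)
Your proof is correct and follows essentially the same approach as the paper: both use the Yoneda correspondence $\alpha\mapsto\alpha_A(1_A)$ and the naturality computation $\alpha_B(f)=F(f)(\alpha_A(1_A))$ to identify ``$\alpha$ is a natural isomorphism'' with ``$\alpha_A(1_A)$ is universal.'' The paper simply invokes the Yoneda lemma for the bijection between natural transformations and elements of $F(A)$, whereas you spell out the inverse $u\mapsto\beta^u$ and verify both composites explicitly, but the content is the same.
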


\begin{proof}
	By Yoneda Lemma, the natural transformations $\a: \Hom_{\mathcal{C}}(A, -)\xrightarrow{} F$ correspond to elements in $F(A)$ via $\a\mapsto \a_A(1_A)$. It remains to check that $\a$ is an isomorphism if and only if $\a_A(1_A)$ in $F(A)$ is an universal element. Actually, for any object $B$ in $\mathcal{C}$, we have $\a_B: \Hom_{\mathcal{C}}(A,B)\xrightarrow{} F(B)$. By the naturality  of $\a$, for any morphism $f: A\xrightarrow{} B$ 
in $\mathcal{C}$, we have $\a_B(f)=\a_B(f\circ 1_A)=F(f)(\a_A(1_A))$. Thus $\a$ is a natural isomorphism if and only if for any object $B$ in $\mathcal{C}$, the map $\Hom_{\mathcal{C}}(A, B)\xrightarrow{} F(B), f\mapsto F(f)(\a_A(1_A))$ is bijection, which is a restatement of saying that $\a_A(1_A)$ in $F(A)$ is a universal element.
\end{proof}

In what follows, we continue with the assumption that $\Gamma$ is an abelian group, $\K$ a $\Gamma$-graded commutative ring and $R$ a $\Gamma$-graded $\K$-algebra. Suppose that $M$ is a $\Gamma$-graded $R$-module and $P$ a graded finitely generated projective $R$-module.

Theorem \ref{thm-1} now says that the functor from $\G$-graded $R\text{-}\text{rings}_{\K}$ to $\Sets$, associating to $T$ the set $\Hom_{T\text{-}\GR}(T\otimes_R M, T\otimes_R P)$, is representable. More precisely, we have the functor 
\begin{equation*}
\begin{split}
\mathcal F^{\gr}_{M,P}: \grring & \longrightarrow{} \Sets\\
T&\longmapsto \Hom_{T\text{-}\GR}(T\otimes_R M, T\otimes_R P)
\end{split}
\end{equation*}
 such that $F^{\gr}_{M,P}(f)=T'\otimes_T -$ for any graded $\K$-algebra homomorphism $f:T\xrightarrow{} T'$. Then,   there is a natural isomorphism $\a: \Hom_{\grring}(S, -)\xrightarrow{} F^{\gr}_{M,P}$ with $S$ the graded algebra constructed in Theorem \ref{thm-1}.

We recall from \cite[Chapter~4]{Schofield} the notion of universal localization of algebras. Let $R$ be
an algebra and $\sigma$ be a family of morphisms between finitely generated projective $R$-modules. A homomorphism $\theta:R\xrightarrow[]{} S$ of algebras is called \emph{$\sigma$-inverting} if for each morphism $\xi\in\sigma$, the morphism $S\otimes_R \xi$ in $S$-${\mathbf{Mod}}$ is invertible; $\theta$ is called a \emph{universal localization 
with respect to $\sigma$} if in addition each $\sigma$-inverting homomorphism $\theta':R\xrightarrow[]{} S'$ factors
uniquely through $\theta$.

We adopt this to the setting of graded algebras.  Let $\Gamma$ be an abelian group, $R,$
$S$ two $\G$-graded $\K$-algebras and $\Sigma$ a family of morphisms between $\Gamma$-graded finitely generated  projective $R$-modules. A homomorphism $\theta:R\xrightarrow[]{} S$ of $\Gamma$-graded algebras is \emph{graded $\Sigma$-inverting}, if for all $\xi\in \Sigma$, the morphism $S\otimes_R \xi$ in $S$-$\GR$ is invertible; $\theta$ is called a \emph{graded universal localization with repect to $\Sigma$} if in addition every $\Sigma$-inverting homomorphism $\theta':R\xrightarrow[]{} S'$ of $\Gamma$-graded algebras factors uniquely through $\theta$. In \cite[Proposition~3.1]{cy}, it was shown that, the $\mathbb Z$-graded universal localisation exists. Using our graded constructions, here we show that, for any arbitrary abelian group $\Gamma$, the $\Gamma$-graded universal localisation exists.

Recall from \S\ref{bgdhyu} that given a graded homomorphism $f:P \rightarrow Q$  between graded finitely generated projective $R$-modules $P$ and $Q$, we can construct a graded ring $R\langle {\overline f}^{-1}\rangle$ where the homomorphism $\overline f$ becomes invertible.

\begin{lemma} Let $R$ and $S$ be $\Gamma$-graded $\K$-algebras and $\Sigma$ a family of morphisms between $\Gamma$-graded finitely generated projective  $R$-modules.   If $\theta:R\xra{}S$ is a $\Gamma$-graded universal localization with respect to $\Sigma$, then $S\cong R\langle \overline{\xi}^{-1}\rangle_{\xi\in \Sigma}$.
\end{lemma}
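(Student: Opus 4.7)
The plan is to prove the lemma by exhibiting $T:=R\langle \overline{\xi}^{-1}\rangle_{\xi\in\Sigma}$ as a graded universal localization of $R$ with respect to $\Sigma$, and then invoke the standard uniqueness (up to unique isomorphism) of an object defined by a universal property. Concretely, let $\iota:R\to T$ be the canonical graded $\K$-algebra homomorphism produced by iterating the constructions of \S\ref{bgdhyu}: for each $\xi\in\Sigma$ we adjoin a graded map $g$ in the reverse direction via Theorem~\ref{thm-1}, then kill $1-g\overline{\xi}$ and $1-\overline{\xi}g$ via two applications of Theorem~\ref{thm-2}, and we perform all of these adjunctions simultaneously over the index set $\Sigma$. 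By construction, for every $\xi\in\Sigma$ the morphism $T\otimes_R \xi$ admits a two-sided graded inverse, so $\iota$ is graded $\Sigma$-inverting.

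Next I would verify the universal property directly. Suppose $\theta':R\to S'$ is any graded $\Sigma$-inverting homomorphism of $\Gamma$-graded $\K$-algebras. For each $\xi\in\Sigma$, since $S'\otimes_R\xi$ is invertible in $S'$-$\GR$, its inverse provides a graded map $\overline{S'\otimes_R Q_\xi}\to \overline{S'\otimes_R P_\xi}$ in $S'$, and by Theorem~\ref{thm-1} this corresponds to a unique graded $R$-algebra homomorphism out of the first stage of the construction. Because the compositions of $\xi$ with its inverse are the identities, the further relations imposed by the two applications of Theorem~\ref{thm-2} are automatically sent to zero, so by the universal property of that construction there is a unique graded $\K$-algebra factorisation $T\to S'$ with $\theta'=(\text{that factorisation})\circ\iota$. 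Thus $\iota:R\to T$ is itself a graded universal localization with respect to $\Sigma$.

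Finally, since both $\theta:R\to S$ and $\iota:R\to T$ are graded universal localizations with respect to the same family $\Sigma$, the universal property applied in both directions yields mutually inverse graded $\K$-algebra homomorphisms $S\rightleftarrows T$ compatible with the structural maps from $R$, whence $S\cong R\langle \overline{\xi}^{-1}\rangle_{\xi\in\Sigma}$ as $\Gamma$-graded $\K$-algebras. The only delicate point is the bookkeeping in the simultaneous construction of $T$, namely checking that adjoining inverses to all $\xi\in\Sigma$ at once is the correct coproduct-style construction and that the universal property passes cleanly through the iterated use of Theorems~\ref{thm-1} and~\ref{thm-2}; once this is acknowledged, the argument is the standard Yoneda-style uniqueness for representing objects, which was already recorded for the present setting in Lemma~\ref{lemma-bijection}.
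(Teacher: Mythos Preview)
Your proposal is correct and follows essentially the same approach as the paper: both arguments construct $R\langle \overline{\xi}^{-1}\rangle_{\xi\in\Sigma}$ by iterating Theorems~\ref{thm-1} and~\ref{thm-2} over $\Sigma$, verify that the canonical map $R\to R\langle \overline{\xi}^{-1}\rangle_{\xi\in\Sigma}$ is a graded universal localization with respect to $\Sigma$, and then conclude by uniqueness of universal objects. The paper phrases the last step slightly differently---showing that the given $S$ satisfies the universal property characterizing $R_\Sigma$ rather than showing $R_\Sigma$ satisfies the universal property characterizing $S$---but this is the same Yoneda-style uniqueness you invoke.
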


\begin{proof}  For each $\xi:P_{\xi}\xra{}Q_{\xi}$ in $\Sigma$, collect the pair of graded finitely generated projective $R$-modules $(Q_{\xi}, P_{\xi})$.  
By a repeated application of Theorem~\ref{thm-1} for each pair $(Q_{\xi}, P_{\xi})$, the $\Gamma$-graded ring 
$$S_{\Sigma} = R \big \langle \Delta_{\xi}: \overline Q_{\xi} \rightarrow \overline P_\xi \mid \xi \in \Sigma \big \rangle, $$
is the graded $R\text{-}\text{ring}_{\K}$ with a universal family of graded module homomorphisms  
$$\big \{\Delta_{\xi}:S_{\Sigma}\otimes_R Q_{\xi}\xra{}S_{\Sigma}\otimes_R P_{\xi}\, |\, (Q_{\xi}, P_{\xi}), \xi \in  \Sigma\big\}.$$  As $S_{\Sigma}$ is a graded $R\text{-}\text{ring}_{\K}$, we have the graded algebra homomorphism $\theta_1: R\xra S_{\Sigma}$.  Let $R_{\Sigma}$  be the graded $S_{\Sigma}\text{-}\text{ring}_{\K}$  such that 
\begin{equation}
\label{stwo}
\begin{cases}
	R_{\Sigma}\otimes_{\theta_2}(1-\Delta_{\xi}\circ S_{\Sigma}\otimes_{\theta_1}\xi)=0;\\\
	R_{\Sigma}\otimes_{\theta_2}(1-S_{\Sigma}\otimes_{\theta_1}\xi\circ \Delta_{\xi})=0.
	\end{cases}
\end{equation}  Here $\theta_2: S_{\Sigma}\xra{}R_{\Sigma}$ is the graded algebra homomorphism such that $R_{\Sigma}$ is a graded $S_{\Sigma}\text{-}\text{ring}_{\K}$.  From the construction, we have  $R_{\Sigma}= R\langle \overline{\xi}^{-1}\rangle_{\xi\in \Sigma} $.

Since $S$ is a graded localisation ring, for each $\xi: P_{\xi}\xra{}Q_{\xi}\in \Sigma$, we have the isomorphism $S\otimes_R \xi:S\otimes_R P_{\xi}\xra{}S\otimes_R Q_{\xi}$. Now by the universal property of $\Delta_{\xi}$, there exists a unique graded algebra homomorphism $\lambda: S_{\Sigma}\xra{}S$ such that $(S\otimes_R \xi)^{-1}=S\otimes_{\lambda}\Delta_{\xi}$. Hence $S$ satisfies \eqref{stwo}, that is, \begin{equation*}
\begin{cases}
	1-(S\otimes_{\lambda}\Delta_{\xi})\circ (S\otimes_R \xi)
 =S\otimes_{\lambda}(1-\Delta_{\xi}\circ (S_{\Sigma}\otimes_{\theta_1}\xi))=0;\\
	1-(S\otimes_R \xi)\circ(S\otimes_{\lambda}\Delta_{\xi})=S\otimes_{\lambda}(1-(S_{\Sigma}\otimes_{\theta_1}\xi)\circ \Delta_{\xi})=0.
	\end{cases}
\end{equation*}

Next we prove that $S$ has the universal property of $R_{\Sigma}$. For any graded $S_{\Sigma}\text{-}\text{ring}_{\K}$ $T$ with $	T\otimes (1-\Delta_{\xi}\circ (S_{\Sigma}\otimes_{\theta_1}\xi))=0 $ and $	T\otimes (1-(S_{\Sigma}\otimes_{\theta_1} \xi)\circ \Delta_{\xi})=0, $ there exists a unique graded algebra homomorphism $\eta:S\xra{}T$ in the following diagram 
\[\xymatrix{R\ar[r]^{\theta_1}\ar[d]_{\theta}& S_{\Sigma} \ar[ld]_{\lambda}\ar[r]^{\theta_2^T} &T
	 \\
  S\ar[rru]_{\eta} & & }\] such that $\theta_2^T\circ \theta_1=\eta\circ \theta$. The proof is completed. 
\end{proof}

\begin{comment}
such that $T\otimes S_1\otimes_{\theta_1} \xi=T\otimes_{\eta}S\otimes_{\theta} \xi.$ 

One can show that $S\cong S_2$. By the universal property of $S$, for the graded $S_1\text{-}\text{ring}_{\K}$ $S_2$ there exists a graded algebra homomorphism $\eta_1:S\xrightarrow{} S_2$ such that $\theta_2\circ \theta_1=\eta_1\circ \theta$.   In the diagram 
\[\xymatrix{R\ar[r]^{\theta_1}\ar[d]_{\theta}& S_1 \ar[ld]_{\xi_1}\ar[r]^{\theta_2} &S_2
	 \\
  S\ar[rru]_{\eta_1} & & }\]
     we also have $\theta=\xi_1\theta_1$. Since $S$ satisfies \eqref{stwo}, there exists a graded algebra homomorphism $\eta_2: S_2\xrightarrow{} S$ such that $\xi_1=\eta_2 \theta_2$. Then we have $$\theta_2\theta_1=\eta_1 \theta=\eta_1 \xi_1\theta_1=\eta_1\eta_2 \theta_2\theta_1.$$  Hence $\eta_1\eta_2=1_{S_2}$. We also have 
		 $$\theta=\xi_1\theta_1=\eta_2\theta_2\theta_1=\eta_2\eta_1\theta.$$ Hence $\eta_2\eta_1=1_{S}$. 
\end{comment}

We are in a position to extend  \cite[Proposition 3.1 (1)]{cy}, which only worked for $\mathbb Z$-graded algebras. The existence of a $\mathbb Z$-graded universal localization follows from the argument in \cite[Theorem 4.1]{Schofield} adapted for categories with automorphisms. Here we prove that the existence of any $\Gamma$-graded universal localization follows from graded Bergman's constructions where $\Gamma$ is any abelian group.

Recall that a $\Gamma$-graded ring $R$ (with unit) is called left graded hereditary if any graded submodule of a graded projective left $R$-module is (graded) projective, which is equivalent to that the graded global dimension for $R$ being less than or equal to $1$ (see~\S\ref{gradedhersec}).

\begin{prop}
\label{propuniversalloc}
  Let $\Gamma$ be an abelian group, $R$ a $\G$-graded $\K$-algebra and $\Sigma$ a family of morphisms between $\Gamma$-graded finitely generated  projective $R$-modules. We have the following statements.
  \begin{enumerate}[\upshape(1)]
  \item The induced homomorphism $R\xrightarrow{} R\langle {\overline{\xi}}^{-1} \rangle_{\xi\in \Sigma}$, $r\mapsto \overline{r}$, is a   $\Gamma$-graded universal localization of $R$ with respect to $\Sigma$, denoted by $\theta_{\Sigma}$, and is unique up to isomorphism. 
  
  \item Moreover, the underlying homomorphism $\theta_{\Sigma}: R\xrightarrow{} R\langle {\overline{\xi}}^{-1} \rangle_{\xi\in \Sigma}$ of ungraded algebras is a universal localization of $R$ with respect to $\sigma=U(\Sigma)$ which is the image of $\Sigma$ under the forgetful functor $U$. 
  
  \item If $R$ is graded hereditary, so is $R\langle {\overline{\xi}}^{-1} \rangle_{\xi\in \Sigma}$.
  \end{enumerate}
\end{prop}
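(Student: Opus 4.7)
The plan for (1) is to verify the graded universal property directly from the iterated construction in \S\ref{bgdhyu}. Given any graded $\Sigma$-inverting homomorphism $\theta':R\to S'$, for each $\xi\in\Sigma$ the inverse of the graded isomorphism $S'\otimes_R\xi$ supplies, by the universal property of Theorem~\ref{thm-1} applied to the pair $(Q_\xi,P_\xi)$, a unique graded factorization $S_\Sigma:=R\langle\Delta_\xi:\overline{Q_\xi}\to\overline{P_\xi}\rangle_{\xi\in\Sigma}\to S'$ extending $\theta'$. Since this factorization sends $1-\Delta_\xi\circ\bar\xi$ and $1-\bar\xi\circ\Delta_\xi$ to zero, it further descends through the two quotients imposed by Theorem~\ref{thm-2}, giving the desired unique graded factorization through $R\langle\bar\xi^{-1}\rangle_{\xi\in\Sigma}$. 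Uniqueness up to graded isomorphism then follows formally, exactly as in the preceding lemma.

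For (2), my plan is to transport the matrix presentation into the ungraded world. Applying Lemma~\ref{lemma-univ} to each $\xi\in\Sigma$ expresses $R\langle\bar\xi^{-1}\rangle_{\xi\in\Sigma}$ as a quotient of a free $\K$-algebra construction by finitely many matrix relations of the form $fh^*e=h^*$, $h_\xi h^*=e$ and $h^*h_\xi=f$. These relations make no reference to the grading; they depend only on the idempotents representing $P_\xi,Q_\xi$ and the matrix $h_\xi$ representing $\xi$. Consequently, any ungraded $\K$-algebra homomorphism $\theta':R\to S'$ that inverts every $U(\xi)$ provides matrices in $S'$ satisfying exactly these relations, yielding a unique ungraded factorization through $U(R\langle\bar\xi^{-1}\rangle_{\xi\in\Sigma})$. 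Thus $U(\theta_\Sigma)$ is the ungraded universal localization at $\sigma=U(\Sigma)$.

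For (3), the plan is to adapt Bergman's hereditary-preservation argument from \cite[\S5]{bergman74} (or equivalently \cite[Chapter~4]{Schofield}) to the graded setting. By Proposition~\ref{herher} it suffices to show that the graded global dimension of $R\langle\bar\xi^{-1}\rangle_{\xi\in\Sigma}$ is at most one, and since the construction proceeds by alternating applications of Theorems~\ref{thm-1} and~\ref{thm-2}, I would argue that each elementary step preserves this bound. The matrix forms of \S\ref{ghostagain} allow graded length-one resolutions over the new ring to be assembled from resolutions over the old ring, provided one tracks the shifts $(\alpha_i)$ at every step through the graded matrix multiplication (\ref{jan16dis}). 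The main obstacle, and where genuine homological input is needed, is the quotient step in Theorem~\ref{thm-2}: imposing vanishing relations can a priori create graded submodules of graded projectives that fail to be graded projective, and controlling this requires mirroring Bergman's explicit resolution argument in the graded category with careful bookkeeping of shift functors $\mathcal{T}_\alpha$ throughout.
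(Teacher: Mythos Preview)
For (1) and (2) your approach coincides with the paper's: factor any graded $\Sigma$-inverting $\theta'$ through $S_\Sigma=R\langle\Delta_\xi:\overline{Q_\xi}\to\overline{P_\xi}\rangle_{\xi\in\Sigma}$ via Theorem~\ref{thm-1}, then descend through the quotients of Theorem~\ref{thm-2}; for (2) the paper simply says ``take $\Gamma$ to be the trivial group,'' which amounts to the same observation as your matrix-presentation argument (compare Remark~\ref{rmkthm}(2)).

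For (3) you are working much harder than the paper, and the obstacle you single out---the quotient step of Theorem~\ref{thm-2}---is precisely what the paper sidesteps. The paper's proof is two lines: by part~(2) the \emph{ungraded} ring underlying $R\langle\bar\xi^{-1}\rangle_{\xi\in\Sigma}$ is an ordinary Bergman universal localization, hence hereditary by \cite{bergman78} (or \cite{Schofield}); and a graded ring that is hereditary is automatically graded hereditary, by the remark following Proposition~\ref{herher}. No graded bookkeeping of resolutions or shift functors is required. One caveat worth noting: this shortcut needs $R$ to be hereditary (ungraded), whereas the stated hypothesis is only that $R$ is \emph{graded} hereditary, and the paper's own example $\mathbb Z[x,x^{-1}]$ in \S\ref{gradedhersec} shows these differ. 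In all the paper's applications (\S\ref{secmainres}) the base ring is semisimple and the distinction is moot, but if one insists on the hypothesis exactly as written, then your more laborious graded adaptation of Bergman's argument may indeed be what is needed.
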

\begin{proof}
(1) Recall from \eqref{stwo} that $S_{\Sigma}=R\langle \Delta_{\xi}: \overline{Q_{\xi}}\xra \overline{P_{\xi}} | \xi:P_{\xi}\xra Q_{\xi}\in \Sigma\rangle$ and that $R_{\Sigma}$ satisfies that  $$R_{\Sigma}\otimes_R \xi:R_{\Sigma}\otimes_R P_{\xi}\xrightarrow[]{\sim} R_{\Sigma}\otimes_R Q_{\xi},$$ is an isomorphism for each $\xi: P_{\xi}\xra Q_{\xi} \in \Sigma$.

For any $\Sigma$-inverting graded homomorphism $\theta': R\xra S'$, there exists a unique graded $R$-algebra homomorphism $\lambda: S_{\Sigma}\xra{}S'$ such that $(S'\otimes_R \xi)^{-1}=S'\otimes_{\lambda}\Delta_{\xi}$. Hence we have
\begin{equation}
\label{firstcom}
    \theta'=\lambda\circ \theta_1.
\end{equation}
For each $\xi$ in $\Sigma$, we have the isomorphism $S'\otimes_R \xi:S'\otimes_R P_{\xi}\xra{}S'\otimes_R Q_{\xi}$, and  
\begin{equation*}
\begin{cases}
	1-(S'\otimes_{\lambda}\Delta_{\xi})\circ (S'\otimes_R \xi)
 =S'\otimes_{\lambda}(1-\Delta_{\xi}\circ (S_{\Sigma}\otimes_{\theta_1}\xi))=0;\\
	1-(S'\otimes_R \xi)\circ(S'\otimes_{\lambda}\Delta_{\xi})=S'\otimes_{\lambda}(1-(S_{\Sigma}\otimes_{\theta_1}\xi)\circ \Delta_{\xi})=0.
	\end{cases}
\end{equation*} There exists a unique graded $S_{\Sigma}$-algebra homomorphism $\eta: R_{\Sigma}\xra S'$ in the following diagram.
\[\xymatrix{R\ar[r]^{\theta_1}\ar[d]_{\theta'}& S_{\Sigma} \ar[ld]_{\lambda}\ar[r]^{\theta_2} &R_{\Sigma}\ar[lld]^{\eta}
	 \\
  S' & & }\] Thus we have 
\begin{equation}
\label{secondcom}
    \eta\circ\theta_2=\lambda.
\end{equation} Therefore we have $\theta_2\circ \theta_1\circ\eta=\theta'$. Suppose that there exists $\eta': R_{\Sigma}\xra S'$ such that $\theta_2\circ \theta_1\circ \eta'=\theta'$. Combining with \eqref{firstcom} we have that $\theta_2\circ \theta_1\circ \eta'=\theta'=\lambda\circ \theta_1$. By the uniqueness of $\lambda$ we have $\lambda=\eta'\circ \theta_2$. By comparing with \eqref{secondcom} we have $\eta=\eta'$. Then we prove that $\theta_2\circ\theta_1: R\xra R_{\Sigma}$ is a graded universal localization of $R$ with respect to $\Sigma$, which will be denoted by  $\theta_{\Sigma}$. The uniqueness of $\theta_{\Sigma}$ up to isomorphism follows from the universal property of Bergman algebras.

(2) Take $\Gamma$ to be the trivial group. Then we have the second statement.

(3) By Bergman's construction $R\langle {\overline{\xi}}^{-1} \rangle_{\xi\in \Sigma}$ is hereditary. As it is a graded ring, it is graded hereditary. 
    \end{proof}

\subsection{Path algebras as universal algebras}
     
Bergman's proof of Theorem~5.3 in \cite{bergman74} indicates that any lower triangle matrix ring can be realised as the universal algebra of the form $R \langle  f_i:\overline P_i \rightarrow \overline Q_i \rangle$, where $R$ can be chosen as an appropriate
 commutative $\K$-semisimple ring. Furthermore, inverting these maps, i.e.,  $R \langle  f_i, f_i^{-1}:\overline P_i  \cong \overline Q_i \rangle$ would then produce the whole matrix ring. As the lower triangle matrix rings are path algebras of certain kind of line graphs, it is natural to ask whether all path algebras of graphs can be obtained as universal algebras. Here we show this is in fact the case, namely for any finite graph $E$, the path algebra $P_\K(E) \cong  R \langle  f_i:\overline P_i  \rightarrow \overline Q_i \rangle$, where $R$ is an commutative $\K$-semisimple ring and $(P_i,Q_i)$ are finitely generated projective $\K$-modules determined by the shape of the graph.  Although it is usually quite difficult to describe the universal ring obtained by inverting morphisms (see for example \cite{sheiham}), here we show that the Leavitt path algebra $L_\K(E)$ is the universal algebra, which inverts the maps $f_i: P_i  \rightarrow Q_i $. 

Although the results of this section might be known (or predictable) to the experts, we provide the details to demonstrate what it means to build a graded ring from the inset and not to assign the  grading from the outset, as it has been done so far in the theory of combinatorial algebras. Namely, previously we would construct (Leavitt) path algebras and then assign a grading to them. By using our graded Bergman machinery, the algebras obtained are already equipped with a grading.

Let $E$ be a finite graph. We use $E^{\reg}$ to denote the set of vertices in $E$ at which there are at least one edge starting from it.
Recall that the \emph{path algebra} of the graph $E$ is defined as 
\begin{equation}\label{gfhfyhr}
P_\K(E) = \K \big \langle E^0, E^1 \big \rangle \big / \big \langle u v = \delta_{u,v}, s(e)e=er(e)=e, u,v \in E^0, e\in E^1 \big \rangle, 
\end{equation}
where $\K \big \langle E^0, E^1 \big \rangle$ is the free (non-unital) algebra with vertices and edges of the graph as the generators. 
Assigning zero to vertices and $1$ to edges, since the defining relations in (\ref{gfhfyhr}) are homogeneous, $P_\K(E)$ becomes a $\mathbb Z$-graded ring (with positive support). 
In Lemma \ref{lempathalg} we will realise $P_\K(E)$ as a $\mathbb Z$-graded universal algebra.

Consider the semisimple $\K$-algebra $R=\prod_{E^0}\K$, the product of $|E^0|$-copies  of the field $\K$ as a $\mathbb Z$-graded ring which is concentrated in the zero component.  
Denote $p_v(i)$, where $v\in E^0$ and $i\in \mathbb Z$, the graded finitely generated projective  $R$-module with $\K(i)$ appears in $v$-th component of $R$ and zero elsewhere. Throughout, we write $p_v$ for $p_v(0)$. Recall that $p_v(i), i \in \mathbb Z$, is the shift of the projective $R$-module $p_v$ by $i$.  Note that $\bigoplus_{v\in E^0}p_v \cong R$ as graded $R$-modules.

 \begin{lemma}
 \label{lempathalg}
 	Let $E$ be a finite graph and $\K$ a field. Let $R=\prod_{E^0}\K$ be the $\mathbb{Z}$-graded algebra concentrated in degree zero, with graded finitely generated projective modules $p_v(i)$ as above. Then we have a $\mathbb Z$-graded $\K$-algebra isomorphism $$P_\K(E)\cong R\,\big \langle g^v:\overline{p_v}\longrightarrow{} \overline{\bigoplus_{e\in s^{-1}(v)}{p_{r(e)}(1)}}, \, {v\in E^{\reg}} \big \rangle.$$   
 \end{lemma}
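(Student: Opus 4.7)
The plan is to describe the right-hand side of the claimed isomorphism explicitly by generators and relations, using the matrix presentation provided by Lemma \ref{lemma-matrix}(1), and then to match this presentation with the usual one of the path algebra $P_\K(E)$ given in (\ref{gfhfyhr}).

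First I would realise the two families of modules that appear as images of concrete homogeneous idempotents over $R=\prod_{v\in E^0}\K$. Writing $\epsilon_v$ for the primitive central idempotent of $R$ corresponding to $v\in E^0$, the module $p_v$ is the image of the $1\times 1$ idempotent $\epsilon_v\in\M_1(R)(\overline 0)_0$. For each $v\in E^{\reg}$, fix an enumeration $s^{-1}(v)=\{e^v_1,\dots,e^v_{n_v}\}$. Then $\bigoplus_{i=1}^{n_v}p_{r(e^v_i)}(1)$ is the image of the diagonal idempotent
\[
f_v=\diag\!\bigl(\epsilon_{r(e^v_1)},\dots,\epsilon_{r(e^v_{n_v})}\bigr)\in\M_{n_v}(R)(\overline 1)_0,
\]
where $\overline 1=(1,\dots,1)\in\mathbb Z^{n_v}$.

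Next I would apply Lemma \ref{lemma-matrix}(1) simultaneously for all $v\in E^{\reg}$ (this is a repeated use of Theorem \ref{thm-1}, one adjunction per regular vertex, and the resulting ring is independent of the order since the generators added at different vertices are independent). Each application adjoins a $1\times n_v$ row of symbols $h^v=(h^v_{1,1},\dots,h^v_{1,n_v})$ with $\deg(h^v_{1,i})=1-0=1$, subject to the single matrix relation $\epsilon_v\,h^v\,f_v=h^v$; expanding componentwise this reads $\epsilon_v\,h^v_{1,i}\,\epsilon_{r(e^v_i)}=h^v_{1,i}$ for every $i$. Consequently the right-hand side of the claimed isomorphism has the presentation
\[
R\big\langle h^v_{1,i}\mid v\in E^{\reg},\,1\le i\le n_v\big\rangle\big/\big\langle \epsilon_v h^v_{1,i}\epsilon_{r(e^v_i)}=h^v_{1,i}\big\rangle,
\]
with the $\mathbb Z$-grading that assigns degree $0$ to the $\epsilon_v$ and degree $1$ to the $h^v_{1,i}$.

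To conclude, I would exhibit mutually inverse graded $\K$-algebra homomorphisms between this presentation and $P_\K(E)$, defined on generators by $\epsilon_v\leftrightarrow v$ and $h^v_{1,i}\leftrightarrow e^v_i$. Since $E^0$ is finite, $\sum_{v\in E^0}\epsilon_v=1$ in $R$, so the single relation $\epsilon_v h^v_{1,i}\epsilon_{r(e^v_i)}=h^v_{1,i}$ is equivalent to the pair $\epsilon_{s(e^v_i)}h^v_{1,i}=h^v_{1,i}=h^v_{1,i}\epsilon_{r(e^v_i)}$, which is precisely the path-algebra relation $s(e)e=e=er(e)$; the orthogonality and completeness of the $\epsilon_v$ mirror $uv=\delta_{uv}u$. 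Each side therefore satisfies the defining relations of the other, so the assignments extend to $\mathbb Z$-graded $\K$-algebra homomorphisms which are mutually inverse on generators. The only step that requires genuine care is tracking the matrix conventions of \S\ref{matrixrep} to ensure that the shift $(1)$ on the target module translates into degree-$1$ generators (and not $-1$), so that the resulting grading on the Bergman construction agrees with the standard length grading on $P_\K(E)$; once this bookkeeping is in place, the verification reduces to the direct comparison of presentations outlined above.
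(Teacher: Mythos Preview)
Your proposal is correct and follows essentially the same approach as the paper: realise the source and target modules via explicit homogeneous idempotents, read off the generators-and-relations presentation of the universal ring, and match it with the defining presentation of $P_\K(E)$. The only cosmetic difference is that you invoke the packaged statement of Lemma~\ref{lemma-matrix}(1) to obtain the relation $\epsilon_v h^v f_v = h^v$ directly, whereas the paper re-derives the same relations by unwinding the proof of Theorem~\ref{thm-1} with the specific resolution $\bigoplus_{w\neq v}p_w\to R\to p_v\to 0$; the resulting presentations and the identification with $P_\K(E)$ are identical.
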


\begin{proof}
We observe that for any regular vertex $v\in E^0$, we have the following diagram with exact rows of $R$-modules
	 \[\xymatrix{\bigoplus_{w\neq v}p_w\ar[r]^{u}& R \ar[r]^{\varepsilon} \ar[d]^{h^v}&p_v\ar[r] &0
	 \\ Q(1)\ar[r]& \bigoplus\limits_{e\in s^{-1}(v)}R(1)\ar[r]^{1-f^v} \ar[d]^{f^v}& \bigoplus\limits_{e\in s^{-1}(v)}p_{r(e)}(1) \ar[r]&0\\ 
	 & \bigoplus\limits_{e\in s^{-1}(v)}R(1)&& }\] where $Q=\bigoplus_{e\in s^{-1}(v)}(R/p_{r(e)})$ with $$\bigoplus_{e\in s^{-1}(v)}p_{r(e)}\bigoplus Q=\bigoplus_{e\in s^{-1}(v)}R, $$ that is, the second row of the above diagram splits. And $f^v$ is an idempotent endomorphism of $\bigoplus_{e\in s^{-1}(v)}R(1)$ such that $f^v|_{Q(1)}=\id_{Q(1)}$ and $\Ker f^v=\bigoplus_{e\in s^{-1}(v)}p_{r(e)}(1)$. Note that the graded module homomorphism $h^v$ can be written as the matrix 
	 $$\begin{pmatrix}{e_1^v}\\ e_2^v\\
	 \vdots\\
	 e_{n_v}^v
	 \end{pmatrix}$$ such that $e_1^v, e_2^v, \cdots, e_{n_v}^v$ are elements in $R$ corresponding to the index set of all the edges starting from $v$ in $E$. By Theorem \ref{thm-1}, the universal algebra $S$ with universal graded module homomorphisms $$g^v: S \otimes_R p_v\longrightarrow S \otimes_R \bigoplus_{e\in s^{-1}(v)}p_{r(e)}(1)$$ for regular vertices $v\in E^0$ is obtained via adjoining to $R$ generators $e_i^v$ for $v\in E^{\reg}$ and $1\leq i\leq n_v$ subject to the relations $h^v \circ u=0=f^v\circ h^v$. More precisely, we have $h^v\circ u=0$ implying 
	 \begin{equation}
	 \label{pathalgebrarelation1}
{\mathbf e}_{w} e^v_i=0
\end{equation}
for any $w\neq v$ and all $1\leq i\leq n_v$. Here, $$\mathbf{e}_w=\begin{pmatrix}
	 	0,\cdots, 0, 1,0\cdots, 0
	 \end{pmatrix}\in p_w.$$ Hence by the universal property of the cokernel $\varepsilon$, there exists a unique graded homomorphism $\tau: p_v\xra \bigoplus_{e\in s^{-1}(v)}R(1)$ such that $h^v=\tau \circ \varepsilon$. This deduce that \begin{equation}
	 \label{pathalgebrarelation2}
	 	\mathbf{e}_v e_i^v=e^v_i
	 \end{equation} for all $1\leq i\leq n_v$. Also we have $f^v\circ h^v=0$ implying 
	 \begin{equation}
	 	\label{pathalgebrarelation3}
e^v_i \mathbf{e}_{r(e^v_i)}=e^v_i,\qquad e^v_i {\mathbf e}_w=0 \text{~for~} w\neq r(e^v_i),
\end{equation}
 for each $1\leq i\leq n_v$. One can see that $S$ is isomorphic to the path algebra $P_\K(E)$ when one observes that $\mathbf e_v$ corresponds to the vertex $v\in E^0$, and for each $v\in E^{\reg}$,  $e^v_i$ corresponds to the edge starting from $v$ for $1\leq i\leq n_v$. 
\end{proof}

One observes that the universal graded module homomorphism $$g^v: P_\K( E)\otimes_R p_v\longrightarrow P_\K(E)\otimes_R \big (\bigoplus_{e\in s^{-1}(v)}p_{r(e)}(1)\big)$$ in the following diagram corresponding to $P_\K( E)\otimes_R h^v$  is given by $g^v(xv)=\sum_{e\in s^{-1}(v)}xe$ for any $x\in P_\K( E)$. 
 \[\xymatrix{
 P_\K( E) \otimes_R p_v\ar[d]^{\cong}\ar[r]^>>>>{g^v}& P_\K( E)\otimes_R (\bigoplus_{e\in s^{-1}(v)}p_{r(e)}(1))\ar[d]^{\cong}\\
 P_\K( E) \, v \ar[r]^>>>>>>>>>{g^v} & \bigoplus_{e\in s^{-1}(v)}P_\K( E) \,r(e)}(1)\]

We have the following consequence (compare with \cite[Proposition 4.3(1)]{cy}). We mention that in \cite{cy} the authors constructed the maps $g^v$ which are exactly the graded universal homomorphisms of Bergman algebras. 

\begin{thm} \label{levbalcon}
The $\mathbb Z$-graded algebra homomorphism $\iota: P_\K(E)\xrightarrow{} L(E), \, p\mapsto p$, for $p$ a path in $E$,  is a graded universal localization with respect to $$\Sigma=\Big \{g^v: P_\K(E) v\longrightarrow \bigoplus_{e\in s^{-1}(v)} P_\K(E) r(e)(1) \, |\, v\in E^{\reg}\Big \},$$ where $g^v(x)=\sum_{e\in s^{-1}(v)}xe$ for $x\in P_\K(E) v$.
\end{thm}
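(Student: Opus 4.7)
The plan is to chain the two Bergman-style constructions already made available in the paper. By Lemma~\ref{lempathalg}, the path algebra itself can be realised as a graded Bergman algebra over $R=\prod_{E^0}\K$:
\[
P_\K(E)\cong R\big\langle g^v:\overline{p_v}\longrightarrow \overline{\textstyle\bigoplus_{e\in s^{-1}(v)}p_{r(e)}(1)},\ v\in E^{\reg}\big\rangle.
\]
The universal localisation of $P_\K(E)$ with respect to $\Sigma$ exists by Proposition~\ref{propuniversalloc} and equals $P_\K(E)\langle \overline{g^v}^{-1}\rangle_{v\in E^{\reg}}$. Composing the two universal constructions (adjoining the maps $g^v$ and then inverting them) gives a graded $\K$-algebra isomorphism
\[
P_\K(E)\big\langle \overline{g^v}^{-1}\big\rangle_{v\in E^{\reg}}\cong R\big\langle g^v,(g^v)^{-1}:\overline{p_v}\cong\overline{\textstyle\bigoplus_{e\in s^{-1}(v)}p_{r(e)}(1)},\ v\in E^{\reg}\big\rangle,
\]
so it suffices to identify the right-hand side with $L_\K(E)$.

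To do this I would use the matrix form of Lemma~\ref{lempresS} for each $v\in E^{\reg}$. Represent $p_v$ by the rank-one idempotent $\mathbf e_v\in R=\M_1(R)(0)_0$, and $\bigoplus_{e\in s^{-1}(v)}p_{r(e)}(1)$ by the diagonal homogeneous idempotent $f_v=\mathrm{diag}(\mathbf e_{r(e_1^v)},\dots,\mathbf e_{r(e_{n_v}^v)})\in\M_{n_v}(R)(\overline 1)_0$, where $\overline 1=(1,\dots,1)$. Lemma~\ref{lempresS} then presents the universal algebra by a $1\times n_v$ row of symbols $h^v=(e_1^v,\dots,e_{n_v}^v)$ of degree $1$ and an $n_v\times 1$ column $(h^v)^{*}=((e_1^v)^{*},\dots,(e_{n_v}^v)^{*})^{\mathrm t}$ of degree $-1$, subject to four matrix identities. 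The relations $\mathbf e_v h^v f_v=h^v$ and $f_v(h^v)^{*}\mathbf e_v=(h^v)^{*}$ translate to $s(e_i^v)e_i^v=e_i^v=e_i^v r(e_i^v)$ and the symmetric relations for $(e_i^v)^{*}$; the relation $h^v(h^v)^{*}=\mathbf e_v$ becomes $\sum_{i}e_i^v(e_i^v)^{*}=v$; and $(h^v)^{*}h^v=f_v$ becomes $(e_i^v)^{*}e_j^v=\delta_{ij}r(e_i^v)$. Together with the orthogonality of the idempotents inherited from $R$, these are precisely the defining relations of the Leavitt path algebra $L_\K(E)$ (cf.\ Example~\ref{exmlpa}), so the universal algebra is graded isomorphic to $L_\K(E)$ and $\iota$ is recovered as the canonical map from $P_\K(E)$.

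The bulk of the argument is bookkeeping; the only genuinely delicate point is verifying that the degree data match: each edge symbol $e_i^v$ arising from Lemma~\ref{lempresS} sits in degree $\gamma_k-\beta_j=1-0=1$ and each $(e_i^v)^{*}$ in degree $-1$, which agrees with the standard $\mathbb Z$-grading on $L_\K(E)$. Once this is checked, the isomorphism is automatically $\mathbb Z$-graded and an $R$-algebra (hence $P_\K(E)$-algebra) isomorphism, and Proposition~\ref{propuniversalloc}(1) identifies $\iota$ as the graded universal localisation of $P_\K(E)$ with respect to $\Sigma$.
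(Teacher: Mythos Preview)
Your argument is correct and follows essentially the same route as the paper's proof: both use Proposition~\ref{propuniversalloc}(1) together with Lemma~\ref{lempathalg} to identify the graded universal localisation $P_\K(E)\langle \overline{g^v}^{-1}\rangle_{v\in E^{\reg}}$ with $R\langle g^v,(g^v)^{-1}:\overline{p_v}\cong\overline{\bigoplus_{e\in s^{-1}(v)}p_{r(e)}(1)}\rangle$, and then invoke Lemma~\ref{lempresS} to obtain a matrix presentation. The only difference is cosmetic: the paper packages the final identification with $L_\K(E)$ by passing through the Bergman algebra $B_R(e,f)$, Theorem~\ref{lemhyperberg}, and Example~\ref{exmlpa}, whereas you verify directly that the four matrix relations coming from Lemma~\ref{lempresS} are precisely the Leavitt path algebra relations (i)--(iv). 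Your direct verification is in fact the content of those cited results specialised to this situation, so the two proofs are equivalent.
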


\begin{proof}
    By Proposition \ref{propuniversalloc}(1) we have that $P_{\K}(E)\xrightarrow[]{}P_{\K}(E)\langle \xi^{-1}\rangle_{\xi\in \Sigma}$,  $p\mapsto p$, is a $\mathbb Z$-graded universal localization of $P_{\K}(E)$ with respect to $\Sigma$. One observes that \begin{equation}
    \label{com}
        \begin{split}
            P_{\K}(E)\langle \xi^{-1}\rangle_{\xi\in \Sigma}
            &\cong R\big \langle \overline{p_v}
            \cong \overline{\oplus_{e\in s^{-1}(v)}p_{r(e)}(1)}, v\in E^{\rm reg}\big\rangle\\
            &\cong B_R(e, f)\\
            &\cong L(H)\\
            &\cong L(E)
        \end{split}
    \end{equation} where $R=\prod_{E^0}\K$ and $(e, f)=\{(e_{h^v}, f_{h^v})\, |\, v\in E^{\rm reg}\}$ with $e_{h^v}=(\epsilon_{v})\in R_0$ and $f_{h^v}=\in \Mat_{|J_v|\times |J_v|}(R)(1,1,\cdots, 1)_0$ the idempotent matrix whose entry at position $(j,j')$ is $\delta_{jj'}\epsilon_{r(e^{v,j})}$. Here in the equation \eqref{com} the first isomorphism follows by Lemma \ref{lempathalg}, the second isomorphism follows by Lemma \ref{lempresS}, the third isomorphism is given in Theorem \ref{lemhyperberg} and the last isomorphism is given in Example \ref{exmlpa}. Moreover all the four isomorphisms preserve elements. Hence the composition $P_{\K}(E)\xrightarrow[]{}P_{\K}(E)\langle \xi^{-1}\rangle_{\xi\in \Sigma}\xra L(E)$, $p\mapsto p$ is a $\mathbb Z$-graded universal localization of $P_{\K}(E)$ with respect to $\Sigma$.
\end{proof}

%{\bf Q1} can one translate properties of LPAs using Bergman's setting? For example, suppose we have an algebra homomorphism $\varphi: S=R<\overline{P_i}\cong \overline{Q_i}>\xrightarrow{} A$. When is $\varphi$ injective? Can one give conditions on $R$, $P_i$ and $Q_i$? This should have relations with injectivity and coproduct rings. 

%\begin{equation*}
%{\def\labelstyle{\displaystyle}
%\xymatrix{ E: \quad && \bullet\ar[r] &\bullet\ar@(r, %u)_{}\ar@(l,d)_{}\ar[r]&\bullet\ar[r]&\bullet} \qquad \quad \quad 
%{\def\labelstyle{\displaystyle}
%\xymatrix{
%F: \quad \,\,  \quad &&\bullet\ar@(r, %u)_{}\ar@(l,d)_{}\ar[r]&\bullet\ar[r]&\bullet}}}
%\end{equation*}  $\LL(E)\cong \LL(E)^2$ but $\LL(F)$ has IBN. 	
% \end{section}

 \section{Smash products of graded Bergman algebras}\label{smashbergloc}

 Throughout this section, $\Gamma$ is an abelian group, $\K$ a commutative $\Gamma$-graded ring concentrated in degree $0$, and $R$ is a $\Gamma$-graded $k$-algebra, i.e., $R$ is equipped with a graded homomorphism $\K\rightarrow Z(R)$.  Note that, by our assumptions $\K \subseteq R_0$.

\subsection{Smash products of graded Bergman algebras I}\label{typeabergsmash}

Let $P$ and $Q$ be non-zero graded  finitely generated projective $R$-modules and set 
\[S:=R\big\langle h,h^{-1}:\overline{P}\cong \overline{Q}\big\rangle.\]
We write $P$ as the image of an idempotent graded endomorphism $d$ of a graded free $R$-module of finite rank $\bigoplus_{j\in J}R(\beta_j)$, and similarly $Q$ as the image of an idempotent graded endomorphism $e$ of a graded free $R$-module of finite rank $\bigoplus_{m\in M}R(\gamma_m)$. Clearly $\deg(d_{ij})=\beta_j-\beta_i$, for any $i,j\in J$, and $\deg(e_{km})=\gamma_m-\gamma_k$, for any $k,m\in M$. Recall from Lemma \ref{lempresS} that $S$ can be obtained from $R$ by adjoining homogeneous generators $h_{jm},h'_{mj}~(j\in J,m\in M)$ of degree $\deg(h_{jm})=\gamma_m-\beta_j$, respectively $\deg(h'_{mj})=\beta_j-\gamma_m$, and relations $dh=h=he,~eh'=h'=h'd,~hh'=d,~h'h=e$. Hence $S\cong R\langle X\rangle/I$, where $X=\{h_{jm},h'_{mj}\mid j\in J,m\in M\}$ and $I$ is the ideal of $R\langle X\rangle$ generated by the relations 
\begin{align*}
\sum_{i\in J}d_{ji}h_{im}&=h_{jm}=\sum_{k\in M}h_{jk}e_{km}&(j\in J,m\in M),\\
\sum_{k\in M}e_{mk}h'_{kj}&=h'_{mj}=\sum_{i\in J}h'_{mi}d_{ij}&(j\in J,m\in M),\\
\sum_{m\in M}h_{im}h'_{mj}&=d_{ij}&(i,j\in J),\\
\sum_{j\in J}h'_{kj}h_{jm}&=e_{km}&(k,m\in M).
\end{align*}
It follows from Lemma \ref{lempressmash} and Corollary \ref{corsmashkey} that $S\#\Gamma$ has the presentation 
 \begin{equation}\label{eqn6.1}
 \begin{split}
     S\#\Gamma=\Big\langle& rp_{\gamma},h_{jm}p_\gamma,h'_{mj}p_\gamma\quad(r\in R^{h}, \gamma\in \Gamma,j\in J,m\in M)\mid{}\\
 &rp_\gamma+sp_\gamma=(r+s)p_\gamma\quad(r,s\in R_{\delta};~\gamma,\delta\in \Gamma),\\
 &rp_{\gamma}sp_{\delta}=rs_{\gamma-\delta} p_{\delta}\quad(r,s\in R^{h};~\gamma,\delta\in\Gamma ),\\
 &(h_{jm}p_\gamma)(1p_\gamma)=h_{jm}p_\gamma=(1p_{\gamma+\gamma_m-\beta_j})(h_{jm}p_\gamma)\quad(j\in J,m\in M, \gamma\in \Gamma),\\
  &(h'_{mj}p_\gamma)(1p_\gamma)=h'_{mj}p_\gamma=(1p_{\gamma+\beta_j-\gamma_m})(h'_{mj}p_\gamma)\quad(j\in J,m\in M, \gamma\in \Gamma)\\
&\sum_{i\in J}d_{ji}p_{\gamma+\gamma_m-\beta_i}h_{im}p_\gamma=h_{jm}p_\gamma=\sum_{k\in M}h_{jk}p_{\gamma+\gamma_m-\gamma_k}e_{km}p_\gamma\quad(j\in J,m\in M,\gamma\in\Gamma)\\
&\sum_{k\in M}e_{mk}p_{\gamma+\beta_j-\gamma_k}h'_{kj}p_\gamma=h'_{mj}p_\gamma=\sum_{i\in J}h'_{mi}p_{\gamma+\beta_j-\beta_i}d_{ij}p_\gamma\quad(j\in J,m\in M,\gamma\in\Gamma)\\
&\sum_{m\in M}h_{im}p_{\gamma+\beta_j-\gamma_m}h'_{mj}p_\gamma=d_{ij}p_\gamma\quad(i,j\in J;~\gamma\in\Gamma)\\
&\sum_{j\in J}h'_{kj}p_{\gamma+\gamma_m-\beta_j}h_{jm}p_\gamma=e_{km}p_\gamma\quad(k,m\in M;~\gamma\in\Gamma)\Big\rangle.
\end{split}
 \end{equation}
For any $\gamma\in \Gamma$, $i,j\in J$ and $k,m\in M$, define
\begin{align*}
d_{ij}^{(\gamma)}&:=d_{ij}p_{\gamma-\beta_j},\\
e^{(\gamma)}_{km}&:=e_{km}p_{\gamma-\gamma_m},\\
h^{(\gamma)}_{jm}&:=h_{jm}p_{\gamma-\gamma_m},\\
(h')^{(\gamma)}_{mj}&:=h'_{mj}p_{\gamma-\beta_j}.
\end{align*}
Using these conventions it follows from (\ref{eqn6.1}) that 

\begin{equation}\label{eqn6.2}
\begin{split}
 S\#\Gamma=\Big\langle& rp_{\gamma},h^{(\gamma)}_{jm},(h')^{(\gamma)}_{mj}\quad(r\in R^{h}, \gamma\in \Gamma,j\in J,m\in M)\mid{}\\
 &rp_\gamma+sp_\gamma=(r+s)p_\gamma\quad(r,s\in R_{\delta};~\gamma,\delta\in \Gamma),\\
 &rp_{\gamma}sp_{\delta}=rs_{\gamma-\delta} p_{\delta}\quad(r,s\in R^{h};~\gamma,\delta\in\Gamma ),\\
 &h_{jm}^{(\gamma)}(1p_{\gamma-\gamma_m})=h_{jm}^{(\gamma)}=(1p_{\gamma-\beta_j})h_{jm}^{(\gamma)}\quad(j\in J,m\in M, \gamma\in \Gamma),\\
  &(h')_{mj}^{(\gamma)}(1p_{\gamma-\beta_j})=(h')_{mj}^{(\gamma)}=(1p_{\gamma-\gamma_m})(h')_{mj}^{(\gamma)}\quad(j\in J,m\in M, \gamma\in \Gamma),\\
&\sum_{i\in J}d^{(\gamma)}_{ji}h_{im}^{(\gamma)}=h_{jm}^{(\gamma)}=\sum_{k\in M}h_{jk}^{(\gamma)}e_{km}^{(\gamma)}\quad(j\in J,m\in M,\gamma\in\Gamma),\\
&\sum_{k\in M}e_{mk}^{(\gamma)}(h'_{kj})^{(\gamma)}=(h'_{mj})^{(\gamma)}=\sum_{i\in J}(h')_{mi}^{(\gamma)}d_{ij}^{(\gamma)}\quad(j\in J,m\in M,\gamma\in\Gamma),\\
&\sum_{m\in M}h_{im}^{(\gamma)}(h')^{(\gamma)}_{mj}=d_{ij}^{(\gamma)}\quad(i,j\in J;~\gamma\in\Gamma),\\
&\sum_{j\in J}(h')_{kj}^{(\gamma)}h_{jm}^{(\gamma)}=e_{km}^{(\gamma)}\quad(k,m\in M;~\gamma\in\Gamma)\Big\rangle.
\end{split}
 \end{equation}

Next, for any finite subset $A\subseteq \Gamma$ we define the $k$-algebra $T_A$ by

\begin{equation}\label{eqn6.3}
\begin{split}
T_A=\Big\langle& rp_{\beta}~~(r\in R_\alpha;~\alpha+\beta,\beta\in\Gamma_A);\quad h^{(\gamma)}_{jm},(h')^{(\gamma)}_{mj}~~(\gamma\in A,j\in J,m\in M)\mid{}\\
 &rp_\beta+sp_\beta=(r+s)p_\beta\quad(r,s\in R_{\alpha};~\alpha+\beta,\beta\in\Gamma_A),\\
 &rp_{\beta}sp_{\beta'}=rs_{\beta-\beta'} p_{\beta'}\quad(r\in R_{\alpha};~s\in R_{\alpha'};~\alpha+\beta,\beta, \alpha'+\beta',\beta'\in\Gamma_A),\\
 &h_{jm}^{(\gamma)}(1p_{\gamma-\gamma_m})=h_{jm}^{(\gamma)}=(1p_{\gamma-\beta_j})h_{jm}^{(\gamma)}\quad(j\in J,m\in M, \gamma\in A),\\
  &(h')_{mj}^{(\gamma)}(1p_{\gamma-\beta_j})=(h')_{mj}^{(\gamma)}=(1p_{\gamma-\gamma_m})(h')_{mj}^{(\gamma)}\quad(j\in J,m\in M, \gamma\in A),\\
&\sum_{i\in J}d^{(\gamma)}_{ji}h_{im}^{(\gamma)}=h_{jm}^{(\gamma)}=\sum_{k\in M}h_{jk}^{(\gamma)}e_{km}^{(\gamma)}\quad(j\in J,m\in M,\gamma\in A),\\
&\sum_{k\in M}e_{mk}^{(\gamma)}(h'_{kj})^{(\gamma)}=(h'_{mj})^{(\gamma)}=\sum_{i\in J}(h')_{mi}^{(\gamma)}d_{ij}^{(\gamma)}\quad(j\in J,m\in M,\gamma\in A),\\
&\sum_{m\in M}h_{im}^{(\gamma)}(h')^{(\gamma)}_{mj}=d_{ij}^{(\gamma)}\quad(i,j\in J;~\gamma\in A),\\
&\sum_{j\in J}(h')_{kj}^{(\gamma)}h_{jm}^{(\gamma)}=e_{km}^{(\gamma)}\quad(k,m\in M;~\gamma\in A)\Big\rangle,
\end{split}
 \end{equation}
where $\Gamma_A=\{\gamma-\gamma_m,\gamma-\beta_j\mid \gamma\in A,j\in J,m\in M\}$. Clearly $T_A$ is a unital algebra with identity $\sum_{\gamma\in\Gamma_A}p_\gamma$. Moreover, \[S\#\Gamma=\varinjlim_A T_A,\] where $A$ runs through all finite subsets of $\Gamma$. We will show that the $T_A$'s are isomorphic to Bergman algebras.

Let $A\subseteq \Gamma$ a finite subset. We denote by $R\# \Gamma_A$ the subalgebra of $R\# \Gamma$ consisting of all sums of elements $rp_\beta$, where $r\in R_\alpha$ and $\alpha+\beta,\beta\in \Gamma_A$. Note that $R\# \Gamma_A$ is is a unital algebra with identity $\sum_{\gamma\in\Gamma_A}p_\gamma$.

\begin{lemma}\label{lemrelpress}
The $\K$-algebra $R\#\Gamma_A$ has the presentation 
 \begin{equation}\label{eqn6.4}
 \begin{split}
 R\#\Gamma_A=\Big\langle& rp_{\beta}\quad(r\in R_\alpha;~\alpha+\beta,\beta\in\Gamma_A)\mid{}\\
 &rp_\beta+sp_\beta=(r+s)p_\beta\quad(r,s\in R_{\alpha};~\alpha+\beta,\beta\in\Gamma_A),\\
 &rp_{\beta}sp_{\beta'}=rs_{\beta-\beta'} p_{\beta'}\quad(r\in R_{\alpha};~s\in R_{\alpha'};~\alpha+\beta,\beta, \alpha'+\beta',\beta'\in\Gamma_A)\Big\rangle.
 \end{split}
 \end{equation}   
\end{lemma}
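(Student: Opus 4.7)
The plan is to mimic, mutatis mutandis, the proof of Lemma \ref{lempressmash}, with care taken that we now restrict to the finite index set $\Gamma_A$ rather than all of $\Gamma$. Concretely, let $F$ denote the free $\K$-algebra generated by the set $\{rp_\beta \mid r\in R_\alpha,\ \alpha+\beta,\beta\in \Gamma_A\}$, and let $I$ be the two-sided ideal of $F$ generated by the displayed relations in (\ref{eqn6.4}). I would then consider the evident $\K$-algebra homomorphism
\[
\theta: F \longrightarrow R\#\Gamma_A, \qquad rp_\beta \longmapsto rp_\beta.
\]
Because the defining relations of $R\#\Gamma$ (and hence of its subalgebra $R\#\Gamma_A$) certainly hold among these elements, we have $I\subseteq \ker \theta$, so $\theta$ descends to $\bar\theta:F/I\to R\#\Gamma_A$.

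For surjectivity, every element of $R\#\Gamma_A$ is by definition a $\K$-linear combination of elements $rp_\beta$ with $r\in R_\alpha$ and $\alpha+\beta,\beta\in\Gamma_A$, which are precisely the generators of $F/I$; so $\bar\theta$ is surjective. Injectivity is the heart of the argument and proceeds exactly as in the proof of Lemma \ref{lempressmash}: suppose $\bar\theta(x)=0$ for some non-zero $x\in F/I$. Using the first two families of relations in (\ref{eqn6.4}), we can write
\[
x=r_1p_{\beta_1}+\dots+r_np_{\beta_n}
\]
with $r_1,\dots,r_n\in R^h\setminus\{0\}$ and $\beta_1,\dots,\beta_n\in \Gamma_A$ such that $\deg(r_i)+\beta_i\in \Gamma_A$ for all $i$. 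After grouping and cancellation, we may further assume that for each $\gamma\in\Gamma_A$ the set $J(\gamma):=\{i\mid \beta_i=\gamma\}$ has the property that $\deg(r_i)\neq \deg(r_j)$ whenever $i\neq j$ in $J(\gamma)$. Then the equation
\[
0=\bar\theta(x)=\sum_{\gamma\in\Gamma_A}\Big(\sum_{i\in J(\gamma)}r_i\Big)p_\gamma
\]
forces $\sum_{i\in J(\gamma)}r_i=0$ for each $\gamma\in \Gamma_A$, and the distinctness of degrees within each $J(\gamma)$ forces each $r_i=0$, contradicting our hypothesis. Hence $\bar\theta$ is an isomorphism.

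The only subtlety, and what I expect to be the main bookkeeping obstacle, is verifying that the ``elementary'' reductions used to bring $x$ into the above standard form are legitimate consequences of the restricted relations in (\ref{eqn6.4}), i.e.\ that one never needs to pass through an index $\beta\notin \Gamma_A$. Since the relations $rp_\beta+sp_\beta=(r+s)p_\beta$ and $rp_\beta\,sp_{\beta'}=rs_{\beta-\beta'}p_{\beta'}$ in (\ref{eqn6.4}) are only imposed when the indices involved lie in $\Gamma_A$, the reduction remains internal to $F/I$ and no issue arises. Apart from this careful index-tracking, the argument is a direct transcription of the proof of Lemma \ref{lempressmash}.
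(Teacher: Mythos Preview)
Your proposal is correct and follows essentially the same approach as the paper's proof: both define the abstractly presented algebra, build the obvious homomorphism to the smash product, and verify injectivity by reducing an arbitrary element to a standard form $\sum r_i p_{\beta_i}$ with homogeneous $r_i$ of distinct degrees over each $\beta$. The only cosmetic difference is that the paper maps into the full $R\#\Gamma$ and then identifies the image as $R\#\Gamma_A$, whereas you map directly into the subalgebra; your additional remark about the reductions staying inside $\Gamma_A$ is a point the paper passes over with ``clearly''.
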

\begin{proof}
Denote by $Z$ the $\K$-algebra presented by (\ref{eqn6.4}). It follows from Lemma \ref{lempressmash} that there is a $\K$-algebra homomorphism $\phi:Z\to R\#\Gamma$ such that $\phi(rp_\gamma)=rp_\gamma$, where $r\in R_\alpha$ and $\alpha+\beta,\beta\in\Gamma_A$. Clearly one can write any element $z\in Z$ in the form $z=\sum_{\beta\in\Gamma_A}\sum_{\alpha+\beta\in\Gamma_A}r_\beta^{(\alpha)}p_\beta$, where each $r^{(\alpha)}_\beta\in R_\alpha$. It follows that $\phi(Z)=R\# \Gamma_A$. It remains to show that $\phi$ is injective. Suppose that $\phi(z)=0$, for some $z\in Z$. Write $z=\sum_{\beta\in\Gamma_A}\sum_{\alpha+\beta\in\Gamma_A}r_\beta^{(\alpha)}p_\beta$, where each $r^{(\alpha)}_\beta\in R_\alpha$. Since $\phi(z)=0$, it follows that each $r^{(\alpha)}_\beta=0$. Thus $\phi$ is injective and therefore $Z\cong \phi(Z)=R\#\Gamma_A$.
\end{proof}

For any $\gamma\in A$ define the matrices 
\begin{align*}
d^{(\gamma)}&:=(d_{ij}^{(\gamma)})_{i,j\in J}=(d_{ij}p_{\gamma-\beta_j})_{i,j\in J}\in \Mat_{J\times J}(R\#\Gamma_A),\\
e^{(\gamma)}&:=(e_{km}^{(\gamma)})_{k,m\in K}=(e_{km}p_{\gamma-\gamma_m})_{k,m\in K}\in \Mat_{M\times M}(R\#\Gamma_A).
\end{align*}
A straightforward computation shows that $d^{(\gamma)}$ and $e^{(\gamma)}$ are idempotent matrices. Hence they define idempotent endomorphisms of $\bigoplus_{j\in J}R\# \Gamma_A$ and $\bigoplus_{m\in M}R\# \Gamma_A$, respectively. It follows that $P_A^{(\gamma)}:=(\bigoplus_{j\in J}R\# \Gamma_A)d^{(\gamma)}$ and $Q_A^{(\gamma)}:=(\bigoplus_{m\in M}R\# \Gamma_A)e^{(\gamma)}$ are non-zero finitely generated projective $R\#\Gamma_A$-modules. Define the $\K$-algebra
\begin{equation*}
B_A:=R\#\Gamma_A\Big\langle g^{(\gamma)}, (g^{(\gamma)})^{-1}:\overline{P_A^{(\gamma)}}\cong\overline{Q_A^{(\gamma)}}~(\gamma\in A)\Big\rangle.
\end{equation*}
We will show that $T_A\cong B_A$. By Lemma \ref{lempresS} and (\ref{eqn6.4}) we have
\begin{equation}\label{eqn6.5}
\begin{split}
B_A=\Big\langle& rp_{\beta}~(r\in R_\alpha;~\alpha+\beta,\beta\in\Gamma_A);~~~~g^{(\gamma)}_{jm},(g')^{(\gamma)}_{mj}~~(\gamma\in A,j\in J,m\in M)\mid{}\\
 &rp_\beta+sp_\beta=(r+s)p_\beta\quad(r,s\in R_{\alpha};~\alpha+\beta,\beta\in\Gamma_A),\\
 &rp_{\beta}sp_{\beta'}=rs_{\beta-\beta'} p_{\beta'}\quad(r\in R_{\alpha};~s\in R_{\alpha'};~\alpha+\beta,\beta, \alpha'+\beta',\beta'\in\Gamma_A),\\
&\sum_{i\in J}d^{(\gamma)}_{ji}g_{im}^{(\gamma)}=g_{jm}^{(\gamma)}=\sum_{k\in M}g_{jk}^{(\gamma)}e_{km}^{(\gamma)}\quad(j\in J,m\in M,\gamma\in A),\\
&\sum_{k\in M}e_{mk}^{(\gamma)}(g'_{kj})^{(\gamma)}=(g'_{mj})^{(\gamma)}=\sum_{i\in J}(g')_{mi}^{(\gamma)}d_{ij}^{(\gamma)}\quad(j\in J,m\in M,\gamma\in A),\\
&\sum_{m\in M}g_{im}^{(\gamma)}(g')^{(\gamma)}_{mj}=d_{ij}^{(\gamma)}\quad(i,j\in J;~\gamma\in A),\\
&\sum_{j\in J}(g')_{kj}^{(\gamma)}g_{jm}^{(\gamma)}=e_{km}^{(\gamma)}\quad(k,m\in M;~\gamma\in A)\Big\rangle,
\end{split}
 \end{equation}

\begin{lemma}\label{lemmonomial}
In $B_A$ the relations 
\begin{align}
 &g_{jm}^{(\gamma)}(1p_{\gamma-\gamma_m})=g_{jm}^{(\gamma)}=(1p_{\gamma-\beta_j})g_{jm}^{(\gamma)}\quad(j\in J,m\in M, \gamma\in A),\label{eqn6.6}\\
  &(g')_{mj}^{(\gamma)}(1p_{\gamma-\beta_j})=(g')_{mj}^{(\gamma)}=(1p_{\gamma-\gamma_m})(g')_{mj}^{(\gamma)}\quad(j\in J,m\in M, \gamma\in A),\label{eqn6.7}
\end{align}
hold.
\end{lemma}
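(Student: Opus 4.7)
The plan is to derive the four monomial identities from the Bergman defining relations for $B_A$ together with elementary manipulations in the smash product $R\#\Gamma_A$. The crucial point is that the relations $d^{(\gamma)}g^{(\gamma)} = g^{(\gamma)} = g^{(\gamma)}e^{(\gamma)}$ and $e^{(\gamma)}(g')^{(\gamma)} = (g')^{(\gamma)} = (g')^{(\gamma)}d^{(\gamma)}$ (which follow from the Bergman relations in (\ref{eqn6.5}) by the same argument showing $ehf=h$ implies $eh=h=hf$) already encode the desired idempotent behavior, and it remains to extract the ``degree 0'' action of the $1p_\mu$.

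First I would verify the two key smash-product identities
\[
(1p_{\gamma-\beta_j})\,d^{(\gamma)}_{ji} = d^{(\gamma)}_{ji}, \qquad e^{(\gamma)}_{km}\,(1p_{\gamma-\gamma_m}) = e^{(\gamma)}_{km},
\]
for all indices. These follow by direct computation from the multiplication rule in $R\#\Gamma_A$: since $d_{ji}$ is homogeneous of degree $\beta_i-\beta_j$ and $e_{km}$ of degree $\gamma_m-\gamma_k$, one gets $(1p_{\gamma-\beta_j})(d_{ji}p_{\gamma-\beta_i}) = (d_{ji})_{\beta_i-\beta_j}\,p_{\gamma-\beta_i} = d_{ji}p_{\gamma-\beta_i}$, and analogously for $e^{(\gamma)}_{km}(1p_{\gamma-\gamma_m})$.

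Next I would multiply the relation $\sum_{i\in J} d^{(\gamma)}_{ji} g^{(\gamma)}_{im} = g^{(\gamma)}_{jm}$ of (\ref{eqn6.5}) on the left by $1p_{\gamma-\beta_j}$. The left-hand side becomes $\sum_i \bigl((1p_{\gamma-\beta_j})d^{(\gamma)}_{ji}\bigr)g^{(\gamma)}_{im} = \sum_i d^{(\gamma)}_{ji} g^{(\gamma)}_{im} = g^{(\gamma)}_{jm}$, while the right-hand side is $(1p_{\gamma-\beta_j})g^{(\gamma)}_{jm}$. This establishes the second equality in (\ref{eqn6.6}). Symmetrically, multiplying $\sum_{k\in M} g^{(\gamma)}_{jk} e^{(\gamma)}_{km} = g^{(\gamma)}_{jm}$ on the right by $1p_{\gamma-\gamma_m}$ gives the first equality in (\ref{eqn6.6}).

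The relations (\ref{eqn6.7}) for $(g')^{(\gamma)}_{mj}$ are obtained by exactly the same argument, swapping the roles of $d^{(\gamma)}, e^{(\gamma)}$ and using $\sum_k e^{(\gamma)}_{mk}(g')^{(\gamma)}_{kj} = (g')^{(\gamma)}_{mj} = \sum_i (g')^{(\gamma)}_{mi}d^{(\gamma)}_{ij}$ together with $(1p_{\gamma-\gamma_m})e^{(\gamma)}_{mk} = e^{(\gamma)}_{mk}$ and $d^{(\gamma)}_{ij}(1p_{\gamma-\beta_j}) = d^{(\gamma)}_{ij}$. There is no real obstacle here beyond careful bookkeeping of degrees; everything reduces to the fact that $d^{(\gamma)}$ and $e^{(\gamma)}$ are built from homogeneous entries whose degrees exactly compensate the shifts encoded in the $p$-symbols.
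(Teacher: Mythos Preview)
Your proof is correct and follows essentially the same route as the paper: both use the defining relations $\sum_i d^{(\gamma)}_{ji}g^{(\gamma)}_{im}=g^{(\gamma)}_{jm}=\sum_k g^{(\gamma)}_{jk}e^{(\gamma)}_{km}$ together with the smash-product identities $(1p_{\gamma-\beta_j})d^{(\gamma)}_{ji}=d^{(\gamma)}_{ji}$ and $e^{(\gamma)}_{km}(1p_{\gamma-\gamma_m})=e^{(\gamma)}_{km}$, which come from the homogeneity of $d_{ji}$ and $e_{km}$. The only difference is presentational: you isolate the auxiliary identities first, whereas the paper writes out the full chains of equalities inline.
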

\begin{proof}
We only prove relation (\ref{eqn6.6}) and leave relation (\ref{eqn6.7}) to the reader. 
Clearly \begin{align*}
    g_{jm}^{(\gamma)}(1p_{\gamma-\gamma_m})
        =&\Big (\sum_{k\in M}g_{jk}^{(\gamma)}e_{km}^{(\gamma)}\Big)(1p_{\gamma-\gamma_m})\\
    =&\sum_{k\in M}g_{jk}^{(\gamma)}(e_{km}p_{\gamma-\gamma_m})(1p_{\gamma-\gamma_m})\\
=&\sum_{k\in M}g_{jk}^{(\gamma)}(e_{km}p_{\gamma-\gamma_m})\\
=&\sum_{k\in M}g_{jk}^{(\gamma)}e_{km}^{(\gamma)}\\
    =&g_{jm}^{(\gamma)}.
\end{align*}
Similarly
\begin{align*}
    (1p_{\gamma-\beta_j})g_{jm}^{(\gamma)}
=& (1p_{\gamma-\beta_j})\sum_{i\in J}d_{ji}^{(\gamma)}g^{(\gamma)}_{im}\\
=& (1p_{\gamma-\beta_j})\sum_{i\in J}(d_{ji}p_{\gamma-\beta_i})g^{(\gamma)}_{im}\\
=&\sum_{i\in J}(1p_{\gamma-\beta_j})(d_{ji}p_{\gamma-\beta_i})g^{(\gamma)}_{im}\\
=&\sum_{i\in J}((d_{ji})_{\gamma-\beta_j-(\gamma-\beta_i)}p_{\gamma-\beta_i})g^{(\gamma)}_{im}\\
=&\sum_{i\in J}(d_{ji}p_{\gamma-\beta_i})g^{(\gamma)}_{im}\\
=&\sum_{i\in J}d_{ji}^{(\gamma)}g^{(\gamma)}_{im}\\
=&g_{jm}^{(\gamma)}
\end{align*}
since $\deg(d_{ji})=\beta_i-\beta_j$.
\end{proof}

\begin{prop}\label{propbergsmash}
Let $A\subseteq \Gamma$ be a finite subset. Then $T_A\cong B_A$.
\end{prop}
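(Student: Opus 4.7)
The plan is to show that $T_A$ and $B_A$ are defined by essentially the same generators and relations, with the only difference being that $T_A$ explicitly imposes the ``support'' relations
\[h_{jm}^{(\gamma)}(1p_{\gamma-\gamma_m})=h_{jm}^{(\gamma)}=(1p_{\gamma-\beta_j})h_{jm}^{(\gamma)},\qquad (h')_{mj}^{(\gamma)}(1p_{\gamma-\beta_j})=(h')_{mj}^{(\gamma)}=(1p_{\gamma-\gamma_m})(h')_{mj}^{(\gamma)},\]
while $B_A$ does not. The crucial observation, already packaged as Lemma~\ref{lemmonomial}, is that these relations are consequences of the Bergman-type relations $\sum_i d^{(\gamma)}_{ji}g^{(\gamma)}_{im}=g^{(\gamma)}_{jm}=\sum_k g^{(\gamma)}_{jk}e^{(\gamma)}_{km}$ and their $(g')$-analogues, together with the defining identity $rp_\beta sp_{\beta'}=rs_{\beta-\beta'}p_{\beta'}$ in $R\#\Gamma_A$. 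So the two presentations define the same $\K$-algebra.

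More precisely, I would first compare the two presentations (\ref{eqn6.3}) and (\ref{eqn6.5}) side by side. Every defining relation of $B_A$ appears verbatim among the defining relations of $T_A$, so the assignment $rp_\beta\mapsto rp_\beta$, $g^{(\gamma)}_{jm}\mapsto h^{(\gamma)}_{jm}$, $(g')^{(\gamma)}_{mj}\mapsto (h')^{(\gamma)}_{mj}$ determines a well-defined $\K$-algebra homomorphism $\psi:B_A\to T_A$. In the other direction, to build $\phi:T_A\to B_A$ sending $rp_\beta\mapsto rp_\beta$, $h^{(\gamma)}_{jm}\mapsto g^{(\gamma)}_{jm}$, $(h')^{(\gamma)}_{mj}\mapsto (g')^{(\gamma)}_{mj}$, one must verify that all relations of $T_A$ hold in $B_A$. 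Every relation is immediate from the presentation of $B_A$ except for the two support relations above, and these are precisely the content of Lemma~\ref{lemmonomial}. Hence $\phi$ is well-defined.

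Finally, since $\phi$ and $\psi$ agree with each other's inverses on the chosen generating sets, the compositions $\phi\circ\psi$ and $\psi\circ\phi$ are identities on generators and therefore on the whole algebras. This gives $T_A\cong B_A$.

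There is essentially no hard step: the real work has already been done in Lemma~\ref{lemrelpress} (which guarantees that $R\#\Gamma_A$ injects into both presentations with the expected relations) and in Lemma~\ref{lemmonomial} (which disposes of the redundant support relations). The proof is thus a short bookkeeping argument comparing the two presentations.
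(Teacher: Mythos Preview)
Your proposal is correct and follows exactly the paper's approach: the paper's proof is the single sentence ``The proposition follows from (\ref{eqn6.3}), (\ref{eqn6.5}), (\ref{eqn6.6}) and (\ref{eqn6.7}),'' which is precisely the presentation comparison you describe, with Lemma~\ref{lemmonomial} supplying the missing support relations in $B_A$.
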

\begin{proof}
The proposition follows from (\ref{eqn6.3}), (\ref{eqn6.5}), (\ref{eqn6.6}) and (\ref{eqn6.7}).
\end{proof}

The theorem below is the main result of this subsection. It follows directly from Proposition \ref{propbergsmash} and the fact that $S\#\Gamma=\varinjlim_A T_A$, where $A$ runs through all finite subsets of $\Gamma$.
\begin{thm}\label{thmbergsmash}
The $\K$-algebra $R\big\langle h,h^{-1}:\overline{P}\cong \overline{Q}\big\rangle\#\Gamma$ is the direct limit of the Bergman algebras
\[R\#\Gamma_A\Big\langle g^{(\gamma)}, (g^{(\gamma)})^{-1}:\overline{P_A^{(\gamma)}}\cong\overline{Q_A^{(\gamma)}}~(\gamma\in A)\Big\rangle,\]
where $A$ runs through all finite subsets of $\Gamma$.
\end{thm}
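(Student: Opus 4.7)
The proof will be essentially a direct consequence of the preparatory material assembled just before the statement. The plan proceeds in three steps.

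First, I would recall that the smash product $S\#\Gamma$ is built up from the subalgebras associated to finite subsets of $\Gamma$. Using the presentation of $S\#\Gamma$ recorded in (\ref{eqn6.2}), one sees that each homogeneous generator $rp_\gamma$, $h_{jm}^{(\gamma)}$, $(h')^{(\gamma)}_{mj}$ and each defining relation involves only finitely many indices $\gamma \in \Gamma$. Hence, writing $T_A$ for the subalgebra generated by the symbols indexed by $\gamma \in \Gamma_A$ (with $A$ finite), we obtain the description $T_A$ given in (\ref{eqn6.3}), together with the directed system of natural inclusions $T_A \hookrightarrow T_{A'}$ for $A \subseteq A'$, and the identification
\[
S\#\Gamma \;=\; \varinjlim_{A} T_A,
\]
where $A$ ranges over all finite subsets of $\Gamma$. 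This step is the content of the discussion leading up to (\ref{eqn6.3}) and requires only that one verify the inclusions are well-defined, which follows since presentation (\ref{eqn6.3}) for $A$ sits inside presentation (\ref{eqn6.3}) for $A'$ when $A\subseteq A'$.

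Second, I would invoke Proposition \ref{propbergsmash} (already proved), which identifies each $T_A$ with the Bergman algebra
\[
B_A \;=\; R\#\Gamma_A\Big\langle g^{(\gamma)},(g^{(\gamma)})^{-1}:\overline{P_A^{(\gamma)}}\cong \overline{Q_A^{(\gamma)}}~(\gamma\in A)\Big\rangle.
\]
The key input here is Lemma \ref{lemmonomial}, which shows that the additional relations appearing in the presentation (\ref{eqn6.3}) of $T_A$ but not in the presentation (\ref{eqn6.5}) of $B_A$, namely $h_{jm}^{(\gamma)}(1p_{\gamma-\gamma_m})=h_{jm}^{(\gamma)}=(1p_{\gamma-\beta_j})h_{jm}^{(\gamma)}$ and its dual, hold automatically in $B_A$ once the Bergman relations $hh'=d$, $h'h=e$, $dh=h=he$, $eh'=h'=h'd$ are imposed. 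Thus the presentations match on the nose and Proposition \ref{propbergsmash} supplies a natural isomorphism $T_A \xrightarrow{\sim} B_A$.

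Third, I would check that these isomorphisms $T_A\cong B_A$ are compatible with the structural maps of the directed system: for $A\subseteq A'$, the inclusion $T_A\hookrightarrow T_{A'}$ corresponds under the isomorphisms of Proposition \ref{propbergsmash} to the natural map $B_A \to B_{A'}$ induced by $R\#\Gamma_A\hookrightarrow R\#\Gamma_{A'}$ and by sending each generator $g^{(\gamma)}_{jm}$ (resp.\ $(g')^{(\gamma)}_{mj}$) of $B_A$ to the corresponding generator of $B_{A'}$. This is immediate on generators, hence the isomorphisms assemble into an isomorphism of directed systems, and passing to the direct limit yields
\[
R\big\langle h,h^{-1}:\overline{P}\cong \overline{Q}\big\rangle\#\Gamma \;=\; S\#\Gamma \;=\; \varinjlim_{A} T_A \;\cong\; \varinjlim_{A} B_A,
\]
which is the desired conclusion. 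The only part of the argument requiring genuine care is the naturality check in this third step, but since everything is defined by matching generators to generators, this amounts to a routine bookkeeping verification rather than a real obstacle.
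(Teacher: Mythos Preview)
Your proposal is correct and follows exactly the paper's approach: the paper's proof is a one-liner stating that the theorem follows directly from Proposition \ref{propbergsmash} together with the already-established identification $S\#\Gamma=\varinjlim_A T_A$. Your three-step outline simply unpacks this, with the third step (compatibility with the transition maps) being a routine verification that the paper leaves implicit.
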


We finish this subsection by proving a lemma which will be used in \S 7. For $\gamma\in \Gamma$ we define the $R\#\Gamma$-modules $P^{(\gamma)}:=(\bigoplus_{j\in J}R\# \Gamma)d^{(\gamma)}$ and $Q^{(\gamma)}:=(\bigoplus_{m\in M}R\# \Gamma)e^{(\gamma)}$.

\begin{lemma}\label{lemtech_1}
Let $\gamma\in \Gamma$. Then $P^{(\gamma)}\cong P(-\gamma)\#\Gamma$ and $Q^{(\gamma)}\cong Q(-\gamma)\#\Gamma$ as $R\#\Gamma$-modules.
\end{lemma}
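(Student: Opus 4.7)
The plan is to construct an explicit $R\#\Gamma$-linear isomorphism $\varphi\colon P(-\gamma)\#\Gamma \to P^{(\gamma)}$ by direct matrix computation; the second claim follows by applying the same argument with $(P,d,\overline\beta)$ replaced by $(Q,e,\overline\gamma)$ throughout, so I will only spell out the $P$ case. Recall from \S\ref{matrixrep} that, under the matrix conventions in force, $P$ is realised as $\{y \in \bigoplus_{j\in J} R(\beta_j) \mid yd = y\}$, with $d = (d_{ij})$ acting by right multiplication on row vectors, and that by definition $P^{(\gamma)} = \{z \in \bigoplus_{j\in J} R\#\Gamma \mid z d^{(\gamma)} = z\}$. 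I will set
$$\varphi(y) := \bigl(y_i\, p_{\gamma-\beta_i}\bigr)_{i\in J}$$
for $y = (y_i)_i \in P$ and check that it is a well-defined $R\#\Gamma$-module isomorphism.

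The verification proceeds in three steps. First, that $\varphi(y) \in P^{(\gamma)}$: using the multiplication rule $(rp_\nu)(sp_\mu) = rs_{\nu-\mu}p_\mu$ in $R\#\Gamma$ together with the homogeneity $d_{ji} \in R_{\beta_i - \beta_j}$, the product $(y_j p_{\gamma-\beta_j})(d_{ji} p_{\gamma-\beta_i})$ collapses to $y_j d_{ji} p_{\gamma-\beta_i}$, and summing over $j$ gives $(yd)_i p_{\gamma-\beta_i} = y_i p_{\gamma-\beta_i} = \varphi(y)_i$. Second, that $\varphi$ is $R\#\Gamma$-linear: the action on $P(-\gamma)\#\Gamma$ described in \S\ref{subsubseccatiso} is $(rp_\nu)\cdot y = r\, y^{P(-\gamma)}_\nu = r\, y^P_{\nu-\gamma}$, and matching this coordinatewise with the coordinatewise $R\#\Gamma$-action on $\bigoplus_j R\#\Gamma$ restricted to $P^{(\gamma)}$, both sides evaluate to $\bigl(r\,(y_i)_{\beta_i+\nu-\gamma}\, p_{\gamma-\beta_i}\bigr)_{i\in J}$. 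Third, injectivity is immediate, since $y_i p_{\gamma-\beta_i} = 0$ in $R\#\Gamma$ forces $y_i = 0$ in $R$.

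Surjectivity is where the work lies. Given $z \in P^{(\gamma)}$, I will decompose each coordinate as $z_j = \sum_\nu z_j^{(\nu)} p_\nu$ and expand the relation $z = zd^{(\gamma)}$ coordinatewise as $z_i = \sum_j z_j\cdot (d_{ji} p_{\gamma-\beta_i})$. Because $d_{ji}$ is homogeneous of degree $\beta_i - \beta_j$, the $R\#\Gamma$-multiplication rule kills every term except the one with $\nu = \gamma - \beta_j$, so that $z_i$ is concentrated on the single idempotent $p_{\gamma-\beta_i}$; write $z_i = z_i^{(\gamma-\beta_i)}\, p_{\gamma-\beta_i}$. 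Setting $y_i := z_i^{(\gamma-\beta_i)}$ yields a row vector $y = (y_i)_i$, and the identity $z_i^{(\gamma-\beta_i)} = \sum_j z_j^{(\gamma-\beta_j)}\, d_{ji}$ read off from the same expansion gives $yd = y$; hence $y \in P$ and $\varphi(y) = z$. The main obstacle is precisely this degree bookkeeping; once it is done, the shifts $p_{\gamma-\beta_i}$ built into $d^{(\gamma)}$ are revealed as engineered exactly so that every element of $P^{(\gamma)}$ is $\Gamma$-concentrated in each coordinate, which is what makes $\varphi^{-1}$ well-defined.
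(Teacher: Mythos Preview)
Your proof is correct and follows the same approach as the paper: you construct the very same map (the paper writes it as $xd\mapsto x^{(\gamma)}d^{(\gamma)}$, which unwinds to your $y\mapsto (y_i p_{\gamma-\beta_i})_i$ once one sets $y=xd$) and verify well-definedness, $R\#\Gamma$-linearity, injectivity, and surjectivity by the same degree bookkeeping. The only cosmetic difference is that you parametrise $P$ as the fixed points $\{y:yd=y\}$ rather than as $\{xd:x\in\bigoplus_j R(\beta_j)\}$, which lets you skip the paper's separate well-definedness argument.
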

\begin{proof}
We only prove that $P(-\gamma)\#\Gamma\cong P^{(\gamma)}$. The proof that $Q(-\gamma)\#\Gamma\cong Q^{(\gamma)}$ is similar. Recall that $$P(-\gamma)\#\Gamma=P(-\gamma)=P=\Ima(d)$$ as a set. Hence any element of $ P(-\gamma)\#\Gamma$ can be written as $xd$ where $x\in \bigoplus_{j\in J}R(\beta_j)$. We define the map 
\begin{align*}
\theta: P(-\gamma)\#\Gamma&\longrightarrow P^{(\gamma)}\\   
 xd&\longmapsto  x^{(\gamma)}d^{(\gamma)},
\end{align*}
where $x^{(\gamma)}\in \bigoplus_{j\in J}R\#\Gamma$ is defined by $x^{(\gamma)}_j=x_jp_{\gamma-\beta_j}~(j\in J)$. Clearly 
\begin{align*}
&xd=yd\\
\Leftrightarrow~~&(x-y)d=0\\
\Leftrightarrow~~&\sum_{i\in J}(x_i-y_i)d_{ij}=0~\forall j\in J~(\text{in }R)\\\
\Leftrightarrow~~&(\sum_{i\in J}(x_i-y_i)d_{ij})p_{\gamma-\beta_j}=0~\forall j\in J~(\text{in }R\#\Gamma)\\
\Leftrightarrow~~&\sum_{i\in J}(x_i-y_i)d_{ij}p_{\gamma-\beta_j}=0~\forall j\in J\\
\Leftrightarrow~~&\sum_{i\in J}(x_i-y_i)p_{\gamma-\beta_i}d_{ij}p_{\gamma-\beta_j}=0~\forall j\in J\\
\Leftrightarrow~~&\sum_{i\in J}(x^{(\gamma)}_i-y^{(\gamma)}_i)d^{(\gamma)}_{ij}=0~\forall j\in J\\
\Leftrightarrow~~&(x^{(\gamma)}-y^{(\gamma)})d^{(\gamma)}=0\\
\Leftrightarrow~~&x^{(\gamma)}d^{(\gamma)}=y^{(\gamma)}d^{(\gamma)}
\end{align*}
for any $x,y\in \bigoplus_{j\in J}R(\beta_j)$. Hence $\theta$ is well-defined and injective. 

Next we show that $\theta$ is surjective. Let $z\in P^{(\gamma)}=\Ima(d^{(\gamma)})$. Then there is a $y\in \bigoplus_{j\in J}R\#\Gamma$ such that $z=yd^{(\gamma)}$. For any $i\in J$ write $y_i=\sum_{\alpha\in\Gamma}y_{i,\alpha}p_\alpha\in R\#\Gamma$. Then
\begin{multline*}
(yd^{(\gamma)})_j
=\sum_{i\in J}y_id^{(\gamma)}_{ij}
=\sum_{i\in J}(\sum_{\alpha\in\Gamma}y_{i,\alpha}p_\alpha)d_{ij}p_{\gamma-\beta_j}
=\sum_{i\in J}\sum_{\alpha\in\Gamma}y_{i,\alpha}(d_{ij})_{\alpha-\gamma+\beta_j}p_{\gamma-\beta_j}
=\sum_{i\in J}y_{i,\gamma-\beta_i}d_{ij}p_{\gamma-\beta_j},
\end{multline*}
since $\deg(d_{ij})=\beta_j-\beta_i$. Hence we may assume that $y_{i,\alpha}=0$, for any $i\in J$ and $\alpha\neq \gamma-\beta_i$. Define $x\in \bigoplus_{j\in J}R(\beta_j)$ by $x_j=y_{j,\gamma-\beta_j}~(j\in J)$. Then clearly $y=x^{(\gamma)}$ and hence $z=yd^{(\gamma)}=x^{(\gamma)}d^{(\gamma)}=\theta(xd)$. Thus $\theta$ is surjective. 

It remains to show that $\theta$ is a module homomorphism. Clearly
\begin{equation}\label{EQ5.3}
    \theta(xd+yd)=\theta((x+y)d)=(x+y)^{(\gamma)}d^{(\gamma)}=x^{(\gamma)}d^{(\gamma)}+y^{(\gamma)}d^{(\gamma)}=\theta(xd)+\theta(yd),
\end{equation}
for any $x,y\in \bigoplus_{j\in J}R(\beta_j)$. Let now $x\in \bigoplus_{j\in J}R(\beta_j)$, $r\in R$ and $\alpha\in\Gamma$. Then
\begin{equation}\label{EQ5.4}
    \theta(rp_\alpha.xd)=\theta(r(xd)_{\alpha}).
\end{equation}
For any $j\in J$ we have
\begin{equation}\label{EQ5.5}
((xd)_{\alpha})_j=((xd)_j)_{\alpha}=\Big (\sum_{i\in J}x_id_{ij}\Big )_{\alpha}=\sum_{i\in J}(x_i)_{\alpha-\gamma+\beta_i}d_{ij}
\end{equation}
since $\deg((x_i)_{\delta}d_{ij})=\delta-\beta(i)+\gamma$ in $R(\beta(j)-\gamma)$. Define $y\in \bigoplus_{j\in J}R(\beta_j)$ by $y_j=r(x_j)_{\alpha-\gamma+\beta_j}~(j\in J)$. It follows from (\ref{EQ5.4}) and (\ref{EQ5.5}) that  
\begin{equation}\label{EQ5.6}
    \theta(rp_\alpha.xd)=\theta(yd)=y^{(\gamma)}d^{(\gamma)}.
\end{equation}
On the other hand we have
\begin{equation}\label{EQ5.7}
    rp_\alpha.\theta(xd)=(rp_\alpha).x^{(\gamma)}d^{(\gamma)}.
\end{equation}
Clearly 
\begin{equation}\label{EQ5.8}
    ((rp_\alpha).x^{(\gamma)})_j=(rp_\alpha).(x^{(\gamma)})_j=(rp_\alpha)(x_jp_{\gamma-\beta_j})=r(x_j)_{\alpha-\gamma+\beta_j}p_{\gamma-\beta_j}=y^{(\gamma)}_j,
\end{equation}
for any $j\in J$. Hence 
\begin{equation}\label{EQ5.9}
    (rp_\alpha).x^{(\gamma)}=y^{(\gamma)}.
\end{equation}
It follows from (\ref{EQ5.6}), (\ref{EQ5.7}) and (\ref{EQ5.8}) that 
\begin{equation}\label{EQ5.10}
    \theta(rp_\alpha.xd)=rp_\alpha.\theta(xd).
\end{equation}
In view of (\ref{EQ5.3}) and (\ref{EQ5.10}) we have shown that $\theta$ is a module homomorphism.
\end{proof}

\subsection{Smash products of graded Bergman algebras II}
Let $P$ be a non-zero graded  finitely generated projective $R$-module and set 
\[S:=R\big\langle e:\overline{P}\to \overline{P};~e^2=e\big\rangle.\]
We write $P$ as the image of an idempotent graded endomorphism $d$ of a free graded $R$-module of finite rank $\bigoplus_{j\in J}R(\beta_j)$. Clearly $\deg(d_{ij})=\beta_j-\beta_i$ for any $i,j\in J$. Recall from Lemma \ref{lempresS*} that $S$ can be obtained from $R$ by adjoining homogeneous generators $e_{ij}~(i,j\in J)$ of degree $\deg(e_{ij})=\beta_j-\beta_i$ and relations $de=e=ed$ and $ee=e$. Hence $S\cong R\langle X\rangle/I$ where $X=\{e_{ij}\mid i,j\in J\}$ and $I$ is the ideal of $R\langle X\rangle$ generated by the relations 
\begin{align*}
\sum_{k\in J}d_{ik}e_{kj}&=e_{ij}=\sum_{k\in J}e_{ik}d_{kj}&(i,j\in J),\\
\sum_{k\in J}e_{ik}e_{kj}&=e_{ij}&(i,j\in J).
\end{align*}
It follows from Lemma \ref{lempressmash} and Corollary \ref{corsmashkey} that $S\#\Gamma$ has the presentation 
 \begin{equation}\label{eqn6.16}
 \begin{split}
 S\#\Gamma=\Big\langle& rp_{\gamma},e_{ij}p_\gamma\quad(r\in R^{h};~ \gamma\in \Gamma;~i,j\in J)\mid{}\\
 &rp_\gamma+sp_\gamma=(r+s)p_\gamma\quad(r,s\in R_{\delta};~\gamma,\delta\in \Gamma),\\
 &rp_{\gamma}sp_{\delta}=rs_{\gamma-\delta} p_{\delta}\quad(r,s\in R^{h};~\gamma,\delta\in\Gamma ),\\
 &(e_{ij}p_\gamma)(1p_\gamma)=e_{ij}p_\gamma=(1p_{\gamma+\beta_j-\beta_i})(e_{ij}p_\gamma)\quad(i,j\in J;~ \gamma\in \Gamma),\\
&\sum_{k\in J}d_{ik}p_{\gamma+\beta_j-\beta_k}e_{kj}p_\gamma=e_{ij}p_\gamma=\sum_{k\in J}e_{ik}p_{\gamma+\beta_j-\beta_k}d_{kj}p_\gamma\quad(i,j\in J;~ \gamma\in \Gamma),\\
&\sum_{k\in J}e_{ik}p_{\gamma+\beta_j-\beta_k}e_{kj}p_\gamma=e_{ij}p_\gamma(i,j\in J;~ \gamma\in \Gamma)\Big\rangle.
\end{split}
\end{equation}
For any $\gamma\in \Gamma$ and $i,j\in J$ define $d_{ij}^{(\gamma)}:=d_{ij}p_{\gamma-\beta_j}$ and $e^{(\gamma)}_{ij}:=e_{ij}p_{\gamma-\beta_j}$. Using these conventions it follows from (\ref{eqn6.16}) that 

\begin{equation}\label{eqn6.17}
\begin{split}
    S\#\Gamma=\Big\langle& rp_{\gamma},e_{ij}^{(\gamma)}\quad(r\in R^{h};~ \gamma\in \Gamma;~i,j\in J)\mid{}\\
 &rp_\gamma+sp_\gamma=(r+s)p_\gamma\quad(r,s\in R_{\delta};~\gamma,\delta\in \Gamma),\\
 &rp_{\gamma}sp_{\delta}=rs_{\gamma-\delta} p_{\delta}\quad(r,s\in R^{h};~\gamma,\delta\in\Gamma ),\\
 &e_{ij}^{(\gamma)}(1p_{\gamma-\beta_j})=e_{ij}^{(\gamma)}=(1p_{\gamma-\beta_i})e_{ij}^{(\gamma)}\quad(i,j\in J;~ \gamma\in \Gamma),\\
&\sum_{k\in J}d_{ik}^{(\gamma)}e_{kj}^{(\gamma)}=e_{ij}^{(\gamma)}=\sum_{k\in J}e_{ik}^{(\gamma)}d_{kj}^{(\gamma)}\quad(i,j\in J;~ \gamma\in \Gamma),\\
&\sum_{k\in J}e_{ik}^{(\gamma)}e_{kj}^{(\gamma)}=e_{ij}^{(\gamma)}(i,j\in J;~ \gamma\in \Gamma)\Big\rangle.
\end{split}
 \end{equation}

For any finite subset $A\subseteq \Gamma$ we define the $k$-algebra $T_A$ by

\begin{equation}\label{eqn6.18}
\begin{split}
    T_A=\Big\langle& rp_{\beta}~(r\in R_\alpha;~\alpha+\beta,\beta\in\Gamma_A);~~~~e_{ij}^{(\gamma)}~(\gamma\in A;~i,j\in J)\mid{}\\
 &rp_\beta+sp_\beta=(r+s)p_\beta\quad(r,s\in R_{\alpha};~\alpha+\beta,\beta\in\Gamma_A),\\
 &rp_{\beta}sp_{\beta'}=rs_{\beta-\beta'} p_{\beta'}\quad(r\in R_{\alpha};~s\in R_{\alpha'};~\alpha+\beta,\beta, \alpha'+\beta',\beta'\in\Gamma_A),\\
 &e_{ij}^{(\gamma)}(1p_{\gamma-\beta_j})=e_{ij}^{(\gamma)}=(1p_{\gamma-\beta_i})e_{ij}^{(\gamma)}\quad(i,j\in J;~ \gamma\in A),\\
&\sum_{k\in J}d_{ik}^{(\gamma)}e_{kj}^{(\gamma)}=e_{ij}^{(\gamma)}=\sum_{k\in J}e_{ik}^{(\gamma)}d_{kj}^{(\gamma)}\quad(i,j\in J;~ \gamma\in A),\\
&\sum_{k\in J}e_{ik}^{(\gamma)}e_{kj}^{(\gamma)}=e_{ij}^{(\gamma)}\quad(i,j\in J;~ \gamma\in A)\Big\rangle,
\end{split}
 \end{equation}
where $\Gamma_A=\{\gamma-\beta_j\mid \gamma\in A,j\in J\}$. Clearly $T_A$ is a unital algebra with identity $\sum_{\gamma\in\Gamma_A}p_\gamma$. Moreover, $S\#\Gamma=\varinjlim_A T_A$, where $A$ runs through all finite subsets of $\Gamma$. We will show that the $T_A$'s are isomorphic to Bergman algebras.

Let $A\subseteq \Gamma$ a finite subset. We denote by $R\# \Gamma_A$ the subalgebra of $R\# \Gamma$ consisting of all sums of elements $rp_\beta$, where $r\in R_\alpha$ and $\alpha+\beta,\beta\in \Gamma_A$. Note that $R\# \Gamma_A$ is is a unital algebra with identity $\sum_{\gamma\in\Gamma_A}p_\gamma$.

\begin{lemma}
The algebra  $R\#\Gamma_A$ has the presentation 
 \begin{equation}\label{eqn6.19}
 \begin{split}
     R\#\Gamma_A=\Big\langle& rp_{\beta}\quad(r\in R_\alpha;~\alpha+\beta,\beta\in\Gamma_A)\mid{}\\
 &rp_\beta+sp_\beta=(r+s)p_\beta\quad(r,s\in R_{\alpha};~\alpha+\beta,\beta\in\Gamma_A),\\
 &rp_{\beta}sp_{\beta'}=rs_{\beta-\beta'} p_{\beta'}\quad(r\in R_{\alpha};~s\in R_{\alpha'};~\alpha+\beta,\beta, \alpha'+\beta',\beta'\in\Gamma_A)\Big\rangle.
 \end{split}
 \end{equation}   
\end{lemma}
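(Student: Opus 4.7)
The plan is to imitate the proof of Lemma \ref{lemrelpress} essentially verbatim, since that argument uses nothing specific about the precise form of $\Gamma_A$ (only that it is the index set on which one restricts the idempotents $p_\beta$). I would denote by $Z$ the $\K$-algebra presented by (\ref{eqn6.19}). By Lemma \ref{lempressmash}, every defining relation of $Z$ holds among the corresponding elements of $R\#\Gamma$, so there is a well-defined $\K$-algebra homomorphism $\phi:Z\to R\#\Gamma$ sending each generator $rp_\beta$ of $Z$ to the element $rp_\beta$ of $R\#\Gamma$.

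For the image, each generator $rp_\beta$ of $Z$ (with $r\in R_\alpha$) satisfies $\alpha+\beta,\beta\in\Gamma_A$, so $\phi(Z)\subseteq R\#\Gamma_A$; conversely every homogeneous element $rp_\beta$ of $R\#\Gamma_A$ is by definition one of the named generators of $Z$, hence $\phi(Z)=R\#\Gamma_A$. Next I would reduce an arbitrary $z\in Z$ to the normal form
\[z=\sum_{\beta\in\Gamma_A}\sum_{\alpha+\beta\in\Gamma_A}r_\beta^{(\alpha)}p_\beta\qquad (r_\beta^{(\alpha)}\in R_\alpha).\]
Iterated application of the product relation collapses any word to a single generator $tp_{\beta'}$, and the additive relation then collects letters sharing the same index $\beta$. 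It is worth noting that the degree bookkeeping works out: in the product relation $rp_\beta\cdot sp_{\beta'}=rs_{\beta-\beta'}p_{\beta'}$ we have $rs_{\beta-\beta'}\in R_{\alpha+\beta-\beta'}$, and $(\alpha+\beta-\beta')+\beta'=\alpha+\beta\in\Gamma_A$, so the product is again a legal generator.

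For injectivity, if $\phi(z)=0$ for $z$ in the above normal form, comparing pieces in the decomposition $R\#\Gamma=\bigoplus_{\beta\in\Gamma}\bigoplus_{\alpha\in\Gamma}R_\alpha p_\beta$ forces every $r_\beta^{(\alpha)}=0$, so $z=0$. Combined with $\phi(Z)=R\#\Gamma_A$ this yields $Z\cong R\#\Gamma_A$, which is exactly the content of the lemma.

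There is no genuine obstacle here: the proof is a routine repetition of Lemma \ref{lemrelpress}, and the only step requiring a moment's attention is confirming that the intermediate expressions arising during the normalization to canonical form remain inside the prescribed generator set of (\ref{eqn6.19}), which is immediate from the degree identity noted above.
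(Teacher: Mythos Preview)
Your proposal is correct and matches the paper's approach exactly: the paper's proof simply reads ``See the proof of Lemma \ref{lemrelpress},'' and you have spelled out precisely that argument, including the one nontrivial check that products of generators remain in the prescribed generating set.
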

\begin{proof}
See the proof of Lemma \ref{lemrelpress}.
\end{proof}

For any $\gamma\in A$ define the matrices 
\begin{align*}
d^{(\gamma)}&:=(d_{ij}^{(\gamma)})_{i,j\in J}=(d_{ij}p_{\gamma-\beta_j})_{i,j\in J}\in \Mat_{J\times J}(R\#\Gamma_A).
\end{align*}
A straightforward computation shows that the matrices $d^{(\gamma)}$ are idempotent. Hence they define idempotent endomorphisms of $\bigoplus_{j\in J}R\# \Gamma_A$. It follows that $P_A^{(\gamma)}:=(\bigoplus_{j\in J}R\# \Gamma_A)d^{(\gamma)}$ are non-zero finitely generated projective $R\#\Gamma_A$-modules. Define the $\K$-algebra
\begin{equation*}
  B_A:=R\#\Gamma_A\Big\langle f^{(\gamma)}:\overline{P_A^{(\gamma)}}\to\overline{P_A^{(\gamma)}};~(f^{(\gamma)})^2=f^{(\gamma)}~(\gamma\in A)\Big\rangle.
  \end{equation*}
We will show that $T_A\cong B_A$. By Lemma \ref{lempresS} and (\ref{eqn6.19}) we have
\begin{equation}\label{eqn6.20}
\begin{split}
B_A=\Big\langle& rp_{\beta}~(r\in R_\alpha;~\alpha+\beta,\beta\in\Gamma_A);~~~~f_{ij}^{(\gamma)}~(\gamma\in A;~i,j\in J)\mid{}\\
 &rp_\beta+sp_\beta=(r+s)p_\beta\quad(r,s\in R_{\alpha};~\alpha+\beta,\beta\in\Gamma_A),\\
 &rp_{\beta}sp_{\beta'}=rs_{\beta-\beta'} p_{\beta'}\quad(r\in R_{\alpha};~s\in R_{\alpha'};~\alpha+\beta,\beta, \alpha'+\beta',\beta'\in\Gamma_A),\\
&\sum_{k\in J}d_{ik}^{(\gamma)}f_{kj}^{(\gamma)}=f_{ij}^{(\gamma)}=\sum_{k\in J}f_{ik}^{(\gamma)}d_{kj}^{(\gamma)}\quad(i,j\in J;~ \gamma\in A),\\
&\sum_{k\in J}f_{ik}^{(\gamma)}f_{kj}^{(\gamma)}=f_{ij}^{(\gamma)}\quad(i,j\in J;~ \gamma\in A)\Big\rangle.
\end{split}
 \end{equation}

\begin{lemma}\label{lemmonomial*}
In $B_A$ the relations 

\begin{align}
 &f_{ij}^{(\gamma)}(1p_{\gamma-\beta_j})=f_{ij}^{(\gamma)}=(1p_{\gamma-\beta_i})f_{ij}^{(\gamma)}\quad(i,j\in J;~ \gamma\in A) \label{eqn6.21}
\end{align}
hold.
\end{lemma}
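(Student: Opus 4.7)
The plan is to mimic the proof of Lemma \ref{lemmonomial} almost verbatim, replacing the role played by the defining matrices $d^{(\gamma)}$ and $e^{(\gamma)}$ of the two projective modules $P_A^{(\gamma)}$ and $Q_A^{(\gamma)}$ by the single idempotent matrix $d^{(\gamma)}$ associated to $P_A^{(\gamma)}$. The key observation is that in presentation (\ref{eqn6.20}) we already have the "bimodule-type" relations $f^{(\gamma)}_{ij}=\sum_{k\in J}f^{(\gamma)}_{ik}d^{(\gamma)}_{kj}=\sum_{k\in J}d^{(\gamma)}_{ik}f^{(\gamma)}_{kj}$, so we can absorb an idempotent $1p_{\gamma-\beta_j}$ on the right, respectively $1p_{\gamma-\beta_i}$ on the left, into the $d^{(\gamma)}$ factor using the smash product multiplication rule $rp_\beta\cdot sp_{\beta'}=rs_{\beta-\beta'}p_{\beta'}$ from the presentation of $R\#\Gamma_A$.

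For the right-hand equality, I would compute
\begin{align*}
f_{ij}^{(\gamma)}(1p_{\gamma-\beta_j}) &= \Big(\sum_{k\in J}f_{ik}^{(\gamma)}d_{kj}^{(\gamma)}\Big)(1p_{\gamma-\beta_j})
= \sum_{k\in J}f_{ik}^{(\gamma)}(d_{kj}p_{\gamma-\beta_j})(1p_{\gamma-\beta_j})\\
&= \sum_{k\in J}f_{ik}^{(\gamma)}(d_{kj}\cdot 1_0\, p_{\gamma-\beta_j})
= \sum_{k\in J}f_{ik}^{(\gamma)}d_{kj}^{(\gamma)} = f_{ij}^{(\gamma)},
\end{align*}
using $1\in R_0$ so that $(1)_{(\gamma-\beta_j)-(\gamma-\beta_j)}=1$.

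For the left-hand equality, I would use the dual relation $f_{ij}^{(\gamma)}=\sum_{k\in J}d_{ik}^{(\gamma)}f_{kj}^{(\gamma)}$ together with the fact that $\deg(d_{ik})=\beta_k-\beta_i$, which makes $(d_{ik})_{(\gamma-\beta_i)-(\gamma-\beta_k)}=(d_{ik})_{\beta_k-\beta_i}=d_{ik}$. Explicitly,
\begin{align*}
(1p_{\gamma-\beta_i})f_{ij}^{(\gamma)} &= (1p_{\gamma-\beta_i})\sum_{k\in J}(d_{ik}p_{\gamma-\beta_k})f_{kj}^{(\gamma)}
= \sum_{k\in J}(d_{ik})_{\beta_k-\beta_i}p_{\gamma-\beta_k}\,f_{kj}^{(\gamma)}\\
&= \sum_{k\in J}d_{ik}^{(\gamma)}f_{kj}^{(\gamma)} = f_{ij}^{(\gamma)}.
\end{align*}

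Both computations are purely mechanical once the smash-product multiplication rule and the homogeneity of $d_{ik}$ are invoked. There is no real obstacle: the homogeneity condition $\deg(d_{ik})=\beta_k-\beta_i$ is precisely what forces the degree shift computation to work out, and this is exactly the same mechanism used in Lemma \ref{lemmonomial}. The statement should then follow verbatim, and in the next step (the analog of Proposition \ref{propbergsmash}) it will be combined with presentations (\ref{eqn6.18}) and (\ref{eqn6.20}) to conclude $T_A\cong B_A$.
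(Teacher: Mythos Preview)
Your proof is correct and follows essentially the same approach as the paper's own proof: both equalities are obtained by expanding $f_{ij}^{(\gamma)}$ via the relations $f_{ij}^{(\gamma)}=\sum_k f_{ik}^{(\gamma)}d_{kj}^{(\gamma)}=\sum_k d_{ik}^{(\gamma)}f_{kj}^{(\gamma)}$ and then absorbing the idempotent $1p_{\gamma-\beta_j}$ (respectively $1p_{\gamma-\beta_i}$) into the $d^{(\gamma)}$ factor using the smash-product multiplication rule and the homogeneity $\deg(d_{ik})=\beta_k-\beta_i$.
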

\begin{proof}
Clearly \begin{align*}
    f_{ij}^{(\gamma)}(1p_{\gamma-\beta_j})
        =&\Big(\sum_{k\in J}f_{ik}^{(\gamma)}d_{kj}^{(\gamma)}\Big)(1p_{\gamma-\beta_j})\\
    =&\sum_{k\in J}f_{ik}^{(\gamma)}(d_{kj}p_{\gamma-\beta_j})(1p_{\gamma-\beta_j})\\
=&\sum_{k\in J}f_{ik}^{(\gamma)}(d_{kj}p_{\gamma-\beta_j})\\
=&\sum_{k\in J}f_{ik}^{(\gamma)}d_{kj}^{(\gamma)}\\
    =&f_{ij}^{(\gamma)}.
\end{align*}
Similarly
\begin{align*}
    (1p_{\gamma-\beta_i})f_{ij}^{(\gamma)}
=& (1p_{\gamma-\beta_i})\Big (\sum_{k\in J}d_{ik}^{(\gamma)}f_{kj}^{(\gamma)}\Big)\\
=& \sum_{k\in J}(1p_{\gamma-\beta_i})(d_{ik}p_{\gamma-\beta_k})f_{kj}^{(\gamma)}\\
=& \sum_{k\in J}((d_{ik})_{\gamma-\beta_i-(\gamma-\beta_k)}p_{\gamma-\beta_k})f_{kj}^{(\gamma)}\\
=& \sum_{k\in J}(d_{ik}p_{\gamma-\beta_k})f_{kj}^{(\gamma)}\\
=& \sum_{k\in J}d_{ik}^{(\gamma)}f_{kj}^{(\gamma)}\\
=& f_{ij}^{(\gamma)}
\end{align*}
since $\deg(d_{ik})=\beta_k-\beta_i$.
\end{proof}

\begin{prop}\label{propbergsmash*}
Let $A\subseteq \Gamma$ be a finite subset. Then $T_A\cong B_A$.
\end{prop}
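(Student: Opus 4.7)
The plan is to compare the presentations (\ref{eqn6.18}) of $T_A$ and (\ref{eqn6.20}) of $B_A$ directly and exhibit mutually inverse $\K$-algebra homomorphisms between them. Both algebras are generated by the symbols $rp_\beta$ (for $r\in R_\alpha$ with $\alpha+\beta,\beta\in\Gamma_A$) together with an indexed family of ``matrix'' symbols: $e_{ij}^{(\gamma)}$ in the case of $T_A$ and $f_{ij}^{(\gamma)}$ in the case of $B_A$, for $\gamma\in A$ and $i,j\in J$. The first two families of relations (the $R\#\Gamma_A$-relations) in (\ref{eqn6.18}) and (\ref{eqn6.20}) are identical, and the relations
\[\sum_{k\in J}d_{ik}^{(\gamma)}e_{kj}^{(\gamma)}=e_{ij}^{(\gamma)}=\sum_{k\in J}e_{ik}^{(\gamma)}d_{kj}^{(\gamma)},\qquad \sum_{k\in J}e_{ik}^{(\gamma)}e_{kj}^{(\gamma)}=e_{ij}^{(\gamma)}\]
match (under $e\leftrightarrow f$) the analogous relations in $B_A$. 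The only apparent discrepancy is the ``monomial'' relation (\ref{eqn6.21}) of $T_A$, which has no counterpart in the list of defining relations of $B_A$.

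First I would define a $\K$-algebra homomorphism $\Phi\colon B_A\to T_A$ by sending $rp_\beta\mapsto rp_\beta$ and $f_{ij}^{(\gamma)}\mapsto e_{ij}^{(\gamma)}$. Since every defining relation of $B_A$ is, after the substitution $f\leftrightarrow e$, already a defining relation of $T_A$, the map $\Phi$ is well-defined.

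Next I would define a homomorphism $\Psi\colon T_A\to B_A$ by $rp_\beta\mapsto rp_\beta$ and $e_{ij}^{(\gamma)}\mapsto f_{ij}^{(\gamma)}$. The $R\#\Gamma_A$-relations and the three families of relations involving $d^{(\gamma)}$ and $e^{(\gamma)}$ (equivalently $f^{(\gamma)}$) are preserved by inspection. The only relation that needs genuine verification is the monomial relation (\ref{eqn6.21}), i.e.\ that $f_{ij}^{(\gamma)}(1p_{\gamma-\beta_j})=f_{ij}^{(\gamma)}=(1p_{\gamma-\beta_i})f_{ij}^{(\gamma)}$ holds in $B_A$; but this is precisely the content of Lemma~\ref{lemmonomial*}. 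Hence $\Psi$ is well-defined.

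Finally, $\Phi$ and $\Psi$ are mutually inverse on the generating sets, so $T_A\cong B_A$. There is no real obstacle here: the whole content of the proposition is that the presentation (\ref{eqn6.18}) of $T_A$ contains the redundant relation (\ref{eqn6.21}), and this redundancy has already been pinned down in Lemma~\ref{lemmonomial*}. In the spirit of the proof of the analogous Proposition~\ref{propbergsmash}, one can simply state that the claim is immediate from (\ref{eqn6.18}), (\ref{eqn6.20}) and (\ref{eqn6.21}).
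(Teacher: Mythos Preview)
Your proposal is correct and is exactly the paper's approach: the paper's proof simply cites (\ref{eqn6.18}), (\ref{eqn6.20}) and (\ref{eqn6.21}), which is precisely the comparison of presentations you spell out. Your explicit construction of $\Phi$ and $\Psi$ is just a fleshed-out version of that one-line observation, with Lemma~\ref{lemmonomial*} handling the only nontrivial point.
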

\begin{proof}
The proposition follows from (\ref{eqn6.18}), (\ref{eqn6.20}) and (\ref{eqn6.21}).
\end{proof}

The theorem below is the main result of this subsection. It follows directly from Proposition \ref{propbergsmash*} and the fact that $S\#\Gamma=\varinjlim_A T_A$, where $A$ runs through all finite subsets of $\Gamma$.
\begin{thm}\label{thmbergsmash*}
The $\K$-algebra $R\big\langle e:\overline{P}\to \overline{P};~e^2=e\big\rangle\#\Gamma$ is the direct limit of the Bergman algebras
\[R\#\Gamma_A\Big\langle f^{(\gamma)}:\overline{P_A^{(\gamma)}}\to\overline{P_A^{(\gamma)}};~(f^{(\gamma)})^2=f^{(\gamma)}~(\gamma\in A)\Big\rangle,\]
where $A$ runs through all finite subsets of $\Gamma$.
\end{thm}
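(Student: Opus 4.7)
The plan is to assemble the theorem from two ingredients that are already essentially in place: the direct-limit description $S\#\Gamma = \varinjlim_{A} T_A$ over the directed system of finite subsets $A \subseteq \Gamma$, and the identification $T_A \cong B_A$ given in Proposition~\ref{propbergsmash*}, where
\[
B_A = R\#\Gamma_A\Big\langle f^{(\gamma)}:\overline{P_A^{(\gamma)}}\to \overline{P_A^{(\gamma)}};~(f^{(\gamma)})^2=f^{(\gamma)}~(\gamma\in A)\Big\rangle.
\]

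First I would make explicit why $S\#\Gamma = \varinjlim_{A} T_A$. The presentation (\ref{eqn6.17}) of $S\#\Gamma$ uses the generators $rp_\gamma$ with $r \in R^h$, $\gamma \in \Gamma$, together with the symbols $e_{ij}^{(\gamma)}$ for all $\gamma \in \Gamma$ and $i,j\in J$, subject to relations each of which involves only finitely many indices from $\Gamma$. For each finite subset $A \subseteq \Gamma$, the sub-presentation obtained by restricting the generators to those with indices in $\Gamma_A \cup A$ gives precisely the algebra $T_A$ defined in (\ref{eqn6.18}); furthermore, for $A \subseteq A'$ the inclusion on generators induces a (unit-preserving) algebra homomorphism $T_A \to T_{A'}$ since $\Gamma_A \subseteq \Gamma_{A'}$. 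Taking the direct limit over the directed poset of finite subsets of $\Gamma$ then recovers the full presentation (\ref{eqn6.17}), so $S\#\Gamma \cong \varinjlim_A T_A$ as $\K$-algebras.

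Next I would invoke Proposition~\ref{propbergsmash*}, which produces the isomorphism $T_A \cong B_A$ for each finite $A$. Because these isomorphisms are constructed directly from the presentations (\ref{eqn6.18}) and (\ref{eqn6.20}) by matching the respective generators (with Lemma~\ref{lemmonomial*} supplying the missing degree-zero relations (\ref{eqn6.21}) in $B_A$), they are natural in $A$: given $A \subseteq A'$ the induced squares
\[
\xymatrix{
T_A \ar[r]^{\cong} \ar[d] & B_A \ar[d] \\
T_{A'} \ar[r]^{\cong} & B_{A'}
}
\]
commute, since both vertical maps are just inclusions of generators. Passing to the direct limit therefore yields
\[
S\#\Gamma \;\cong\; \varinjlim_A T_A \;\cong\; \varinjlim_A B_A,
\]
which is exactly the statement of the theorem.

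The only real obstacle is the bookkeeping required to check that the isomorphisms $T_A \cong B_A$ commute with the transition maps of the two directed systems; but once one has written down the presentation-level matching of generators in Proposition~\ref{propbergsmash*}, this commutation is immediate from the inclusion of generating sets. No genuinely new ingredient beyond Proposition~\ref{propbergsmash*} and the inspection of the presentation (\ref{eqn6.17}) is needed.
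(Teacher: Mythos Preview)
Your proposal is correct and follows exactly the paper's approach: the paper's proof is a single sentence stating that the theorem follows directly from Proposition~\ref{propbergsmash*} together with the fact that $S\#\Gamma=\varinjlim_A T_A$. You have simply spelled out in more detail the two ingredients the paper invokes, including the (straightforward) compatibility of the isomorphisms $T_A\cong B_A$ with the transition maps.
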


We finish this subsection with a lemma which will be used in \S 7. For $\gamma\in \Gamma$ we define the $R\#\Gamma$-module $P^{(\gamma)}:=(\bigoplus_{j\in J}R\# \Gamma)d^{(\gamma)}$.

\begin{lemma}\label{lemtech_1*}
Let $\gamma\in A$. Then $P^{(\gamma)}\cong P(-\gamma)\#\Gamma$ as $R\#\Gamma$-modules.
\end{lemma}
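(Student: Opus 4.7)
The plan is to mimic the proof of Lemma~\ref{lemtech_1} almost verbatim, since the present setup is the ``one-module'' specialisation of that ``two-module'' situation. First I would write an arbitrary element of $P(-\gamma)\#\Gamma = P(-\gamma) = P = \Ima(d)$ in the form $xd$ with $x \in \bigoplus_{j\in J} R(\beta_j)$, and define
\[
\theta: P(-\gamma)\#\Gamma \longrightarrow P^{(\gamma)}, \qquad xd \longmapsto x^{(\gamma)}d^{(\gamma)},
\]
where $x^{(\gamma)} \in \bigoplus_{j\in J} R\#\Gamma$ has $j$-th component $x^{(\gamma)}_j := x_j p_{\gamma-\beta_j}$.

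The next step is to check well-definedness and injectivity simultaneously by running through the chain of equivalences
\[
xd = yd \;\Longleftrightarrow\; \textstyle\sum_{i\in J}(x_i - y_i)d_{ij} = 0 \;\forall j \;\Longleftrightarrow\; \textstyle\sum_{i\in J}(x_i - y_i)p_{\gamma-\beta_i}\,d_{ij}p_{\gamma-\beta_j} = 0 \;\forall j \;\Longleftrightarrow\; x^{(\gamma)}d^{(\gamma)} = y^{(\gamma)}d^{(\gamma)},
\]
exactly as in the proof of Lemma~\ref{lemtech_1}; here the key ingredient is $\deg(d_{ij}) = \beta_j-\beta_i$, which makes the insertion of $p_{\gamma-\beta_i}$ transparent via the smash product multiplication rule $rp_\alpha \cdot sp_\beta = rs_{\alpha-\beta}p_\beta$.

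For surjectivity, given $z = yd^{(\gamma)} \in P^{(\gamma)}$ with $y \in \bigoplus_{j\in J} R\#\Gamma$, I would expand $y_i = \sum_{\alpha\in\Gamma} y_{i,\alpha}p_\alpha$ and compute
\[
(yd^{(\gamma)})_j = \sum_{i\in J}\sum_{\alpha\in\Gamma} y_{i,\alpha}(d_{ij})_{\alpha-\gamma+\beta_j}\,p_{\gamma-\beta_j} = \sum_{i\in J} y_{i,\gamma-\beta_i}\,d_{ij}\,p_{\gamma-\beta_j},
\]
since $\deg(d_{ij}) = \beta_j-\beta_i$ forces all other terms to vanish. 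This lets me replace $y$ by $x^{(\gamma)}$ where $x_j := y_{j,\gamma-\beta_j}$, so that $z = \theta(xd)$.

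Finally, I would verify that $\theta$ is $R\#\Gamma$-linear. Additivity is clear from $\theta((x+y)d) = (x+y)^{(\gamma)}d^{(\gamma)} = x^{(\gamma)}d^{(\gamma)} + y^{(\gamma)}d^{(\gamma)}$. For the action, given $rp_\alpha \in R\#\Gamma$ and $xd \in P(-\gamma)\#\Gamma$, I would define $y \in \bigoplus_{j\in J}R(\beta_j)$ by $y_j := r(x_j)_{\alpha-\gamma+\beta_j}$ and show that both $\theta(rp_\alpha \cdot xd) = \theta(r(xd)_\alpha\cdot d) = \theta(yd) = y^{(\gamma)}d^{(\gamma)}$ and $rp_\alpha \cdot \theta(xd) = (rp_\alpha)\cdot x^{(\gamma)}d^{(\gamma)} = y^{(\gamma)}d^{(\gamma)}$ hold, the latter by the componentwise calculation $((rp_\alpha)\cdot x^{(\gamma)})_j = (rp_\alpha)(x_jp_{\gamma-\beta_j}) = r(x_j)_{\alpha-\gamma+\beta_j}p_{\gamma-\beta_j} = y^{(\gamma)}_j$. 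The main ``obstacle'' is merely bookkeeping with the three degree shifts $\beta_j$, $\gamma$ and $\alpha$ involved; there is no conceptual novelty beyond Lemma~\ref{lemtech_1}, so the proof is essentially a transcription of that argument with $(Q, e, \gamma_m)$ removed and only $(P, d, \beta_j)$ retained.
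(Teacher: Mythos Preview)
Your proposal is correct and is exactly the approach the paper takes: the paper's proof of Lemma~\ref{lemtech_1*} consists of the single line ``See the proof of Lemma~\ref{lemtech_1}'', and you have carried out precisely that transcription, retaining only the $(P,d,\beta_j)$ data.
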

\begin{proof}
See the proof of Lemma \ref{lemtech_1}.
\end{proof}

\section{The graded $\V$-monoid of a graded Bergman algebra}\label{sec7}
\subsection{Basic definitions and results}\label{subsec7.1}
Let $R$ be a not necessarily unital ring. Recall that a $R$-module $M$ is called {\it unital} if $RM=M$. We denote by $R$-$\Mod$ the category of unital $R$-modules. Furthermore, we denote by $R$-$\Mod_{\proj}$ the full subcategory of $R$-$\Mod$ whose objects are the projective objects of $R$-$\Mod$ that are finitely generated as a $R$-module. If $R$ has local units, we define
\[\V(R)=\{[P]\mid P\in R\text{-}\Mod_{\proj}\}\]
where $[P]$ denotes the isomorphism class of $P$ as a $R$-module. $\V(R)$ becomes an abelian monoid by defining $[P]+[Q]=[P\oplus Q]$. It is well-known that $\V$ is a functor that commutes with direct limits. There is a different, but equivalent definition of $\V$ using idempotent matrices over $R$, cf. \cite[Subsection 4A]{ara-hazrat-li-sims}.
%The Grothendieck group $K_0(R)$ is the group completion of $\V(R)$ (cf. \cite[Subsection 4A]{ara-hazrat-li-sims}).

Let $R$ now be a $\Gamma$-graded ring. Recall that a $R$-module $M$ is called {\it $\Gamma$-graded} if there is a decomposition $M=\bigoplus_{\gamma\in\Gamma}M_\gamma$ such that $R_\alpha M_\gamma\subseteq M_{\alpha\gamma}$ for any $\alpha, \gamma\in \Gamma$. We denote by $R$-$\GR$ the category of $\Gamma$-graded unital $R$-modules with morphisms the $R$-module homomorphisms that preserve grading. Moreover, we denote by $R$-$\GR_{\proj}$ the full subcategory of $R$-$\GR$ whose objects are the projective objects of $R$-$\GR$ that are finitely generated as a $R$-module. If $R$ has graded local units, we define
\[\V^{\gr}(R)=\{[P] \mid P \in R\text{-}\GR_{\proj}\}\]
where $[P]$ denotes the isomorphism class of $P$ as a graded $R$-module. $\V^{\gr}(R)$ becomes a $\Gamma$-monoid by defining $[P]+[Q]=[P\oplus Q]$ and $\gamma.[P]=[P(\gamma)]$. 
%The graded Grothendieck group $K^{\gr}_0(R)$ is the group completion of $\V^{\gr}(R)$ (cf. \cite[Subsection 4A]{ara-hazrat-li-sims}).

\begin{prop}[{\cite[Proposition 66]{Raimund2}}]\label{propsmash}
Let $R$ be a $\Gamma$-graded ring with graded local units. Then $R$-$\GR_{\proj}\cong R\#\Gamma$-$\Mod_{\proj}$ by an isomorphism that commutes with direct sums. It follows that $\V^{\gr}(R)\cong \V(R\#\Gamma)$.
\end{prop}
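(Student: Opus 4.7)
The plan is to invoke the isomorphism of categories $\psi:R\text{-}\GR\to R\#\Gamma\text{-}\Mod$ recalled in \S2.4.3 and show it restricts to an isomorphism on the projective, finitely generated subcategories. Recall that $\psi$ sends an object $M=\bigoplus_{\gamma}M_\gamma$ to the $R\#\Gamma$-module $M\#\Gamma$, which as an abelian group is $M$ itself with action $(rp_\alpha)\cdot m=rm_\alpha$, and is the identity on morphisms. The inverse functor sends a unital $R\#\Gamma$-module $N$ to the graded $R$-module whose underlying group is $N$, with $\gamma$-component $p_\gamma N$ (defined in the obvious way via the graded local units of $R$, so that $N=\bigoplus_\gamma p_\gamma N$), and with $R$-action inherited from $R\#\Gamma$ via $r\cdot n=\sum_\gamma r(p_\gamma n)$. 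The fact that these are mutually inverse, and that $\psi$ preserves direct sums, is the content of the cited results.

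Since $\psi$ is an isomorphism of categories, it is automatically exact and preserves epimorphisms and arbitrary direct sums. Consequently $P$ is a projective object of $R\text{-}\GR$ if and only if $\psi(P)=P\#\Gamma$ is a projective object of $R\#\Gamma\text{-}\Mod$. So the content of the proposition reduces to the claim that $\psi$ preserves finite generation of the underlying modules.

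The core step is therefore the following equivalence: a graded $R$-module $P$ is finitely generated as an $R$-module (equivalently, finitely generated by homogeneous elements) if and only if $P\#\Gamma$ is finitely generated as an $R\#\Gamma$-module. For the forward implication, suppose $P$ is generated by homogeneous elements $x_1,\dots,x_n$ of degrees $\gamma_1,\dots,\gamma_n$. Given $m\in P\#\Gamma=P$, write $m=\sum_k r_k x_{i_k}$ with each $r_k\in R$ homogeneous; then in $P\#\Gamma$ we have $r_kx_{i_k}=(r_kp_{\gamma_{i_k}})\cdot x_{i_k}$, so the same $x_i$ generate $P\#\Gamma$ as an $R\#\Gamma$-module. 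Conversely, if $P\#\Gamma$ is generated as an $R\#\Gamma$-module by $y_1,\dots,y_n$, then decomposing each $y_i=\sum_\gamma (y_i)_\gamma$ into its finitely many non-zero homogeneous components yields a finite homogeneous generating set for $P$ as a graded $R$-module, since any $m=\sum_k a_k y_{i_k}$ with $a_k=r_kp_{\alpha_k}\in R\#\Gamma$ rewrites as $\sum_k r_k(y_{i_k})_{\alpha_k}$.

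Combining these observations, $\psi$ restricts to an isomorphism $R\text{-}\GR_{\proj}\cong R\#\Gamma\text{-}\Mod_{\proj}$ that commutes with direct sums. The monoid isomorphism $\V^{\gr}(R)\cong \V(R\#\Gamma)$ then follows at once, since both monoids are defined as isomorphism classes of objects in these categories with addition given by direct sum, and the $\Gamma$-action plays no role in the target (which is just an abelian monoid). The main technical subtlety to handle carefully will be the use of graded local units in place of a global identity: the symbols $p_\gamma$ do not literally lie in $R\#\Gamma$, but the expressions $rp_\alpha$ with $r\in R$ do, and every argument above must be read with that convention; this is precisely the setting treated in \cite{Raimund2}, which is why we are content to sketch the plan rather than redo the verification in full.
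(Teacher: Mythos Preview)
Your proposal is correct and follows the same approach as the paper: the paper does not give a self-contained proof of this proposition but simply cites \cite[Proposition 66]{Raimund2}, having already recorded in \S\ref{subsubseccatiso} that the category isomorphism $\psi:R\text{-}\GR\to R\#\Gamma\text{-}\Mod$ restricts to the finitely generated projective subcategories. Your sketch fills in exactly the verification behind that citation---projectivity is automatic from the category isomorphism, and finite generation is checked by passing to homogeneous generators in each direction---so there is nothing to add.
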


Let $R$ be a unital ring and $\epsilon\in \Mat_{n\times n}(R)$ an idempotent matrix. Then $\epsilon$ defines an endomorphism of the $R$-module $R^n$, which we also denote by $\epsilon$. Clearly the image $M(\epsilon)=\Ima\epsilon$ is a finitely generated projective $R$-module (since $R^n=\Ker \epsilon\oplus\Ima\epsilon$). Now let $M$ be a finitely generated projective $R$-module. Then $M\oplus N=R^n$ for some $R$-module $N$ and $n\in \N$. Define an endomorphism $\epsilon$ of $R^n$ by $\epsilon|_M=\id_M$ and $\epsilon|_N=0$. Then clearly $M=\Ima\epsilon$ and $\epsilon$ is idempotent. We denote the idempotent matrix in $\Mat_{n\times n}(R)$ corresponding to $\epsilon$ also by $\epsilon$. Clearly $M(\epsilon)=M$. Hence any finitely generated projective $R$-module equals $M(\epsilon)$ for some idempotent square matrix $\epsilon$ over $R$.

Now let $R$ be a $\Gamma$-graded ring and $\epsilon\in \Mat_{n\times n}(R)$ an idempotent matrix with homogeneous entries such $\deg(\epsilon_{ij})=\gamma_{j}-\gamma_i~(1\leq i,j\leq n)$ where $\gamma_1,\dots,\gamma_n\in \Gamma$. Then $\epsilon$ defines an endomorphism of the graded $R$-module $\bigoplus_{k=1}^nR(\gamma_k)$, which we also denote by $\epsilon$. Clearly the image $M(\epsilon)=\Ima\epsilon$ is a graded finitely generated projective  $R$-module (since $\bigoplus_{k=1}^nR(\gamma_k)=\Ker \epsilon\oplus\Ima\epsilon$). Now let $M$ be a graded finitely generated projective $R$-module. Then $M\oplus N=\bigoplus_{k=1}^nR(\gamma_k)$ for some graded $R$-module $N$ and $\gamma_1,\dots,\gamma_n\in \Gamma$. Define an endomorphism $\epsilon$ of $\bigoplus_{k=1}^nR(\gamma_k)$ by $\epsilon|_M=\id_M$ and $\epsilon|_N=0$. Then clearly $M=\Ima\epsilon$ and $\epsilon$ is idempotent. We denote the idempotent matrix in $\Mat_{n\times n}(R)$ corresponding to $\epsilon$ also by $\epsilon$. Clearly $\deg(\epsilon_{ij})=\gamma_{j}-\gamma_i~(1\leq i,j\leq n)$ and $M(\epsilon)=M$. Hence any graded finitely generated projective $R$-module equals $M(\epsilon)$ for some idempotent square matrix $\epsilon\in\Mat_{n\times n}(R)$ such that $\deg(\epsilon_{ij})=\gamma_{j}-\gamma_i~(1\leq i,j\leq n)$ where $\gamma_1,\dots,\gamma_n\in \Gamma$.

\begin{lemma}\label{lemidemsmash_1}
Let $R$ be a $\Gamma$-graded ring and $\epsilon\in \Mat_{n\times n}(R)$ an idempotent matrix with homogeneous entries such $\deg(\epsilon_{ij})=\gamma_{j}-\gamma_i~(1\leq i,j\leq n)$ where $\gamma_1,\dots,\gamma_n\in \Gamma$. Then $M(\epsilon)\#\Gamma\cong M(\epsilon\#\Gamma)$, where $\epsilon\#\Gamma\in \Mat_{n\times n}(R)$ is defined by $(\epsilon\#\Gamma)_{ij}=\epsilon_{ij}p_{-\gamma_j}~(1\leq i,j\leq n)$.
\end{lemma}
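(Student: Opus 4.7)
This statement is essentially the $\gamma = 0$ special case of Lemma \ref{lemtech_1} (with $J = \{1, \ldots, n\}$, $\beta_k = \gamma_k$, $d = \epsilon$); the matrix $\epsilon\#\Gamma$ in our notation coincides with the $d^{(0)}$ appearing there. The plan is therefore to imitate the argument used to prove Lemma \ref{lemtech_1}, after first noting that $\epsilon\#\Gamma$ is genuinely idempotent: using $\deg(\epsilon_{jk}) = \gamma_k - \gamma_j$ one has $(\epsilon_{jk})_{\gamma_k - \gamma_j} = \epsilon_{jk}$, so
\[
((\epsilon\#\Gamma)^2)_{ik} = \sum_{j=1}^{n} \epsilon_{ij}p_{-\gamma_j}\,\epsilon_{jk}p_{-\gamma_k} = \sum_{j=1}^{n} \epsilon_{ij}\epsilon_{jk}\,p_{-\gamma_k} = (\epsilon\#\Gamma)_{ik}.
\]

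I would then write a typical element of $M(\epsilon)$ as $x\epsilon$ with $x \in \bigoplus_{k=1}^n R(\gamma_k)$, introduce $x^{(0)} \in (R\#\Gamma)^n$ by $x^{(0)}_k := x_k p_{-\gamma_k}$, and define
\[
\theta : M(\epsilon)\#\Gamma \longrightarrow M(\epsilon\#\Gamma), \qquad x\epsilon \longmapsto x^{(0)}(\epsilon\#\Gamma).
\]
Well-definedness and injectivity are handled simultaneously via the chain of equivalences $x\epsilon = y\epsilon \iff \sum_i(x_i - y_i)\epsilon_{ij} = 0\ (\forall j) \iff x^{(0)}(\epsilon\#\Gamma) = y^{(0)}(\epsilon\#\Gamma)$, whose critical step rewrites $(x_i - y_i)\epsilon_{ij}p_{-\gamma_j}$ as $(x_i - y_i)p_{-\gamma_i}\epsilon_{ij}p_{-\gamma_j}$, using that $\epsilon_{ij}$ is homogeneous of degree $\gamma_j - \gamma_i$.

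For surjectivity, given $z = y(\epsilon\#\Gamma)$ with $y_i = \sum_{\alpha \in \Gamma} y_{i,\alpha}p_\alpha$, the same degree identity forces $(y(\epsilon\#\Gamma))_j = \sum_i y_{i,-\gamma_i}\epsilon_{ij}p_{-\gamma_j}$, so setting $x_i := y_{i,-\gamma_i}$ recovers $z = \theta(x\epsilon)$. Additivity of $\theta$ is immediate from $(x+y)^{(0)} = x^{(0)} + y^{(0)}$; for scalar compatibility with $rp_\alpha \in R\#\Gamma$, a short coordinatewise computation shows that both $\theta((rp_\alpha)(x\epsilon))$ and $(rp_\alpha)\theta(x\epsilon)$ equal $y^{(0)}(\epsilon\#\Gamma)$, where $y_j := r(x_j)_{\alpha - \gamma_j}$, which again rests on the homogeneity of $\epsilon$.

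The only delicate point throughout is the bookkeeping between the internal grading of $R$ and the shifted grading on each $R(\gamma_k)$ under the smash-product action; the calculations in the proof of Lemma \ref{lemtech_1} transplant verbatim, so no essential new obstacle arises.
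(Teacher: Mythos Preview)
Your approach is exactly the paper's: its proof of this lemma is literally ``See the proof of Lemma \ref{lemtech_1}'', and you have correctly identified the statement as the $\gamma=0$ instance of that lemma and transplanted the argument. One small slip: in the module-compatibility step the correct auxiliary vector is $y_j = r(x_j)_{\alpha+\gamma_j}$ (since $(rp_\alpha)(x_jp_{-\gamma_j}) = r(x_j)_{\alpha+\gamma_j}p_{-\gamma_j}$), not $r(x_j)_{\alpha-\gamma_j}$.
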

\begin{proof}
See the proof of Lemma \ref{lemtech_1}.
\end{proof}

\begin{lemma}\label{lemidemsmash_2}
Let $\phi:R\to S$ be a ring homomorphism and $\epsilon\in \Mat_{n\times n}(R)$ an idempotent matrix. Then $S\otimes_RM(\epsilon)\cong M(\phi(\epsilon))$, where $\phi(\epsilon)\in \Mat_{n\times n}(S)$ is obtained from $\epsilon$ by applying $\phi$ to each entry.
\end{lemma}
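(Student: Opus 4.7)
My plan is to exploit the fact that an idempotent matrix gives a splitting, and this splitting survives base change because $S \otimes_R -$ is additive.

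First, I would observe that because $\epsilon \in \Mat_{n\times n}(R)$ is idempotent, the endomorphism $\epsilon : R^n \to R^n$ yields a direct sum decomposition
\[
R^n \;=\; \Ker(\epsilon) \oplus \Ima(\epsilon) \;=\; M(1-\epsilon) \oplus M(\epsilon).
\]
In particular the inclusion $M(\epsilon) \hookrightarrow R^n$ is a split monomorphism.

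Next, I would apply the functor $S \otimes_R -$. Since tensor products commute with direct sums and since split monomorphisms are preserved, we obtain
\[
S^n \;\cong\; S \otimes_R R^n \;=\; (S \otimes_R M(1-\epsilon)) \oplus (S \otimes_R M(\epsilon)),
\]
with the inclusion $S \otimes_R M(\epsilon) \hookrightarrow S^n$ still split. Under the identification $S \otimes_R R^n = S^n$, the induced endomorphism $\id_S \otimes_R \epsilon$ of $S^n$ is precisely left multiplication by the matrix $\phi(\epsilon)$, because the matrix of the induced map is obtained entry-wise by applying $\phi$.

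Finally, I would identify images. The image of $\id_S \otimes_R \epsilon$ inside $S^n$ equals the submodule $S \otimes_R \Ima(\epsilon) = S \otimes_R M(\epsilon)$ (here the fact that the inclusion is split is used to ensure no collapse occurs on tensoring). On the other hand, this image is by definition $\Ima(\phi(\epsilon)) = M(\phi(\epsilon))$. Hence $S \otimes_R M(\epsilon) \cong M(\phi(\epsilon))$ as $S$-modules. There is no real obstacle here; the only subtlety worth double-checking is that the matrix representing $\id_S \otimes_R \epsilon$ with respect to the obvious basis of $S^n$ really is $\phi(\epsilon)$, which is immediate from how tensor products act on free modules and the definition of $\phi(\epsilon)$.
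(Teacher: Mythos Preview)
Your argument is correct. You exploit the splitting $R^n = M(\epsilon)\oplus M(1-\epsilon)$, apply the additive functor $S\otimes_R-$, and identify the induced idempotent on $S^n$ with right multiplication by $\phi(\epsilon)$; since the inclusion $S\otimes_R M(\epsilon)\hookrightarrow S^n$ remains split, its image is exactly $\Ima(\phi(\epsilon))=M(\phi(\epsilon))$.

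The paper takes a more hands-on route: it writes down the presentation of $S\otimes_R M(\epsilon)$ as an abelian monoid, defines the map $\theta(s\otimes m)=s\,\phi(m)$ on generators, and then verifies by direct computation that $\theta$ is well-defined, surjective, injective, and $S$-linear. Your functorial argument is shorter and avoids checking well-definedness against the tensor relations. The paper's approach, on the other hand, makes the explicit formula $s\otimes m\mapsto s\,\phi(m)$ visible from the start; this explicit description is what is actually used later (e.g.\ in tracking isomorphism classes through the chain of identifications in Theorem~\ref{thmbergV}). Your proof yields the same formula implicitly via the identification $S\otimes_R R^n\cong S^n$, $s\otimes x\mapsto s\,\phi(x)$, so nothing is lost, but it is worth recording that formula if you intend to use the isomorphism elementwise afterwards.
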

\begin{proof}
Recall that as an abelian monoid, $S\otimes_RM(\epsilon)$ has the presentation
\begin{equation}\label{defrel1}
\begin{split}
\Big\langle 
s\otimes m~(s\in S,m\in M(\epsilon))\mid{}& (s_1+s_2)\otimes m=s_1\otimes m+s_2\otimes m~(s_1,s_2\in S;~m\in M(\epsilon)),\\
&s\otimes (m_1+m_2)=s\otimes m_1+s\otimes m_2~(s\in S;~m_1,m_2\in M(\epsilon)),\\
& s.r\otimes m=s\otimes r.m ~(s\in S;~r\in R;~m\in M(\epsilon))
\Big\rangle.
\end{split}
\end{equation}
Clearly for any $k,l\in \N$, $\phi$ induces a map $\Mat_{k\times l}(R)\to \Mat_{k\times l}(S)$, which we also denote by $\phi$. We define a monoid homomorphism $\theta$ from the free abelian monoid $\big\langle s\otimes m~(s\in S,m\in M(\epsilon))\big\rangle$ to the abelian monoid $M(\phi(\epsilon))$ by 
\[\theta(s\otimes m)=s\phi(m)\]
for any $s\in S$ and $m\in M(\epsilon)$. Note that if $m\in M(\epsilon)$, then $m=x\epsilon$ for some $x\in R^n$ and hence $\phi(m)=\phi(x)\phi(\epsilon)\in M(\phi(\epsilon))$. One checks routinely that $\theta$ preserves the defining relations in presentation (\ref{defrel1}). Hence $\theta$ induces a monoid homomorphism $\eta:S\otimes_RM(\epsilon)\to M(\phi(\epsilon))$. 

First we show that $\eta$ surjective. For any $1\leq j\leq n$, let $\alpha_j$ be the element of $R^n$ whose $j$-th component is $1$ and whose other components are $0$. Define $\beta_1,\dots,\beta_n\in S^n$ similarly. Let now $m\in M(\phi(\epsilon))$. Then there is a $y\in S^n$ such that $m=y\phi(\epsilon)$. It follows that $m=y\phi(\epsilon)=\sum_{k=1}^ny_k\beta_k\phi(\epsilon)=\sum_{k=1}^n\eta(y_k\otimes \alpha_k\epsilon)$. Hence $\eta$ is surjective.

Next we show that $\eta$ is injective. Suppose that $\eta(s\otimes m)=\eta(s'\otimes m')$. Then $s\phi(m)=s'\phi(m')$ in $S^n$, i.e. $s\phi(m_k)=s'\phi(m'_k)~(1\leq k\leq n)$ in $S$. It follows that 
\begin{multline*}
    s\otimes m
    =s\otimes \sum_{k=1}^nm_k\alpha_k
    =\sum_{k=1}^ns\otimes m_k\alpha_k
=\sum_{k=1}^ns\phi(m_k)\otimes \alpha_k= \\ =\sum_{k=1}^ns'\phi(m'_k)\otimes \alpha_k
    =\sum_{k=1}^ns'\otimes m'_k\alpha_k
    =s'\otimes \sum_{k=1}^nm'_k\alpha_k
    s'\otimes m'.
\end{multline*}
Hence $\eta$ is injective. We leave it to the reader to show that $\eta$ preserves the action of $S$.
\end{proof}

\begin{lemma}\label{lemidemsmash_3}
Let $\phi:R\to S$ be a homomorphism of $\Gamma$-graded rings and $\epsilon\in \Mat_{n\times n}(R)$ an idempotent matrix with homogeneous entries such $\deg(\epsilon_{ij})=\gamma_{j}-\gamma_i~(1\leq i,j\leq n)$ where $\gamma_1,\dots,\gamma_n\in \Gamma$. Then $S\otimes_RM(\epsilon)\cong M(\phi(\epsilon))$ as graded $S$-modules, where $\phi(\epsilon)\in \Mat_{n\times n}(S)$ is obtained from $\epsilon$ by applying $\phi$ to each entry.
\end{lemma}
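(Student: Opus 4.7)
The plan is to repeat the construction of Lemma \ref{lemidemsmash_2} verbatim and then verify that the resulting monoid/module isomorphism is compatible with the gradings. Recall that $M(\epsilon) = \Ima(\epsilon)$ is a graded submodule of $\bigoplus_{k=1}^{n} R(\gamma_k)$ (since $\epsilon$ is a homogeneous endomorphism of degree $0$), and similarly $M(\phi(\epsilon))$ is a graded submodule of $\bigoplus_{k=1}^{n} S(\gamma_k)$ because $\deg(\phi(\epsilon)_{ij}) = \deg(\epsilon_{ij}) = \gamma_j - \gamma_i$. The tensor product $S \otimes_R M(\epsilon)$ inherits a $\Gamma$-grading from the gradings on $S$ and on $M(\epsilon)$ in the usual way: a simple tensor $s \otimes m$ with $s \in S_\alpha$ and $m \in M(\epsilon)_\beta$ is declared homogeneous of degree $\alpha + \beta$.

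First I would invoke Lemma \ref{lemidemsmash_2} to obtain the $S$-module isomorphism $\eta: S \otimes_R M(\epsilon) \to M(\phi(\epsilon))$, $s \otimes m \mapsto s\phi(m)$. It then suffices to show that $\eta$ sends homogeneous elements of degree $\delta$ to homogeneous elements of degree $\delta$. Take $s \in S_\alpha$ and a homogeneous element $m \in M(\epsilon)_\beta$; write $m = \sum_k m_k \alpha_k$, where $\alpha_k \in R^n$ is the $k$-th standard vector (viewed as a homogeneous element of degree $-\gamma_k$ in $\bigoplus_{k=1}^{n} R(\gamma_k)$). Homogeneity of $m$ in degree $\beta$ means each $m_k$ is homogeneous of degree $\beta + \gamma_k$ in $R$. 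Since $\phi$ preserves degrees, $\phi(m_k)$ is homogeneous of degree $\beta + \gamma_k$ in $S$, so $\phi(m_k)\beta_k$ is homogeneous of degree $\beta$ in $\bigoplus_{k=1}^{n} S(\gamma_k)$, and consequently $s\phi(m) = \sum_k s\phi(m_k)\beta_k$ is homogeneous of degree $\alpha + \beta$ in $M(\phi(\epsilon))$, as required.

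The inverse of $\eta$ constructed implicitly in Lemma \ref{lemidemsmash_2} (via the surjectivity argument $m = \sum_k \eta(y_k \otimes \alpha_k \epsilon)$ for $m = y\phi(\epsilon) \in M(\phi(\epsilon))$) is likewise degree-preserving by the same component-wise degree bookkeeping. Hence $\eta$ is a graded $S$-module isomorphism. No new obstacle arises beyond the purely grading-theoretic verification, which is a routine degree-tracking exercise given the hypothesis $\deg(\epsilon_{ij}) = \gamma_j - \gamma_i$ and the fact that $\phi$ preserves degrees.
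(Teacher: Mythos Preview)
Your proposal is correct and follows exactly the paper's approach: the paper's own proof simply invokes Lemma~\ref{lemidemsmash_2} and states that ``we leave it to the reader to check that $\eta$ preserves the grading,'' whereas you have carried out that verification explicitly and correctly. Your degree bookkeeping (using $\deg(\alpha_k)=-\gamma_k$ in $\bigoplus_k R(\gamma_k)$, consistent with the paper's convention in \S\ref{matrixrep}) is accurate, so there is nothing to add.
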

\begin{proof}
The proof of the previous lemma shows that there is an isomorphism $\eta:S\otimes_RM(\epsilon)\to M(\phi(\epsilon))$ of $S$-modules. We leave it to the reader to check that $\eta$ preserves the grading.
\end{proof}

\subsection{The graded $\V$-monoid of a graded Bergman algebra I}\label{gdhdhdh}
 Let $R$ be a $\Gamma$-graded $k$-algebra where $\Gamma$ is an abelian group and $\K$ is a graded field concentrated in degree zero. Moreover, let $P$ and $Q$ be non-zero graded finitely generated projective $R$-modules. Set 
\[S:=R\big\langle h,h^{-1}:\overline{P}\cong \overline{Q}\big\rangle.\]

\begin{thm}\label{thmbergV}
There is a monoid isomorphism 
\[\V^{\gr}(S) \cong \V^{\gr}(R)/\big \langle [P(\gamma)]=[Q(\gamma)], \gamma\in \Gamma\big \rangle,\] given by tensoring. 
\end{thm}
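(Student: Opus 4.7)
The plan is to reduce the graded statement to the ungraded case via smash products and then invoke Bergman's classical result \cite[Theorem 5.1]{bergman74}. First I would apply Proposition \ref{propsmash} to replace $\V^{\gr}(S)$ and $\V^{\gr}(R)$ by $\V(S\#\Gamma)$ and $\V(R\#\Gamma)$ respectively. By Theorem \ref{thmbergsmash}, $S\#\Gamma$ is the direct limit of the Bergman algebras
\[B_A := R\#\Gamma_A\Big\langle g^{(\gamma)},(g^{(\gamma)})^{-1} : \overline{P_A^{(\gamma)}}\cong \overline{Q_A^{(\gamma)}}\ (\gamma\in A)\Big\rangle\]
indexed by the finite subsets $A\subseteq \Gamma$, and consequently $\V(S\#\Gamma)=\varinjlim_A \V(B_A)$ since $\V$ commutes with direct limits.

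Next I would apply Bergman's original result $\V(B_A)\cong \V(R\#\Gamma_A)/\langle [P_A^{(\gamma)}]=[Q_A^{(\gamma)}],\ \gamma\in A\rangle$ (note that $R\#\Gamma_A$ is a unital corner of $R\#\Gamma$, and $P_A^{(\gamma)}$, $Q_A^{(\gamma)}$ are finitely generated projective $R\#\Gamma_A$-modules as shown in \S\ref{typeabergsmash}). Passing to the direct limit over $A$ and using that quotients of monoids commute with directed colimits, I would obtain
\[\V(S\#\Gamma)\cong \V(R\#\Gamma)\big/\big\langle [P^{(\gamma)}]=[Q^{(\gamma)}],\ \gamma\in \Gamma\big\rangle,\]
where $P^{(\gamma)}=(\bigoplus_{j\in J}R\#\Gamma)d^{(\gamma)}$ and similarly $Q^{(\gamma)}$. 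Here I need to check that the canonical maps $\V(R\#\Gamma_A)\to \V(R\#\Gamma)$ send $[P_A^{(\gamma)}]$ to $[P^{(\gamma)}]$, which follows by Lemma \ref{lemidemsmash_2} since both projective modules are given by the same idempotent matrix viewed over different rings.

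Now I would translate the relations back to the graded side. By Lemma \ref{lemtech_1}, $P^{(\gamma)}\cong P(-\gamma)\#\Gamma$ and $Q^{(\gamma)}\cong Q(-\gamma)\#\Gamma$ as $R\#\Gamma$-modules. Under the isomorphism $\V^{\gr}(R)\cong \V(R\#\Gamma)$ of Proposition \ref{propsmash}, the class $[P(-\gamma)]\in \V^{\gr}(R)$ corresponds to $[P(-\gamma)\#\Gamma]=[P^{(\gamma)}]\in \V(R\#\Gamma)$, and likewise for $Q$. Replacing $-\gamma$ by $\gamma$ (since $\gamma$ ranges over all of $\Gamma$) yields the desired relations $[P(\gamma)]=[Q(\gamma)]$. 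Combining everything gives the monoid isomorphism
\[\V^{\gr}(S)\cong \V(S\#\Gamma)\cong \V(R\#\Gamma)\big/\big\langle [P^{(\gamma)}]=[Q^{(\gamma)}]\big\rangle \cong \V^{\gr}(R)\big/\big\langle [P(\gamma)]=[Q(\gamma)],\ \gamma\in\Gamma\big\rangle.\]

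The main obstacle will be the bookkeeping around the direct limit: verifying that the transition maps $B_A\hookrightarrow B_{A'}$ (for $A\subseteq A'$) induce the expected maps on $\V$ compatible with the identifications $P_A^{(\gamma)}\mapsto P_{A'}^{(\gamma)}$, and that the directed colimit of the quotient monoids $\V(R\#\Gamma_A)/\langle\cdot\rangle$ really is the quotient of $\varinjlim_A \V(R\#\Gamma_A)=\V(R\#\Gamma)$ by the union of the relations. Both are standard once the explicit matrix description of the idempotents $d^{(\gamma)}, e^{(\gamma)}$ from \S\ref{typeabergsmash} is in hand, so Lemma \ref{lemidemsmash_2} handles the compatibility on the nose.
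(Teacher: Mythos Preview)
Your proposal is essentially the paper's proof: pass to smash products via Proposition \ref{propsmash}, write $S\#\Gamma$ as the direct limit of the unital Bergman algebras $B_A$ using Theorem \ref{thmbergsmash}, apply Bergman's classical result to each $B_A$, take the colimit, and translate back via Lemma \ref{lemtech_1}. Two small points: the relevant result in \cite{bergman74} is Theorem~5.2 (adjoining a universal isomorphism), not Theorem~5.1 (which handles the idempotent-endomorphism construction); and the paper spends an extra paragraph tracking an arbitrary $[M(\epsilon)]$ through the chain of isomorphisms using Lemmas \ref{lemidemsmash_1}, \ref{lemidemsmash_2}, \ref{lemidemsmash_3} to verify that the composite is literally $[M]\mapsto[S\otimes_R M]$ --- your discussion of compatibility via Lemma \ref{lemidemsmash_2} is in the right direction but stops short of this final check.
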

\begin{proof}
It follows from Proposition \ref{propsmash}, Theorem \ref{thmbergsmash}, \cite[Theorem 5.2]{bergman74} and Lemma \ref{lemtech_1} that
\begin{align*}
\V^{\gr}(S)
\cong~& \V(S\#\Gamma)\\
\cong~& \V(\varinjlim R\#\Gamma_A\Big\langle g^{(\gamma)}, (g^{(\gamma)})^{-1}:\overline{P_A^{(\gamma)}}\cong\overline{Q_A^{(\gamma)}}~(\gamma\in A)\Big\rangle)\\
\cong~& \varinjlim \V(R\#\Gamma_A\Big\langle g^{(\gamma)}, (g^{(\gamma)})^{-1}:\overline{P_A^{(\gamma)}}\cong\overline{Q_A^{(\gamma)}}~(\gamma\in A)\Big\rangle)\\
\cong~& \varinjlim \V(R\#\Gamma_A)/\big([P_A^{(\gamma)}]=[Q_A^{(\gamma)}]~~(\gamma\in A)\big)\\
\cong~& \V(R\#\Gamma)/\big([P^{(\gamma)}]=[Q^{(\gamma)}]~~(\gamma\in \Gamma)\big)\\
\cong~& \V(R\#\Gamma)/\big([P(\gamma)\#\Gamma]=[Q(\gamma)\#\Gamma]~~(\gamma\in \Gamma)\big)\\
\cong~& \V^{\gr}(R)/\big([P(\gamma)]=[Q(\gamma)]~~(\gamma\in \Gamma)\big).
\end{align*}
It remains to show that the isomorphism $\V^{\gr}(R)/\big([P(\gamma)]=[Q(\gamma)]~(\gamma\in \Gamma)\big)\cong\V^{\gr}(S)$ obtained above is given by tensoring. Let $M$ be a graded finitely generated projective $R$-module. By \S\ref{subsec7.1}, $M=M(\epsilon)$ for some idempotent matrix $\epsilon\in\Mat_{n\times n}(R)$ such that $\deg(\epsilon_{ij})=\gamma_{j}-\gamma_i~(1\leq i,j\leq n)$ where $\gamma_1,\dots,\gamma_n\in \Gamma$. Applying the isomorphisms above in reversed order to $[M]\in \V^{\gr}(R)/\big([P(\gamma)]=[Q(\gamma)]~~(\gamma\in \Gamma)\big)$ yields (in view of Lemmas \ref{lemidemsmash_1} and \ref{lemidemsmash_2})
\begin{multline*}
  [M(\epsilon)]\mapsto [M(\epsilon\#\Gamma)]\mapsto [M(\epsilon\#\Gamma_A)]\mapsto [M(\phi_A(\epsilon\#\Gamma_A))]\mapsto[M(\psi(\epsilon\#\Gamma))]=[M(\xi(\epsilon)\#\Gamma)]\mapsto[M(\xi(\epsilon))],  
\end{multline*}
where $A$ is chosen such that all entries of $\epsilon\#\Gamma$ lie in $R\#\Gamma_A$, $\epsilon\#\Gamma_A$ is just $\epsilon\#\Gamma$ viewed as matrix over $R\#\Gamma_A$, and 
\begin{align*}
\phi_A:{}&{}R\#\Gamma_A\to R\#\Gamma_A\Big\langle g^{(\gamma)}, (g^{(\gamma)})^{-1}:\overline{P_A^{(\gamma)}}\cong\overline{Q_A^{(\gamma)}}~(\gamma\in A)\Big\rangle,\\
\psi:{}&{}R\#\Gamma\to S\#\Gamma,\\
\xi:{}&{}R\to S   
\end{align*}
are the canonical homomorphisms. It follows from Lemma \ref{lemidemsmash_3} that the isomorphism \[\V^{\gr}(R)/\big([P(\gamma)]=[Q(\gamma)]~~(\gamma\in \Gamma)\big)\cong \V^{\gr}(S),\] obtained above maps $[M(\epsilon)]$ to $[M(\xi(\epsilon))]=[S\otimes_R M(\epsilon)]$.
\end{proof}

\begin{cor}\label{corbergV}
There is a $\Gamma$-monoid isomorphism $\V^{\gr}(S) \cong \V^{\gr}(R)/\big \langle [P]=[Q]\big \rangle$ given by tensoring. 
\end{cor}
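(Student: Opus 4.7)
The plan is to deduce this corollary directly from Theorem~\ref{thmbergV} by reinterpreting the relations in the $\Gamma$-monoid sense and checking $\Gamma$-equivariance of the tensoring map.

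First I would invoke the convention from \S\ref{gammamoni}: for a $\Gamma$-monoid $M$ and a pair $(m,n)\in M\times M$, the $\Gamma$-monoid congruence generated by $m=n$ is by definition the monoid congruence generated by the family $\{{}^{\gamma} m = {}^{\gamma} n : \gamma\in\Gamma\}$. Since the $\Gamma$-action on $\V^{\gr}(R)$ is given by ${}^{\gamma}[M] = [M(\gamma)]$, the $\Gamma$-monoid congruence on $\V^{\gr}(R)$ generated by the single relation $[P]=[Q]$ coincides, as a monoid congruence, with the one generated by the family $\{[P(\gamma)]=[Q(\gamma)] : \gamma\in\Gamma\}$. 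Consequently the two quotients
\[
\V^{\gr}(R)/\big\langle [P]=[Q]\big\rangle \quad \text{and} \quad \V^{\gr}(R)/\big\langle [P(\gamma)]=[Q(\gamma)],\, \gamma\in\Gamma\big\rangle
\]
have the same underlying monoid structure, and the former naturally carries the quotient $\Gamma$-action inherited from $\V^{\gr}(R)$.

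Next, Theorem~\ref{thmbergV} supplies a monoid isomorphism between this common quotient and $\V^{\gr}(S)$, given explicitly by $[M]\mapsto [S\otimes_R M]$. To upgrade it to an isomorphism of $\Gamma$-monoids it remains to verify that this map respects the $\Gamma$-action on both sides. This reduces to the natural graded isomorphism $S\otimes_R M(\gamma) \cong (S\otimes_R M)(\gamma)$ of graded $S$-modules, which holds for any graded $\K$-algebra extension $R\to S$ since the shift functor $\TT_\gamma$ commutes with the induction functor. Hence ${}^{\gamma}[S\otimes_R M] = [(S\otimes_R M)(\gamma)] = [S\otimes_R M(\gamma)]$, so the assignment $[M]\mapsto [S\otimes_R M]$ is $\Gamma$-equivariant.

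I do not anticipate any real difficulty: the whole argument is a reformulation, and the definition of ``$\Gamma$-monoid generated by relations'' in \S\ref{gammamoni} was set up precisely so that the passage from Theorem~\ref{thmbergV} to Corollary~\ref{corbergV} becomes essentially automatic. The only point worth writing out carefully is the compatibility of shifts with tensoring, and the observation that the $\Gamma$-structure on the quotient is indeed the one induced from $\V^{\gr}(R)$.
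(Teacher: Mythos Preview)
Your proposal is correct and follows essentially the same approach as the paper's own proof: identify the $\Gamma$-monoid congruence generated by $[P]=[Q]$ with the monoid congruence generated by $\{[P(\gamma)]=[Q(\gamma)]:\gamma\in\Gamma\}$, invoke Theorem~\ref{thmbergV} for the monoid isomorphism, and then check that tensoring is $\Gamma$-equivariant. The only difference is cosmetic ordering and that you spell out the shift--induction compatibility $S\otimes_R M(\gamma)\cong (S\otimes_R M)(\gamma)$ explicitly, which the paper leaves implicit.
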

\begin{proof}
By Theorem \ref{thmbergV} there is a monoid isomorphism $\phi:\V^{\gr}(R)/\big([P(\gamma)]=[Q(\gamma)]~(\gamma\in \Gamma)\big)\cong\V^{\gr}(S)$ given by tensoring. Clearly the monoid congruence on $\V^{\gr}(R)$ generated by the relations $[P(\gamma)]=[Q(\gamma)]~(\gamma\in \Gamma)$ is a $\Gamma$-monoid congruence. It follows that $\V^{\gr}(R)/\big([P(\gamma)]=[Q(\gamma)]~(\gamma\in \Gamma)\big)$ is a $\Gamma$-monoid since $\V^{\gr}(R)$ is a $\Gamma$-monoid. Since tensoring preserves the action of $\Gamma$, $\phi:\V^{\gr}(R)/\big([P(\gamma)]=[Q(\gamma)]~(\gamma\in \Gamma)\big)\cong\V^{\gr}(S)$ is an isomorphism of $\Gamma$-monoids. Clearly the relation $[P]=[Q]$ generates the same $\Gamma$-monoid congruence on $\V^{\gr}(R)$ as the relations $[P(\gamma)]=[Q(\gamma)]~(\gamma\in \Gamma)$. Hence $\V^{\gr}(R)/\big([P(\gamma)]=[Q(\gamma)]~(\gamma\in \Gamma)\big)=\V^{\gr}(R)/\big([P]=[Q]\big)$.
\end{proof}

\begin{example}
Consider the Leavitt algebra $L_\K(n,n+k)$. As already observed by Bergman~\cite{bergman74}, $$L_\K(n,n+k)\cong  \K \langle h, h^{-1}: \overline{\K^n} \cong \overline{\K^{n+k}} \rangle.$$ 
Then by \cite[Theorem 5.2]{bergman74} $$\mathcal V(L_\K(n,n+k))\cong \mathcal V(\K)/\langle [\K^n]=[\K^{n+k}]\rangle\cong M_{n,n+k}.$$ Here, for $n,k \in \mathbb{N}^+$, $M_{n,n+k}$ is the finite commutative monoid
$$M_{n,n+k} := \big \{ 0, x, 2x, \dots , nx , \dots, (n+k-1)x \big \} , \ \  \ \ \mbox{with relation} \ (n+k)x = nx.$$

Returning to the graded setting, we have 
$$L_\K(n,n+k)\cong_{\gr}  \K \langle h, h^{-1}: \overline{\K^n} \cong_{\gr} \overline{\K^{n+k}(-1)} \rangle,$$
and by Corollary~\ref{corbergV} 
$$\mathcal V^{\gr}(L_\K(n,n+k))\cong \mathcal V^{\gr}(\K)/\langle [\K^n]=[\K^{n+k}(-1)]\rangle\cong \big \langle 1, (n/n+k)^i, i\in \mathbb Z \big \rangle\subseteq \mathbb Q.$$ 

Since $[L_\K(n,n+k))]$ represents $1$ in this monoid, which is then a strongly order unit, by Proposition~\ref{mnhygh}, we conclude that $L_\K(n,n+k)$ is a strongly graded ring. (This can also be obtained from looking at the generators and relations of the algebra.) Therefore, an application of Dade's theorem, imply that  
$\mathcal V^{\gr}(L_\K(n,n+k)) \cong  \mathcal V(L_\K(n,n+k)_0)$, which in turn implies that the zero component ring $L_\K(n,n+k)_0$ has IBN. 

\end{example}

\subsection{The graded $\V$-monoid of a graded Bergman algebra II}
 Let $R$ be a $\Gamma$-graded $k$-algebra where $\Gamma$ is an abelian group and $k$ a field concentrated in degree zero. Moreover, let $P$ be a non-zero graded finitely generated projective $R$-modules. Set 
\[S:=R\big\langle e:\overline{P}\to \overline{P};~e^2=e\big\rangle.\]

\begin{thm}\label{thmbergV*}
There is a monoid isomorphism 
\[\V^{\gr}(S) \cong \V^{\gr}(R)\Big\langle[P_1^{(\gamma)}],[P_2^{(\gamma)}]~(\gamma\in A) \mid [P(\gamma)]=[P_1^{(\gamma)}]+[P_2^{(\gamma)}]~(\gamma\in \Gamma)\Big\rangle,\] given by tensoring.
\end{thm}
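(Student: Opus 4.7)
The plan is to imitate closely the proof of Theorem~\ref{thmbergV}, substituting every ingredient with its ``type~II'' analogue. First I would apply Proposition~\ref{propsmash} to reduce the graded $\V$-monoid to a non-graded one, obtaining $\V^{\gr}(S)\cong \V(S\#\Gamma)$. Then I would invoke Theorem~\ref{thmbergsmash*}, which expresses $S\#\Gamma$ as the direct limit over finite subsets $A\subseteq \Gamma$ of the Bergman algebras
\[B_A \;=\; R\#\Gamma_A\big\langle f^{(\gamma)}:\overline{P_A^{(\gamma)}}\to\overline{P_A^{(\gamma)}};~(f^{(\gamma)})^2=f^{(\gamma)}~(\gamma\in A)\big\rangle.\]
Since the functor $\V$ commutes with direct limits, it remains to compute $\V(B_A)$ and pass to the limit.

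For each finite $A$, I would apply Bergman's original result for the ``universal idempotent endomorphism'' construction (the analogue of \cite[Theorem 5.2]{bergman74} used in the proof of Theorem~\ref{thmbergV}, which says that adjoining a universal idempotent endomorphism $e:\overline{P}\to \overline{P}$ to a ring $R$ enlarges $\V(R)$ by freely adjoining generators $[P_1],[P_2]$ modulo the single relation $[P]=[P_1]+[P_2]$). This yields
\[\V(B_A)\;\cong\;\V(R\#\Gamma_A)\big\langle [(P_A^{(\gamma)})_1],[(P_A^{(\gamma)})_2]~(\gamma\in A)\ \big|\ [P_A^{(\gamma)}]=[(P_A^{(\gamma)})_1]+[(P_A^{(\gamma)})_2]\big\rangle.\]
Passing to the direct limit over all finite $A\subseteq \Gamma$ replaces $R\#\Gamma_A$ by $R\#\Gamma$ and lets $\gamma$ range over all of $\Gamma$. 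Using Lemma~\ref{lemtech_1*} to identify $P^{(\gamma)}\cong P(-\gamma)\#\Gamma$, and finally applying Proposition~\ref{propsmash} in reverse to replace $\V(R\#\Gamma)$ with $\V^{\gr}(R)$, the resulting monoid becomes
\[\V^{\gr}(R)\big\langle [P^{(\gamma)}_1],[P^{(\gamma)}_2]~(\gamma\in \Gamma)\ \big|\ [P(\gamma)]=[P_1^{(\gamma)}]+[P_2^{(\gamma)}]\big\rangle,\]
which is the desired presentation.

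It remains to verify that this isomorphism is implemented by $[M]\mapsto [S\otimes_R M]$. For this I would follow verbatim the argument in the last paragraph of the proof of Theorem~\ref{thmbergV}: represent an arbitrary graded finitely generated projective $R$-module $M$ as $M(\epsilon)$ for a homogeneous idempotent matrix $\epsilon$, and chase $[M(\epsilon)]$ through the chain of isomorphisms above using Lemmas~\ref{lemidemsmash_1}, \ref{lemidemsmash_2} and~\ref{lemidemsmash_3} in turn, landing on $[M(\xi(\epsilon))]=[S\otimes_R M(\epsilon)]$, where $\xi:R\to S$ is the canonical map. The main obstacle will be bookkeeping: ensuring that the free generators $[(P_A^{(\gamma)})_1],[(P_A^{(\gamma)})_2]$ assemble compatibly as $A$ grows, so that the direct limit of presentations indeed gives the claimed presentation with $\gamma$ ranging over all of $\Gamma$; otherwise the argument is a direct transcription of the type~I proof.
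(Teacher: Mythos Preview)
Your proposal is correct and matches the paper's proof essentially verbatim: the paper applies Proposition~\ref{propsmash}, Theorem~\ref{thmbergsmash*}, \cite[Theorem~5.1]{bergman74} (not~5.2, but you described its content correctly), and Lemma~\ref{lemtech_1*} in exactly the chain you outline, and then defers the ``given by tensoring'' claim to Lemmas~\ref{lemidemsmash_1}, \ref{lemidemsmash_2}, \ref{lemidemsmash_3} via the proof of Theorem~\ref{thmbergV}, just as you propose.
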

\begin{proof}
It follows from Proposition \ref{propsmash}, Theorem \ref{thmbergsmash*}, \cite[Theorem 5.1]{bergman74} and \ref{lemtech_1*} that
\begin{align*}
\V^{\gr}(S)
\cong~& \V(S\#\Gamma)\\
\cong~& \V(\varinjlim R\#\Gamma_A\Big\langle f^{(\gamma)}:\overline{P_A^{(\gamma)}}\to\overline{P_A^{(\gamma)}};~(f^{(\gamma)})^2=f^{(\gamma)}~(\gamma\in A)\Big\rangle)\\
\cong~& \varinjlim \V(R\#\Gamma_A\Big\langle f^{(\gamma)}:\overline{P_A^{(\gamma)}}\to\overline{P_A^{(\gamma)}};~(f^{(\gamma)})^2=f^{(\gamma)}~(\gamma\in A)\Big\rangle)\\
\cong~& \varinjlim \V(R\#\Gamma_A)\big\langle[P_1^{(\gamma)}],[P_2^{(\gamma)}]~(\gamma\in A)\mid [P_A^{(\gamma)}]=[P_1^{(\gamma)}]+[P_2^{(\gamma)}]~(\gamma\in A)\big\rangle\\
\cong~& \V(R\#\Gamma)\big\langle[P_1^{(\gamma)}],[P_2^{(\gamma)}]~(\gamma\in \Gamma)\mid [P^{(\gamma)}]=[P_1^{(\gamma)}]+[P_2^{(\gamma)}]~(\gamma\in \Gamma)\big\rangle\\
\cong~& \V(R\#\Gamma)\big\langle[P_1^{(\gamma)}],[P_2^{(\gamma)}]~(\gamma\in \Gamma)\mid [P(\gamma)\#\Gamma]=[P_1^{(\gamma)}]+[P_2^{(\gamma)}]~(\gamma\in \Gamma)\big\rangle\\
\cong~& \V^{\gr}(R)\big\langle[P_1^{(\gamma)}],[P_2^{(\gamma)}]~(\gamma\in \Gamma)\mid [P(\gamma)]=[P_1^{(\gamma)}]+[P_2^{(\gamma)}]~(\gamma\in \Gamma)\big\rangle.
\end{align*}
It remains to show that the isomorphism $\V^{\gr}(R)\big\langle[P_1^{(\gamma)}],[P_2^{(\gamma)}]~(\gamma\in A)\mid [P(\gamma)]=[P_1^{(\gamma)}]+[P_2^{(\gamma)}]~(\gamma\in \Gamma)\big\rangle\cong\V^{\gr}(S)$ is given by tensoring. But that follows from Lemmas \ref{lemidemsmash_1}, \ref{lemidemsmash_2} and \ref{lemidemsmash_3}, see the proof of Theorem \ref{thmbergV}.
\end{proof}

\begin{cor}\label{corbergV*}
There is a $\Gamma$-monoid isomorphism $\V^{\gr}(S) \cong \V^{\gr}(R)\big\langle[P_1],[P_2]\mid [P]=[P_1]+[P_2]\big\rangle$ given by tensoring.
\end{cor}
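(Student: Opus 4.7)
The plan is to follow exactly the strategy used to deduce Corollary~\ref{corbergV} from Theorem~\ref{thmbergV}: upgrade the plain monoid isomorphism of Theorem~\ref{thmbergV*} to an isomorphism of $\Gamma$-monoids, and then repackage the presentation using a single orbit of relations.

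First, I would invoke Theorem~\ref{thmbergV*} to get a monoid isomorphism
\[
\phi:\V^{\gr}(R)\Big\langle[P_1^{(\gamma)}],[P_2^{(\gamma)}]~(\gamma\in \Gamma) \mid [P(\gamma)]=[P_1^{(\gamma)}]+[P_2^{(\gamma)}]~(\gamma\in \Gamma)\Big\rangle \xrightarrow{\ \sim\ } \V^{\gr}(S),
\]
induced by tensoring with $S$, where $[P_i^{(\gamma)}]$ is the class of the graded finitely generated projective summand produced by the universal graded idempotent on $\overline{P(\gamma)}$. On the right-hand side, $\V^{\gr}(S)$ carries the natural $\Gamma$-action by shift of degrees. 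On the left-hand side, I would define a $\Gamma$-action by ${}^{\gamma}[P_i^{(\delta)}] := [P_i^{(\delta+\gamma)}]$ (together with the existing action on $\V^{\gr}(R)$) and check that the defining relations $[P(\delta)]=[P_1^{(\delta)}]+[P_2^{(\delta)}]$ are sent to relations of the same form under $\gamma$, so the $\Gamma$-action descends to the presented quotient. Because tensoring with $S$ commutes with the shift functor on graded modules, $\phi$ intertwines the two actions, so $\phi$ is in fact an isomorphism of $\Gamma$-monoids.

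Next, I would compare the two presentations. By the construction of the free $\Gamma$-monoid and of $\Gamma$-monoid congruences recalled in \S\ref{gammamoni}, the $\Gamma$-monoid congruence on $\V^{\gr}(R)\langle[P_1],[P_2]\rangle$ generated by the single relation $[P]=[P_1]+[P_2]$ is, as a monoid congruence, generated by its $\Gamma$-orbit, i.e.\ by the relations $[P(\gamma)] = {}^{\gamma}[P_1] + {}^{\gamma}[P_2]$ for $\gamma\in \Gamma$. Setting $[P_i^{(\gamma)}] := {}^{\gamma}[P_i]$ therefore defines a $\Gamma$-monoid isomorphism
\[
\V^{\gr}(R)\Big\langle[P_1^{(\gamma)}],[P_2^{(\gamma)}]~(\gamma\in \Gamma) \mid [P(\gamma)]=[P_1^{(\gamma)}]+[P_2^{(\gamma)}]~(\gamma\in \Gamma)\Big\rangle \;\cong\; \V^{\gr}(R)\big\langle[P_1],[P_2]\mid [P]=[P_1]+[P_2]\big\rangle.
\]
Composing with $\phi$ gives the required $\Gamma$-monoid isomorphism, and since $\phi$ was obtained by tensoring with $S$ on base classes and by sending $[P_i^{(\gamma)}]\mapsto {}^{\gamma}(S\otimes_R P_i)$ on the adjoined generators, the resulting isomorphism is induced by tensoring.

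The only real point of care — already implicit in the proof of Corollary~\ref{corbergV} — is the bookkeeping for the $\Gamma$-action on the presented monoid: one must verify that the declared action on the generators $[P_i^{(\gamma)}]$ is compatible with the monoid relations so that it passes to the quotient, and then match it with the shift action on $\V^{\gr}(S)$ through $\phi$. Once this compatibility is established, the rest is a direct rewriting of presentations, exactly as in Corollary~\ref{corbergV}.
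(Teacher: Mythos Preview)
Your proposal is correct and follows exactly the approach the paper intends: the paper's proof is literally ``See the proof of Corollary~\ref{corbergV},'' and you have spelled out that adaptation, including the one extra ingredient needed here (defining the $\Gamma$-action on the adjoined generators $[P_i^{(\gamma)}]$ and checking it respects the relations) before collapsing the family of relations to a single $\Gamma$-orbit.
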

\begin{proof}
See the proof of Corollary \ref{corbergV}.
\end{proof}

\section{Realisation of $\Gamma$-monoids as non-stable $K$-theory of graded rings}\label{secmainres}

Throughout this section, $\Gamma$ is an abelian group, $\K$ is a $\Gamma$-graded field concentrated in degree zero, and $R$ is a graded $\K$-algebra.  Recall from \S\ref{gammamoni} that if $(M,i)$ is a pointed $\Gamma$-monoid with $i\in M$ an $\Gamma$-order unit, then $i$ is a strong order unit if for any $m \in M$, we have $m\leq ki$ for some $k\in \mathbb N$. Furthermore,   $i$ is an invariant order unit if ${}^\gamma i =i$ for any $\gamma \in \Gamma$.

\begin{thm}\label{mainalibm}
Let $(M, i)$ be a pointed conical $\Gamma$-monoid with a distinguished order unit $i$. Then there is a hereditary graded $\K$-algebra $R$ such that $\phi:  (M,i) \cong (\V^{\gr}(R),[R])$ as pointed $\Gamma$-monoids. Furthermore, $R$ has a weak universal property: if $S$ is a $\Gamma$-graded $\K$-algebra with $\Gamma$-monoid homomorphism $\psi:  (M,i) \rightarrow (\V^{\gr}(S),[S])$, then there is a (nonunique) graded $\K$-algebra homomorphism $R\rightarrow S$ such that the following diagram commutes. 
\[\xymatrix{
(M,i) \ar[rr]^{\phi}\ar[dr]_{\psi}& & (\V^{\gr}(R),[R]) \ar@{.>}[dl]^{-\otimes_R S}
	 \\
 & (\V^{\gr}(S),[S]) }\]

Furthermore, 
\begin{enumerate}[\upshape(1)]
    \item $R$ is a strongly graded ring if and only if 
$i$ is a strong order unit. 
    \item $R$ is a crossed-product ring if and only if $i$ is an invariant order unit. 
\end{enumerate}

\end{thm}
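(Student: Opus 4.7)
The plan is to imitate Bergman's classical realisation argument in the graded setting, using Corollaries~\ref{corbergV} and~\ref{corbergV*} as the two inductive tools. First I would apply Proposition~\ref{propgammaquot} to present $(M,i)$ as $F\langle X\rangle/\sim$ with the distinguished order unit $i$ the image of a chosen generator $x_0 \in X$. Because $i$ is a $\Gamma$-order unit, I would enlarge $X$ and the defining family of relations of $\sim$ so that for every $x \in X\setminus\{x_0\}$ the congruence $\sim$ contains a relation of the form $x + z_x = \sum_k {}^{\gamma_{x,k}} x_0$ with $z_x \in F\langle X\rangle$ and $\gamma_{x,k} \in \Gamma$; the whole presentation then becomes data consisting of ``splittings'' of shifted copies of $x_0$ together with additional defining relations $m_\ell = n_\ell$ among words in $F\langle X\rangle$.

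Next I would build $R$ as a directed colimit of graded $\K$-algebras, starting from $R_0 = \K$ (concentrated in degree zero), for which $\V^{\gr}(R_0) \cong \N[\Gamma]$ with generator $[R_0]$ to be identified with $x_0$. For each generator $x$ to be introduced, I would apply Corollary~\ref{corbergV*} to the graded free module $P = \bigoplus_k R^{(n)}(\gamma_{x,k})$, passing to $R^{(n+1)} := R^{(n)}\langle e:\overline{P}\to\overline{P};~e^2=e\rangle$; this adjoins a split $[P] = [P_{x,1}] + [P_{x,2}]$ in $\V^{\gr}(R^{(n+1)})$ which I would identify with $\sum_k {}^{\gamma_{x,k}}x_0 = x + z_x$, iterating the splitting on $[P_{x,2}]$ if $z_x$ is itself a longer word. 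Once every $x$ has been realised, for each remaining defining relation $m_\ell = n_\ell$ the elements $m_\ell,n_\ell$ are represented by graded finitely generated projective modules $M_\ell,N_\ell$ in the current ring, and I would apply Corollary~\ref{corbergV} to pass to $R^{(n)}\langle h_\ell, h_\ell^{-1}: \overline{M_\ell} \cong \overline{N_\ell}\rangle$, imposing the equality. Since $\V^{\gr}$ commutes with directed colimits and the distinguished class is transported by $-\otimes_{R^{(n)}} R^{(n+1)}$, the colimit $R$ satisfies $\V^{\gr}(R) \cong F\langle X\rangle/\sim \cong M$ as pointed $\Gamma$-monoids with $[R]\mapsto i$.

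For hereditarity, each stage of the colimit is a graded Bergman construction over a graded hereditary base (using Proposition~\ref{propuniversalloc}(3) and the discussion in Section~\ref{gradedhersec}), and $R_0 = \K$ is trivially graded hereditary; directed colimits preserve this property, so $R$ is graded hereditary. For the weak universal property, a morphism $\psi:(M,i)\to(\V^{\gr}(S),[S])$ supplies stage-by-stage the data (splittings and equivalences in $\V^{\gr}(S)$, and via Proposition~\ref{propsmash} the actual module-theoretic maps in $S$) required by the universal properties of each Bergman step, giving a (non-unique) graded $\K$-algebra homomorphism $R\to S$ whose induced map on $\V^{\gr}$ is $\psi$ via $-\otimes_R S$. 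Finally, assertions (1) and (2) are immediate from Proposition~\ref{mnhygh}: under the identification $(\V^{\gr}(R),[R]) \cong (M,i)$, the ring $R$ is strongly graded if and only if $[R]$ is a strong order unit in $\V^{\gr}(R)$, which is equivalent to $i$ being a strong order unit in $M$, and the same with ``strong'' replaced by ``invariant'' handles the crossed-product case.

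The main obstacle I expect is the bookkeeping in the inductive construction: one must order the splittings and equivalences so that (i) the graded finitely generated projective modules $M_\ell, N_\ell$ realising $m_\ell$ and $n_\ell$ already exist before the equivalence relating them is imposed, (ii) each ``complementary'' summand $[P_{x,2}]$ introduced by Corollary~\ref{corbergV*} is consistently identified with the intended element $z_x$, possibly via further splittings, and (iii) no spurious identifications are forced by the Bergman constructions beyond those prescribed by $\sim$. Once the presentation has been arranged so that every generator sits inside a splitting of a shifted copy of $x_0$ and every defining relation is an equivalence of already-constructed projectives, the induction is formally the same as Bergman's, but tracking the $\Gamma$-action through the shifts and splittings is the delicate part.
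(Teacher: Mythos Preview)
Your approach is essentially the same as the paper's: present $(M,i)$ with every generator sitting in a splitting of shifted copies of $i$, then realise the splittings via Corollary~\ref{corbergV*} and the remaining relations via Corollary~\ref{corbergV}, take a directed colimit, and invoke Proposition~\ref{mnhygh} for parts (1) and (2). The paper packages the colimit through a directed set $Z$ of finite pairs $(A,B)$ of generators and relations, but the underlying mechanism is identical to yours.

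Two points need repair. First, the theorem asserts that $R$ is \emph{hereditary}, not merely graded hereditary, and your justification is incomplete: Proposition~\ref{propuniversalloc}(3) concerns only the universal-localisation step $R\langle\overline{\xi}^{-1}\rangle$, not the idempotent-adjoining step $R\langle e:\overline{P}\to\overline{P};\,e^2=e\rangle$, and in any case its proof simply quotes Bergman--Dicks. The paper handles this by reorganising the construction as a transfinite chain and appealing directly to \cite[Theorem~3.4]{bergman78}, which shows that both types of Bergman extension (and transfinite colimits of them) preserve hereditarity; you should do the same. Second, the reference to Proposition~\ref{propsmash} in your weak-universality argument is unnecessary: the graded universal properties of the constructions in \S\ref{bergconst} already supply the required graded $\K$-algebra maps $R^{(n)}\to S$ at each stage, without passing through the smash product.
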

\begin{proof}
Let $X=\{i,p_\phi,q_\phi\mid \phi\in\Phi\}$ be a set of non-zero generators for the $\Gamma$-monoid $M$ such that for each $\phi\in\Phi$ there are $\gamma_{\phi,1},\dots,\gamma_{\phi,n_\phi}\in\Gamma$ such that 
\begin{equation}\label{classedfp}
  p_\phi+q_\phi=\sum_{k=1}^{n_\phi}{}^{\gamma_{\phi,k}}i.  
\end{equation}
Let $Y=\{u_\psi=v_\psi\mid \psi\in\Psi\}$ be a set of relations, including the relations (\ref{classedfp}), such that $M=\langle X\mid Y\rangle$. Since $M$ is conical, we may assume that $u_\psi,v_\psi\neq 0$ for any $\psi\in \Psi$. Let $Z$ be the set of all pairs $(A,B)$,  where $A$ is a finite subset of $\Phi$ and $B$ is a finite subset of $\Psi$ such that 
\begin{enumerate}[(I)]
\item in the relations $u_\psi=v_\psi~(\psi\in B)$ only the generators $i,p_\phi,q_\phi~(\phi\in A)$ appear,
\item the relations $u_\psi=v_\psi~(\psi\in B)$ include the relations $p_\phi+q_\phi=\sum_{k=1}^{n_\phi}{}^{\gamma_{\phi,k}}i~(\phi\in A)$.    
\end{enumerate}
Define a partial order $\leq$ on $Z$ by $(A,B)\leq (A',B')$ if $A\subseteq A'$ and $B\subseteq B'$. Clearly $(Z,\leq)$ is a directed set. For any $(A,B)\in Z$ let $M_{(A,B)}=\langle X_{(A,B)}\mid Y_{(A,B)}\rangle$,  where $X_{(A,B)}=\{i,p_\phi,q_\phi\mid\phi\in A\}$ and $Y_{(A,B)}=\{u_\psi=v_\psi\mid \psi\in B\}$. If $(A,B)\leq (A',B')$, then clearly there is a canonical $\Gamma$-monoid homomorphism $\alpha_{(A,B)}^{(A',B')}:M_{(A,B)}\to M_{(A',B')}$. One checks easily that $M$ is the direct limit of the direct system $\{M_{(A,B)},\alpha_{(A,B)}^{(A',B')}\mid (A,B)\leq (A',B')\in Z\}$ in the category of $\Gamma$-monoids. 

Now fix an $(A,B)\in Z$. Let $R'_{(A,B)}$ be the $\Gamma$-graded $\K$-algebra obtained from $\K$ by adjoining idempotent endomorphisms \[e_\phi:\overline{\bigoplus_{1\leq k\leq n_\phi}\K(\gamma_{\phi,k})}\longrightarrow \overline{\bigoplus_{1\leq k\leq n_\phi}\K(\gamma_{\phi,k})},\] for any $\phi\in A$. Then
\[\V^{\gr}(R'_{(A,B)})= \Big \langle I,P_\phi,Q_\phi~(\phi\in A)\mid P_\phi+Q_\phi=\sum_{k=1}^{n_\phi}{}^{\gamma_{\phi,k}} I~(\phi\in A)\Big \rangle,\] 
by Corollary \ref{corbergV*}. Let $R_{(A,B)}$ be the $\Gamma$-graded $\K$-algebra obtained from $R'_{(A,B)}$ by adjoining isomorphisms $\overline{U_\psi}\cong \overline{V_\psi}~(\psi\in B)$, where $U_{\psi}$ and $V_{\psi}$ are the graded $R'_{(A,B)}$-modules corresponding to $u_\psi$ and $v_\psi$, respectively. Then $\V^{\gr}(R_{(A,B)})\cong M_{(A,B)}$ by Corollary \ref{corbergV}, and this isomorphism maps $[R_{(A,B)}]$ to $i$. 

If $(A,B)\leq (A',B')$, then clearly there is a canonical graded $\K$-algebra homomorphism $\beta_{(A,B)}^{(A',B')}:R_{(A,B)}\to R_{(A',B')}$. Let $R$ be the direct limit of the direct system $\big \{R_{(A,B)},\beta_{(A,B)}^{(A',B')}\mid (A,B)\leq (A',B')\in Z\big\}$ in the category of graded $\K$-algebras. Since $\V^{\gr}$ commutes with direct limits, we obtain
\[\V^{\gr}(R)=\V^{\gr}(\varinjlim R_{(A,B)})=\varinjlim \V^{\gr}(R_{(A,B)})\cong\varinjlim M_{(A,B)}=M.\]
Clearly the isomorphism $\V^{\gr}(R)\cong M$ maps $[R]$ to $i$.

It remains to show that $R$ is hereditary. We closely follow the approach used in Theorem~3.7 in \cite{aragoodearl}.  Choose an ordinal $\lambda$ and a bijection $\alpha\to \phi_\alpha$ from $[0,\lambda)$ to $\Phi$. We build graded $\K$-algebras $R_\alpha$ for $\alpha\in [0,\lambda)$ as follows.
\begin{enumerate}[(a)]
    \item Start with $R_0=\K$.
    
    \item For $\alpha\in [0,\lambda)$, let $R_{\alpha+1}$ be obtained from $R_\alpha$ by adjoining an idempotent endomorphism \[e_{\phi_\alpha}:\overline{\bigoplus_{1\leq k\leq n_{\phi_\alpha}}\K(\lambda_{\phi_\alpha,k})}\to\overline{\bigoplus_{1\leq k\leq n_{\phi_\alpha}}\K(\lambda_{\phi,k})}.\]
    
    \item If $\beta\leq \lambda$ is a limit ordinal and $R_\alpha$ has been defined for all $\alpha < \beta$, take $R_\beta$ to be the
direct limit of $(R_\alpha)_{\alpha<\beta}$.
\end{enumerate}
By \cite[Theorem 3.4]{bergman78} the algebra $R_\lambda$ is hereditary. Now choose an ordinal $\mu>\lambda$ and a bijection $\alpha\to \psi_\alpha$ from $[\lambda,\mu)$ to $\Psi$. We build graded $\K$-algebras $R_\alpha$ for $\alpha\in [\lambda,\mu)$ as follows.
\begin{enumerate}[(a)]
\setcounter{enumi}{3}
    \item Start with the algebra $R_\lambda$ constructed above.
    
    \item For $\alpha\in [\lambda,\mu)$, let $R_{\alpha+1}$ be obtained from $R_\alpha$ by adjoining an isomorphism $\overline{U_{\psi_\alpha}}\cong \overline{V_{\psi_\alpha}}$, where $U_{\psi_\alpha}$ and $V_{\psi_\alpha}$ are the graded $R_{\alpha}$-modules corresponding to $u_{\psi_\alpha}$ and $v_{\psi_\alpha}$, respectively. 
    
    \item If $\beta\in[\lambda,\mu)$ is a limit ordinal and $R_\alpha$ has been defined for all $\alpha < \beta$, take $R_\beta$ to be the
direct limit of $(R_\alpha)_{\lambda\leq \alpha<\beta}$. 
\end{enumerate}
By \cite[Theorem 3.4]{bergman78} the algebra $R_\mu$ is hereditary. Clearly $R_\mu\cong R$.

In order to prove the weak universality of the $\K$-algebra $R$, suppose that $S$ is a $\Gamma$-graded $\K$-algebra with $\Gamma$-monoid homomorphism $\psi:  (M,i) \rightarrow (\V^{\gr}(S),[S])$. Consider the following diagram, which gives a $\Gamma$-monoid homomorphism, $f_{(A,B)}$, for any $(A,B)\in Z$. 
\begin{equation}\label{hfgfhs12}
\xymatrix{
\varinjlim  (M_{(A,B)},i) \cong (M,i) \ar[rr]^{\psi}& & (\V^{\gr}(S),[S]) \\
 & (M_{(A,B)},i)\ar[ul]
 \ar@{.>}[ur]_{f_{(A,B)}}}
 \end{equation}

Since  $M_{(A,B)}\cong \V^{\gr}(R_{(A,B)})$, and $R_{(A,B)}$ is the universal ring such that
$\overline{P_\phi} \oplus \overline{Q_\phi}=\sum_{k=1}^{n_\phi}{}^{\gamma_{\phi,k}} I~(\phi\in A)$ and $\overline{U_\psi}\cong \overline{V_\psi}~(\psi\in B)$ and the images of these modules in $\V^{\gr}(S)$ give the same relations, it follows that there is a $\Gamma$-graded homomorphism $R_{(A,B)} \rightarrow S$ which induces 
$f_{(A,B)}= -\otimes_{R_{(A,B)}} S$ on the level of monoids. Since the direct limit of $R_{(A,B)}$, $(A,B)\in Z$ is $R$ and $\V^{\gr}$ commutes with direct limits, 
\[\xymatrix{
\varinjlim  R_{(A,B)} \cong R \ar[rr]^{\psi_1}& & S \\
 & R_{(A,B)}\ar[ul]
 \ar@{.>}[ur]_{f_{(A,B)}}}\]
 we obtain that $\overline {\psi_1}: \V^{\gr}(R)\rightarrow \V^{\gr}(S)$ coincides with $\psi$ in Diagram~\ref{hfgfhs12}.

The second part of the theorem, statements (1) and (2), immediately follow from~Proposition~\ref{mnhygh}.
\end{proof}

If $M$ is generated by a finite number of elements whose sum is the order unit $i$, then the construction of the algebra $R$ in the proof of Theorem \ref{mainalibm} above simplifies a bit, as it is demonstrated in the proof of Theorem~\ref{mainalibm_2} below.

\begin{thm}\label{mainalibm_2}
Let $(M, i)$ be a pointed conical $\Gamma$-monoid with a distinguished order unit $i$. Suppose that $M$ is generated by finitely many elements whose sum is $i$. Then there is a hereditary unital hyper Leavitt path algebra $R=L_\K(H)$ with a $\Gamma$-grading induced by a weight map, such that $\phi:  (M,i) \cong (\V^{\gr}(R),[R])$ as pointed $\Gamma$-monoids. Furthermore, $R$ has a weak universal property. 
\end{thm}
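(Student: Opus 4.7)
The plan is to refine the construction of Theorem~\ref{mainalibm} by exploiting the finite generation hypothesis together with the equality $x_1+\cdots+x_n=i$ to eliminate the type II Bergman step (adjoining universal idempotents), producing $R$ purely as a type I graded Bergman localization over a commutative semisimple base; Theorem~\ref{lemhyperberg} then identifies this localization as a unital hyper Leavitt path algebra with weight-induced grading. First I would present $M$: using generators $x_1,\dots,x_n$ summing to $i$, write $M=\langle x_1,\dots,x_n\mid u_\psi=v_\psi,\,\psi\in\Psi\rangle$ with $u_\psi,v_\psi\neq 0$ (valid by conicity), each a finite $\Gamma$-linear combination of the generators. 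Next, let $R_0:=\prod_{j=1}^{n}\K$ as a commutative semisimple $\K$-algebra concentrated in degree zero, and let $p_j$ be the simple projective $R_0$-module at the $j$-th coordinate. Then $\V^{\gr}(R_0)$ is the free $\Gamma$-monoid on $[p_1],\dots,[p_n]$, the assignment $x_j\mapsto[p_j]$ matches $i\mapsto[R_0]$, and each $u_\psi$ (resp.\ $v_\psi$) corresponds to a graded finitely generated projective $R_0$-module $U_\psi$ (resp.\ $V_\psi$).

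Next I would form
\[
R:=R_0\bigl\langle h_\psi,h_\psi^{-1}:\overline{U_\psi}\cong\overline{V_\psi}\bigm|\psi\in\Psi\bigr\rangle,
\]
realized as a direct limit over finite subfamilies when $\Psi$ is infinite. By Lemma~\ref{lempresS}, at each finite stage this is a graded Bergman algebra $B_{R_0}(e,f)$ on a finite family of pairs of homogeneous idempotents (regular in the hypergraph sense since $u_\psi,v_\psi$ are finite sums), so by Theorem~\ref{lemhyperberg} it is graded isomorphic to a unital hyper Leavitt path algebra whose grading is induced by a weight map. Passing to the direct limit enlarges the hyperedge set while keeping $H^0=\{v_1,\dots,v_n\}$ finite, so $R\cong_{\gr} L_\K(H)$ is unital, and the stage-wise weight maps glue to a global one. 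Iterating Corollary~\ref{corbergV} and using continuity of $\V^{\gr}$ under direct limits yields
\[
\V^{\gr}(R)\cong\V^{\gr}(R_0)/\langle [U_\psi]=[V_\psi]\mid\psi\in\Psi\rangle\cong M
\]
with $[R]\mapsto i$; hereditariness of $R$ then follows from Proposition~\ref{propuniversalloc}(3), since $R_0$ is semisimple (hence graded hereditary) and $R$ is a graded universal localization of $R_0$.

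The weak universal property is obtained as in Theorem~\ref{mainalibm}: a $\Gamma$-monoid homomorphism $\psi:(M,i)\to(\V^{\gr}(S),[S])$ with $\psi(x_1)+\cdots+\psi(x_n)=[S]$ picks out a decomposition of $S$ into graded projective summands, inducing a graded $\K$-algebra homomorphism $R_0\to S$, and the relations $\psi([U_\psi])=\psi([V_\psi])$ then extend this uniquely to $R\to S$ by the universal property of the Bergman localization. The main obstacle I expect is precisely this first step: realizing the decomposition $\psi(x_1)+\cdots+\psi(x_n)=[S]$ as an actual system of orthogonal homogeneous idempotents in $S$ summing to $1$ requires a noncanonical choice of representatives of the $\psi(x_j)$, which is why the universal property is only weak rather than strict; everything downstream is then forced by Bergman universality and the compatibility of the graded Bergman construction with base change.
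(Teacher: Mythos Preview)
Your proposal is correct and follows essentially the same route as the paper: replace the type~II step of Theorem~\ref{mainalibm} by the commutative semisimple base $R_0=\K^n$ (whose $\V^{\gr}$ is already the free $\Gamma$-monoid on the generators), form the direct limit of type~I graded Bergman localizations over finite subfamilies of relations, and invoke Theorem~\ref{lemhyperberg} and Corollary~\ref{corbergV}. One small citation issue: Proposition~\ref{propuniversalloc}(3) as stated only yields \emph{graded} hereditariness, whereas the theorem asserts hereditariness; the paper instead refers back to the Bergman--Dicks argument of Theorem~\ref{mainalibm}, though in your situation this is harmless since $R_0$ is semisimple and Proposition~\ref{propuniversalloc}(2) identifies the graded and ungraded localizations.
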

\begin{proof}
Let $X=\{p_\phi\mid \phi\in\Phi\}$, where $\Phi$ is a finite set, be a set non-zero generators for $M$ such that $\sum_{\phi\in\Phi} p_\phi=i$. Let $Y=\{u_\psi=v_\psi\mid \psi\in\Psi\}$ be a set of relations such that $M=\langle X\mid Y\rangle$. Since $M$ is conical, we may assume that $u_\psi,v_\psi\neq 0$ for any $\psi\in \Phi$. Let $R'$ be the $\Gamma$-graded $\K$-algebra $\K^\Phi$ which is concentrated in degree $0$. Then
\[\V^{\gr}(R')= \big \langle P_\phi~(\phi\in\Phi) \rangle\] 
is freely generated by generators $P_\phi~(\phi\in\Phi)$ whose sum is $[R']$. Let $Z$ be the set of all finite subsets of $\Psi$. Clearly $Z$ is a directed set together with the inclusion relation. For any $A\in Z$ let $M_{A}=\langle X\mid Y_{A}\rangle$ where $Y_{A}=\{u_\psi=v_\psi\mid \psi\in B\}$. If $A\subseteq A'\in Z$, then clearly there is a canonical $\Gamma$-monoid homomorphism $\alpha_{A}^{A'}:M_{A}\to M_{A'}$. One checks easily that $M$ is the direct limit of the direct system $\{M_{A},\alpha_{A}^{A'}\mid A\subseteq A'\in Z\}$ in the category of $\Gamma$-monoids. 

Now fix an $A\in Z$. Let $R_{A}$ be the $\Gamma$-graded $\K$-algebra obtained from $R'$ by adjoining isomorphisms $\overline{U_\psi}\cong \overline{V_\psi}~(\psi\in A)$ where $U_{\psi}$ and $V_{\psi}$ are the graded $R'$-modules corresponding to $u_\psi$ and $v_\psi$, respectively. Then $\V^{\gr}(R_{A})\cong M_{A}$ by Corollary \ref{corbergV}, and this isomorphism maps $[R_{A}]$ to $i$. 

If $A\subseteq A'\in Z$, then clearly there is a canonical graded $\K$-algebra homomorphism $\beta_{A}^{A'}:R_{A}\to R_{A'}$. Let $R$ be the direct limit of the direct system $\{R_{A},\beta_{A}^{A'}\mid A\subseteq A'\in Z\}$ in the category of graded $\K$-algebras. Since $\V^{\gr}$ commutes with direct limits, we obtain
\[\V^{\gr}(R)=\V^{\gr}(\varinjlim R_{A})=\varinjlim \V^{\gr}(R_{A})\cong\varinjlim M_{A}=M.\]
Clearly the isomorphism $\V^{\gr}(R)\cong M$ maps $[R]$ to $i$. By Lemma \ref{lemhyperberg}, $R$ is graded isomorphic to a unital hyper Leavitt path algebra $L_\K(H)$ whose grading is induced by a weight map. That $R$ is hereditary and has a weak universal property follow as in the proof of Theorem~\ref{mainalibm}.
\end{proof}

%\begin{thm}
%Let $\Gamma$ be an abelian group and $M$ be a conical $\Gamma$-monoid with an order unit $I$. Then there is a $\Gamma$-graded $K$-algebra $A$ such that $\mathcal \V^{\gr}(A)\cong M$ as $\Gamma$-monoids, with $[A]\mapsto I$ %under this isomorphism. It has a weak universal property as well. 
%\end{thm}

\section{Application to graph  algebras}
\label{lpaaplicaton}

The Graded  Classification Conjecture for Leavitt path algebras states that the graded Grothendieck group, $K^{\operatorname{gr}}_0$,  is a complete invariant for 
these algebras~(\cite{willie, haz2013}, \cite[\S7.3.4]{lpabook}):  
%That is, the unital Leavitt path algebras $L_\K(E)$ and $L_\K(F)$, associated to the graphs $E$ and $F$, respectively, are graded isomorphic if and only if there is a preordered $\mathbb Z$-module isomorphism $K^{\operatorname{gr}}_0(L_\K(E)) \rightarrow K^{\operatorname{gr}}_0(L_\K(F))$, mapping $[L_\K(E)]$ to $[L_\K(F)]$. The other question raised in the same paper was whether the $K^{\operatorname{gr}}_0$-functor is full, that is, a preordered $\mathbb Z$-homomorphism $K^{\operatorname{gr}}_0(L_\K(E)) \rightarrow K^{\operatorname{gr}}_0(L_\K(F))$, mapping $[L_\K(E)]$ to $[L_\K(F)]$, can be lifted to a graded homomorphism $L_\K(E)\rightarrow L_\K(F)$.

\begin{conj}\label{conjalg}
Let $E$ and
$F$ be finite graphs. 
\begin{enumerate}[\upshape(1)]

\item For any order preserving $\Z[x,x^{-1}]$-module homomorphism $\phi: K^{\gr}_0(L_\K(E)) \rightarrow K^{\gr}_0(L_\K(F))$ with 
$\phi([L_\K(E)])=L_\K(F)$, there exists a unital $\mathbb Z$-graded $\K$-homomorphism $\psi: L_\K(E) \rightarrow L_\K(F)$ such that $K^{\gr}_0(\psi) = \phi$.

    \item For any order preserving  $\Z[x,x^{-1}]$-module
isomorphism $\phi: K_0^{\gr}(L_\K(E)) \rightarrow K_0^{\gr}(L_\K(F))$ with 
$\phi([L_\K(E)]=[L_\K(F)]$, there exists a unital $\mathbb Z$-graded $\K$-isomorphism $\psi: L_\K(E) \rightarrow L_\K(F)$ such that $K^{\gr}_0(\psi) = \phi$.

\end{enumerate}

\end{conj}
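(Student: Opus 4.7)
\textbf{Proof proposal for Conjecture~\ref{conjalg}(1).} The plan is to prove the stronger statement announced in the introduction: for any $\mathbb{Z}$-graded $\K$-algebra $A$ with $\mathcal{V}^{\gr}(A)$ cancellative and any order-preserving $\mathbb{Z}[x,x^{-1}]$-module homomorphism $\phi\colon K_0^{\gr}(L_\K(E))\to K_0^{\gr}(A)$ with $\phi([L_\K(E)])=[A]$, there exists a unital $\mathbb{Z}$-graded $\K$-algebra homomorphism $\psi\colon L_\K(E)\to A$ with $K_0^{\gr}(\psi)=\phi$. Specialising to $A=L_\K(F)$ then yields part (1) of the conjecture. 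The key technical ingredient is the weak universal property in Theorem~\ref{mainalibm_2} (equivalently, Theorem~\ref{mainalibm}) applied to the specific pointed $\Gamma$-monoid arising from $L_\K(E)$.

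\emph{Step 1 (reduction to $\mathcal{V}^{\gr}$).} First I would descend from $K_0^{\gr}$ to the positive cone. For any row-finite graph $E$, the monoid $\mathcal{V}^{\gr}(L_\K(E))$ is known to be cancellative (indeed, it agrees with the graph monoid $M_E^{\gr}$, which is a refinement monoid). Together with the cancellativity hypothesis on $\mathcal{V}^{\gr}(A)$, this means that both monoids embed as positive cones in their group completions. Since $\phi$ is order preserving and $\mathbb{Z}[x,x^{-1}]$-linear (i.e.\ equivariant for the shift action of $\mathbb{Z}$), it restricts to a pointed $\Gamma$-monoid homomorphism $\bar\phi\colon(\mathcal{V}^{\gr}(L_\K(E)),[L_\K(E)])\to(\mathcal{V}^{\gr}(A),[A])$ with $\Gamma=\mathbb{Z}$.

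\emph{Step 2 (realising $L_\K(E)$ as a Bergman algebra).} The crucial observation is that the pointed $\Gamma$-monoid $(\mathcal{V}^{\gr}(L_\K(E)),[L_\K(E)])$ admits the finite presentation generated by the classes $[L_\K(E)v]$, $v\in E^0$, whose sum is $[L_\K(E)]$, subject to the Leavitt relations
\[
[L_\K(E)v]=\sum_{e\in s^{-1}(v)}[L_\K(E)r(e)](1)\qquad(v\in E^{\reg}).
\]
Feeding exactly this pointed presentation into the construction of Theorem~\ref{mainalibm_2} (equivalently, Theorems~\ref{thm-1}, \ref{lemhyperberg}, \ref{levbalcon}), the algebra that comes out is, up to a canonical graded $\K$-isomorphism, $L_\K(E)$ itself: the relations produced by Corollary~\ref{corbergV} match the Cuntz--Krieger type relations defining $L_\K(E)$, and the shift $(1)$ in the presentation implements the standard $\mathbb{Z}$-grading on $L_\K(E)$. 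Consequently $L_\K(E)$ itself inherits the weak universal property of Theorem~\ref{mainalibm_2}.

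\emph{Step 3 (lifting $\bar\phi$ and checking $K_0^{\gr}(\psi)=\phi$).} Applying the weak universal property to the pointed $\Gamma$-monoid homomorphism $\bar\phi$ of Step~1 produces a unital $\mathbb{Z}$-graded $\K$-algebra homomorphism $\psi\colon L_\K(E)\to A$ such that $\mathcal{V}^{\gr}(\psi)=\bar\phi$. Passing to Grothendieck groups, $K_0^{\gr}(\psi)$ agrees with $\phi$ on the image of $\mathcal{V}^{\gr}(L_\K(E))$. Since $\mathcal{V}^{\gr}(L_\K(E))$ generates $K_0^{\gr}(L_\K(E))$ as an abelian group and both maps are $\mathbb{Z}[x,x^{-1}]$-linear, we conclude $K_0^{\gr}(\psi)=\phi$.

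\emph{Main obstacle.} The conceptually non-trivial point is Step~2: the assertion that the abstract graded algebra produced by the inductive Bergman-type construction from the Leavitt presentation of the graph monoid is graded isomorphic to the \emph{concrete} algebra $L_\K(E)$. This requires matching generators and relations precisely—both the idempotent/equivalence relations used in Corollary~\ref{corbergV} and the shift conventions from~\S\ref{matrixrep}—so that no spurious generators survive and no relations are lost. The direct-limit/colimit step inside Theorem~\ref{mainalibm_2} also needs to be carried out carefully so that the weak universality transfers to the concrete model $L_\K(E)$; but once this identification is in place, the proof is essentially a one-line application of the universal property, which is why the argument is advertised as ``short''.
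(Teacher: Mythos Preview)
Your proposal is correct and matches the paper's approach essentially line for line: the paper reduces from $K_0^{\gr}$ to $\mathcal V^{\gr}$ exactly as in your Step~1 (this is Corollary~\ref{linvco}), then in Theorem~\ref{arnonevas} it realises $L_\K(E)$ concretely as the graded Bergman ring $R\langle h_v,h_v^{-1}:\overline{p_v}\cong \overline{\bigoplus_{e\in s^{-1}(v)}p_{r(e)}(1)}\rangle$ over $R=\prod_{E^0}\K$ (your Step~2, via Theorem~\ref{levbalcon}), and uses its universal property to produce $\psi$ (your Step~3). The only cosmetic difference is that the paper argues directly with this explicit Bergman presentation of $L_\K(E)$ rather than invoking the abstract weak universal property of Theorem~\ref{mainalibm_2}, thereby sidestepping the identification you flag as the ``main obstacle''; but the content is the same.
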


Here the order preserving $\Z[x,x^{-1}]$-module isomorphism $K_0^{\gr}(L_\K(E)) \cong K_0^{\gr}(L_\K(F))$ should give that these algebras are graded Morita equivalent (see Conjecture~\ref{conji1}). 

Since the graded $K$-theory of Leavitt path algebras coincides with equivariant $K$-theory of graph $C^*$-algebras one can extend the conjecture to the setting of $C^*$-algebras.  
Denote by $\gamma_E$ the gauge circle actions on $C^*(E)$ and $K^\T_0(C^*(E))$ the equivariant $K$-theory of $C^*(E)$~\cite{chrisp}. There are canonical order preserving isomorphisms of $\Z[x,x^{-1}]$-modules (see~\cite[p. 275]{haz2013}, \cite{haziso} and \cite[Proof of Theorem A]{eilers2}). 
\begin{equation}\label{copenhag}
K^{\gr}_0(L(E)) \cong K_0(L(E \times \mathbb Z)) \cong  K_0(C^*(E \times \mathbb Z))\cong K_0^{\mathbb T}(C^*(E)).
\end{equation}

Thus one can pose the analytic version of Conjecture~\ref{conjalg} as follows. 

\begin{conj}\label{conjanal}
Let $E$ and $F$ be finite graphs. Then there is an order preserving $\mathbb Z[x,x^{-1}]$-module
isomorphism
\[ \phi: K^\T_0(C^*(E)) \longrightarrow K_0^\T(C^*(F)),\] 
with 
$\phi([C^*(E)])=[C^*(F)]$ 
 if and only if
 $C^*(E) \cong C^*(F)$ which respect the gauge action. 
\end{conj}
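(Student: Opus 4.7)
The plan is to bridge the analytic statement to the algebraic Graded Classification Conjecture~\ref{conjalg}(2) via the isomorphism chain~(\ref{copenhag}), and then to promote a graded $\K$-algebra isomorphism (over $\K=\mathbb C$) to a gauge-equivariant $C^*$-algebra isomorphism through a completion/uniqueness argument. Concretely, first I would assume a pointed order-preserving $\mathbb Z[x,x^{-1}]$-module isomorphism $\phi:K^{\mathbb T}_0(C^*(E))\to K^{\mathbb T}_0(C^*(F))$. Using~(\ref{copenhag}), transport $\phi$ to a pointed order-preserving $\mathbb Z[x,x^{-1}]$-module isomorphism $\widetilde\phi:K_0^{\gr}(L_{\mathbb C}(E))\to K_0^{\gr}(L_{\mathbb C}(F))$. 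Applying Conjecture~\ref{conjalg}(2) to $\widetilde\phi$ (and to $\widetilde\phi^{-1}$) would produce a unital $\mathbb Z$-graded $\mathbb C$-algebra isomorphism $\psi:L_{\mathbb C}(E)\to L_{\mathbb C}(F)$ inducing $\widetilde\phi$ on $K_0^{\gr}$.

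The next step is to make $\psi$ respect the canonical involution on Leavitt path algebras. Because the generating sets $\{v,e,e^*\}$ of $L_{\mathbb C}(E)$ are determined up to the graded structure by the Cuntz--Krieger relations and the grading forces $\deg\psi(e)=1$ and $\deg\psi(e^*)=-1$, one can invoke the graded uniqueness theorem for Leavitt path algebras (combined with polar-decomposition style adjustments inside the hereditary graded finite-dimensional building blocks) to replace $\psi$, if necessary, by a graded $*$-isomorphism without altering its class on $K_0^{\gr}$. Having a graded $*$-homomorphism, I would then extend $\psi$ to the $C^*$-completion: since the gauge actions on $C^*(E)$ and $C^*(F)$ restrict to the $\mathbb Z$-grading on the dense $*$-subalgebras $L_{\mathbb C}(E)\subseteq C^*(E)$ and $L_{\mathbb C}(F)\subseteq C^*(F)$, any graded (hence gauge-invariant after averaging over $\mathbb T$) injective $*$-homomorphism that is nonzero on every vertex projection is automatically isometric by the gauge-invariant uniqueness theorem of Bates--Hong--Raeburn--Szyma\'nski. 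Therefore $\psi$ extends uniquely to a $*$-isomorphism $\Psi:C^*(E)\to C^*(F)$, which commutes with the gauge actions since it does so on the dense subalgebras. The converse implication is immediate: a gauge-equivariant $*$-isomorphism $\Psi:C^*(E)\to C^*(F)$ induces an order-preserving $\mathbb Z[x,x^{-1}]$-module isomorphism on $\mathbb T$-equivariant $K$-theory that sends $[C^*(E)]\mapsto[C^*(F)]$ by functoriality.

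The principal obstacle is the algebraic input Conjecture~\ref{conjalg}(2): the excerpt only announces a proof of the fullness half, Conjecture~\ref{conjalg}(1). Upgrading fullness to the isomorphism statement requires both existence of a lift $\psi$ and the ability to choose $\psi$ to be an isomorphism, which is not automatic from $\widetilde\phi$ being an isomorphism on $K_0^{\gr}$ alone. The natural route is to apply the fullness statement to both $\widetilde\phi$ and $\widetilde\phi^{-1}$, obtaining graded homomorphisms $\psi$ and $\psi'$ such that $\psi'\circ\psi$ and $\psi\circ\psi'$ induce the identity on $K_0^{\gr}$, and then to invoke a graded rigidity result (in the spirit of the graded uniqueness theorem, or equivalently the cancellation properties of $\mathcal V^{\gr}$) to conclude that $\psi$ is a graded isomorphism. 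This cancellation/rigidity step, and not the analytic completion argument, is where the real difficulty lies; once $\psi$ is a graded $*$-isomorphism, passing to $C^*$-completions and verifying gauge-equivariance is standard via the gauge-invariant uniqueness theorem.
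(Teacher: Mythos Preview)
The statement you are addressing is labeled a \emph{conjecture} in the paper, and the paper provides no proof of it. What the paper does is exactly the conditional reduction you outline: via the chain~(\ref{copenhag}) one transports a pointed order-preserving $\mathbb Z[x,x^{-1}]$-isomorphism on equivariant $K$-theory to one on $K_0^{\gr}$; \emph{if} Conjecture~\ref{conjalg}(2) held with the lifted map a graded $*$-isomorphism, then \cite[Theorem~4.4]{abramstomforde} would yield a gauge-equivariant $*$-isomorphism of the graph $C^*$-algebras. The paper states this implication in the paragraph following the conjecture and stops there.

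Your proposal follows precisely this route and you correctly isolate the two genuine gaps: (a) upgrading the fullness statement Conjecture~\ref{conjalg}(1) to the isomorphism statement Conjecture~\ref{conjalg}(2), and (b) arranging the lift to be a $*$-isomorphism. Neither is established in the paper, and your sketches for them are not proofs. For (a), producing $\psi$ and $\psi'$ from $\widetilde\phi$ and $\widetilde\phi^{-1}$ so that $\psi'\psi$ and $\psi\psi'$ induce the identity on $K_0^{\gr}$ does \emph{not} force $\psi$ to be an isomorphism: the graded uniqueness theorem gives injectivity but says nothing about surjectivity, and ``$K_0^{\gr}(\psi'\psi)=\id$'' is far weaker than ``$\psi'\psi=\id$'' (indeed Va\c{s}'s title in \cite{vas} advertises that $K_0^{\gr}$ is only \emph{weakly} faithful). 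For (b), the ``polar-decomposition style adjustments inside hereditary graded finite-dimensional building blocks'' is hand-waving; Leavitt path algebras are not locally finite-dimensional in any sense that makes this work, and the paper explicitly flags the $*$-preservation as an additional hypothesis (``one should have that the isomorphism \ldots\ is indeed a $*$-isomorphism. If this is the case, then \ldots''). In short, there is nothing to compare your attempt against: the paper does not claim a proof, and the obstacles you identify are exactly the open ones.
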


In fact in Conjecture~\ref{conjalg} if the graded Grothendieck groups are isomorphic, one should have that the isomorphism between the Leavitt path algebras is indeed a $*$-isomorphism. If this is the case, then Conjecture~\ref{conjalg} implies Conjecture~\ref{conjanal}. For, if $K^\T_0(C^*(E)) \cong K_0^\T(C^*(F))$, then by (\ref{copenhag}), $K_0^{\gr}(L_{\mathbb C}(E)) \cong K_0^{\gr}(L_{\mathbb C}(F))$, so $L_{\mathbb C}(E)\cong_{\gr} L_{\mathbb C}(F)$ via a $*$-isomorphism. This implies that $C^*(E) \cong C^*(F)$ which respects the gauge action (see~\cite[Theorem~4.4]{abramstomforde}).

 The notion of \emph{talented monoids} allows us to write the classification conjectures for Leavitt and $C^*$-algebras in a unified manner. The positive cone of the graded Grothendieck group of a Leavitt path algebra $L_\K(E)$ can
be described purely based on the underlying graph, via the so-called talented monoid $T_E$ of $E$~\cite{hazli}. The benefit of talented monoids is that they give us control over elements of the monoids (such as minimal elements, atoms, etc.)
and consequently on the ``geometry'' of the graphs such as the number of cycles, their lengths, etc. (see~\cite{hazli,Luiz}).

\begin{deff}\label{talentedmon}
Let $E$ be a row-finite directed graph. The \emph{talented monoid} of $E$, denoted $T_E$, is the commutative 
monoid generated by $\{v(i) \mid v\in E^0, i\in \mathbb Z\}$, subject to
\[v(i)=\sum_{e\in s^{-1}(v)}r(e)(i+1)\]
for every $i \in \mathbb Z$ and every $v\in E^{0}$ that is not a sink. The additive group $\mathbb{Z}$ of integers acts on $T_E$ via monoid automorphisms by shifting indices: For each $n,i\in\mathbb{Z}$ and $v\in E^0$, define ${}^n v(i)=v(i+n)$, which extends to an action of $\mathbb{Z}$ on $T_E$. Throughout we will denote elements $v(0)$ in $T_E$ by $v$.
\end{deff}

The crucial ingredient for us is the action of $\mathbb Z$ on the monoid $T_E$. The general idea is that the monoid structure of $T_E$ along with the action of $\mathbb Z$ resemble the graded ring structure of a Leavitt path algebra $L_{\mathsf k}(E)$.
Thus the conjectures above roughly state that isomorphism of talented monoids can lift to the isomorphisms of graded and equivariant $K$-theory of respected Leavitt and graph $C^*$-algebras. We therefore can formulate both conjectures as follows:

\begin{conj}\label{conji1}
Let $E$ and $F$ be finite graphs and $\K$ a field. Then the following are equivalent.

\begin{enumerate}[\upshape(1)]
\item  The talented monoids \(T_E\) and \(T_F\) are \(\mathbb{Z}\)-isomorphic;

\item  The Leavitt path algebras \(L_\K(E)\) and \(L_\K(F)\) are graded Morita equivalent;

\item The graph $C^*$-algebras $C^*(E)$ and $C^*(F)$ are equivariant Morita equivalent. 
\end{enumerate}

Furthermore,  the following are equivalent.

\begin{enumerate}[\upshape(1)]
\item  The talented monoids \(T_E\) and \(T_F\) are pointed \(\mathbb{Z}\)-isomorphic;

\item  The Leavitt path algebras \(L_\K(E)\) and \(L_\K(F)\) are graded isomorphic;

\item The graph $C^*$-algebras $C^*(E)$ and $C^*(F)$ are equivariant isomorophic. 
\end{enumerate}
\end{conj}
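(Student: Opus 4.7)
The plan is to leverage the fullness result (Conjecture \ref{conjalg}(1)) proved earlier in Section \ref{lpaaplicaton} via the graded Bergman machinery, together with the standard transport between Leavitt path algebras and graph $C^*$-algebras. The three ``backward'' directions (2) $\Rightarrow$ (1), (3) $\Rightarrow$ (1), and their pointed analogues are the immediate ones: a graded Morita equivalence $L_\K(E) \sim L_\K(F)$ induces an equivalence of categories of graded finitely generated projective modules, hence a $\mathbb{Z}$-monoid isomorphism $\V^{\gr}(L_\K(E)) \cong \V^{\gr}(L_\K(F))$, which under the identification $\V^{\gr}(L_\K(E)) \cong T_E$ (and similarly for $F$) yields $T_E \cong T_F$; in the pointed setting the class $[L_\K(E)]$ is sent to $[L_\K(F)]$. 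The analytic version follows by transporting along the identifications in \eqref{copenhag}, using that the gauge action corresponds precisely to the $\mathbb{Z}$-action on $T_E$.

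For the pointed direction (1) $\Rightarrow$ (2), the first step is to lift a pointed $\mathbb{Z}$-isomorphism $\alpha : T_E \to T_F$ to a unital graded $\K$-homomorphism $\psi : L_\K(E) \to L_\K(F)$. Since $\V^{\gr}(L_\K(F))$ embeds in $K_0^{\gr}(L_\K(F))$ it is cancellative, so Conjecture \ref{conjalg}(1) applies and produces $\psi$ with $K_0^{\gr}(\psi) = \alpha$. Applying the same construction to $\alpha^{-1}$ yields $\psi' : L_\K(F) \to L_\K(E)$. For the unpointed direction, I would reduce to the pointed case by stabilisation: given an unpointed $\mathbb{Z}$-iso, choose $\overline{\alpha} \in \mathbb{Z}^n$ and $\overline{\beta} \in \mathbb{Z}^m$ so that $\sum_i {}^{\alpha_i}[L_\K(E)]$ and $\sum_j {}^{\beta_j}[L_\K(F)]$ correspond under it; pass to the matrix amplifications $\M_n(L_\K(E))(\overline{\alpha})$ and $\M_m(L_\K(F))(\overline{\beta})$ (which are Leavitt path algebras of expanded graphs and are graded Morita equivalent to the originals), invoke the pointed case there, and descend.

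The hard part will be promoting the pair $(\psi, \psi')$ into a graded Morita equivalence. The compositions $\psi' \circ \psi$ and $\psi \circ \psi'$ are unital graded endomorphisms inducing the identity on $K_0^{\gr}$, but fullness alone does not force them to be isomorphisms. This is the genuine content of the classification conjecture that sits beyond Conjecture \ref{conjalg}(1), and extracting such faithfulness from fullness appears to require exploiting uniqueness features of the Bergman construction; one natural attempt is to track how graded endomorphisms of $L_\K(E)$ lift through the direct system $\{R_{(A,B)}\}$ appearing in the proof of Theorem \ref{mainalibm_2}, and to show that any graded $\K$-endomorphism inducing the identity on the talented monoid is inner, hence an automorphism. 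Finally, (2) $\Leftrightarrow$ (3) is obtained by embedding $L_{\mathbb{C}}(E)$ densely in $C^*(E)$ and using that graded $\ast$-bimodules extend to equivariant Hilbert bimodules under the gauge circle action, translating one form of equivalence into the other; the main technical point here is that the completion preserves the relevant module structures, which is standard.
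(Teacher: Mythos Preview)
The statement you are trying to prove is labeled as a \emph{Conjecture} in the paper (Conjecture~\ref{conji1}), not a theorem; the paper offers no proof of it and explicitly presents it as an open problem. What the paper actually establishes in Section~\ref{lpaaplicaton} is only the fullness statement of Conjecture~\ref{conjalg}(1), via Theorem~\ref{arnonevas} and Corollary~\ref{linvco}. So there is no ``paper's own proof'' to compare against.

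You yourself identify the obstruction correctly: once you lift $\alpha$ and $\alpha^{-1}$ to $\psi$ and $\psi'$, nothing forces $\psi'\circ\psi$ or $\psi\circ\psi'$ to be isomorphisms (or even to yield a Morita equivalence). Your proposed fix, namely showing that any graded $\K$-endomorphism of $L_\K(E)$ inducing the identity on $T_E$ is inner, is precisely the faithfulness half of the functor $K_0^{\gr}$ that remains open; the Bergman direct-system argument in Theorem~\ref{mainalibm_2} gives only a \emph{weak} universal property (existence, not uniqueness, of the lifting), so tracking endomorphisms through $\{R_{(A,B)}\}$ cannot by itself produce uniqueness up to inner automorphism. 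Likewise, the passage (2)~$\Leftrightarrow$~(3) you sketch via dense embedding and completion of bimodules is not routine in the equivariant setting and is part of what Conjecture~\ref{conjanal} asserts. In short, the ``easy'' implications you list are fine, but the forward directions (1)~$\Rightarrow$~(2) and (1)~$\Rightarrow$~(3) remain open, and your proposal does not close them.
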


 G. Arnone in \cite{arnone} and L. Vas in \cite{vas} answered part (2) of Conjecture~\ref{conjalg}, independently and with different approaches,  in positive. In fact Arnone shows that the lifting map can be diagonal preserving graded $*$-homomorphism and Vas shows that the graphs can consist of infinite edges. 

Using theory developed here, we can in fact show that for any $\mathbb Z$-graded $\K$-algebra $A$ and any pointed $\mathbb Z$-homomorphism $\phi:\mathcal V^{\gr}(L_\K(E)) \rightarrow \mathcal V^{\gr}(A)$, we can lift $\phi$ to a graded $\K$-algebra homomorphism  $\psi: L_\K(E) \rightarrow A$, which preserves the injectivity.

\begin{thm}\label{arnonevas}
Let $E$ be a finite graph and let $A$ be a $\mathbb Z$-graded $\K$-algebra. Let $\phi: T_E \rightarrow \mathcal V^{\gr}_0(A)$ be a $\mathbb Z$-monoid homomorphism with  
$\phi(1_E)=[A]$. Then there exists a unital $\mathbb Z$-graded $\K$-algebra homomorphism $\psi: L_\K(E) \rightarrow A$ such that $\mathcal V^{\gr}_0(\psi) = \phi$. Furthermore, if $\phi(v)\not = 0$, for all $v\in E^0$, then $\psi$ is monomorphism. 

\end{thm}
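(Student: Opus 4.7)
The plan is to apply the weak universality established in Theorem~\ref{mainalibm_2} to the talented monoid $T_E$ and then invoke the Graded Uniqueness Theorem for Leavitt path algebras to handle the injectivity claim.

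First I would identify $T_E$ with the non-stable graded $K$-theory $\mathcal V^{\gr}(L_\K(E))$ as pointed $\mathbb Z$-monoids. The map $v \mapsto [L_\K(E)v]$ induces a $\mathbb Z$-monoid isomorphism $T_E \cong \mathcal V^{\gr}(L_\K(E))$ sending $1_E = \sum_{v \in E^0} v$ to $[L_\K(E)]$; in our present framework this is essentially reproved as follows. By Lemma~\ref{lempathalg} the path algebra $P_\K(E)$ is a graded Bergman algebra over $R = \prod_{E^0}\K$, and by Theorem~\ref{levbalcon} the canonical embedding $P_\K(E) \hookrightarrow L_\K(E)$ is a graded universal localization inverting the maps $g^v: \overline{p_v} \to \overline{\bigoplus_{e \in s^{-1}(v)} p_{r(e)}(1)}$. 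Combining Lemma~\ref{lempathalg} with Corollary~\ref{corbergV} (applied successively over the regular vertices),
\[
\mathcal V^{\gr}(L_\K(E)) \;\cong\; \mathcal V^{\gr}(R)\,\big/\,\big\langle [p_v]=\textstyle\sum_{e \in s^{-1}(v)}[p_{r(e)}(1)] : v \in E^{\reg}\big\rangle,
\]
which under $v \leftrightarrow [p_v]$ is exactly $T_E$, and carries $[L_\K(E)]$ to $1_E$.

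The same computation shows that $L_\K(E)$ realises the pointed $\mathbb Z$-monoid $(T_E,1_E)$ in the sense of Theorem~\ref{mainalibm_2}. Indeed, $L_\K(E)$ is obtained from $R = \prod_{E^0}\K$ (a graded commutative $\K$-semisimple ring concentrated in degree zero) by successively adjoining universal graded isomorphisms between graded finitely generated projective $R$-modules representing the defining relations of $T_E$; that this particular Bergman construction indeed produces $L_\K(E)$ is precisely the content of Theorem~\ref{lemhyperberg} together with Example~\ref{exmlpa}. Hence the same weak universal property established in Theorem~\ref{mainalibm_2} applies to $L_\K(E)$ itself. Applying it to the given $\phi\colon T_E \to \mathcal V^{\gr}(A)$ with $\phi(1_E)=[A]$ yields a unital graded $\K$-algebra homomorphism $\psi\colon L_\K(E) \to A$ such that the induced map $-\otimes_{L_\K(E)}A$ on $\mathcal V^{\gr}$ coincides with $\phi$ under the identification $T_E \cong \mathcal V^{\gr}(L_\K(E))$, so that $\mathcal V^{\gr}(\psi)=\phi$.

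For the injectivity claim, suppose $\phi(v)\neq 0$ for every $v \in E^0$. The image $\psi(v) \in A$ is a homogeneous idempotent of degree zero and, by construction of $\psi$, satisfies $\mathcal V^{\gr}(\psi)([L_\K(E)v]) = [A\psi(v)] = \phi(v)$. As $\phi(v)\neq 0$, we have $A\psi(v)\neq 0$, so $\psi(v)\neq 0$. The Graded Uniqueness Theorem for Leavitt path algebras (see \cite[Theorem~2.2.15]{lpabook}) states that a graded $\K$-algebra homomorphism from $L_\K(E)$ into a $\mathbb Z$-graded algebra is injective whenever it is non-vanishing on each vertex, so $\psi$ is a monomorphism.

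The main obstacle is the identification in the second paragraph: making precise that the abstract weakly universal algebra produced by Theorem~\ref{mainalibm_2} from the presentation of $T_E$ by the generators $\{v\}_{v\in E^0}$ (summing to $1_E$) and the relations $v = \sum_{e\in s^{-1}(v)} {}^1 r(e)$ coincides, as a pointed $\mathbb Z$-graded $\K$-algebra, with $L_\K(E)$. Once Theorem~\ref{lemhyperberg} and Example~\ref{exmlpa} (together with the fact that Leavitt path algebras of finite graphs are hereditary) are marshalled to secure this identification, the remainder of the argument reduces to a clean application of universality followed by the Graded Uniqueness Theorem.
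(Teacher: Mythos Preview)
Your proposal is correct and follows essentially the same route as the paper. The paper's proof unpacks the weak universality directly rather than invoking Theorem~\ref{mainalibm_2}: it explicitly builds the graded $\K$-algebra map $R=\prod_{E^0}\K\to A$ from a decomposition $A\cong\bigoplus_{v\in E^0}q_v$ with $[q_v]=\phi(v)$, and then appeals to the universal property of the Bergman construction $S=R\langle h_v,h_v^{-1}:\overline{p_v}\cong\overline{\bigoplus_{e\in s^{-1}(v)}p_{r(e)}(1)}\rangle\cong_{\gr}L_\K(E)$ to produce $\psi$. Your packaging via Theorem~\ref{mainalibm_2} is a legitimate shortcut, and you correctly flag that the only real work is identifying the realising algebra of $(T_E,1_E)$ with $L_\K(E)$; since $T_E$ has finitely many relations, the direct limit in the proof of Theorem~\ref{mainalibm_2} collapses to a single Bergman algebra over $\K^{E^0}$, and the equation~(\ref{com}) already records that this algebra is $L_\K(E)$. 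The injectivity step is identical in both arguments.
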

\begin{proof}
Consider the semisimple $\K$-algebra $R=\prod_{E^0}\K$, the product of $|E^0|$-copies  of the field $\K$. 
Then 
\begin{equation}\label{timemm}
\mathcal V^{\gr}(R) \cong \big \langle v(i)\mid v\in E^0, \, i \in \mathbb Z \big \rangle,
\end{equation}
with the $\mathbb Z$-action ${}^n v(i)=v(i+n)$, for $n,i \in \mathbb Z$. 
Denote $p_v(i)$, where $v\in E^0$ and $i\in \mathbb Z$, the graded finitely generated projective $R$-module with $\K(i)$ appears in $v$-th component of $R$ and zero elsewhere. Throughout, we write $p_v$ for $p_v(0)$. Note that $\bigoplus_{v\in E^0}p_v \cong R$ as graded $R$-module and the isomorphism classes $[p_v(i)]$ correspond to $v(i)$ in (\ref{timemm}). Next we consider the pairs of graded finitely generated projective $R$-modules 
$p_v$ and $\bigoplus_{e\in s^{-1}(v)}p_{r(e)}(1)$, for $v\in E^0$ that are not sinks. We construct the graded ring
\begin{equation}\label{sequi}
    S:=R\big\langle h_v,h_v^{-1}:\overline{p_v}\cong \overline{\bigoplus_{e\in s^{-1}(v)}p_{r(e)}(1)}\big\rangle,
    \end{equation}for vertices $v$ which are not sink. This amounts to adjoining matrices  to $R$ with relations which define the $\K$-algebra $L_\K(E)$ (see Theorem~\ref{levbalcon}), and thus 
    \begin{equation}\label{sequjj}
        S\cong_{\gr} L_\K(E).
    \end{equation}

On the other hand, by Theorem~\ref{thmbergV}, the monoid $\mathcal V^{\gr}(L_\K(E))$ is obtained by $\mathcal V^{\gr}(R)$ subject to relations \[[p_v(i)]\cong \bigoplus_{e\in s^{-1}(v)}[p_{r(e)}(i+1)],\] for $i\in \mathbb Z$. In~(\ref{timemm}) these relations translate to 
 $v(i)=\sum_{e\in s^{-1}(v)}r(e)(i+1)$. Thus $\mathcal V^{\gr}(L_\K(E))$  is precisely the talented monoid $T_E$, where the graded $\K$-algebra homomorphism $R\rightarrow L_\K(E)$, induces 
 $\mathcal V^{\gr}(R) \rightarrow \mathcal V^{\gr}(L_\K(E)), [p_v] \mapsto [L_K(E)v]$.

 Now suppose $\phi: T_E \rightarrow \mathcal V^{\gr}(A)$ is a pointed $\mathbb Z$-monoid homomorphism. Since $\phi(1_E)=\sum_{v\in E^0}\phi(v)=[A]$, we obtain a graded $A$-module isomorphism $A\cong \bigoplus_{v\in E^0} q_v$, where $q_v$ are graded finitely generated $A$-modules with $\phi(v)=[q_v]$.   It follows that $A = \bigoplus_{v\in E^0} Ae_v$, where $e_v$'s are pairwise orthogonal idempotents of homogeneous degree zero in $A$, with $Ae_v \cong q_v$ as graded $A$-modules. Thus there is a natural graded $\K$-algebra homomorphism $\eta: R=\prod_{E^0}\K \rightarrow A$, making the ring $A$ a $\mathbb Z$-graded $R\text{-ring}_{\K}$. Since $A \otimes_R p_v \cong q_v$ (use $A \otimes_R R/I \cong A/AI$, for an ideal $I$ of $R$), we obtain the following commutative diagram of $\mathbb Z$-monoids:
 \[\xymatrix{
\mathcal V^{\gr}(R) \ar[rr]^{A \otimes_R -}   \ar[dr]& & \V^{\gr}(A)\\
 & T_E \ar[ur]_{\phi}}\]

Thus 
\[[q_v]=\phi(v)=\phi\big(\sum_{e\in s^{-1}(v)}r(e)(i+1)\big)=\sum_{e\in s^{-1}(v)}[q_{r(e)}(i+1)],\] implying graded $A$-module isomorphisms  
\begin{align*}
q_v &\cong \bigoplus_{e\in s^{-1}(v)}q_{r(e)}(1)\\
A\otimes_R p_v &\cong A \otimes_R \big (\bigoplus_{e\in s^{-1}(v)}p_{r(e)}(1)\big),
\end{align*}
for vertices $v$ which are not sink. 

Since $L_\K(E)$ is the universal ring providing these isomoprhisms (see~\ref{sequi} and \ref{sequjj}), it follows that there is a graded $\K$-algebra homomorphism $\psi: L_\K(E) \rightarrow A$, such that the following diagram is commutative: \[\xymatrix{
\mathcal V^{\gr}(R) \ar[rr]^{A\otimes_R -}   \ar[dr]& & \V^{\gr}(A)\\
 & \mathcal V^{\gr}(L_\K(E)) \ar[ur]_{\; \phi= A \otimes_{L_\K(E)} -}}\]

Finally, suppose $\phi(v)\not = 0$, for all $v\in E^0$. Since $\phi(v)=[A \otimes_{L_\K(E)} L_\K(E)v]=[A\psi(v)]$, it follows that  $\psi(v)\not = 0$, for all $v\in E^0$. Now the graded uniqueness theorem for Leavitt path algebras (\cite[Theorem 2.2.15]{lpabook}) implies that $\psi$ is injective. 
\end{proof}

\begin{cor}\label{linvco}
Let $E$ and $F$ be finite graphs and  $\phi: K^{\gr}_0(L_\K(E)) \rightarrow K^{\gr}_0(L_\K(F))$ a preordered $\mathbb Z[x,x^{-1}]$-module homomorphism with 
$\phi([L_\K(E)])=L_\K(F)$. Then there exists a unital $\mathbb Z$-graded homomorphism $\psi: L_\K(E) \rightarrow L_\K(F)$ such that $K^{\gr}_0(\psi) = \phi$. Furthermore if $\phi$ is injective, so is $\psi$.
\end{cor}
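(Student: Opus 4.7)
The plan is to reduce this corollary to Theorem~\ref{arnonevas} by passing from the graded Grothendieck group back to the $\mathbb Z$-monoid $\mathcal V^{\gr}$ of graded finitely generated projective modules.

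First, I will use the fact that for a finite graph $E$ the $\mathbb Z$-monoid $\mathcal V^{\gr}(L_\K(E))$ identifies canonically with the talented monoid $T_E$, and that $T_E$ is cancellative (see \cite{hazli}). Consequently the natural map $\iota_E \colon \mathcal V^{\gr}(L_\K(E)) \to K_0^{\gr}(L_\K(E))$ will be an injective $\mathbb Z[x,x^{-1}]$-module homomorphism whose image is exactly the positive cone of $K_0^{\gr}(L_\K(E))$; the same holds for $F$.

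Next, since $\phi$ preserves the preorder, it carries the positive cone of $K_0^{\gr}(L_\K(E))$ into the positive cone of $K_0^{\gr}(L_\K(F))$. Combined with the injectivity of $\iota_E$ and $\iota_F$, this yields a unique pointed $\mathbb Z$-monoid homomorphism $\bar\phi \colon T_E \to T_F$ satisfying $\iota_F \circ \bar\phi = \phi \circ \iota_E$ and $\bar\phi([L_\K(E)]) = [L_\K(F)]$. I will then apply Theorem~\ref{arnonevas} with $A = L_\K(F)$ to $\bar\phi$ to obtain a unital $\mathbb Z$-graded $\K$-algebra homomorphism $\psi \colon L_\K(E) \to L_\K(F)$ with $\mathcal V^{\gr}(\psi) = \bar\phi$. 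Passing to group completion then gives $K_0^{\gr}(\psi) = \phi$, as required.

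For the injectivity claim, if $\phi$ is injective then, again because $\iota_E$ and $\iota_F$ are injective, the induced map $\bar\phi$ is injective on $T_E$. In particular $\bar\phi(v) \neq 0$ for every $v \in E^0$, since $v \neq 0$ in $T_E$, so the second half of Theorem~\ref{arnonevas} forces $\psi$ to be a monomorphism. The only genuine subtlety in this strategy is the cancellativity of $T_E$, which is what makes it possible to recover $\bar\phi$ from the group-level data $\phi$; beyond that, the argument is a straightforward transcription of the theorem proved above.
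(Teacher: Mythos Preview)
Your proposal is correct and follows essentially the same route as the paper: restrict the order-preserving map $\phi$ to the positive cone to get a pointed $\mathbb Z$-monoid map $\bar\phi:T_E\to T_F$, then invoke Theorem~\ref{arnonevas}. The paper's proof is terser---it cites \cite{ara-hazrat-li-sims} directly for the identification of the positive cone with $\mathcal V^{\gr}$ rather than arguing via cancellativity of $T_E$ from \cite{hazli}, and it leaves the injectivity step (that $\phi$ injective $\Rightarrow$ $\bar\phi(v)\neq 0$) implicit---but the substance is identical, and your version simply makes those details explicit.
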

\begin{proof}
    Since the positive cone of $K^{\gr}_0(L_\K(E))$ is precisely $\mathcal V^{\gr}(L_\K(E))$ (see~\cite{ara-hazrat-li-sims}) and  $\phi$ is preordered, we obtain a $\mathbb Z$-monoid homomorphism  $\phi: \mathcal V^{\gr}(L_\K(E)) \rightarrow \mathcal V^{\gr}(L_\K(F))$. The Corollary now follows from Theorem~\ref{arnonevas}.
\end{proof}

\begin{rmk}
Although throughout the paper, by setting the grade group $\Gamma$ to be trivial, we recover the non-graded universal constructions,  Corollary~\ref{linvco}, however,  is not valid for the (non-graded) Grothendieck group $K_0$. This is because the positive cone of $K_0(L(E))$ is not necessarily $\mathcal V(L(E))$. As an example, since  $K_0(L_\K(1,2))=0$ and $K_0(L(2,3))=0$, we have a  preordered pointed monoid homomorphism $K_0(L_\K(1,2))\rightarrow K_0(L(2,3))$, however there does not exist any unital ring homomorphsim $L_\K(1,2) \rightarrow L_\K(2,3).$
\end{rmk}

We finish this section by  giving a new proof for the graded 
uniqueness theorem for Leavitt path algebras (See~\cite[Theorem 2.2.15]{lpabook} for an element-wise proof). Although this proof can be extended to arbitrary graphs, for simplicity we work with Leavitt path algebras associated to finite graphs, where all our applications are concerned.  

\begin{thm}\label{uniquethm}
Let $E$ be a finite graph, $A$ a $\mathbb Z$-graded ring and $\phi:L_\K(E)\rightarrow A$ a graded ring homomorphism. If $\phi(v)\not = 0$ for all $v\in E^0$, then $\phi$ is injective.
\end{thm}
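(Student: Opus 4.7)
My strategy is to pass to the smash product with $\mathbb Z$, and thereby reduce the claim to the fact that a homomorphism out of an ultramatricial $\K$-algebra is injective as soon as it is nonzero on each vertex idempotent. Applying the functor $-\#\mathbb Z$ to the graded $\K$-algebra homomorphism $\phi$ produces a $\K$-algebra homomorphism $\phi\#\mathbb Z\colon L_\K(E)\#\mathbb Z\to A\#\mathbb Z$ of rings with local units. Since the decomposition $x=\sum_{n\in\mathbb Z}x_np_n$ inside $L_\K(E)\#\mathbb Z$ recovers each homogeneous component $x_n\in L_\K(E)$ (the content of the category isomorphism recalled in Proposition~\ref{propsmash}), the map $\phi$ is injective if and only if $\phi\#\mathbb Z$ is injective. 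Thus the problem is transferred to the smash product.

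Next, I identify $L_\K(E)\#\mathbb Z$ explicitly using the machinery of Section~\ref{smashbergloc}. By Lemma~\ref{lempathalg} together with Theorem~\ref{levbalcon}, there is a graded isomorphism $L_\K(E)\cong R\big\langle h_v,h_v^{-1}:\overline{p_v}\cong \overline{\bigoplus_{e\in s^{-1}(v)}p_{r(e)}(1)}\big\rangle_{v\in E^{\reg}}$, where $R=\prod_{v\in E^0}\K$ is concentrated in degree zero. Theorem~\ref{thmbergsmash} then realises $L_\K(E)\#\mathbb Z$ as the directed union of Bergman algebras $B_A$ built over the commutative semisimple algebras $R\#\mathbb Z_A$ for finite $A\subset\mathbb Z$, and by Lemma~\ref{lemhyperberg} each $B_A$ is a unital hyper Leavitt path algebra whose grading is induced by a weight map. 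Taking the colimit produces a $\K$-algebra isomorphism $L_\K(E)\#\mathbb Z\cong L_\K(F)$, where $F$ is the standard $\mathbb Z$-cover of $E$, with vertices $E^0\times\mathbb Z$ and edges connecting level $n$ to level $n+1$. The graph $F$ is acyclic and row-finite, so $L_\K(F)$ is an ultramatricial $\K$-algebra: a direct limit of finite direct sums of matrix algebras over $\K$, whose minimal central idempotents at each stage are finite sums of pairwise orthogonal vertex idempotents $(v,n)$.

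Finally, the injectivity is obtained by tracking vertex idempotents. Under the above identification the element $vp_n\in L_\K(E)\#\mathbb Z$ corresponds to the vertex $(v,n)$ of $F$. Since $v\in L_\K(E)$ is homogeneous of degree zero, the computation $(\phi\#\mathbb Z)(vp_n)=\phi(v)p_n$ produces a nonzero element of $A\#\mathbb Z$, because by hypothesis $\phi(v)\neq 0$ and $\phi(v)p_n$ is the projection of $\phi(v)$ onto its $0$-homogeneous component. A $\K$-algebra homomorphism out of an ultramatricial algebra is injective iff it is injective on each finite-dimensional stage, and for a finite direct sum of matrix algebras over $\K$ this reduces to the assertion that no minimal central idempotent is annihilated. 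Here every minimal central idempotent at each stage of $L_\K(F)$ is a finite sum of pairwise orthogonal vertex idempotents $(v,n)$ with distinct nonzero images under $\phi\#\mathbb Z$, so none is annihilated. Consequently $\phi\#\mathbb Z$ is injective, and hence so is $\phi$.

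The main technical obstacle is the middle step: the precise matching between the Bergman-smash presentation of $L_\K(E)\#\mathbb Z$ given by Theorem~\ref{thmbergsmash} and the graph-theoretic description as $L_\K(F)$ for the $\mathbb Z$-cover $F$ of $E$. Once the symbols $p_\gamma$ are tracked through the isomorphisms of Lemma~\ref{lemhyperberg} (where the hypergraph collapses to an ordinary graph because each source map for $L_\K(E)$ has length one), this identification is routine but notationally intricate; the ultramatricial conclusion and the final injectivity argument are then standard.
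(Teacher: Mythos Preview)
Your strategy is sound and genuinely different from the paper's. The paper argues entirely inside $\V^{\gr}$: since $L_\K(E)$ is graded von Neumann regular, every nonzero homogeneous $a$ has $L_\K(E)a=L_\K(E)e$ for a homogeneous idempotent $e$, giving a nonzero class in $\V^{\gr}(L_\K(E))$; because the generators $[L_\K(E)v]$ of that monoid map to nonzero elements of the conical monoid $\V^{\gr}(A)$, the induced monoid map has trivial ``kernel'', contradicting $\phi(e)=0$. Your route via the smash product and the acyclic cover $\overline E$ is the classical Cuntz--Krieger/gauge-action argument, and it meshes well with Section~\ref{smashbergloc}. The paper's proof is shorter once one grants graded regularity; yours explains the role of the ultramatricial cover and requires no regularity input.

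There is, however, a genuine slip in your last paragraph. It is \emph{not} true that every minimal central idempotent at each matricial stage of $L_\K(\overline E)$ is a sum of vertex idempotents. If $v\in E^{\reg}$ has out-degree at least two, then in any stage containing $(v,n)$ one has $(v,n)=\sum_{e\in s^{-1}(v)} ee^*$, and these path projections land in \emph{different} matrix blocks; the identity of a given block then looks like $(w,m)+(\text{non-vertex projections } pp^*)$ for a sink $(w,m)$. What \emph{is} true, and suffices, is that each simple summand of a stage corresponds to a sink $(w,m)$ of the finite-stage graph, and that sink is itself one of the rank-one diagonal idempotents of the block. Since $(\phi\#\mathbb Z)\big((w,m)\big)=\phi(w)p_m\neq 0$, the restriction of $\phi\#\mathbb Z$ to that simple block is nonzero and hence injective; this repairs the argument. (A minor aside: the reason $\phi(v)p_n\neq 0$ in $A\#\mathbb Z$ is simply that $\phi(v)\neq 0$, not a projection onto a homogeneous component.)
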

\begin{proof}
The ring homomorphism $\phi$ induces a $\Gamma$-monoid homomorphism 
\begin{align*}
\overline \phi: \mathcal V^{\gr}(L_\K(E)) &\longrightarrow \mathcal V^{\gr}(A),\\
[L_\K(E)v]&\longmapsto [A\phi(v)]
\end{align*}
Since any graded  finitely generated projective  $L_\K(E)$-module, is generated by some   $L_\K(E)v$, $v\in E^0$  (see the first part of the proof of Theorem~\ref{arnonevas}) and since $\phi(v)\not = 0$ for all $v\in E^0$, then $\overline \phi^{-1}(0)=\{0\}.$ 
Suppose $\phi(a)=0$, for some $0 \not = a\in L_\K(E)$. Without the  loss of generality we can assume $a$ is homogeneous. Consider the left ideal $L_\K(E)a$. Since $L_\K(E)$ is graded von Neumann regular, $L_\K(E)a=  L_\K(E)e$, for some homogeneous idempotent $e$. Thus $L_\K(E)a$ is a graded finitely generated projective module and $\overline \phi([L_\K(E)a])=[L_\K(E)\phi(a)]=0$, which is a contradiction. 
\end{proof}

\noindent{{\bf Acknowledgements}}
We would like to thank Gene Abrams whose germane questions  led us to try to understand Bergman's paper. Our thanks to George Bergman who provided us with invaluable inputs which improved the presentation of the paper.

\end{document}